\newtheorem{theorem}{Theorem}[section]
\newtheorem{corollary}[theorem]{Corollary}
\newtheorem{lemma}[theorem]{Lemma}
\newtheorem*{thm1}{Theorem A}
\newtheorem*{thm2}{Theorem B}
\theoremstyle{definition}
\newtheorem{defn}[theorem]{Definition}
\theoremstyle{remark}
\newtheorem{remark}[theorem]{Remark}
\let\c@equation\c@theorem
\numberwithin{equation}{section}
\title[]{On the non-collapsed RCD spaces with local bounded covering geometry}
\author{Jikang Wang}
\address[Jikang Wang]{UC Berkeley, Berkeley, CA, US}
\email{jikangwang1117@gmail.com}
\thanks{}
\thanks{}
\thanks{}
\begin{document}
\date{}

\maketitle
\begin{center}
Dedicated to Xiaochun Rong’s 70th Birthday
\end{center}
\begin{abstract}
We consider a RCD$(-(N-1),N)$ space $(X,d,\mathcal{H}^N)$ with local bounded covering geometry. The first result is related to Gromov's almost flat manifold theorem. Specifically, if for every point $\tilde{p}$ in the universal cover $\widetilde{X}$, we have  $\mathcal{H}^N(B_1(\tilde{p})) \ge v > 0$ and the diameter of $X$ is sufficiently small, then $X$ is biH\"{o}lder homeomorphic to an infranil-manifold. Moreover, if $X$ is a smooth Riemannian $N$-manifold with $\mathrm{Ric} \ge -(N-1)$, then $X$ is  biH\"{o}lder diffeomorphic to an infranil-manifold. An application of our argument is to confirm the conjecture that Gromov's almost flat manifold theorem holds in the $\mathrm{RCD}+\mathrm{CBA}$ setting.

The second result concerns a regular fibration theorem. Let $(X_i,d_i,\mathcal{H}^N)$ be a sequence of RCD$(-(N-1),N)$ spaces  converging to a compact smooth $k$-dimensional manifold $K$ in the Gromov-Hausdorff sense. Assume that for any $p_i \in X_i$, the local universal cover is non-collapsing, i.e., for any pre-image point $\tilde{p}_i$ of $p_i$ in the universal cover of the ball $B_3(p_i)$, we have $\mathcal{H}^N(B_{1}(\tilde{p}_i)) \ge v$ for some fixed $v>0$. Then for sufficiently large $i$, there exists a fibration map $f_i:X_i \to K$, where the fiber is an infra-nilmanifold and the structure group is affine. 
\end{abstract}
\section{Introduction}
In this paper, we study the topology of a RCD$(-(N-1),N)$ space $(X,d,\mathcal{H}^N)$ which satisfies $(\rho,v)$-bound covering condition, i.e., for any $p \in X$, take the universal cover of $\rho$-ball $B_{\rho}(p)$ and $\tilde{p}$ is a pre-image of $p$, then $\mathcal{H}^N(B_{\rho/3}(\tilde{p})) \ge v$, where $\rho,v > 0$.

We first review the topological theory of collapsing manifolds with sectional curvature bound. 
The one of most important theorems is Gromov's almost flat theorem.
  
\begin{theorem}[Gromov's almost flat manifold theorem, \cite{Gromov1978,Rong2019,Ruh1982}]\label{almost flat}
Given $n \in \mathbb{N}$, there exists $\epsilon(n) > 0$ and $C(n)>0$ so that for any compact almost flat $n$-manifold $M$ with
$$\max(|\sec_{M}|)\mathrm{diam}(M)^2 < \epsilon(n),$$ 
Then $M$ is diffeomorphic to an infra-nilmanifold, $\mathcal{N}/\Gamma$, where $\mathcal{N}$ is a simply connected nilpotent $n$-dim Lie group and $\Gamma$ is a discrete subgroup of $\mathcal{N} \rtimes \text{Aut}(\mathcal{N})$ and $[ \Gamma : \Gamma \cap \mathcal{N}] \le C(n)$.  
\end{theorem}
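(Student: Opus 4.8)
The plan is to reproduce Gromov's original argument together with Ruh's refinement, as in the references cited above. First I would rescale the metric so that $|\sec_M|\le 1$; the hypothesis then reads $\mathrm{diam}(M)<\sqrt{\epsilon(n)}=:\delta(n)$, which we may take as small as we wish. The first substantial step is to show that $\pi_1(M)$ is virtually nilpotent with controlled index. Passing to the universal cover $\widetilde M$ with lifted basepoint $\tilde p$, the key input is the commutator displacement estimate: if two deck transformations $\gamma_1,\gamma_2$ move $\tilde p$ by at most $r$ for $r$ small, then (after adjusting the basepoint) $[\gamma_1,\gamma_2]$ moves $\tilde p$ by at most $c(n)\,r^2$ — this is because a loop of length $r$ in a space with $|\sec|\le 1$ has holonomy $O(r^2)$-close to the identity, so the "rotational part'' of $\gamma_i$ is small and the commutator construction is contracting. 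Since $\mathrm{diam}(M)<\delta(n)$, every element of $\pi_1(M)$ is represented by a loop of length $<2\delta(n)$, so $\pi_1(M)$ is generated by such short deck transformations; iterating the commutator estimate, together with the standard covering/volume argument bounding the number of generators, yields the generalized Margulis lemma: $\pi_1(M)$ contains a nilpotent subgroup $\Lambda$ of index at most $C(n)$, and after passing to a further subgroup of index bounded in terms of $n$ we may assume $\Lambda$ is torsion-free.

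Next I would pass to the finite normal cover $\widehat M=\widetilde M/\Lambda$, which still satisfies $|\sec|\le 1$ and $\mathrm{diam}(\widehat M)\le C(n)\,\delta(n)$ and has torsion-free finitely generated nilpotent fundamental group $\Lambda$. By Malcev's theorem $\Lambda$ sits as a cocompact lattice in a unique simply connected nilpotent Lie group $\mathcal N$ of dimension $n$. The core of the proof is then to construct a $\Lambda$-equivariant diffeomorphism $\Phi\colon\widetilde M\to\mathcal N$, whence $\widehat M\cong\mathcal N/\Lambda$ is a nilmanifold. I would do this by the \emph{center-of-mass / averaging method}: using that $\widetilde M$ is almost flat, one averages the local exponential charts of $\widetilde M$ against the transitive left action of $\mathcal N$ (transported via $\Lambda$) to produce a map that is, quantitatively, a $C^1$-small perturbation of a local isometry, hence a local diffeomorphism; equivariance makes $\Phi$ proper, and a degree argument promotes it to a global diffeomorphism once $\delta(n)$ is small enough. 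Equivalently, one constructs a $\pi_1(M)$-invariant flat affine connection on $\widetilde M$ à la Ruh and identifies $\widetilde M$ with $\mathcal N$ via its affine exponential map. Finally, the residual free action of the finite group $G=\pi_1(M)/\Lambda$ on $\widehat M$ is intertwined by $\Phi$ with an action on $\mathcal N$; because the construction is natural and $G$ acts by isometries, this action is by elements of $\mathcal N\rtimes\mathrm{Aut}(\mathcal N)$. Setting $\Gamma$ to be the resulting extension of $G$ by $\Lambda$ gives $M=\mathcal N/\Gamma$ with $\Gamma$ discrete in $\mathcal N\rtimes\mathrm{Aut}(\mathcal N)$ and $[\Gamma:\Gamma\cap\mathcal N]=[\pi_1(M):\Lambda]\le C(n)$.

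The step I expect to be the main obstacle is the construction and verification of the global diffeomorphism $\Phi\colon\widetilde M\to\mathcal N$ — this is Ruh's contribution, and it is where the smallness of $\epsilon(n)$ is genuinely consumed: one needs uniform quantitative $C^1$-closeness of the averaged map to an isometry, independent of how badly $M$ collapses, and one must carry the entire averaging construction out $\Lambda$-equivariantly, so that the domains over which one averages and the choice of $\Lambda$ are arranged compatibly with the deck action. By contrast, the Margulis lemma is by now standard, and once an equivariant diffeomorphism to a nilpotent Lie group is in hand, the identification of $M$ with an infra-nilmanifold carrying the stated index bound is formal.
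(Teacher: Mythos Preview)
The paper does not give its own proof of this statement: Theorem~\ref{almost flat} is stated as a background result with references to Gromov, Ruh, and Rong, and is used only as context and motivation. There is therefore nothing in the paper to compare your argument against directly.

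That said, your sketch is a faithful outline of the classical Gromov--Ruh proof, and it is worth noting that the paper's proof of its own Theorem~A (the RCD analogue) runs in broadly the same spirit but with substantively different machinery. Both arguments start from a Margulis-type lemma yielding a nilpotent subgroup of bounded index, and both end by constructing an equivariant map to a nilpotent Lie group via a center-of-mass averaging. The differences are: (i) in place of Malcev's theorem applied to an abstract torsion-free nilpotent group, the paper extracts the nilpotent Lie group $\mathcal{N}_i$ from the \emph{nilprogression} structure of the short generators via the Breuillard--Green--Tao and Zamora theory (Theorems~\ref{nil} and~\ref{Malcev}), which simultaneously pins down the Lie algebra structure and a left-invariant metric close to flat; (ii) in place of Ruh's flat-connection construction, the paper builds the equivariant map by gluing local almost $N$-splitting maps and invokes the canonical Reifenberg method (Theorem~\ref{Rei}) to conclude biH\"older homeomorphism, or diffeomorphism in the smooth case. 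Your approach buys a direct smooth diffeomorphism via the $C^1$-closeness estimate; the paper's approach buys applicability to non-smooth RCD spaces, at the cost of only biH\"older regularity in general.
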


A related result, due to Fukaya, asserts that if a sequence of $n$-manifolds $M_i$ with $|\sec| \le 1$ converging to a lower dimensional manifold $K$, then there is a fibration map $f_i:M_i \to K$ where the fibers are infra-nilmanifolds with an affine structure group.

\begin{theorem}[smooth fibration, \cite{Fukaya1987,CFG}]\label{regular}
Assume that a sequence of $n$-manifolds $M_i$ with $|\sec_{M_i}| \le 1$ converges to a compact lower dimensional manifold $K$ in the Gromov-Hausdorff sense, $M_i \overset{GH}\longrightarrow K$. Then for $i$ large enough, there is a smooth fiberation   map $f_i: M_i \to N$ with fiber an infra-nilmanifold and an affine structure group, and $f_i$ is a Gromov-Hausdorff approximation (GHA).
\end{theorem}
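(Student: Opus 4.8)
The plan is to follow Fukaya's frame-bundle collapsing argument, after first upgrading the hypothesis to bounded covariant derivatives of curvature. Step one is \emph{smoothing}: since one only has $|\sec_{M_i}|\le 1$ with no control on $\nabla^m\mathrm{Rm}$, replace $g_i$ by a smoothed metric $g_i'$ (Abresch's smoothing, or Bemelmans--Min-Oo--Ruh) that is $\varepsilon_i$-close to $g_i$ in $C^1$, with $\varepsilon_i\to 0$, such that $|\nabla^m\mathrm{Rm}(g_i')|\le C(m,n)$ for every $m$. Then $(M_i,g_i')$ still converges to $K$ in the Gromov--Hausdorff sense, and any fiber bundle $f_i:M_i\to K$ that is a GHA for $g_i'$ is also one for $g_i$ up to an error $\varepsilon_i$; so it suffices to work with $g_i'$.

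Next, pass to the orthonormal frame bundles $FM_i'\to M_i'$ with the canonical metric (Levi-Civita horizontal distribution plus a bi-invariant metric on $O(n)$). The derivative bounds from the smoothing step make $\{FM_i'\}$ a sequence of uniformly bounded geometry of all orders; the $O(n)$-action is free and isometric, and $M_i'=FM_i'/O(n)$. By the equivariant Cheeger--Gromov compactness theorem (in Fukaya's equivariant form), after a subsequence $(FM_i',O(n))$ converges in the equivariant $C^\infty$ sense to $(Y,O(n))$, with $Y$ a closed smooth manifold, the $O(n)$-action smooth with only finite isotropy groups, and $Y/O(n)=K$.

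The heart of the matter is now the Cheeger--Fukaya--Gromov structure theory applied to the collapse $FM_i'\to Y$. For $i$ large one obtains an $O(n)$-invariant pure nilpotent Killing structure on $FM_i'$: a sheaf of nilpotent Lie algebras of local Killing fields whose orbits are precisely the collapsing directions. The reason for working upstairs on the frame bundle is that the holonomy obstruction to globalizing this sheaf vanishes there, giving an honest $O(n)$-equivariant fiber bundle $\pi_i:FM_i'\to Y$ with compact nilmanifold fibers $\mathcal N_i/\Lambda_i$; invoking the rigidity half of the Gromov--Ruh almost-flat theorem pins the transition functions of $\pi_i$ down to the affine group $\mathcal N_i\rtimes\mathrm{Aut}(\mathcal N_i)$. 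Quotienting by $O(n)$ then produces $f_i:M_i'\to K$: over $x\in K$, lift to $\tilde x\in Y$ and note that $f_i^{-1}(x)$ is the quotient of the nilmanifold $\pi_i^{-1}(O(n)\tilde x)$ by the finite isotropy group at $\tilde x$, hence an infra-nilmanifold, with the affine structure group of $\pi_i$ descending to $f_i$. That $f_i$ is a GHA follows since its fibers have diameter tending to $0$ and $f_i$ coincides, up to small error, with nearest-point projection onto $K$.

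The step I expect to be the main obstacle is the construction of the equivariant nilpotent Killing structure on the frame bundle with its structure group controlled to be \emph{affine}: this is exactly where the all-orders curvature bounds, the vanishing of the globalization obstruction on $FM_i'$, and the rigidity part of the almost-flat theorem must be combined. Once that structure is in hand, the quotient construction, the identification of the fibers, and the GHA estimate are all routine.
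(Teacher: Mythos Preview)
Your proposal is a reasonable outline of the classical Fukaya/Cheeger--Fukaya--Gromov argument, but the paper does not give its own proof of this statement: Theorem~\ref{regular} is cited as a background result from \cite{Fukaya1987,CFG}, not proved. So there is nothing in the paper to compare your sketch against.

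What the paper \emph{does} prove is Theorem~B, the RCD generalization of this fibration theorem, and there the approach is necessarily quite different from yours. In the RCD setting there is no frame bundle, no smooth Ricci flow or Abresch smoothing, and no implicit function theorem. Instead, the paper builds the fibration map $f_i$ directly as a glued-and-averaged local almost $k$-splitting map (Theorem~\ref{local split}), and then shows it is a fibration by working on local relative covers: the approximate-group/nilprogression structure of the local fundamental group (Theorems~\ref{nil}, \ref{Malcev}, \ref{Q}) produces a simply connected nilpotent Lie group $\mathcal{N}_i$ with a left-invariant metric close to flat, and an almost-equivariant map to $\mathbb{R}^k\times\mathcal{N}_i$ is made genuinely equivariant via the averaging in Theorem~\ref{inv}. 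The biH\"older homeomorphism onto $B_r(0^k)\times\mathcal{N}_i/H_i'$ then comes from the canonical Reifenberg method. The affine structure group is obtained not from the CFG nilpotent Killing structure but from the base-point independence of the escape-norm filtration on the nilprogression.

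So your frame-bundle route is correct for the smooth theorem being cited, but it is orthogonal to the paper's own contribution.
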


More generally, the theories of singular fibration and nilpotent killing structure for collapsed manifolds with $|\sec| \le 1$  have been extensively studied in \cite{CFG,CG1,CG2,Fukaya1987,Fukaya1988,Fukaya1989}. 
However, Theorems \ref{almost flat} and \ref{regular} may not hold if we replace the sectional curvature by the Ricci curvature \cite{Anderson1992}.

Around 2016, Rong proposed to investigate the class of $n$-dim manifolds $M$ satisfying $(\rho,v)$-bound Ricci covering geometry. Specifically, $\mathrm{Ric}_M \ge -(n-1)$, and for any point $p \in M$, take the universal cover of the $\rho$-ball $B_{\rho}(p)$ with $\tilde{p}$ a pre-image of $p$. Then the $\mathrm{Vol}(B_{\rho/3}(\tilde{p})) \ge v$.
According to \cite{CFG}, any $n$-manifold with $|\mathrm{sec}| \le 1$ satisfies the $(\rho,v)$-bound Ricci covering condition for some $\rho,v > 0$ depending on $n$.  

Theorems \ref{almost flat} and \ref{regular} can be generalized to $(\rho,v)$-bound Ricci covering geometry, see Theorem \ref{A} and \ref{fib}. If the diameter of $M$ is less than $\rho$, then $(\rho,v)$-bound covering condition is exactly that the universal cover $\widetilde{M}$ is non-collapsing.

\begin{theorem}[\cite{HKRX2020,Rong2022}] \label{A}
Given $n,v>0$, there exists $\epsilon(n,v) > 0$ and $C(n)>0$, so that if a $n$-manifold $M$ satisfies: $$
 \mathrm{Ric} \ge -(n-1),  \ \mathrm{diam}(M) < \epsilon \le \epsilon(n,v), \  \mathrm{Vol}(B_1(\tilde{p})) \ge v, \ \forall \tilde{p} \in \widetilde{M}, 
$$
then $M$ is diffeomorphic to an infra-nilmanifold.
\end{theorem}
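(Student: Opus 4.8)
The plan is to argue by contradiction, to regularize the metric by a short Ricci flow in order to acquire a two-sided curvature bound, and then to invoke Theorem~\ref{almost flat}. Suppose no such $\epsilon(n,v)$ exists; then there are closed $n$-manifolds $M_i$ with $\mathrm{Ric}_{M_i}\ge-(n-1)$, $\mathrm{diam}(M_i)=\delta_i\to 0$, and $\mathrm{Vol}(B_1(\tilde p_i))\ge v$ for every $\tilde p_i$ in the universal cover $\widetilde M_i$, yet no $M_i$ diffeomorphic to an infra-nilmanifold. Write $\Gamma_i=\pi_1(M_i)$, acting on $\widetilde M_i$ by deck transformations.

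First I would record the structural picture, which is the core of the argument in the $\mathrm{RCD}$ category as well. Taking equivariant pointed Gromov--Hausdorff limits (Fukaya--Yamaguchi precompactness), $(\widetilde M_i,\tilde p_i,\Gamma_i)\to(Y,y_\infty,G)$, where $Y$ is a non-collapsed Ricci limit with $\mathrm{Ric}\ge-(n-1)$ and $\mathcal H^n(B_1(y_\infty))\ge v$, and $G\le\mathrm{Isom}(Y)$ is closed. Because $M_i=\widetilde M_i/\Gamma_i$ collapses to a point, $Y/G$ is a point, so $G$ acts transitively on $Y$. A non-collapsed Ricci limit has a dense regular set, and transitivity forces every point of $Y$ to be regular; hence $Y$ is a Riemannian manifold and $\mathrm{Isom}(Y)$ is a Lie group (Montgomery--Zippin), and $Y$ is simply connected, being a non-collapsed limit of the simply connected $\widetilde M_i$. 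By the generalized Margulis lemma of Kapovitch--Wilking, $\Gamma_i$ contains a nilpotent subgroup $N_i$ with $[\Gamma_i:N_i]\le C(n)$; then $\widetilde M_i/N_i$ is a $\le C(n)$-sheeted cover of $M_i$, so $\mathrm{diam}(\widetilde M_i/N_i)\le 2C(n)\delta_i\to 0$, and in the limit $N_i\to G'\le G$ with $G'$ a nilpotent Lie group still acting transitively on $Y$. Thus $Y$ is a simply connected Riemannian homogeneous space of a nilpotent Lie group, hence by Wilson's theorem isometric to a simply connected nilpotent Lie group $\mathcal N$ with a left-invariant metric, with $\mathrm{Isom}(\mathcal N)=\mathcal N\rtimes K$, $K$ compact with boundedly many components; consequently, for large $i$, $\Gamma_i$ is a cocompact discrete subgroup of $\mathcal N\rtimes\mathrm{Aut}(\mathcal N)$ with $[\Gamma_i:\Gamma_i\cap\mathcal N]\le C(n)$. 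This already shows $M_i$ is homeomorphic to the infra-nilmanifold $\mathcal N/\Gamma_i$.

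To upgrade to a diffeomorphism I would smooth. Since $\widetilde M_i$ has $\mathrm{Ric}\ge-(n-1)$ and is non-collapsed at unit scale, the local Ricci-flow smoothing for manifolds with a lower Ricci bound and non-collapsing (Simon, Hochard, Lai, and the Ricci-flow smoothing developed under bounded Ricci covering geometry) yields a canonical, hence $\Gamma_i$-equivariant, Ricci flow $g_i(t)$ on $\widetilde M_i$ defined for $t\in(0,\tau]$ with $\tau=\tau(n,v)>0$ uniform and $|\mathrm{Rm}_{g_i(t)}|\le C(n,v)/t$; it descends to a Ricci flow on $M_i$ that changes neither the diffeomorphism type of $M_i$ nor $\Gamma_i$. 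At $t=\tau$ the curvature is bounded, $|\mathrm{Rm}_{g_i(\tau)}|\le\Lambda(n,v)$; controlling the length distortion of the flow by the Ricci lower bound one keeps $\mathrm{diam}(M_i,g_i(\tau))\to 0$, so $\max|\sec_{g_i(\tau)}|\cdot\mathrm{diam}(M_i,g_i(\tau))^2\to 0$. Thus for large $i$ the manifold $(M_i,g_i(\tau))$ is almost flat, so by Theorem~\ref{almost flat} it is diffeomorphic to an infra-nilmanifold; being diffeomorphic to $M_i$, this contradicts the choice of $M_i$.

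The main obstacle is exactly the smoothing step: converting a Ricci \emph{lower} bound together with non-collapsing of the universal cover into a short-time Ricci flow with a \emph{uniform} existence time $\tau(n,v)$ and good curvature estimates, while simultaneously preserving the collapsed structure (small diameter, equivalently small codiameter of $\Gamma_i$) so that Theorem~\ref{almost flat} applies to the regularized metric. The absence of any two-sided curvature or injectivity-radius bound is precisely what keeps the Cheeger--Fukaya--Gromov / Ruh machinery from applying before smoothing; once the smoothing is in place, that machinery — packaged in Theorem~\ref{almost flat} — closes the argument. In the $\mathrm{RCD}$ setting no such smoothing is available, which is why one there obtains only a biHölder homeomorphism.
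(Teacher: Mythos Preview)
Your proposal is essentially correct and follows the approach of \cite{HKRX2020}, which the paper itself summarizes: run Ricci flow for a definite time $\tau(n,v)$ using the non-collapsing of the universal cover, obtain a two-sided curvature bound, and apply Gromov's Theorem~\ref{almost flat}. (Your ``structural picture'' paragraph is suggestive but not itself a proof: equivariant Gromov--Hausdorff convergence does not by itself furnish a group embedding $\Gamma_i \hookrightarrow \mathcal N \rtimes \mathrm{Aut}(\mathcal N)$, nor a homeomorphism $M_i \cong \mathcal N/\Gamma_i$; the paper devotes several lemmas to exactly this point. But you correctly rely on the Ricci-flow step for the actual argument, so this is harmless.)

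The paper's own proof (Section~\ref{proof A}, proving the stronger Theorem~A) takes a genuinely different route that avoids both Ricci flow and Gromov's theorem. After blowing up so that the universal covers converge to $\mathbb R^N$, it uses the structure theory of approximate groups (nilprogressions, Theorems~\ref{nil} and~\ref{Malcev}) together with Theorem~\ref{local} to show that the nilpotent subgroup $G_i\le\pi_1(X_i)$ is isomorphic to a lattice in an explicitly constructed simply connected nilpotent Lie group $\mathcal N_i$; endows $\mathcal N_i$ with a left-invariant metric $C^4$-close to flat (Lemma~\ref{metric}); builds a global almost $G_i'$-equivariant GHA $h_i:\widetilde X_i\to\mathcal N_i$ (Lemmas~\ref{localeg},~\ref{extension}); passes to a quotient by a deep finite-index normal subgroup $G_i''$, averages via Theorem~\ref{inv} to obtain a genuinely equivariant locally almost $N$-splitting map; and then invokes the canonical Reifenberg method (Theorem~\ref{Rei}) to conclude biH\"older homeomorphism --- diffeomorphism in the smooth case since $df$ is non-degenerate. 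What your approach buys is brevity in the smooth category; what the paper's approach buys is that it works verbatim for non-collapsed $\mathrm{RCD}$ spaces (where no Ricci flow is available), yields the quantitative biH\"older estimate, and transfers with only minor changes to the mixed-curvature setting of Theorem~\ref{mixed}.
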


We summarize the proofs of Theorem \ref{A} as follows. In \cite{HKRX2020}, it was proved that the Ricci flow on $M$ exists for a definite time. After running the flow, we get an almost flat metric on $M$. Then we can apply Gromov's almost flat manifold theorem. The proof in \cite{Rong2022} uses successively blowing up technique and the structure of iterated bundles, avoiding reliance on Gromov's result.

Recently, Zamora and Zhu proved the topology rigidity for a RCD$(K,N)$ space with a small diameter, extending previous work in \cite{KapovitchWilking2011}. By the generalized Margulis lemma (Theorem \ref{KW}), if the diameter of a RCD$(K,N)$ space $(X,p)$ is sufficiently small, then $\pi_1(X,p)$ contains a nilpotent subgroup $G$ with index $\le C(N)$. Then we can find a descending sequence
$$G= G_1 \vartriangleright G_2 \vartriangleright ... \vartriangleright G_k=\{e\}$$
such that $G_i/G_{i+1}$ is cyclic, $i=1,2...,k-1$. Then we define the rank of $\pi_1(X)$ is the number of $i$ such that $G_i/G_{i+1}$ is infinite. 
\begin{theorem}\label{ZZ}(\cite{ZamoraZhu2024}) 
For any $K \in \mathbb{R}$ and $N \ge 1$, there exists $\epsilon > 0$ such that for any RCD$(K,N)$ space $(X,d,\mathfrak{m})$ with diameter less than $\epsilon$, then rank$(\pi_1(X)) \le N$. Moreover, if rank$(\pi_1(X))=N$, then $X$ is homeomorphic to an infranil-manifold of dimension $N$.
\end{theorem}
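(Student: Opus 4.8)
The strategy I would follow combines the generalized Margulis lemma with an equivariant blow-up analysis of the universal cover; since $\epsilon$ is not quantitative, I would argue by contradiction. Suppose $X_i$ is a sequence of $\mathrm{RCD}(K,N)$ spaces with $\mathrm{diam}(X_i)\to 0$ for which the conclusion fails. By Theorem~\ref{KW}, for $i$ large $\pi_1(X_i)$ contains a nilpotent subgroup of index $\le C(N)$; passing to the (normal core of the) corresponding finite cover, which is again $\mathrm{RCD}(K,N)$ with diameter $\le C(N)\,\mathrm{diam}(X_i)\to 0$ and whose fundamental group has the same rank, I may assume $\Gamma_i:=\pi_1(X_i)$ is nilpotent of class $\le C(N)$ with Hirsch length $r_i=\mathrm{rank}(\pi_1(X_i))$. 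On the universal cover $\widetilde X_i$ the group $\Gamma_i$ acts freely, properly discontinuously and cocompactly by isometries, with quotient of diameter $\to 0$.

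For the bound $r_i\le N$, I would pass to equivariant pointed (measured) Gromov--Hausdorff limits at the scale of the diameter: rescaling $\widetilde X_i$ by $\mathrm{diam}(X_i)^{-1}$ turns the synthetic Ricci bound into $\mathrm{RCD}(\mathrm{diam}(X_i)^2 K,N)$, hence into $\mathrm{RCD}(0,N)$ in the limit, while keeping the $\Gamma_i$-action cocompact with quotient of bounded diameter and a generating set of bounded displacement at the basepoint. The limit is a nilpotent Lie group $G$ (a closed subgroup of the isometry group of an $\mathrm{RCD}(0,N)$ space, which is a Lie group) acting cocompactly on an $\mathrm{RCD}(0,N)$ space $Y$ with $\dim_{\mathcal H}Y\le N$. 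An analysis of cocompact nilpotent isometric actions on $\mathrm{RCD}(0,N)$ spaces via the splitting theorem — a noncompact orbit of a central $\mathbb R$ in $G$ produces a Euclidean de~Rham factor, and iterating one arrives at $Y\cong\mathbb R^{a}\times Z$ with $Z$ compact — bounds the total rank such an action can support by $N$. To see the whole of $r_i$ one must, as in the successive-blow-up arguments behind Theorem~\ref{A}, repeat this at the finer scales where the successive lower-central-series layers of $\Gamma_i$ become visible, each layer appearing as a Euclidean factor inside an $\mathrm{RCD}(0,\cdot)$ limit; the cumulative bound is $r_i\le N$, contradicting $r_i\ge N+1$ and proving the first assertion.

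For the rigidity statement, suppose $r_i=N$. Tracing the above inequalities, the Bass--Guivarc'h growth exponent of $\Gamma_i$ must equal its Hirsch length, forcing $\Gamma_i$ to be virtually $\mathbb Z^N$ (so the infra-nilmanifold produced will be a flat, i.e.\ Bieberbach, manifold) and the limit space $Y$ to be exactly $\mathbb R^N$, with $G\supseteq\mathbb R^N$ acting by translations. In particular the rescaled universal covers $\widetilde X_i$ converge to $\mathbb R^N$ \emph{without collapse}; by the volume-convergence and almost-rigidity theory this makes $\mathcal H^N$-balls in $\widetilde X_i$ of almost maximal volume, so the topological Reifenberg-type theorem for non-collapsed $\mathrm{RCD}$ spaces applies: $\widetilde X_i$, and hence $X_i$, is non-collapsed and biH\"{o}lder homeomorphic to $\mathbb R^N$ on bounded balls. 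Finally, using that the free cocompact $\Gamma_i$-action converges equivariantly to the $G$-action on $\mathbb R^N$, I would upgrade this to a $\Gamma_i$-equivariant near-homeomorphism $\widetilde X_i\to\mathbb R^N$ as in the manifold case, which descends to a homeomorphism of $X_i=\widetilde X_i/\Gamma_i$ onto a flat $N$-manifold, i.e.\ an $N$-dimensional infra-nilmanifold — the desired contradiction.

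The main obstacle is the ``rank does not drop'' step: one must verify that after the (possibly iterated) rescalings the limit nilpotent Lie group is genuinely $r_i$-dimensional, which requires identifying the scales at which the successive lower-central-series layers of $\Gamma_i$ appear and proving that the limit actions remain proper and almost effective on the $\mathrm{RCD}(0,N)$ limits. A secondary difficulty, in the rigidity half, is promoting equivariant Gromov--Hausdorff closeness to an honest $\Gamma_i$-equivariant homeomorphism, which rests on the topological regularity theory of non-collapsed $\mathrm{RCD}$ spaces near Euclidean space.
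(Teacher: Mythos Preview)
First, a point of context: Theorem~\ref{ZZ} is quoted from \cite{ZamoraZhu2024} and the present paper does not give its own proof of it. What the paper proves is Theorem~A, which handles the rigidity half under the hypothesis that the universal cover is non-collapsing (a condition which, by the remark following Theorem~\ref{ZZ}, is equivalent to $\mathrm{rank}(\pi_1(X))=N$ for non-collapsed $\mathrm{RCD}$ spaces). So any comparison should be against the proof of Theorem~A in Section~\ref{proof A}.

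Your rigidity argument contains a genuine error. You assert that when $\mathrm{rank}(\Gamma_i)=N$, tracing the inequalities forces the Bass--Guivarc'h exponent of $\Gamma_i$ to equal its Hirsch length, so $\Gamma_i$ is virtually $\mathbb Z^N$ and the resulting infranil-manifold is flat. This is false: take the $3$-dimensional Heisenberg group $H$ with its standard left-invariant metric (which has $\mathrm{Ric}\ge -\tfrac12$) and lattices $\Gamma_\epsilon$ with $\mathrm{diam}(H/\Gamma_\epsilon)\to 0$. Each $\Gamma_\epsilon$ has Hirsch length $3=N$, is \emph{not} virtually abelian, and has Bass--Guivarc'h exponent $4>N$. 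Bishop--Gromov with a negative lower curvature bound only gives an exponential volume bound on $\widetilde X_i$, so you cannot cap the polynomial growth degree of $\Gamma_i$ by $N$; the step ``Bass--Guivarc'h $=$ Hirsch length'' is unjustified. The theorem's conclusion is ``infranil-manifold'', not ``flat manifold'', precisely because such non-abelian examples occur.

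Consequently the endgame you propose --- a $\Gamma_i$-equivariant homeomorphism $\widetilde X_i\to\mathbb R^N$ --- cannot work as stated, since a non-abelian nilpotent lattice has no free proper isometric action on flat $\mathbb R^N$. The paper's proof of Theorem~A addresses exactly this: from the nilprogression structure of $G_i(\tilde p_i,\epsilon)$ (Theorems~\ref{nil} and~\ref{Malcev}) it builds a simply connected nilpotent Lie group $\mathcal N_i$ with a left-invariant metric $C^4$-close to flat (Lemma~\ref{metric}), embeds the deck group into $\mathcal N_i\rtimes\mathrm{Aut}(\mathcal N_i)$, and constructs an almost-equivariant map from $\widetilde X_i$ to $\mathcal N_i$ (Lemmas~\ref{localeg} and~\ref{extension}) rather than to $\mathbb R^N$. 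Your outline for the inequality $r_i\le N$ via iterated blow-ups is in the right spirit, but the rigidity half requires this genuinely nilpotent target, not the Euclidean one.
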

Zamora and Zhu conjectured that the homeomorphism in Theorem \ref{ZZ} can be biH\"{o}lder.

We call a RCD$(-(N-1),N)$ space non-collapsed if the measure is the Hausdorff measure $\mathcal{H}^N$; in particular, $N \in \mathbb{N}$. We say that a sequence of RCD$(-(N-1),N)$ spaces $(X_i,d_i,\mathcal{H}^N)$ are non-collapsing if there exists $ v > 0$ such that for any $ p_i \in (X_i,d_i,\mathcal{H}^N)$, $\mathcal{H}^N(B_1(p_i)) \ge v$. Sometimes, we simply say a  RCD$(-(N-1),N)$ space $(X,d,\mathcal{H}^N)$ non-collapsing if the Hausdorff measure of any one ball in $X$ is bounded below by a fixed number.

\begin{remark}
Consider a RCD$(-(N-1),N)$ space $(X,d,\mathcal{H}^N)$ with a small diameter. By \cite{ZamoraZhu2024}, if rank$(\pi_1(X)) = N$ then the universal cover $\widetilde{X}$ is non-collapsing. Conversely, if the universal cover $\widetilde{X}$ is non-collapsing, then rank$(\pi_1(X))=N$ by our proof of Theorem A.  
\end{remark}

At present, the proofs of Theorem \ref{A} and the rigidity part of Theorem \ref{ZZ} are different. The proofs in \cite{HKRX2020,Rong2022} for Theorem \ref{A} rely on the smooth structure, and hence cannot be directly extended to the non-smooth setting. On the other hand, the proof in \cite{ZamoraZhu2024} for the rigidity part of Theorem \ref{ZZ} uses a topological result for aspherical manifolds to find the homeomorphism, but does not prove diffeomorphism in the smooth case. The first main result in this paper is to give a new proof that works for both Theorem \ref{A} and the rigidity part of Theorem \ref{ZZ}. Further, we can show that the homeomorphism is biH\"{o}lder, thereby confirming the conjecture in \cite{ZamoraZhu2024}.

\begin{thm1}\label{T1}
Given $N,v>0$, there exists $\epsilon(N,v) > 0$ and $C(N)>0$, so that if a RCD$(-(N-1),N)$ space $(X,d,\mathcal{H}^N)$ satisfies: 
$$ \mathrm{diam}(X) < \epsilon \le \epsilon(N,v), \  \mathcal{H}^N (B_1(\tilde{p})) \ge v, \ \forall \tilde{p} \in \widetilde{X},$$ 
then $X$ is biH\"{o}lder homeomorphic to an infranil-manifold $\mathcal{N}/\Gamma$ where $\mathcal{N}$ has a left-invariant metric. More precisely, there exists $f:X \to \mathcal{N}/\Gamma$ such that for all  $x,y \in X$,
$$(1-\Phi(\epsilon|N,v))d(x,y)^{1+\Phi(\epsilon|N,v)} \le d(f(x),f(y)) \le (1+\Phi(\epsilon|N,v))d(x,y),$$
where $\Phi(\epsilon|N,v) \to 0$ as $\epsilon \to 0$.
Moreover, if $X$ is a smooth $N$-manifold with $\mathrm{Ric} \ge -(N-1)$, then $f$ is a diffeomorphism.
\end{thm1}

The proof of Theorem A can be extended to metric spaces with mixed curvature. Kapovitch showed that Gromov’s almost flat manifolds theorem holds for weighted closed manifolds with upper sectional and lower Bakry–Emery Ricci curvature bounds.
\begin{theorem}[\cite{Kapovitch2021}]\label{kapo}
For any $1<N<\infty$, there exists $\epsilon>0$ such that the following hold. If $(M,g,e^{-f} \mathcal{H}^n)$ is a weighted closed Riemannian $n$-manifold with $n \le N$, $\mathrm{sec} \le \epsilon$, $\mathrm{diam} \le \epsilon$ and $\mathrm{Ric}_{f,N} \ge -\epsilon$, then $M$ is diffeomorphic to an infranil-manifold.
\end{theorem}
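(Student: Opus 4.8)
The plan is to derive Theorem~\ref{kapo} from the Gromov almost flat theorem in the $\mathrm{RCD}+\mathrm{CBA}$ setting (the mixed-curvature extension of Theorem~A discussed above), by checking that a weighted closed manifold obeying the stated bounds is exactly such a space. First I would observe that $(M,g,e^{-f}\mathcal{H}^n)$ is infinitesimally Hilbertian, being a weighted Riemannian manifold, and that $\mathrm{Ric}_{f,N}\ge-\epsilon$ is the Bakry-Emery form of the $\mathrm{CD}(-\epsilon,N)$ condition (for $N\ge n$); hence it is an $\mathrm{RCD}(-\epsilon,N)$ space. Likewise $\sec_g\le\epsilon$ is precisely the statement that $(M,d_g)$ has curvature $\le\epsilon$ in the triangle-comparison sense, so it is a $\mathrm{CBA}(\epsilon)$ space. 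Together with $\mathrm{diam}(M)<\epsilon$, this places $(M,d_g,e^{-f}\mathcal{H}^n)$ in the regime of the $\mathrm{RCD}+\mathrm{CBA}$ almost flat theorem, with the curvature and diameter bounds all tending to $0$.

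Before invoking that theorem I would record the one fact that uses the \emph{combination} of the two one-sided bounds: the weight is forced to be nearly constant. From $\sec_g\le\epsilon$ we get $\mathrm{Ric}_g\le(n-1)\epsilon\,g$, so $\mathrm{Ric}_{f,N}\ge-\epsilon$ and $\tfrac1{N-n}\,df\otimes df\ge0$ give $\mathrm{Hess}f\ge-n\epsilon\,g$ on all of $M$ (the case $N=n$ forces $f$ constant, and we are done by Theorem~\ref{A}). Since $M$ is closed, $f$ has critical points at its extrema; running the elementary convexity estimate $(f\circ\gamma)''\ge-n\epsilon$ along a minimizing geodesic between them gives $\mathrm{osc}(f)\le\tfrac{n\epsilon}{2}\,\mathrm{diam}(M)^2<\tfrac n2\epsilon^3$, and feeding this back along arbitrarily long (complete) geodesics upgrades it to $\sup_M|\nabla f|\le n\epsilon^2$. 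Thus, after normalizing $\min f=0$, the weighted measure $e^{-f}\mathcal{H}^n$ is $(1\pm\Phi(\epsilon|N))$-close to $\mathcal{H}^n$ with $\Phi(\epsilon|N)$-small gradient; consequently every equivariant Gromov-Hausdorff blow-up of the universal cover $\widetilde{M}$ has constant limiting weight, so its limits are genuinely non-collapsed, unweighted metric spaces.

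With these inputs the $\mathrm{RCD}+\mathrm{CBA}$ almost flat theorem yields a biH\"older homeomorphism from $M$ onto an infranil-manifold $\mathcal{N}/\Gamma$, the dimension of $\mathcal{N}$ being the geometric dimension of $(M,d_g)$, namely $n$; and since $M$ is a smooth manifold, the smooth-case argument in the proof of Theorem~A upgrades this to a diffeomorphism, which is the conclusion of Theorem~\ref{kapo}. (If one prefers not to use the $\mathrm{RCD}+\mathrm{CBA}$ theorem as a black box, the same three ingredients---the generalized Margulis lemma applied to the $\mathrm{RCD}(-\epsilon,N)$ space $M$, the metric G\"unther inequality in $\widetilde{M}$ coming from $\sec_g\le\epsilon$ which makes $\widetilde{M}$ non-collapsed at a definite scale, and the near-constancy of $f$---reduce the statement, after one rescaling, to Theorem~\ref{A} or Theorem~A for the unweighted metric.)

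The hard part is the $\mathrm{RCD}+\mathrm{CBA}$ almost flat theorem itself, i.e.\ reconciling the two one-sided bounds along the equivariant Gromov-Hausdorff / blow-up argument behind Theorem~A. The $\mathrm{CBA}$ bound is stable under Gromov-Hausdorff limits and, via a metric G\"unther-type volume estimate, supplies the lower volume bound on balls in the universal cover that Theorem~A assumed outright; the $\mathrm{RCD}$ bound is stable under measured Gromov-Hausdorff limits; so a limit space is at once $\mathrm{CAT}(0)$, $\mathrm{RCD}(0,N)$, and non-collapsed. One must then invoke the regularity theory of non-collapsed $\mathrm{RCD}$ spaces to see that such a limit is an $n$-dimensional manifold-like space, and use the $\mathrm{CAT}$ bound together with the non-collapsing to recognize the limiting nilpotent Lie group as having full dimension $n$ and carrying a genuine left-invariant Riemannian---not merely sub-Riemannian---metric. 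This compatibility of the lower-Ricci and upper-curvature structures on the limit is the main obstacle; everything specific to the weighted hypothesis of Theorem~\ref{kapo} beyond it is contained in the two-line estimate of the second paragraph.
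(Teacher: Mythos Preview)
Your proposal is correct in spirit but differs from the proof the paper attributes to Kapovitch. The paper does not prove Theorem~\ref{kapo} itself; it only recalls Kapovitch's original argument: first show $M$ is aspherical via a fibration theorem and induction, deduce homeomorphism to an infranil-manifold from topological results on aspherical manifolds, and then upgrade to diffeomorphism by Ricci flow smoothing followed by Gromov's almost flat theorem. Your route is instead the alternative the paper explicitly advertises after stating Theorem~\ref{mixed}: verify that $(M,d_g,e^{-f}\mathcal{H}^n)$ is $\mathrm{RCD}(-\epsilon,N)$, that $(M,d_g)$ is $\mathrm{CBA}(\epsilon)$, that $\partial M=\emptyset$, and invoke Theorem~\ref{mixed} directly. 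This bypasses both Ricci flow and Gromov's theorem, at the cost of relying on the machinery behind Theorem~\ref{mixed} (strainer maps in mixed curvature, the equivariant gluing of Theorem~\ref{mix glue}, and the nilprogression argument of Section~\ref{proof A}).

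Two remarks on your write-up. First, your second paragraph---the oscillation estimate for $f$ from $\mathrm{Hess}\,f\ge -n\epsilon\,g$---is correct but unnecessary: Theorem~\ref{mixed} is stated for an arbitrary $\mathrm{RCD}(-\epsilon,N)$ measure $\mathfrak{m}$, not just $\mathcal{H}^n$, so near-constancy of the weight plays no role once you commit to that route. Second, Theorem~\ref{mixed} already yields a biLipschitz \emph{diffeomorphism}, not merely a biH\"older homeomorphism, so your appeal to ``the smooth-case argument in the proof of Theorem~A'' to upgrade is redundant; the strainer-based construction in Theorem~\ref{mix glue} produces a differentiable map with nondegenerate differential from the outset.
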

We briefly recall the proof of Theorem \ref{kapo}. Kapovitch first proved that $M$ is  aspherical by a fibration theorem and an induction argument. By some topological results, $M$ is homeomorphic to an infranil-manifold. Then applying Ricci flow smoothing techniques and Gromov's almost flat theorem, $M$ is diffeomorphic to an infranil-manifold.

It was conjectured in \cite{Kapovitch2021} that Gromov's almost flat manifold theorem also holds under the $\mathrm{RCD} + \mathrm{CBA}$ conditions. We recall the structure theory of $\mathrm{RCD} + \mathrm{CBA}$ spaces from \cite{KKK}. If $(X,d,\mathfrak{m})$ is $\mathrm{RCD}(K,N)$ and $\mathrm{CBA}(k)$ with $N<\infty$ then $X$ is a topological manifold with boundary of dimension $\le N$. The manifold part of $X$ is a smooth $C^1$-manifold with a $C^0 \cap \mathrm{BV}$ Riemannian metric which induces the distance function $d$ on $X$. In particular, if $\partial X =\emptyset$, then $X$ is a smooth manifold.

Modify the proof Theorem A a little bit, we can prove the following conjecture in \cite{Kapovitch2021}. 
\begin{theorem}\label{mixed}
For any $1<N<\infty$ there exists $\epsilon(N)$ such that for any $\epsilon < \epsilon(N)$ the following holds. If $(X,d,\mathfrak{m})$ is an $\mathrm{RCD}(-\epsilon,N)$ space such that $(X,d)$ is $\mathrm{CBA}(\epsilon)$, $\partial X =\emptyset$ and $\mathrm{diam}(X) \le \epsilon$, then $X$ is biLipschitz diffeomorphic to an infranil-manifold of dimension $\le N$. That is, there exist a infranil-manifold $\mathcal{N}/\Gamma$ and a diffeomorphic map $f:X \to \mathcal{N}/\Gamma$ so that 
$$(1-\Phi(\epsilon|N))d(x,y) \le d(f(x),f(y)) \le (1+\Phi(\epsilon|N))d(x,y),$$
where $\Phi(\epsilon|N) \to 0$ as $\epsilon \to 0$.
\end{theorem}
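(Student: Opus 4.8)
The plan is to follow the proof of Theorem A almost verbatim, with $N$ replaced by the true dimension $n$ of $X$, and with the $\mathrm{CBA}(\epsilon)$ hypothesis doing two jobs: supplying, for free, the non-collapsing of the universal cover that Theorem A takes as an assumption, and improving the biH\"older conclusion to a biLipschitz one while confirming that the comparison map is a diffeomorphism. In contrast to Kapovitch's proof of Theorem \ref{kapo}, which runs the Ricci flow to smooth the metric and then quotes the classical almost-flat theorem, this route is purely synthetic and hence insensitive to the low $C^0\cap\mathrm{BV}$ regularity of the metric produced by \cite{KKK}.

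\textbf{Step 1 (reduction to a $C^0$ manifold).} By \cite{KKK}, an $\mathrm{RCD}(-\epsilon,N)$ and $\mathrm{CBA}(\epsilon)$ space $X$ with $\partial X=\emptyset$ is a closed smooth manifold of dimension $n\le N$ with a $C^0\cap\mathrm{BV}$ Riemannian metric $g$ inducing $d$, and $\mathfrak m$ agrees with $\mathcal H^n$ up to a continuous density; the universal cover $\widetilde X$ has the same structure. Since $\widetilde X$ is simply connected and $\mathrm{CAT}(\epsilon)$, it has no short geodesic loops at unit scale once $\epsilon<\epsilon(N)$, and volume comparison for an upper curvature bound (G\"unther-type; equivalently local $n$-Ahlfors regularity of geodesically complete $\mathrm{CBA}$ spaces without boundary) yields $\mathcal H^n(B_1(\tilde p))\ge v(N)>0$ for all $\tilde p\in\widetilde X$. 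Thus the local bounded covering hypothesis holds with $v$ a function of $N$ alone, and the two-parameter constant in Theorem A collapses to a single $\epsilon(N)$.

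\textbf{Step 2 (rerun the proof of Theorem A, upgraded by $\mathrm{CBA}$).} The generalized Margulis lemma, Theorem \ref{KW}, applies to $\mathrm{RCD}(-\epsilon,N)$ spaces and produces a nilpotent finite-index subgroup of $\pi_1(X)$; combining this with the non-collapsed $\widetilde X$ and equivariant Gromov--Hausdorff blow-down as $\epsilon\to 0$ gives a simply connected nilpotent Lie group $\mathcal N$ with a left-invariant metric acting on $\widetilde X$, a presentation $X\cong\mathcal N/\Gamma$, and an explicit $\Phi(\epsilon)$-almost equivariant map $f\colon X\to\mathcal N/\Gamma$, exactly as in Theorem A. In the pure $\mathrm{RCD}$ setting $f$ is only biH\"older, because the distance functions and the limit group action are controlled only in the H\"older sense (Cheeger--Colding/Colding--Naber). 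Here $\mathrm{CBA}(\epsilon)$ upgrades each step to the Lipschitz level: the $C^0$ Riemannian structure makes $d$ uniformly biLipschitz to the Euclidean distance in suitable charts on $\widetilde X$, distance functions are semiconcave with Lipschitz gradient flow, and the geodesic and orbit data used to define $f$ depend Lipschitz-continuously on the point. Tracking the constants replaces the exponent $1+\Phi(\epsilon)$ by $1$, giving $(1-\Phi(\epsilon|N))d(x,y)\le d(f(x),f(y))\le(1+\Phi(\epsilon|N))d(x,y)$. Finally, since $X$ and $\widetilde X$ are already smooth manifolds, the isometric $\mathcal N$-action on $\widetilde X$ is smooth on the manifold, so $f$ -- assembled from orbit maps and minimizing geodesics -- can be taken smooth with smooth inverse, as in the smooth case of Theorem A; if needed one mollifies $f$ by a displacement of size $\Phi(\epsilon)\,\mathrm{diam}(X)$ without affecting the biLipschitz bound.

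I expect the crux to be the biLipschitz upgrade in Step 2. Obtaining it requires using $\mathrm{CBA}(\epsilon)$ quantitatively rather than only qualitatively: one needs uniform-in-$\epsilon$ biLipschitz control of harmonic-type coordinates and of the exponential/gradient-flow maps on $\widetilde X$ at unit scale, so that the nilpotent group action and its orbits vary in a Lipschitz -- not merely H\"older -- way, and one must check that the accumulated errors still vanish as $\epsilon\to 0$. Establishing such estimates directly from a $C^0\cap\mathrm{BV}$ metric, where the classical Jost--Karcher a priori bounds for two-sided curvature are not literally available, is the technical heart of the argument; by comparison, the reduction to dimension $n$, the non-collapsing of $\widetilde X$, and the passage from homeomorphism to diffeomorphism are routine given \cite{KKK} and the proof of Theorem A.
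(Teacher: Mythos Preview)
Your overall architecture---run the Theorem A machine on a contradiction sequence, identify $G_i$ with a lattice in a simply connected nilpotent $\mathcal N_i$, build an almost-equivariant global map, pass to a finite quotient, and average---matches the paper. But the heart of the matter, the biLipschitz upgrade in Step 2, is not obtained the way you suggest, and your description contains a real gap.

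In Theorem A the biH\"older exponent $1+\Phi(\epsilon)$ arises because the harmonic $(N,\epsilon)$-splitting maps are only controlled after a scale-dependent transformation $T_s$ with $|T_s|\le s^{-\delta}$ (Theorem \ref{transformation}); having a $C^0\cap\mathrm{BV}$ metric or semiconcave distance functions does not, by itself, eliminate that growth. The paper's fix is to \emph{replace} almost-splitting maps by \emph{strainer maps} coming from the $\mathrm{CBA}(\epsilon)$ structure: following \cite{Kapovitch2021}, one builds differentiable local $\Phi(\epsilon|n)$-GHAs $u$ with the pointwise bound $1-\Phi(\epsilon|n)\le|\nabla u|\le 1+\Phi(\epsilon|n)$, which is exactly what yields biLipschitz (and differentiable) control at every scale without any transformation. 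These strainer maps are then glued and $G$-averaged by the center-of-mass argument of Theorem \ref{inv}, producing a $G$-equivariant map that is biLipschitz diffeomorphic on every $\frac{1}{\epsilon}$-ball (this is the paper's Theorem \ref{mix glue}). Your talk of ``harmonic-type coordinates'', ``Lipschitz gradient flow'', and ``mollifying $f$'' does not supply this mechanism.

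A second, smaller difference: to show $(\widetilde X_i,\tilde p_i)\to\mathbb R^n$, the paper does not invoke a G\"unther-type volume bound. Instead it uses the $\mathrm{CBA}(\epsilon_i)$ exponential map to form a pseudo-cover $\hat X_i=B_r(0^n)\subset T_{p_i}X_i$ with the pull-back metric, shows $\hat X_i\to B_r(0^n)\subset\mathbb R^n$ (Kapovitch's Lemma 3.7), and then identifies the groupification of the pseudo-actions with $\pi_1(X_i)$ to conclude for $\widetilde X_i$. After that, the no-small-subgroup property follows from measure convergence and the rest of Theorem A goes through verbatim, with Theorem \ref{mix glue} substituted for Theorem \ref{inv} at the final step.
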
 
It was known in \cite{Kapovitch2021} that Theorem \ref{mixed} holds up to homeomorphism, while it remains unknown whether we can apply Ricci flow to smooth the metric.
In particular, the proof for Theorem \ref{mixed} provides an alternative approach to proving Theorem \ref{kapo}, without relying on Ricci flow smoothing techniques or Gromov's almost flat manifold theorem. 

Our next result is to prove a regular fibraion theorem in the RCD setting. The following fibration theorem in the smooth case was proved by Huang and Rong.

\begin{theorem}[\cite{Huang2020, Rong2022}] \label{fib}
Given $n,v>0$, there exists $\epsilon(n,v) > 0$ and $C(n)>0$ so that for any compact $n$-manifold $M$ and  $k$-manifold $K$ satisfying:  
\begin{center}
 $\text{Ric}_M \ge -(n-1)$, $(1,v)$-bound covering geometry holds on $M$, \\
 $|\text{sec}_K| \le 1$, $\text{inj}_K \ge 1$, $d_{GH}(M,K)<\epsilon < \epsilon(n,v)$, \\
\end{center} 
then there is a smooth fiber bundle map, $f:M \to K$ that is a $\Phi(\epsilon|n,v)$-Gromov-Hausdorff approximation (GHA), where $\Phi(\epsilon|n,v) \to 0$ as $\epsilon \to 0$. The fiber is an infranil-manifold and the structure group is affine
\end{theorem}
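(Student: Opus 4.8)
The plan is to argue by contradiction and compactness, reducing to the analysis of an equivariant local Gromov--Hausdorff limit and then running the Cheeger--Fukaya--Gromov fibration machinery on the non-collapsed local universal covers furnished by $(1,v)$-bounded covering geometry. Suppose the theorem fails for some $n,v$: there exist compact $n$-manifolds $M_i$ with $\mathrm{Ric}_{M_i}\ge-(n-1)$ and $(1,v)$-bounded covering geometry, compact $k$-manifolds $K_i$ with $|\sec_{K_i}|\le1$ and $\mathrm{inj}_{K_i}\ge1$, and $d_{GH}(M_i,K_i)\to0$, yet for no $i$ is there a smooth fiber bundle map $M_i\to K_i$ with infranil fiber, affine structure group, which is a $\delta_i$-GHA with $\delta_i\to0$. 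By Cheeger--Gromov compactness, after passing to a subsequence $K_i\to K$ in $C^{1,\alpha}$ for a fixed compact smooth $k$-manifold $K$ with bounded geometry, and we may identify $K_i$ with $K$ via diffeomorphisms $C^1$-close to isometries; so $M_i\overset{GH}{\longrightarrow}K$.

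Fix a small $\rho>0$ and points $p_i\in M_i$, and let $q_i\colon(\widetilde B_i,\tilde p_i)\to(B_\rho(p_i),p_i)$ be the universal cover, with deck group $\Gamma_i$. Bishop--Gromov together with the $(1,v)$-bound gives $\mathrm{vol}\,B_s(\tilde p_i)\ge c(n,v)\,s^n$ for $0<s\le\rho/3$, so after a further subsequence $(\widetilde B_i,\tilde p_i,\Gamma_i)$ converges in the pointed equivariant Gromov--Hausdorff sense to $(Y,\tilde p_\infty,G)$, where $Y$ is a non-collapsed $\mathrm{RCD}(-(n-1),n)$ space, $G\le\mathrm{Isom}(Y)$ is closed (hence a Lie group), and $Y/G$ is isometric to the smooth $k$-disc $B_\rho(\hat p_\infty)\subset K$. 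Since $Y$ is non-collapsed of dimension $n$ with $k$-dimensional smooth quotient, the principal $G$-orbits are closed submanifolds of dimension $n-k$, and the identity component $G^0$ is nilpotent; this follows from the structure theory of isometric actions on non-collapsed Ricci (RCD) limit spaces, or alternatively from applying Theorem \ref{A} to a principal orbit, whose local universal cover is non-collapsed and whose intrinsic diameter is $O(\rho)$. Consequently, by the generalized Margulis lemma (Theorem \ref{KW}), for large $i$ the group $\Gamma_i$ has a nilpotent normal subgroup $N_i$ with $[\Gamma_i:N_i]\le C(n)$, and the orbit closure $\overline{N_i\cdot\tilde p_i}$ converges to an infranilmanifold $\mathcal N/\Lambda$ carrying a left-invariant-type metric.

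It remains to promote this local picture to a global smooth fiber bundle $f_i\colon M_i\to K$. Start from a $\delta_i$-GHA $\phi_i\colon M_i\to K$; using that $K$ has bounded geometry at scale $1$, mollify the lift of $\phi_i$ on each non-collapsed cover $\widetilde B_i$ --- the $C^{1,\alpha}$ harmonic-radius bound available on non-collapsed limits (Anderson's gap theorem, Cheeger--Naber $\epsilon$-regularity) makes this legitimate --- and average so that the smoothed lift becomes $\Gamma_i$-equivariant and descends to a smooth map $h_i\colon B_\rho(p_i)\to K$; then glue the local maps $h_i$ by a partition of unity on $K$ using the center-of-mass construction in $K$, the overlaps agreeing in $C^1$ to within $\Phi(\epsilon|n,v)$. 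The resulting $f_i\colon M_i\to K$ is a $\Phi(\epsilon|n,v)$-GHA whose differential is $\Phi(\epsilon)$-close to a submersion; hence $f_i$ is a submersion and, by Ehresmann's theorem, a smooth fiber bundle. Its fiber $F_i=f_i^{-1}(q)$ is a compact $(n-k)$-manifold with non-collapsed local universal cover and small diameter relative to its geometry, so Theorem \ref{A} identifies $F_i$ with the infranilmanifold $\mathcal N/\Lambda$; and since the nilpotent Killing-field structure on the covers is natural under deck transformations and on overlaps, the transition maps are affine automorphisms of $\mathcal N/\Lambda$, i.e.\ the structure group is affine. This contradicts the choice of $M_i$, proving the theorem.

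The main obstacle is the last step: globalizing the equivariant smoothing and assembling the local fibration data into a genuine smooth bundle over $K$ with affine structure group, which is exactly the Cheeger--Fukaya--Gromov nilpotent-Killing-structure apparatus; in the Ricci (rather than sectional) setting one must substitute their local smoothing by the $C^{1,\alpha}$-regularity of the non-collapsed covers and verify that the transition maps stay affine. A second delicate point is the nilpotency claim in the second paragraph --- that $G^0$ is nilpotent of dimension $n-k$ with no extra isotropy producing semisimple factors --- which rests on the structure theory of Ricci (RCD) limit spaces. An alternative to the contradiction argument is Rong's successive-blow-up method, which builds the bundle directly from an iterated-bundle structure on the local covers, replacing the compactness argument by an induction on the orbit dimension and avoiding any appeal to Gromov's almost-flat theorem.
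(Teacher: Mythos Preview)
Your proposal has the right large-scale architecture --- contradiction, pass to local covers, build a GHA by center of mass, show it is a submersion --- but two of the load-bearing steps do not go through as written.

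First, the smoothing step. You invoke a $C^{1,\alpha}$ harmonic-radius bound on the non-collapsed covers $\widetilde B_i$, citing Anderson. But Anderson's harmonic-radius estimate requires a \emph{two-sided} Ricci bound; under only $\mathrm{Ric}\ge-(n-1)$ and a volume lower bound one has Reifenberg flatness and the Cheeger--Jiang--Naber canonical Reifenberg estimates, not $C^{1,\alpha}$ coordinates, so ``mollify on the cover and average to make it $\Gamma_i$-equivariant'' is not available in this form (and averaging over the infinite group $\Gamma_i$ is itself not defined). The paper's account of Huang's proof avoids this entirely: one constructs harmonic almost $k$-splitting maps \emph{directly on $M$} (using that $K$ has injectivity radius $\ge1$), glues them by center of mass on $K$ to obtain a global smooth GHA $f:M\to K$, compares $f$ with a linear average so that $df$ agrees with the differential of a single almost $k$-splitting map, and then proves non-degeneracy of that differential using the $(1,v)$-bounded covering geometry. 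The covers are used to analyze $df$ for a map already built on $M$, not to construct the map.

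Second, identifying the fiber. You assert that $F_i=f_i^{-1}(q)$ has non-collapsed local universal cover and small diameter, ``so Theorem~\ref{A} identifies $F_i$ with an infranilmanifold.'' But Theorem~\ref{A} needs a Ricci lower bound on $F_i$ with its induced metric, and a submanifold inherits no such bound (its second fundamental form in $M_i$ is uncontrolled). This is exactly why Rong's argument, and the paper's own proof of Theorem~B, do \emph{not} apply Theorem~\ref{A} to the fiber: instead they read off the nilpotent structure from the approximate-group/nilprogression structure of the local relative fundamental group $H_i$, build the simply connected nilpotent group $\mathcal N_i$ from it, and exhibit a local biH\"older product $B_r(0^k)\times\mathcal N_i/H_i'$ via an equivariant almost $N$-splitting map. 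The affine structure group then comes from the base-point independence of the escape norm, not from a nilpotent Killing structure in the CFG sense. Your final paragraph correctly flags both issues as delicate, but does not resolve them.
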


Huang first constructed the fibration in \cite{Huang2020}, and Rong proved that the fiber is an infranil-manifold and that the structure group be can be reduced to be affine in \cite{Rong2022}. 

We generalize Theorem \ref{fib} to RCD$(-(N-1),N)$ spaces $(X,d,\mathcal{H}^N)$ with $(\rho,v)$-bound covering geometry.
\begin{thm2} \label{T2}
Given $N,v>0$, suppose that a sequence of compact RCD$(-(N-1),N)$ spaces $(X_i,d_i,\mathcal{H}^N)$, with $(1,v)$-bound covering geometry, converges to a compact smooth $k$-manifold $K$ in the Gromov-Hausdorff sense. Then for sufficiently large $i$, there exists a fiber bundle map $f_i:X_i \to K$ that is an $\epsilon_i$-Gromov-Hausdorff approximation(GHA) where $\epsilon_i \to 0$ as $i \to \infty$. Moreover, the fiber is homeomorphic to an infranil-manifold and the structure group is affine.
\end{thm2}

Next, we study the limit of RCD$(-(N-1),N)$ spaces $(X_i,d_i,\mathcal{H}^N)$ with $(\rho,v)$-bound covering geometry. By \cite{HNW2023,Zhou2024}, for any $k \ge 3$, there exists a collapsing Ricci limit space with the rectifiable dimension equal to $k$, which contains no manifold points. We shall prove that, manifold points in the limit of $(X_i,d_i,\mathcal{H}^N)$ with $(\rho,v)$-bound covering geometry have full measure.

\begin{theorem}\label{T3}
Assume that $(X_i,d_i,\mathcal{H}^N,p_i)$ is a sequence of pointed RCD$(-(N-1),N)$ spaces with $(1,v)$-bound covering geometry, and suppose that
$$(X_i,d_i,\mathcal{H}^N,p_i) \overset{\mathrm{pmGH}}\longrightarrow (X,d,\mathfrak{m},p)$$ with rectifiable dim$(X)=k<N$. Then \\
(1) ($k$-regular points are manifold points) For any $q \in \mathcal{R}^k(X)$, a neighborhood of $q$ is biH\"{o}lder to an open set in $\mathbb{R}^k$; \\
(2) (fibration near $k$-regular point) If $q \in \mathcal{R}^k(X)$ and $q_i \in X_i$ converging to $q$, for sufficiently large $i$, a neighborhood $U_i$ is of $q_i$ is biH\"{o}lder to $V \times \mathcal{N}_i/\Gamma_i$ where $V$ is a neighborhood of $q$ and $\mathcal{N}_i/\Gamma_i$ is an infranil-manifold.
\end{theorem}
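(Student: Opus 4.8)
The plan is to produce, for all large $i$, a fiber bundle map $f_i\colon U_i\to W$ from a neighborhood $U_i$ of $q_i$ in $X_i$ onto a small Euclidean ball $W\subset\mathbb{R}^k$, with fiber an infra-nilmanifold $\mathcal{N}_i/\Gamma_i$ and affine structure group, and then to read off both statements by letting $i\to\infty$. First I would fix $q\in\mathcal{R}^k(X)$ and $q_i\in X_i$ with $q_i\to q$; since $q$ is $k$-regular, for every $\sigma>0$ there is $r=r(\sigma)>0$ with $d_{GH}\big((B_r(q),q),(B_r(0^k),0)\big)\le\sigma r$, so choosing such an $r$ with $\sigma$ small and then $i$ large forces $d_{GH}\big((B_r(q_i),q_i),(B_r(0^k),0)\big)\le 2\sigma r$. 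As $B_r(0^k)$ is flat, it satisfies the bounded-geometry hypotheses asked of the target in the fibration theorem.

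The central step is a localized form of the fibration theorem. Applying the construction underlying Theorem B (equivalently that of Theorem \ref{fib}) to the ball $B_r(q_i)\subset X_i$, and using that $(X_i,d_i,\mathcal{H}^N)$ has $(1,v)$-bounded covering geometry and that $B_r(q_i)$ is $2\sigma r$-close to the flat ball $B_r(0^k)$, I expect to obtain for $i$ large a fiber bundle map $f_i\colon U_i:=B_{r/2}(q_i)\to W:=B_{r/2}(0^k)$ which is a $\Phi(\sigma|N,v)$-GHA and an almost Riemannian submersion, with fiber an $(N-k)$-dimensional infra-nilmanifold (biH\"{o}lder to $\mathcal{N}_i/\Gamma_i$ with a left-invariant metric, by Theorem A) and affine structure group. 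Two points must be checked here: that the construction is genuinely local --- producing the bundle over a relatively compact open Euclidean ball, not merely over a closed manifold --- and that $(1,v)$-bounded covering geometry at scale $1$ supplies the bounded covering geometry needed at the fiber scale $r$, i.e.\ that the local fundamental group is not re-wrapped at smaller scales. The cleanest route to both is to pass to the equivariant Gromov--Hausdorff limit of the local universal covers, $(\widetilde{B_3(q_i)},\tilde q_i,\Gamma_i)\to(\widetilde Y,\tilde q,G)$ in the style of \cite{KapovitchWilking2011}, where $\widetilde Y$ is a non-collapsed $\mathrm{RCD}(-(N-1),N)$ space, $G\le\mathrm{Isom}(\widetilde Y)$ is a nilpotent-by-finite Lie group acting with $(N-k)$-dimensional orbits, and $\widetilde Y/G$ is isometric to a neighborhood of $q$; the $G$-orbit through $\tilde q$ then yields the fibration, and the non-collapsed regularity theory of $\widetilde Y$ at $\tilde q$ --- using that $q$ being $k$-regular forces the germ of $\widetilde Y/G$ at $q$ to be $\mathbb{R}^k$ --- gives a biH\"{o}lder chart upstairs which descends to the quotient.

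It remains to extract the two conclusions. For (1), the fibers $F_i$ of $f_i$ must collapse: if $\liminf_i\mathrm{diam}(F_i)>0$ then, $F_i$ being an $(N-k)$-dimensional infra-nilmanifold with a definite lower Ricci bound, one gets $\mathcal{H}^{N-k}(F_i)\ge c>0$, and the coarea inequality gives $\mathcal{H}^N(B_{r/2}(q_i))\ge c'>0$ uniformly, so the limit of $B_{r/2}(q_i)$ would be non-collapsed of dimension $N$, contradicting rectifiable dim$(X)=k$. Hence $\mathrm{diam}(F_i)\to 0$, and the $f_i$ converge to a map $f_\infty\colon V\to W$, $V:=B_{r/2}(q)$, which is onto by the GHA property, injective because the fibers collapse to points, and a homeomorphism with biH\"{o}lder inverse (from the $\Phi$-GHA control along the base together with the uniform biH\"{o}lder control along the fibers); thus the neighborhood $V$ of $q$ is biH\"{o}lder to the open set $W\subset\mathbb{R}^k$, proving (1). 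For (2), $W$ is contractible, so the bundle $f_i$ is trivial and $U_i\cong W\times F_i$; since the structure group is affine --- hence acts by uniformly biLipschitz maps on $\mathcal{N}_i/\Gamma_i$ --- and $f_i$ is an almost Riemannian submersion, the trivialization can be taken biH\"{o}lder, and composing with the biH\"{o}lder identification $W\simeq V$ from (1) gives $U_i$ biH\"{o}lder to $V\times\mathcal{N}_i/\Gamma_i$, proving (2). I expect the hard part to be the middle paragraph --- the localized fibration with infra-nilmanifold fiber and affine structure group --- and, within it, pinning down the equivariant limit $(\widetilde Y,G)$ and the $G$-orbit through $\tilde q$.
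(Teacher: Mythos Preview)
Your route for part~(2) is essentially the paper's: localize the construction behind Theorem~B to a ball around $q_i$ that is Gromov--Hausdorff close to a Euclidean $k$-ball, and read off the infranil fibration. One technical caveat: you propose to pass to the equivariant limit of the local \emph{universal} covers $\widetilde{B_3(q_i)}$, but these need not be precompact (cf.\ the Sormani--Wei example cited in the paper). The paper instead uses the local \emph{relative} covers $\widetilde{B}(q_i,\epsilon_0,1)$ together with Xu's precompactness theorem, and this is what makes the equivariant limit $(\widetilde Y,G)$ well-defined; you should switch to that object.

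For part~(1), however, your argument differs genuinely from the paper's. You deduce the biH\"older chart on the limit $X$ by letting the fibration maps $f_i$ converge and using that the fibers collapse; the biH\"older estimate then comes from the biH\"older trivialization $\Psi_i:U_i\to W\times F_i$ supplied by the Theorem~B construction, together with $\mathrm{diam}(F_i)\to 0$. This is valid, but your stated justification (``$\Phi$-GHA control along the base together with uniform biH\"older control along the fibers'') is not quite the right mechanism: the GHA property alone gives no small-scale control, and what you actually need is that the \emph{full} product map $\Psi_i$ is biH\"older with uniform constants, so that subtracting off the shrinking fiber contribution leaves a biH\"older lower bound for the base projection in the limit. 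The paper takes a more direct and independent route for part~(1): it builds a harmonic $(k,\epsilon)$-splitting map $f$ on $B_{r_0}(q)\subset X$, lifts it through the approximating sequence to the non-collapsed relative covers via the covering lemma (Lemma~\ref{lift}), extends there to an $(N,\Phi(\epsilon))$-splitting map, applies the non-collapsed transformation theorem upstairs, and then restricts the resulting lower-triangular matrices to their upper-left $k\times k$ blocks to obtain transformations $T_s$ for $f$ itself on $X$ (Lemma~\ref{trans}). Canonical Reifenberg then gives the biH\"older chart directly on $X$, with no appeal to the fibration. Your approach has the virtue of deriving both conclusions from a single construction; the paper's approach isolates a clean statement---the geometric transformation theorem holds at $k$-regular points of such collapsed limits---which is of independent interest and does not require the full nilpotent/fibration machinery.
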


\begin{remark}
It was shown by Rong that in the setting of Theorem \ref{T3}, the Hausdorff dimension of $(X,d)$ is equal to $k$ and any tangent cone of $X$ is a metric cone. 
\end{remark}

Now we sketch our proof of Theorem A. We proceed by contradiction. Suppose that there exists a sequence of RCD$(-(N-1),N)$ spaces $(X_i,d_i,\mathcal{H}^N,p_i)$ with non-collapsing universal covers and $\mathrm{diam}(X_i) \to 0$, while none of $X_i$ is biH\"{o}lder to an infranil-manifold. 
By the generalized Margulis lemma, there is a nilpotent subgroup $G_i$
of $G_i'=\pi_1(X_i,p_i)$ with index $\le C(n)$. We may blow up the metric slowly at a regular point if necessary, and assume that the following diagram holds:
\begin{center}
		$\begin{CD}
			(\widetilde{X}_i,\tilde{p}_i,G_i, G_i') @>eGH>> (\mathbb{R}^N,\tilde{p},G,G')\\
			@VV\pi V @VV\pi V\\
			\widetilde{X}_i/G_i @>GH>> \mathrm{pt}.
		\end{CD}$
	\end{center}
	
We can show that $G$ actions are free translation actions, which can be identified as $\mathbb{R}^N$.
Therefore by the structure theorem of approximate groups \cite{BGT,Zamora2020}, a neighborhood of the identity in $G_i$ forms a nilprogression. Roughly speaking, a nilprogression is a subset of a lattice in a simply connected nilpotent Lie group, and the nilprogression contains all generators and relations of the lattice. 

Since the diameter of $\widetilde{X}_i/G_i$ converges to $0$, $G_i$ is determined by the neighborhood of the identity. This small nrighborhood contains all generators and relations of $G_i$. Thus $G_i$ must be isomorphic to a lattice (the groupfication of the nilprogression) in a simply connected nilpotent $N$-dim Lie group $\mathcal{N}_i$, where the Lie algebra structure of $\mathcal{N}_i$ converges to the one of $\mathbb{R}^N$. Hence we can endow $\mathcal{N}_i$ with a left-invariant metric, which pointed converges to the flat metric on $\mathbb{R}^N$ in the $C^4$-sense. 

Next we identify $G_i'$ as a subgroup in $\mathcal{N}_i \rtimes \text{Aut}(\mathcal{N}_i)$ by the rigidity. $(\widetilde{X}_i,\tilde{p}_i,G_i')$ is eGH-close to $(\mathcal{N}_i,e,G_i')$ on the $\frac{1}{\epsilon}$-ball of the base point for some fixed small $\epsilon > 0$. Thus by an extension lemma, we can construct a global map
$h:\widetilde{X}_i \to \mathcal{N}_i,$
which is almost $G_i'$-equivariant and a GHA on any $\frac{1}{\epsilon}$-ball in $\widetilde{X}_i$. Specifically, for any $\tilde{x} \in \widetilde{X}_i$, $h: B_r (\tilde{x}) \to \mathcal{N}_i $ is a $\epsilon$-GHA to its image; $d(h(g\tilde{x}),gh(\tilde{x})) \le \epsilon$ for any $g \in G_i'$ and $\tilde{x} \in \widetilde{X}_i$.

Then we can find a normal subgroup $G_i''$ of $G_i'$ with finite index, so that $G_i'' \subset G_i$ and $G_i'' \cap B_{\frac{1}{\epsilon}}(e) = \emptyset$ where $e \in \mathcal{N}_i$. Thus $(\widetilde{X}_i/G_i'',G_i'/G_i'')$ is eGH close to $(\mathcal{N}_i/G_i'',G_i'/G_i'')$ on any $\frac{1}{\epsilon}$-ball. Then we apply an averaging technique (see Theorem \ref{inv}) to obtain a $G_i'/G_i''$-equivariant map $f_{G_i'/G_i''}$ from $\widetilde{X}_i/G_i''$ to $\mathcal{N}_i/G_i''$, which is locally almost $N$-splitting. By the canonical Reifenberg method from \cite{CheegerNaberJiang2021} and \cite{HondaPeng2024}, $f_{G_i'/G_i''}$ must be a biH\"{o}der homeomorphism. Since $f_{G_i'/G_i''}$ is $G_i'/G_i''$-equivariant, it follows that $X_i = \widetilde{X}_i/G_i'$ is biH\"{o}lder homeomorphic to the infranil-manifold $\mathcal{N}_i/G_i'$. Then we finish the proof of Theorem A.

The smooth fibration map $f$ in Theorem \ref{fib} is constructed by averaging and gluing some local almost $k$-splitting maps.  Thus $f$ is a smooth GHA. Then Huang compared $f$ to a linear average and showed that $df$ is non-degenerate using the local bounded covering geometry. Therefore $f$ is a fibration map by the implicit function theorem. 

In the non-smooth setting in Theorem B, we can construct a GHA $f_i:X_i \to K$ which is a locally almost $k$-splitting map. Although there is no implicit function theorem in the non-smooth setting, we can prove that $f_i$ is a fibration with infranil-manifold fiber by applying the proof in Theorem A to the collapsing directions of $X_i$. The structure group can be affine because the nilpotent structure of the fiber is independent of the choice of the base point; see also \cite{Rong2022}.   

\begin{remark}
Pointed Gromov-Hausdorff approximations typically cannot provide information about the global topology of non-compact spaces, as they do not capture the geometry outside of a large ball. This limitation is why we consider a global map $h$ which is almost equivariant and acts as a GHA on any $\frac{1}{\epsilon}$-ball. 
\end{remark}

{\bf Acknowledgments.}
The author would like to thank  Xiaochun Rong, Vitali Kapovitch, Jiayin Pan, Shicheng Xu, Xingyu Zhu and Sergio Zamora for helpful discussions.
\section{Preliminaries}
\subsection{Equivariant Gromov-Hausdorff convergence and isometry group on a Ricci limit space}

We review the notion of equivariant Gromov-Hausdorff convergence introduced by Fukaya and Yamaguchi \cite{Fukaya1986,FukayaYamaguchi1992}.

Let $(X,p)$ and $(Y,q)$ be two pointed metric spaces. Let $H$ and $K$ be closed subgroups of $\mathrm{Isom}(X)$ and $\mathrm{Isom}(Y)$, respectively. For any $r>0$, define the sets
$$H(p,r)=\{h \in H | d(hp,p) \le r \}, \ K(q,r)=\{k \in K | d(kq,q) \le r \}.$$

For $\epsilon>0$, a pointed $\epsilon$-equivariant Gromov-Hausdorff approximation (or simply an $\epsilon$-eGHA) is a triple of maps $(f,\phi,\psi)$ where:
$$f:B_{\frac{1}{\epsilon}}(p) \to B_{\frac{1}{\epsilon}+ \epsilon}(q),\quad \phi:H(p,\frac{1}{\epsilon}) \to K(q,\frac{1}{\epsilon}),\quad \psi:K(q,\frac{1}{\epsilon}) \to H(p,\frac{1}{\epsilon})$$ 
satisfying the following conditions:\\
(1) $f(p)=q$, $f(B_{\frac{1}{\epsilon}}(p))$ is $2\epsilon$-dense in $B_{\frac{1}{\epsilon} + \epsilon}(q)$ and $|d(f(x_1),f(x_2))-d(x_1,x_2)|\le\epsilon$ for all $x_1,x_2\in B_{\frac{1}{\epsilon}}(p)$;\\
(2) $d(\phi(h)f(x),f(hx)) < \epsilon$ for all $h \in H(\frac{1}{\epsilon})$ and $x \in B_{\frac{1}{\epsilon}}(p)$; \\
(3) $d(kf(x),f(\psi(k)x))<\epsilon$ for all $k \in K(\frac{1}{\epsilon})$ and $x \in B_{\frac{1}{\epsilon}}(p)$.

The equivariant Gromov-Hausdorff(eGH) distance $d_{eGH}((X_i,p_i,G_i),(X,p,G))$ is defined as the infimum of $\epsilon$ so that there exists a $\epsilon$-eGHA.
A sequence of metric space with isometric actions $(X_i,p_i,G_i)$ converges to a limit space $(X,p,G)$, if $d_{eGH}((X_i,p_i,G_i),(X,p,G))\to 0$.

Given a Gromov-Hausdorff approximation(GHA) $f$ as in condition (1) above, we can construct an admissible metric on the disjoint union $B_{\frac{1}{\epsilon}}(p) \sqcup B_{\frac{1}{\epsilon}}(q)$ so that 
$$ B_{\frac{1}{\epsilon}}(p) \hookrightarrow B_{\frac{1}{\epsilon}}(p) \sqcup B_{\frac{1}{\epsilon}}(q), B_{\frac{1}{\epsilon}}(q) \hookrightarrow B_{\frac{1}{\epsilon}}(p) \sqcup B_{\frac{1}{\epsilon}}(q)$$
are isometric embedding and for any $x \in B_{\frac{1}{\epsilon}}(p)$,$d(x,f(x)) \le 2\epsilon$.
We always assume such an admissible metric whenever Gromov-Hausdorff distance between two metric spaces are small. 

We have the following pre-compactness theorem for equivariant Gromov-Hausdorff convergence \cite{Fukaya1986,FukayaYamaguchi1992}.
\begin{theorem}[\cite{Fukaya1986,FukayaYamaguchi1992}] \label{eGH}
	Let $(X_i,p_i)$ be a sequence of metric spaces converging to a limit space $(X,p)$ in the pointed Gromov-Hausdorff sense. For each $i$, let $G_i$ be a closed subgroup of $\mathrm{Isom}(X_i)$, the isometry group of $X_i$. Then passing to a subsequence if necessary,
	$$(X_i,p_i,G_i)\overset{eGH}\longrightarrow (X,p,G),$$
	where $G$ is a closed subgroup of $\mathrm{Isom}(X)$. Moreover, the quotient spaces $(X_i/G_i, \bar{p}_i)$ pointed Gromov-Hausdorff converge to $(X/G,p)$. 
\end{theorem}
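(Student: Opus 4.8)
The plan is to establish the three assertions in turn: existence of the limit action $(X,p,G)$ along a subsequence; that $G$ is a closed subgroup of $\mathrm{Isom}(X)$ and the convergence is equivariant; and convergence of the quotients. Throughout I use the pointed Gromov--Hausdorff convergence $X_i\to X$ to fix, for each $R>0$ and each large $i$, a GHA $f_{i,R}\colon B_R(p_i)\to B_R(p)$ with error $\delta_{i,R}\to 0$ as $i\to\infty$, together with the associated admissible metric on $B_R(p_i)\sqcup B_R(p)$ described before the statement. I will assume the spaces are proper, which holds in all applications below (RCD spaces are proper); the general case is handled identically after replacing ``uniform convergence on balls'' by ``pointwise convergence on a fixed countable dense subset'' in the diagonal argument.

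\emph{Step 1 (transporting group elements and extracting a limit).} Fix $r>0$. For $g\in G_i(p_i,r)$ and $R\gg r$, the conjugated map $\hat g:=f_{i,R}\circ g\circ (f_{i,R})^{-1}$ is, up to an error that tends to $0$ with $i$, a self-map of $B_{R-r}(p)$ which preserves distances and satisfies $d(\hat g(p),p)\le r+o(1)$. The collection of such almost-isometries of a fixed ball with uniformly bounded displacement of $p$ is precompact in the topology of uniform convergence on compact sets (Arzel\`a--Ascoli: they are $1$-Lipschitz up to an $o(1)$ term and land in a fixed bounded set). Choosing an exhausting sequence of radii $R_k\nearrow\infty$ and $r_k\nearrow\infty$, a diagonal argument over $i$ produces a subsequence and, for each $g$ belonging to a countable dense portion of $\bigcup_r G_i(p_i,r)$, a limiting genuine isometry of $X$ --- genuine because as $i\to\infty$ the domains exhaust $X$ and the errors vanish. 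Let $G\subseteq\mathrm{Isom}(X)$ be the closure, in the compact--open topology, of the set of all such limit isometries.

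\emph{Step 2 ($G$ is a closed subgroup; the convergence is equivariant).} Closedness is by construction, and $\mathrm{Isom}(X)$ is a locally compact Polish group since $X$ is proper, so ``closed subgroup'' is meaningful. If $g,g'\in G_i(p_i,r)$ then $gg'\in G_i(p_i,2r)$ and $g^{-1}\in G_i(p_i,r)$, and the transport of Step 1 is compatible with composition and inversion up to errors vanishing with $i$; hence the limit set is stable under composition and inversion, and so is its closure $G$. To produce the maps $(f,\phi,\psi)$ of an $\epsilon$-eGHA for large $i$, take $f=f_{i,1/\epsilon}$; define $\phi(g)\in G(p,1/\epsilon)$ to be a limit isometry close to $\hat g$ for $g\in G_i(p_i,1/\epsilon)$, and define $\psi$ in the reverse direction, using that every element of $G(p,1/\epsilon)$ is, by the construction and a further diagonal extraction, approximated by transported elements of $G_i(p_i,1/\epsilon+o(1))$. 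Conditions (1)--(3) in the definition of an $\epsilon$-eGHA then follow from the estimate $d(\hat g(x),f_{i,R}(gx))\le\delta_{i,R}$ and its counterpart for $\psi$.

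\emph{Step 3 (convergence of the quotients).} Define $\bar f_i\colon B_R(\bar p_i)\subset X_i/G_i\to X/G$ by $\bar f_i([x])=[f_{i,R'}(x)]$ for a representative $x\in B_{R'}(p_i)$ with $R'$ slightly larger than $R$. Almost-equivariance (condition (2)) shows this is well defined up to an error tending to $0$. For the metric estimate, observe that $d_{X/G}([x],[y])=\inf_{g\in G}d(x,gy)$, and since $X$ is proper this infimum is attained by some $g\in G(p,C)$ with $C$ depending only on $R$ and $d(x,p),d(y,p)$; the same holds downstairs for $X_i/G_i$. Hence the quotient distances are computed inside a fixed ball, where $f_{i,R'}$ together with $\phi,\psi$ controls everything, giving $|d_{X_i/G_i}(\bar x,\bar y)-d_{X/G}(\bar f_i(\bar x),\bar f_i(\bar y))|\to 0$ uniformly on bounded sets; density of the image is inherited from that of $f_{i,R'}$. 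This yields $(X_i/G_i,\bar p_i)\overset{GH}\longrightarrow(X/G,p)$.

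I expect the main obstacle to be Steps 1--2, specifically verifying that the limiting collection of partial isometries genuinely assembles into a closed \emph{subgroup} of $\mathrm{Isom}(X)$ rather than merely a closed subset of approximate isometries: one must run the diagonal argument carefully so that composition, inversion, and the compact--open topology on $\mathrm{Isom}(X)$ all pass to the limit simultaneously and consistently. Once this bookkeeping is in place, Step 3 is routine given properness of $X$.
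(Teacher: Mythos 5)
The paper does not prove this statement at all: it is quoted as a known precompactness theorem and attributed to Fukaya and Fukaya--Yamaguchi, so there is no in-paper argument to compare against. Your outline is essentially the standard proof from those references (transport isometries through Gromov--Hausdorff approximations, extract limits by an Arzel\`a--Ascoli/diagonal argument on exhausting balls, verify group structure and closedness, then push the approximations to the quotients using that the quotient distance $\inf_{g}d(x,gy)$ is realized by elements of bounded displacement), and in that sense it is the right route.

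One point needs to be tightened, and it is exactly the one you flag. The device ``extract limits for a countable dense portion of $\bigcup_r G_i(p_i,r)$ and let $G$ be the closure'' is not well formed as stated: an element $g\in G_i$ lives at a single index $i$ and has no limit as $i\to\infty$, and if $G$ is generated only by limits of countably many \emph{chosen} sequences, then condition (2) of the $\epsilon$-eGHA (every $h\in G_i(p_i,1/\epsilon)$ must be $\epsilon$-close to an element of $G$) does not follow: a rogue sequence $h_i$ has subsequential limits, but limits along sub-subsequences need not belong to the $G$ you fixed. The standard repair is to pass to a subsequence along which the \emph{sets} of transported maps, $\{\hat g : g\in G_i(p_i,R_k)\}$, converge in the Hausdorff sense inside the compact space of almost-isometries of $B_{R_k}(p)$ with bounded displacement (equivalently, run your diagonal argument over a fixed finite net of that compact space for each $k$, selecting a witness in $G_i$ near each net point whenever one exists), and to define $G$ from these limit sets; both inclusions (2) and (3) then hold by construction, and the group and closedness properties follow as you describe. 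Two smaller remarks: $f_{i,R}^{-1}$ should be an approximate inverse of the GHA, which exists but is not literally an inverse; and surjectivity of the limiting distance-preserving maps (so that they are genuine elements of $\mathrm{Isom}(X)$) should be recorded as a consequence of the simultaneous convergence of $g_i^{-1}$, i.e.\ of your stability under inversion. With these adjustments your Step 3 goes through as written.
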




\subsection{Geometric Theory of RCD$(K,N)$ spaces}

In this subsection, we review the structure theory of RCD$(K,N)$ spaces. We assume that the reader is familiar with the basic notions of RCD spaces. A measured metric RCD$(K,N)$ space $(X,d,\mathfrak{m})$ is non-collapsed if $N \in \mathbb{N}$ and $\mathfrak{m}= \mathcal{H}^N$.

We begin by defining regular points on an RCD space. We use the notation $0^k$ to refer the origin in $\mathbb{R}^k$.

\begin{defn}
Let $(X,d,\mathfrak{m})$ be an RCD$(K,N)$ space. Given $\epsilon > 0$, $r > 0$, and $k \in \mathbb{N}$, we define
$$\mathcal{R}_{\epsilon,r}^k(X)=  \{ x \in X : d_{GH}(B_s(x),B_s(0^k)) < \epsilon s, \forall \ 0 < s < 2r\},$$
where $0^k \in \mathbb{R}^k$, 
and 
$$\mathcal{R}_{\epsilon}^k (X) =\bigcup_{r>0}  \mathcal{R}_{\epsilon,r}^k(X), \  \mathcal{R}^k(X)=\bigcap_{\epsilon>0} \mathcal{R}_{\epsilon}^k (X).$$
\end{defn}  
For any RCD$(K,N)$ space $(X,d,\mathfrak{m})$, there exists $k \le N$ s.t. $\mathfrak{m}(X- \mathcal{R}^k(X))=0$ \cite{BS2023}. We call this $k$ the rectifiable dimension of $(X,d,\mathfrak{m})$. 
  
Then we review the theory of almost splitting for RCD spaces, following the notations in \cite{BNS}.
\begin{defn}
Let $(X,d,\mathfrak{m})$ be a RCD$(K,N)$ space for some $K \in \mathbb{R}$ and $1 \le N < \infty$. Let $p \in X$ and $s > 0$. A map $u: B_{2s}(p) \to \mathbb{R}^k$ is a $(k,\delta)$-splitting map if it belongs to the domain of the local Laplacian on $B_{2s}(p)$, and

$$|\nabla u|_{|B_s(p)} \le C(N),$$
$$\sum_{a,b=1}^k \fint_{B_s(p))} |\langle \nabla u^a,\nabla u^b \rangle - \delta_a^{b}| \le \delta^2,$$ $$\sum_{a=1}^k \fint_{B_s(p))} s^2|\mathrm{Hess}(u^a)|^2 \le \delta^2.$$

\end{defn}  
Such a map $u$ is sometimes referred as almost $k$-splitting if it is $(k,\delta)$-splitting for some sufficiently small $\delta>0$. In the literature, it is often assumed that an almost $k$-splitting map is harmonic. However, it is convenient to drop the harmonicity assumption in this paper. By Lemma 4.4 in \cite{HondaPeng2024}, if we further assume that $\fint_{B_{2s}(p)} |\nabla \Delta u| \le \delta$, then $ |\nabla u|_{|B_s(p)} \le 1+\Phi(\delta|N,k)$ where $\Phi(\delta|N,k) \to 0$ as $\delta \to 0$.

It is a classical result that the existence of an almost splitting function is equivalent to pmGH closeness to a space that splits off a Euclidean factor, see \cite{BNS,CheegerColding1997,CheegerColding2000a,CheegerNaber2015}.

We will use the fact that isometry group of a $\mathrm{RCD}(K,N)$ space is a Lie group \cite{Sosa2018,GuSan2019}.

By \cite{DSZZ2023}, the generalized Margulis lemma holds for any RCD$(K,N)$ space, see also \cite{KapovitchWilking2011} for the manifolds with Ricci curvature bounded from below. 
\begin{theorem}[Generalized Margulis lemma, \cite{DSZZ2023}]\label{KW}
For any $K \in \mathbb{R}$ and $N \ge 1$, there exists $C$ and $\epsilon_0 > 0$ such that for any RCD$(K,N)$ space $(X,d, \mathfrak{m},p)$ with rectifiable dimension $k$, the image of the natural homomorphism
$$ \pi_1(B_{\epsilon_0}(p), p) \to \pi_1(B_1(p), p)$$
 contains a normal nilpotent subgroup of index $\le C$. Moreover, this nilpotent subgroup has a nilpotent basis
of length at most $k$. 
\end{theorem}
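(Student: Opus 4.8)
The plan is to adapt the Kapovitch--Wilking proof of the generalized Margulis lemma \cite{KapovitchWilking2011} to the RCD setting, substituting the smooth Cheeger--Colding--Naber inputs by the structure theory recalled above: equivariant precompactness (Theorem \ref{eGH}), the fact that isometry groups of RCD$(K,N)$ spaces are Lie groups, the splitting theorem for RCD spaces, and semilocal simple connectedness of RCD$(K,N)$ spaces (so that balls carry universal covers). Since the argument is inherently inductive, one proves by induction on $N$---equivalently on the rectifiable dimension---a stronger statement in which a group of isometries that moves the base point a controlled amount and is generated by elements of small displacement is shown to be ``almost nilpotent'' with uniform constants. Throughout, for $(X,d,\mathfrak m,p)$ we let $\pi_1(B_1(p),p)$ act on the universal cover $\widetilde{B_1(p)}$, fix a lift $\tilde p$ of $p$, and note that the image of $\pi_1(B_\epsilon(p),p)\to\pi_1(B_1(p),p)$ is the subgroup $\Gamma_\epsilon$ generated by $\{g : d(g\tilde p,\tilde p)<2\epsilon\}$; thus it suffices to show that, for all $\epsilon$ below a threshold depending only on $N$ (after normalizing $K$), $\Gamma_\epsilon$ contains a normal nilpotent subgroup of index $\le C(N)$ with nilpotent basis of length $\le k$.

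Suppose no such $C$ and $\epsilon_0$ exist; then there are RCD$(K,N)$ spaces $(X_i,d_i,\mathfrak m_i,p_i)$ and scales $\epsilon_i\to 0$ with $\Gamma_i:=\Gamma_{\epsilon_i}(p_i)$ having no such subgroup. The central device is a \emph{critical rescaling}: replace $d_i$ by $\lambda_i d_i$ with $\lambda_i\to\infty$ chosen so that, on the rescaled universal cover, the group generated by loops of rescaled length $\le 1$ moves $\tilde p_i$ a definite amount at unit scale while creating nothing new below an auxiliary scale $\delta$. This is where one exploits that short commutators are quadratically shorter than their constituents, which is the seed of nilpotency. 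After rescaling, the curvature lower bound tends to $0$, so by Theorem \ref{eGH} and stability of the RCD$(K,N)$ condition under pointed measured Gromov--Hausdorff convergence we may pass to
$$\bigl(\widetilde{B_R(\tilde p_i)},\,\tilde p_i,\,G_i\bigr)\ \overset{eGH}{\longrightarrow}\ (Y,y,G),$$
with $Y$ an RCD$(0,N)$ space of rectifiable dimension $\le k$ and $G\le\mathrm{Isom}(Y)$ closed, hence a Lie group; a no-small-subgroups argument in the spirit of Kapovitch--Wilking (using the Lie group theorem for isometry groups) rules out degenerate limits and transfers the local structure of $G$ to $G_i$ for large $i$.

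One then studies the $G$-action on the nonnegatively curved space $Y$ and iterates. Whenever an orbit is noncompact, the splitting theorem yields $Y=\mathbb{R}\times Y'$ with a one-parameter (or infinite cyclic) subgroup translating the line and $Y'$ an RCD$(0,N-1)$ space of strictly smaller rectifiable dimension, to which the induction hypothesis applies. Iterating the rescaling-and-splitting along all orbit directions produces a tower of extensions exhibiting $G$---and hence $\Gamma_i$ up to bounded index---as nilpotent, with one generator per split line and therefore at most $k$ generators. The remaining discrete directions, and the ``long'' elements of $\Gamma_i$ that escape the nilpotent core, are bounded in number by a packing/volume-comparison argument (Bishop--Gromov on the universal cover), which gives the index bound $C(N)$; passing finally to the intersection of the conjugates of this subgroup makes it normal without enlarging the nilpotent basis. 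Unwinding the rescaling and the induction contradicts the choice of $(X_i,p_i)$. (Alternatively, the last bookkeeping step can be routed through the Breuillard--Green--Tao structure theorem for approximate groups \cite{BGT,Zamora2020}, the route closest to \cite{ZamoraZhu2024}.)

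I expect the main obstacle to be producing the limit datum with enough structure and carrying the uniform constants through the induction. Specifically: (i) one must know that balls in RCD$(K,N)$ spaces are semilocally simply connected and that the deck action on the (incomplete) universal cover behaves well under equivariant Gromov--Hausdorff convergence---otherwise one works instead with Sormani--Wei $\delta$-covers; (ii) the critical rescaling and the no-small-subgroups step must be carried out without the Colding--Naber H\"older continuity of tangent cones used in the manifold proof, relying instead on the RCD regularity theory together with the Lie group theorem; and (iii) the whole scheme closes only if the ``gap lemma'' bounding the number of long generators is established simultaneously with the main statement, exactly as in Kapovitch--Wilking. Getting the bound by the rectifiable dimension $k$ rather than by $N$ is the most delicate bookkeeping point, since $k$ need not be monotone under arbitrary GH limits; one must check that the rescaling limits arising here never raise it and that each split line genuinely consumes one unit of dimension.
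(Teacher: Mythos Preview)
This theorem is not proved in the paper at all: it is stated in the preliminaries as a cited result from \cite{DSZZ2023} (with \cite{KapovitchWilking2011} referenced for the smooth case) and used as a black box throughout. There is therefore no ``paper's own proof'' to compare your proposal against.

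Your sketch is a reasonable outline of the Kapovitch--Wilking strategy transported to the RCD setting, and is in spirit the route taken in \cite{DSZZ2023}. If you intend to actually write out such a proof, be aware that several of the points you flag as obstacles are genuine and nontrivial in the RCD category: the precompactness of local universal covers is delicate (this paper itself invokes Xu's relative-cover precompactness, Theorem~\ref{precompact}, precisely because naive local universal covers need not be precompact), and the bookkeeping to get the bound by the rectifiable dimension $k$ rather than $N$ requires care. But for the purposes of this paper the correct move is simply to cite \cite{DSZZ2023}.
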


We now recall the volume convergence theorem for  non-collapsed RCD$(K,N)$ spaces.
\begin{theorem}[Volume convergence, \cite{DeGigli2019}]
Assume that $(X_i,d_i,\mathcal{H}^N,p_i)$ is a sequence of non-collapsed RCD$(K,N)$ spaces which pointed measured GH converge to $(X,d,\mathfrak{m},p)$. If $\mathcal{H}^N(B_1(p_i)) \ge v > 0$, then $\mathfrak{m}= \mathcal{H}^N$ and $\forall r > 0$, $\mathcal{H}^N(B_r(p_i))$ converges to $\mathcal{H}^N(B_r(p))$. 
\end{theorem}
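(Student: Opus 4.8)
The plan is to read uniform two‑sided volume bounds in the limit off the Bishop--Gromov inequality, use them to force the rectifiable dimension of $(X,d,\mathfrak{m})$ to equal $N$, and then identify $\mathfrak{m}$ with $\mathcal{H}^N$ by pairing an upper bound for the limiting density (the Bishop inequality on non‑collapsed RCD spaces) against a lower bound coming from lower semicontinuity of $\mathcal{H}^N$ along the convergence. Write $v_{K,N}(r)$ for the volume of an $r$‑ball in the $N$‑dimensional model space of Ricci curvature $K$, so that $v_{K,N}(r)/(\omega_N r^N)\to1$ as $r\to0$, where $\omega_N=\mathcal{L}^N(B_1(0^N))$.

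First I would fix the limit and record volume bounds. By stability of the $\mathrm{RCD}$ condition, $(X,d,\mathfrak{m})$ is $\mathrm{RCD}(K,N)$, and by definition of pointed‑measured Gromov--Hausdorff convergence the measures $\mathcal{H}^N$ on the $X_i$ converge weakly to $\mathfrak{m}$. Fix $R>0$. For $x_i\in B_R(p_i)$ one has $B_1(p_i)\subset B_{R+1}(x_i)$, so two applications of Bishop--Gromov give, for $0<r\le 1$, inequalities $c(R,K,N,v)\,\omega_N r^N\le\mathcal{H}^N(B_r(x_i))\le v_{K,N}(r)$, the upper estimate being the Bishop inequality, valid at every point of a non‑collapsed $\mathrm{RCD}$ space. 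On $(X,d,\mathfrak{m})$ the function $r\mapsto\mathfrak{m}(B_r(x))$ is non‑decreasing while $r\mapsto\mathfrak{m}(B_r(x))/v_{K,N}(r)$ is non‑increasing (Bishop--Gromov), hence $r\mapsto\mathfrak{m}(B_r(x))$ is continuous and $\mathfrak{m}(\partial B_r(x))=0$ for all $r$; weak convergence therefore yields $\mathfrak{m}(B_r(x))=\lim_i\mathcal{H}^N(B_r(x_i))$ for every $r$ and every $x=\lim_i x_i$. Consequently the two‑sided bound passes to $\mathfrak{m}$, so the Bishop--Gromov density $\vartheta(x):=\lim_{r\to0}\mathfrak{m}(B_r(x))/(\omega_N r^N)$ exists, is finite and positive, and satisfies $\vartheta(x)\le1$ everywhere.

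Next I would pin down the dimension and the measure. Let $k\le N$ be the rectifiable dimension of $(X,d,\mathfrak{m})$; at $\mathfrak{m}$‑a.e.\ $x\in\mathcal{R}^k(X)$ the tangent cone is $\mathbb{R}^k$ and $\mathfrak{m}(B_r(x))\sim c(x)\,r^k$ as $r\to0$ with $c(x)\in(0,\infty)$, which would force $\mathfrak{m}(B_r(x))/(\omega_N r^N)\to\infty$ if $k<N$, contradicting $\vartheta(x)<\infty$. Hence $k=N$, so $(X,d,\mathcal{H}^N)$ is a non‑collapsed $\mathrm{RCD}(K,N)$ space ($\mathcal{H}^N$ being locally finite by the upper bound above), and by the non‑collapsed structure theory of \cite{DeGigli2019} one has $\mathfrak{m}=\vartheta\,\mathcal{H}^N$ (with $\vartheta$ the Radon--Nikodym density) together with the fact that the $\mathcal{H}^N$‑density of $\mathcal{H}^N$ is $1$ at $\mathcal{H}^N$‑a.e.\ regular point. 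Combining $\mathfrak{m}=\vartheta\mathcal{H}^N$ with $\vartheta\le1$ gives $\mathfrak{m}\le\mathcal{H}^N$. For the opposite inequality I would invoke lower semicontinuity of the Hausdorff measure along this (non‑collapsed) convergence: $\mathcal{H}^N(B_r(x))\le\liminf_i\mathcal{H}^N(B_r(x_i))=\mathfrak{m}(B_r(x))$, hence $\mathcal{H}^N\le\mathfrak{m}$. Therefore $\mathfrak{m}=\mathcal{H}^N$, and finally $\mathcal{H}^N(B_r(p))=\mathfrak{m}(B_r(p))=\lim_i\mathcal{H}^N(B_r(p_i))$ for every $r>0$, which is the claimed volume convergence.

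I expect the main obstacle to be that two of the ingredients above are themselves the substantial content being imported from \cite{DeGigli2019} and the surrounding non‑collapsed structure theory: the lower semicontinuity of $\mathcal{H}^N$ along Gromov--Hausdorff convergence, and the statement that rectifiable dimension $N$ makes $\mathfrak{m}$ equal to its Bishop--Gromov density times $\mathcal{H}^N$ with that density equal to $1$ at regular points. Once these are granted, the remaining work --- the Bishop--Gromov bookkeeping, the continuity of $r\mapsto\mathfrak{m}(B_r(x))$, and the passage of weak convergence to genuine equalities of ball volumes --- is routine.
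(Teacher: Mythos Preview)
The paper does not prove this theorem: it is stated in the preliminaries as a background result, attributed to \cite{DeGigli2019}, and used later without further justification. There is thus no proof in the paper to compare your proposal against.

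That said, your outline is a faithful reconstruction of the De Philippis--Gigli argument, and you are candid about where the real content lies: the lower semicontinuity of $\mathcal{H}^N$ under Gromov--Hausdorff convergence and the identification $\mathfrak{m}=\vartheta\,\mathcal{H}^N$ with $\vartheta=1$ at regular points are precisely the theorems of \cite{DeGigli2019} being cited, so invoking them inside a proof of this very statement is circular. The parts you label routine (Bishop--Gromov bookkeeping, continuity of $r\mapsto\mathfrak{m}(B_r(x))$, passage from weak convergence to convergence of ball volumes via $\mathfrak{m}(\partial B_r(x))=0$) are indeed correct and standard. If you wanted a self-contained argument you would need to supply the lower semicontinuity of $\mathcal{H}^N$ directly---this is the genuinely delicate step, typically done via almost-rigidity of Bishop--Gromov combined with a covering argument---but for the purposes of this paper the citation is the intended proof.
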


We next state the transformation theorem in \cite{BNS}, see also \cite{CheegerNaberJiang2021,CheegerNaber2015,HondaPeng2024}.
\begin{theorem}[Transformation, \cite{BNS}]\label{transformation}
Let $1 \le N < \infty$. For any $\delta > 0$ there exists $\epsilon(N,\delta)>0$ such that for any $\epsilon < \epsilon (N,\delta)$ and  any $x$ in a RCD$(-\epsilon^2(N-1),N)$ space $(X,d,\mathcal{H}^N)$, the following holds. If $B_s(x)$ is an $(N,\epsilon^2)$-symmetric ball for any $r_0 \le s \le 1$ and $u:B_2(x) \to \mathbb{R}^N$ is a $(N,\epsilon)$-splitting map, then for each scale $r_0 \le s \le 1$ there exists an $N \times N$ lower triangular matrix $T_s$ such that \\
(1) $T_s u: B_s(x) \to \mathbb{R}^N$ is an $(N,\delta)$-splitting map on $B_s(x)$; \\
(2) $\fint_{B_s(x)} \nabla (T_su)^a \cdot \nabla(T_su)^b d\mathcal{H}^N = \delta_{a}^b$; \\
(3) $|T_s \circ T_{2s}^{-1} - \mathrm{Id}| \le \delta$.
\end{theorem}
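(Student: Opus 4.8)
The plan is to produce the matrices $T_s$ by renormalizing, at every dyadic scale, the gradient Gram matrix of $u$, and then to check the three conclusions by elementary linear algebra together with the rigidity forced by the $(N,\epsilon^2)$-symmetry assumption.

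For $s\in[r_0,1]$ write
$$
A_s := \Bigl(\,\fint_{B_s(x)} \langle \nabla u^a, \nabla u^b\rangle \, d\mathcal{H}^N\Bigr)_{a,b=1}^N ,
$$
a symmetric positive semidefinite matrix with $|A_1 - \mathrm{Id}| \le \Psi(\epsilon\mid N)$ by hypothesis. More generally, if $v : B_{2s}(x) \to \mathbb{R}^N$ is any $(N,\eta)$-splitting map, the Bochner/Poincar\'e oscillation estimate
$$
\fint_{B_{\sigma}(x)} \Bigl|\,|\nabla v^a \pm \nabla v^b|^2 - \fint_{B_\sigma(x)} |\nabla v^a \pm \nabla v^b|^2 \Bigr| \le C(N)\,\sigma\, \Bigl(\fint_{B_\sigma}|\mathrm{Hess}\, v|^2\Bigr)^{1/2}\Bigl(\fint_{B_\sigma}|\nabla v|^2\Bigr)^{1/2}
$$
(valid for $\sigma\le 2s$) shows that the Gram matrix of $v$ is almost constant in $\sigma$ as long as the scale-invariant Hessian energy of $v$ is small. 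I define $T_s$ by downward induction on $s=2^{-k}$: let $T_1$ be the unique lower triangular matrix with positive diagonal and $T_1 A_1 T_1^\top = \mathrm{Id}$, and given $T_{2s}$ set $T_s := L\,T_{2s}$, where $L$ is the unique such matrix normalizing the Gram matrix of $T_{2s}u$ on $B_s(x)$ to the identity; for non-dyadic $s$ one takes $T_s$ associated to the nearest dyadic scale, which by doubling only costs a factor depending on $N$. Conclusion (2) then holds by construction.

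The heart of the argument is the claim that at every step $T_{2s}u$ is still an $(N,\Psi(\epsilon\mid N))$-splitting map on $B_s(x)$ --- equivalently, that $|T_s|\le C(N)$ for all $s\in[r_0,1]$ and that the Hessian energy $s^2\fint_{B_s(x)}|\mathrm{Hess}(T_s u)|^2$ stays below $\Psi(\epsilon\mid N)$ at every scale. This is where the $(N,\epsilon^2)$-symmetry is essential: passing the Hessian bound from $B_2(x)$ down to $B_s(x)$ by volume comparison degrades it by a factor $s^{2-N}$ and is useless for $N\ge 3$, so one must instead argue that the Hessian energy of the renormalized map \emph{cannot inflate} as one descends a dyadic scale. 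The standard route (following \cite{CheegerNaberJiang2021}) is the summed estimate
$$
\sum_{r_0 \le 2^{-k}\le 1}\; (2^{-k})^2\fint_{B_{2^{-k}}(x)}|\mathrm{Hess}(T_{2^{-k}}u)|^2\, d\mathcal{H}^N \le \Psi(\epsilon\mid N),
$$
obtained by bounding each term by the defect of an almost-monotone quantity (the normalized volume ratio, or the Dirichlet-energy frequency of the renormalized splitting map) which is $\epsilon^2$-pinched precisely because every $B_s(x)$ is $(N,\epsilon^2)$-symmetric. Granting this, each individual Hessian energy is $\le\Psi$; the oscillation estimate then upgrades the exact normalization $\fint = \delta_{ab}$ of (2) to the genuine $(N,\delta)$-splitting inequality $\sum_{a,b}\fint_{B_s}|\langle\nabla(T_su)^a,\nabla(T_su)^b\rangle-\delta_{ab}|\le\delta^2$; the gradient bound $|\nabla(T_su)|\le|T_s|\le C(N)$ and the Hessian bound from the displayed sum complete conclusion (1). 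Conclusion (3) follows because $L=T_sT_{2s}^{-1}$ is the lower-triangular normalizer of a Gram matrix that is $\Psi(\epsilon\mid N)$-close to the identity (apply the oscillation estimate to $v=T_{2s}u$ between scales $2s$ and $s$, using that its Gram matrix equals $\mathrm{Id}$ at scale $2s$), and the Cholesky-type normalization $A\mapsto T(A)$ is smooth near $\mathrm{Id}$ with $T(\mathrm{Id})=\mathrm{Id}$, so $|L-\mathrm{Id}|\le\Psi(\epsilon\mid N)$; summability of the defects also keeps $\prod|L|\le C(N)$, which closes the induction uniformly and fixes the admissible threshold $\epsilon(N,\delta)$.

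I expect the genuine obstacle to be exactly this scale-uniform, non-inflating control of the Hessian energy of the renormalized maps, i.e.\ the summed Hessian estimate above and its derivation from the $(N,\epsilon^2)$-symmetry at all scales. Everything else is linear algebra (stability of the Cholesky/Gram--Schmidt normalization near $\mathrm{Id}$) together with soft facts about splitting maps (the Poincar\'e oscillation inequality and stability of splitting maps under Gromov--Hausdorff limits). One must also be careful that every $\Psi$ in sight depends only on $N$ and on the total summed defect $\le\Psi(\epsilon\mid N)$, never on the number $\log_2(1/r_0)$ of scales --- this is precisely what makes the conclusion hold uniformly down to the arbitrary lower scale $r_0$.
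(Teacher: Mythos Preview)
The paper does not supply a proof of this statement: Theorem~\ref{transformation} is quoted verbatim from \cite{BNS} (with pointers also to \cite{CheegerNaberJiang2021,CheegerNaber2015,HondaPeng2024}) and is used as a black box later in the paper. So there is no ``paper's own proof'' to compare against.

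As for your outline itself: the architecture is the standard one --- define $T_s$ as the lower-triangular Cholesky normalizer of the Gram matrix of $u$ on $B_s(x)$, so that (2) is automatic; deduce (3) from smooth dependence of the Cholesky factor near $\mathrm{Id}$ once one knows the Gram matrix of $T_{2s}u$ on $B_s(x)$ is close to $\mathrm{Id}$; and reduce (1) to scale-uniform control of the Hessian energy of the renormalized maps. This is exactly the skeleton in the cited references. The one place where your sketch is imprecise is the mechanism for the Hessian/Gram control. In \cite{BNS} this step is \emph{not} obtained by exhibiting an explicit telescoping monotone quantity; it is a contradiction--compactness argument: one lets $\bar s$ be the infimum of scales where (1)--(3) hold, rescales to unit size, passes to a limit which by the $(N,\epsilon^2)$-symmetry hypothesis must be $\mathbb{R}^N$, and uses that a harmonic map with orthonormal averaged gradients on $\mathbb{R}^N$ is affine (hence has zero Hessian and exactly orthonormal gradients), contradicting the failure at scale $\bar s/2$. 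The quantitative ``summed Hessian'' picture you describe is closer in spirit to the neck-region estimates of \cite{CheegerNaberJiang2021}, where Hessian energy on a ball is dominated by the drop of an almost-monotone quantity (entropy/volume pinching); that route also works but requires more machinery than your sketch indicates. Either way, you have correctly isolated the only nontrivial point: propagating the splitting property down the scales without the constants depending on $\log_2(1/r_0)$.
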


An application of the transformation theorem is to construct a biH\"{o}der map \cite{CheegerNaberJiang2021,HondaPeng2024}. 

\begin{theorem}[Canonical Reifenberg method \cite{CheegerNaberJiang2021,HondaPeng2024}] \label{Rei}
Assume that $(X,d,\mathcal{H}^n,p)$ is a RCD$(-\epsilon^2(N-1),N-1)$ space and $u: B_2(p) \to \mathbb{R}^N$ is a harmonic $(N,\epsilon)$-splitting map. Then for any $x,y \in B_1(p)$ we have 
$$(1-\Phi(\epsilon|n,v))d(x,y)^{1+\Phi(\epsilon|n,v)} \le d(f(x),f(y)) \le (1+\Phi(\epsilon|n,v))d(x,y).$$
Moreover, if $X$ is a smooth $N$-manifold with Ric$\ge -\epsilon^2(N-1)$, then for any $x \in B_1(p)$, $du:T_x \to \mathbb{R}^N$ is nondegenerate.
\end{theorem}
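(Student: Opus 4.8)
\emph{Approach.} The plan is to run the canonical Reifenberg scheme of \cite{CheegerNaberJiang2021,HondaPeng2024} built on the transformation theorem (Theorem \ref{transformation}); throughout, $\Phi(\epsilon)=\Phi(\epsilon\mid N)$ denotes a quantity with $\Phi(\epsilon)\to0$ as $\epsilon\to0$, and I write $u$ for the map in the statement (and read the hypothesis as $\mathrm{RCD}(-\epsilon^2(N-1),N)$). The first task is to set up, for every $x\in B_1(p)$ and every dyadic scale $0<s\le1$, the matrices produced by Theorem \ref{transformation}. For this I would first promote the hypothesis: the existence of an $(N,\epsilon)$-splitting map on $B_2(p)$ makes $B_2(p)$ Gromov--Hausdorff $\Phi(\epsilon)$-close to $B_2(0^N)$; volume convergence gives $\mathcal H^N(B_1(x))\ge(1-\Phi(\epsilon))\mathcal H^N(B_1(0^N))$ for $x\in B_1(p)$; Bishop--Gromov monotonicity together with the absolute Bishop bound propagates this downward to $\mathcal H^N(B_s(x))\in[1-\Phi(\epsilon),1+\Phi(\epsilon)]\,\mathcal H^N(B_s(0^N))$ for all $s\le1$; and volume almost-rigidity turns almost-maximal volume into $(N,\Phi(\epsilon))$-symmetry of $B_s(x)$, while the usual localization shows $u$ is $(N,\Phi(\epsilon))$-splitting on each $B_1(x)$. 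Shrinking $\epsilon$, Theorem \ref{transformation} (applied after the harmless rescaling $d\mapsto d/2$) then yields lower triangular matrices $T_{x,s}$ with $T_{x,s}u$ an $(N,\delta)$-splitting map on $B_s(x)$ normalized by $\fint_{B_s(x)}\nabla(T_{x,s}u)^a\cdot\nabla(T_{x,s}u)^b=\delta^{ab}$ and $\|T_{x,s}\circ T_{x,2s}^{-1}-\mathrm{Id}\|\le\delta$, $\delta=\delta(\epsilon)\to0$.

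\emph{The two estimates.} For the upper bound I would note that $u$ harmonic gives $\nabla\Delta u\equiv0$, so Lemma 4.4 of \cite{HondaPeng2024} yields $|\nabla u|\le1+\Phi(\epsilon)$ on $B_1(p)$, whence the Sobolev-to-Lipschitz property gives $d(u(x),u(y))\le(1+\Phi(\epsilon))d(x,y)$. For the lower bound I would fix $x,y\in B_1(p)$ and set $r=d(x,y)$ (if $r$ is bounded below the estimate is immediate from the scale-$1$ Gromov--Hausdorff approximation, so assume $r$ small). Since $T_{x,2r}u$ is $(N,\delta)$-splitting on the $(N,\Phi(\epsilon))$-symmetric ball $B_{2r}(x)$, the map $\tfrac1{2r}(T_{x,2r}u-T_{x,2r}u(x))$ is a $\Phi(\delta)$-Gromov--Hausdorff approximation onto a dense subset of $B_{2r}(0^N)$, so $|T_{x,2r}u(x)-T_{x,2r}u(y)|\ge(1-\Phi(\delta))r$. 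Telescoping $T_{x,2r}$ through the $\simeq\log_2(1/r)$ dyadic scales, using $\|T_{x,s}\circ T_{x,2s}^{-1}-\mathrm{Id}\|\le\delta$ at each step and anchoring at scale $1$ by $\|T_{x,1}^{\pm1}-\mathrm{Id}\|\le\Phi(\epsilon)$ (which holds since $u$ is $(N,\Phi(\epsilon))$-splitting on $B_1(x)$), one gets
\[
\|T_{x,2r}^{\pm1}\|_{\mathrm{op}}\ \le\ (1+\Phi(\epsilon))(1+\delta)^{\log_2(1/r)}\ \le\ (1+\Phi(\epsilon))\,r^{-\Phi(\epsilon)},
\]
and hence $d(u(x),u(y))\ge\|T_{x,2r}\|_{\mathrm{op}}^{-1}|T_{x,2r}u(x)-T_{x,2r}u(y)|\ge(1-\Phi(\epsilon))r^{1+\Phi(\epsilon)}$, the asserted biH\"older lower bound.

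\emph{Homeomorphism and the smooth case.} The lower bound makes $u|_{B_1(p)}$ injective, so with the Lipschitz bound it is a biH\"older homeomorphism onto its image, which is open in $\mathbb R^N$ because on each small ball $u$ is an almost-surjective Gromov--Hausdorff approximation. If in addition $X$ is a smooth $N$-manifold with $\mathrm{Ric}\ge-\epsilon^2(N-1)$, then $u\in C^2$ by elliptic regularity; were $du_x$ to have a unit kernel vector $v$ at some $x\in B_1(p)$, then $c(t):=u(\exp_x(tv))$ is $C^2$ with $\dot c(0)=du_x(v)=0$, so $|c(t)-c(0)|\le C t^2$ for small $t$, whereas $d(x,\exp_x(tv))=t$ and the lower bound force $|c(t)-c(0)|\ge(1-\Phi(\epsilon))t^{1+\Phi(\epsilon)}$; since $\Phi(\epsilon)<1$ this is impossible as $t\to0^+$. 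Thus $du_x$ is nondegenerate for every $x\in B_1(p)$.

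\emph{Main obstacle.} The real work, and the only delicate point, is the analysis of the matrix cocycle $(T_{x,s})$: checking that the $(N,\delta)$-splitting property genuinely persists at every scale so that Theorem \ref{transformation} may be invoked, anchoring the telescoping product correctly at scale $1$, and bounding the operator norms so that they grow no faster than $r^{-\Phi(\epsilon)}$ — which is precisely what keeps the biH\"older exponent within $\Phi(\epsilon)$ of $1$. Everything else (the Lipschitz upper bound, injectivity, openness of the image, and the smooth nondegeneracy) is then comparatively routine.
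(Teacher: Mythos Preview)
Your proposal is correct and follows essentially the same canonical Reifenberg scheme that the paper invokes: the paper does not prove Theorem~\ref{Rei} directly (it is cited from \cite{CheegerNaberJiang2021,HondaPeng2024}), but the argument it reproduces in the proof of Theorem~\ref{mani} is precisely yours --- apply the transformation theorem to obtain matrices $T_{x,s}$ with $|T_{x,s}|\le s^{-\delta}$, use that $T_{x,s}u$ is a $\Phi(\delta)s$-GHA on $B_s(x)$ to get the lower bound, and use the harmonic gradient estimate for the upper bound. Your telescoping derivation of the matrix norm bound and your contradiction argument for the nondegeneracy of $du$ in the smooth case are both standard and sound.
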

In Theorem \ref{Rei}, the harmonicity condition on $u$  can be replaced by the condition $\fint_{B_{s}(p)} |\nabla \Delta u| \le \epsilon$ \cite{HondaPeng2024}.

\subsection{The universal covers and local relatives covers of RCD$(K,N)$ spaces}\label{2.4}

For any RCD$(K,N)$ space $(X,d,\mathfrak{m})$, the universal cover $\widetilde{X}$ exists and is a RCD$(K,N)$ space with induced metric and measure \cite{MondinoWei2019}. $X$ is semi-locally simply connected \cite{Wang2022,Wang2021}, thus the fundamental group is isomorphic to the deck transformation group of $\widetilde{X}$.

We first consider a compact RCD$(K,N)$ space $(X,d,\mathfrak{m},p)$ with diameter less than $D$. Let $(\widetilde{X},\tilde{p})$ be the universal cover and $G$ be the fundamental group of $X$. Define the set $$G(\tilde{p},20D)=\{ g \in G | d(\tilde{p},g\tilde{p}) \le 20D\}.$$ 
We shall review that $\bar{B}_{10D}(\tilde{p})$ and $G(\tilde{p},20D)$ determine $\widetilde{X}$ and $G$ \cite{FukayaYamaguchi1992,SantosZamora,Wang2023}.

$G(\tilde{p},20D)$ is a pseudo-group, i.e. for some $g_1,g_2 \in G(\tilde{p},20D)$, $g_1g_2$ is not defined within $G(\tilde{p},20D)$. To handle this, we define the groupfication $\hat{G}$ of $G(\tilde{p},20D)$ as follows. Let $F$ be the free group generated by elements $e_g$ for each $g \in G(\tilde{p},20D)$. We may quotient $F$ by the normal subgroup generated by all elements of the form $e_{g_1}e_{g_2}e_{g_1g_2}^{-1}$, where $g_1,g_2 \in G(\tilde{p},20D)$ with $g_1g_2 \in G(\tilde{p},20D)$. The quotient group is denoted $\hat{G}$. 

There is a natural (pseudo-group) homomorphism
$$i: G(\tilde{p},20D) \to \hat{G}, \ i(g)=[e_g],$$ 
where $[e_g]$ is the quotient image of $e_g$. Define 
$$\pi:\hat{G} \to G, \ \pi([e_{g_1}e_{g_2}...e_{g_k}])=g_1g_2...g_k.$$ 
Then $\pi \circ i$ is the identity map on $G(\tilde{p},20D)$, thus $i$ is injective. Since $G(\tilde{p},20D)$ generates $G$, $\pi$ is surjective.   

Now we can glue a space by equivalence relation,
\begin{equation*}
	\hat{G} \times_{G(\tilde{p},20D)} \bar{B}_{10D}(\tilde{p})= \hat{G} \times \bar{B}_{10D}(\tilde{p}) / \sim,
\end{equation*}
where the equivalence relation is given by $(\hat{g}i(g),x) \sim (\hat{g},gx)$ for all $g \in G(\tilde{p},20D), \hat{g} \in \hat{G}, x \in \bar{B}_{10D}(\tilde{p})$ with $gx \in \bar{B}_{10D}(\tilde{p})$. Endow $\hat{G} \times_{G(\tilde{p},20D)} \bar{B}_{10D}(\tilde{p})$ with the induced length metric. Then $\hat{G} \times_{G(\tilde{p},20D)} \bar{B}_{10D}(\tilde{p})$ is a covering space of $\tilde{X}$ with deck transformation group $\ker(\pi)$. $\tilde{X}$ is simply connected, thus we obtain the following result.

\begin{theorem}(\cite{FukayaYamaguchi1992,SantosZamora,Wang2023})\label{local}
$\hat{G}$ is isomorphic to $G$ and $\hat{G} \times_{G(\tilde{p},20D)} \bar{B}_{10D}(\tilde{p})$ with induced length metric is isometric to $\widetilde{X}$.
\end{theorem}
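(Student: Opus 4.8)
The plan is to establish Theorem~\ref{local} by showing that the glued space $Y:=\hat{G} \times_{G(\tilde{p},20D)} \bar{B}_{10D}(\tilde{p})$ is simply connected and covers $X$ compatibly with $\widetilde{X}\to X$, and then invoke uniqueness of the universal cover. First I would make precise the claim that $Y$, with its induced length metric, is a covering space of $X$ — not merely of $\widetilde{X}$ — with deck transformation group $\hat{G}$ acting on the left factor. The map $Y\to X$ sends $[(\hat g,x)]$ to $\pi(\hat g)\cdot x$ viewed in $\widetilde{X}/G=X$ (using $\pi:\hat G\to G$); this is well-defined on equivalence classes precisely because of the relation $(\hat g i(g),x)\sim(\hat g,gx)$. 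Local injectivity on each ball-sheet $\{\hat g\}\times B_{r}(\tilde p)$ for small $r$ follows from the fact that $\bar B_{10D}(\tilde p)$ embeds isometrically in $\widetilde{X}$ and $\pi_1$-injectivity of small balls, which is where semi-local simple connectedness of RCD spaces \cite{Wang2022,Wang2021} enters.

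Next I would verify that $Y$ is connected and that the natural map $q:Y\to\widetilde{X}$, $[(\hat g,x)]\mapsto \pi(\hat g)^{\,\widetilde{}}\,x$ (lifting the $G$-action to $\widetilde{X}$), is a covering map with deck group $\ker(\pi)$. Connectedness of $Y$ uses that $\hat G$ is generated by the $i(g)$ together with path-connectedness of $\bar B_{10D}(\tilde p)$: any $[(\hat g,x)]$ can be joined to $[(e,\tilde p)]$ by first moving $x$ to a point on the overlap with an adjacent sheet, then stepping down the word representing $\hat g$ one generator at a time. The covering property of $q$ is a gluing/sheet-counting argument: over a small ball $B_r(z)\subset\widetilde{X}$, the preimage decomposes as a disjoint union indexed by $\ker(\pi)$ because $20D>2\cdot 10D$ is large enough — here one needs $D$ to be the diameter so that $\bar B_{10D}(\tilde p)$ already surjects onto $X$ and any two sheets meeting a common ball differ by an element of $G(\tilde p,20D)$, hence are identified in $Y$ exactly when the corresponding $\hat G$-elements do.

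The core topological input is then: since $\widetilde{X}$ is simply connected, a connected covering $q:Y\to\widetilde{X}$ with deck group $\ker(\pi)$ must be trivial, i.e. $\ker(\pi)=\{e\}$ and $q$ is an isometry (it is a local isometry by construction of the length metric, and a bijective local isometry between length spaces is an isometry). Therefore $\pi:\hat G\to G$ is an isomorphism and $Y\cong\widetilde{X}$, and under this identification the $\hat G$-action is the deck action of $G$, which gives Theorem~\ref{local}.

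The main obstacle I anticipate is the careful bookkeeping in the sheet-overlap analysis — proving that the equivalence relation glues sheets in exactly the right pattern so that (a) $Y$ is a genuine manifold-like covering with no unwanted identifications or failures of the evenly-covered condition, and (b) the induced length metric on $Y$ restricts isometrically to each ball-sheet. This is where the choice of the radius $10D$ versus the pseudo-group window $20D$ is used crucially: one must check that whenever $gx\in\bar B_{10D}(\tilde p)$ for $x\in\bar B_{10D}(\tilde p)$, the element $g$ indeed lies in $G(\tilde p,20D)$ (immediate from the triangle inequality), so that the pseudo-group $G(\tilde p,20D)$ sees \emph{all} the overlaps between $10D$-balls and hence $\hat G$ captures all relations of $G$. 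A secondary point requiring care is that balls in an RCD space need not be simply connected for $r$ comparable to $D$; one only has $\pi_1$-injectivity for sufficiently small radii, so the sheet structure should be verified at a fine scale and then propagated, rather than asserted directly at scale $10D$.
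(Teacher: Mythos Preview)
Your proposal is correct and follows essentially the same route as the paper: the paper's proof (given in the paragraph immediately preceding the theorem) simply asserts that $\hat{G}\times_{G(\tilde p,20D)}\bar B_{10D}(\tilde p)$ is a covering space of $\widetilde{X}$ with deck group $\ker(\pi)$, and then invokes simple connectedness of $\widetilde{X}$ to conclude that $\ker(\pi)$ is trivial and the covering is an isometry. Your write-up supplies the bookkeeping (connectedness, sheet-overlap via the $10D$/$20D$ triangle inequality, local isometry) that the paper leaves implicit, and your detour through the covering $Y\to X$ is harmless but unnecessary --- the map $q:Y\to\widetilde{X}$ is the only one the paper uses.
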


We next consider the local relative covers of a RCD$(K,N)$ space $(X,d,\mathfrak{m})$. By \cite{SormaniWei2004}, there exists a sequence of compact $n$-manifolds $(M_i,p_i)$ with a uniform lower sectional curvature bound, so that the local universal cover of $\widetilde{B_1(p_i)}$ admits no converging subsequence in the pointed Gromov-Hausdorff sense. To address this, we refer to a precompactness theorem for relative covers of open balls by Xu \cite{Xu2023}. 

For any $p \in X$, $r_2>r_1 > 0$, define local relative over $(\widetilde{B}(p,r_1,r_2),\tilde{p})$ as the a connected component of the pre-image of $B_{r_1}(p)$ in the universal cover of $B_{r_2}(p)$, where $\tilde{p}$ is a pre-image point of $p$. 

\begin{theorem}[Precompactness of relative covers, \cite{Xu2023}]\label{precompact}
$(\widetilde{B}(p,r_1,r_2),\tilde{p})$ equipped with its length metric and measure, is globally $A_{K,N,r_1,r_2}(r)$-doubling, that is, there exists a positive non-decreasing function $A_{K,N,r_1,r_2}(r)$ such that  
$$0< \mathfrak{m}(B_r(x)) \le A_{K,N,r_1,r_2}(r) \mathfrak{m}(B_{\frac{r}{2}}(x)), \text{ \ for any \ } x \in (\widetilde{B}(p,r_1,r_2),\tilde{p}).$$
In particular, let $G$ be the image of $\pi_1(B_{r_1}(p),p) \to \pi_1(B_{r_2}(p),p)$, then the family consisting of all such triples $(\widetilde{B}(p,r_1,r_2),\tilde{p},G )$ is precompact in the pointed equivariant Gromov-Hausdorff topology.
\end{theorem}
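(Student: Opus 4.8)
The plan is to reduce the equivariant precompactness to the doubling estimate and then establish the latter. Once a non‑decreasing function $A_{K,N,r_1,r_2}(r)$ with $\mathfrak m(B_r(x))\le A_{K,N,r_1,r_2}(r)\,\mathfrak m(B_{r/2}(x))$ is available, iterating it bounds, for each $R$ and $\epsilon$, the number of $\epsilon$‑balls needed to cover $B_R(\tilde p)$ by a quantity depending only on $R,\epsilon,K,N,r_1,r_2$; so by Gromov's precompactness theorem the spaces $(\widetilde B(p,r_1,r_2),\tilde p)$ form a pointed Gromov–Hausdorff precompact family, and Theorem~\ref{eGH} upgrades this to precompactness of the triples $(\widetilde B(p,r_1,r_2),\tilde p,G)$ in the pointed equivariant topology, with the quotients converging accordingly. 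Thus the entire content is the doubling bound; the inequality $0<\mathfrak m(B_r(x))$ is automatic, since $\widetilde B(p,r_1,r_2)$ is locally isometric to $X$ and $\mathcal H^N$ has full support there.

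Now the setup. Write $W:=B_{r_2}(p)$ and let $\widehat W\to W$ be its universal cover; by construction $Y:=\widetilde B(p,r_1,r_2)$ is a connected component of the preimage of $B_{r_1}(p)$ in $\widehat W$, and the restriction $\pi\colon Y\to B_{r_1}(p)$ is a \emph{regular} covering (its subgroup $\ker\bigl(\pi_1(B_{r_1}(p))\to\pi_1(B_{r_2}(p))\bigr)$ is normal), which is a local isometry once $B_{r_1}(p)$ carries its intrinsic length metric, with deck group $G$ the image in $\pi_1(B_{r_2}(p))$. Every point of $B_{r_1}(p)$, hence of $\widehat W$ and of $Y$ over the interior of $B_{r_1}(p)$, has a neighbourhood isometric to a ball in the RCD$(-(N-1),N)$ space $X$; in particular Bishop–Gromov holds for balls of $\widehat W$ that do not feel $\partial W$. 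The decisive point — and the reason one insists on $r_2>r_1$ — is that \emph{all} of $Y$ lies over $B_{r_1}(p)$, a definite distance $r_2-r_1$ inside $W$; this is exactly the ``room'' that, by the examples of Sormani–Wei recalled above, is absent for the universal cover of a single ball, and it is what keeps the sheets of $\pi$ from accumulating.

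With this in hand I would prove the doubling in two regimes. For radii $r\le c\,(r_2-r_1)$ with $c$ small, $B_r^{d_Y}(\tilde x)$ is isometric to the metric $r$‑ball about $\pi(\tilde x)$ in $B_{r_1}(p)$ with its intrinsic metric — and, for $\tilde x$ over a point at intrinsic distance $<r_1-c(r_2-r_1)$ from $p$, literally to a metric $r$‑ball in $X$ — so the doubling follows from Bishop–Gromov on $X$, with a boundary correction near $\pi^{-1}(\partial B_{r_1}(p))$ coming from a ball‑packing comparison of the intrinsic and ambient metrics. For radii $r\ge c\,(r_2-r_1)$, I would cover $B_r^{d_Y}(\tilde x)$ by balls of radius $c(r_2-r_1)/2$; the projection $\pi(B_r^{d_Y}(\tilde x))\subseteq B_{r_1}(p)\subseteq X$ is totally bounded with bounds from Bishop–Gromov on $X$, so the count reduces to the number of sheets of $\pi$ over a fixed small ball that meet $B_r^{d_Y}(\tilde x)$, and — lifting into $\widehat W$, where all the relevant balls lie over $B_{r_1}(p)$ and so never feel $\partial W$ — a volume/packing argument in $\widehat W$ bounds the contribution of these sheets to $\mathfrak m(B_r^{d_Y}(\tilde x))$ by $A(r)\,\mathfrak m(B_{r/2}^{d_Y}(\tilde x))$ for a suitable $A(r)=A(r;K,N,r_1,r_2)$.

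The step I expect to be the main obstacle is exactly this large‑scale and near‑boundary analysis. The intrinsic metric $d_Y$ departs from the metric induced by $\widehat W$ near $\pi^{-1}(\partial B_{r_1}(p))$, and one must control the discrepancy — e.g.\ by isometrically embedding $Y$ into the larger relative cover $\widetilde B(p,r_1',r_2)$ for $r_1<r_1'<r_2$, over which the points of $Y$ become genuinely interior — while keeping the volume count of sheets honest (note that the number of sheets over a small ball meeting $B_r^{d_Y}(\tilde x)$ is \emph{not} uniformly bounded, as the scaled flat torus shows, so the estimate must be phrased in terms of volume ratios, not cardinalities). This is the RCD incarnation of the Sormani–Wei precompactness theorem for relative $\delta$‑covers, with manifold volume comparison replaced by Bishop–Gromov for $\mathcal H^N$; once it is carried out, the doubling function $A_{K,N,r_1,r_2}(r)$ is produced and the precompactness follows by the reduction of the first paragraph.
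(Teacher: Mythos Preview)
The paper does not prove this statement: it is quoted from Xu \cite{Xu2023} as an external input, with no argument given. So there is no ``paper's own proof'' against which to compare.

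On the substance of your sketch: the reduction of equivariant precompactness to a uniform doubling bound is correct and standard. But your small-radius step contains a genuine error. You assert that for $r\le c(r_2-r_1)$ the ball $B_r^{d_Y}(\tilde x)$ is isometric to the $r$-ball about $\pi(\tilde x)$ in $B_{r_1}(p)$ (and hence, away from the boundary, to an $r$-ball in $X$). This is false: the covering $Y\to B_{r_1}(p)$ may have arbitrarily short deck transformations. Your own flat-torus example already shows it --- if $X$ is a flat torus of diameter $\epsilon\ll r_1<r_2$, then $B_{r_1}(p)=X$, $Y=\mathbb{R}^N$, and a ball of radius $0.1$ in $Y$ is a Euclidean disk, certainly not isometric to the torus. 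You correctly flag in the large-radius paragraph that the sheet count is unbounded and that one must argue via volume ratios; that same obstruction is present at \emph{every} scale, and you have assumed it away at small scales.

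What is actually true at small radii is only that geodesics in $Y$ of length $<c(r_2-r_1)$ project to genuine $X$-geodesics (this is where the gap $r_2-r_1$ is used), so one can pull back Bishop--Gromov from $X$ along the covering; but the comparison must be phrased as a volume-ratio estimate that tolerates arbitrarily many sheets, exactly as you set up for large $r$. In short, your two-regime split is artificial: the honest argument is the sheet-counting/volume-ratio one throughout, and the small-radius ``isometry'' shortcut does not exist.
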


\begin{remark}
In Theorem \ref{precompact}, the quotient space $\widetilde{B}(p,r_1,r_2)/G$ is isometric to $B_{r_1}(p)$ with the length metric on itself. The length metric on $B_{r_1}(p)$ may differ from the original metric $d$. However, they coincide on $B_{\frac{r_1}{3}}(p)$. To simplify the notation, we always assume that the length metric on $B_{r_1}(p)$ is the original metric, otherwise we consider  $B_{\frac{r_1}{3}}(p)$ with length metric on $B_{r_1}(p)$.
\end{remark}

We state some technical lemmas.
\begin{lemma}\label{G_0}
Let $G$ be a nilpotent Lie group and $G_0$ be the identity component. Then for any compact subgroup $K \subset G$, the commutator $[K,G_0]$ is trivial.
\end{lemma}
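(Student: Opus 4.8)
\textbf{Proof plan for Lemma \ref{G_0}.}
The plan is to reduce to the action of the compact group $K$ on the Lie algebra $\mathfrak{g}_0$ of $G_0$ by conjugation and use that a nilpotent Lie algebra admits no nontrivial semisimple automorphisms of finite order built out of a compact group. First I would fix $k \in K$ and consider the inner automorphism $c_k: G_0 \to G_0$, $g \mapsto kgk^{-1}$, which is well-defined because $G_0$ is normal in $G$ (the identity component of any topological group is normal). Its differential $\mathrm{Ad}(k) = (dc_k)_e$ is an automorphism of the nilpotent Lie algebra $\mathfrak{g}_0$. Since $K$ is compact, the closure of $\{\mathrm{Ad}(k) : k \in K\}$ in $\mathrm{GL}(\mathfrak{g}_0)$ is a compact subgroup, so by averaging an inner product we may assume each $\mathrm{Ad}(k)$ is orthogonal, hence semisimple (diagonalizable over $\mathbb{C}$) with all eigenvalues of modulus $1$.

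The key step is then purely Lie-algebraic: a semisimple automorphism $\varphi$ of a nilpotent Lie algebra $\mathfrak{g}_0$ all of whose eigenvalues have modulus $1$ must be the identity. I would argue this via the lower central series $\mathfrak{g}_0 = \mathfrak{g}^{(1)} \supset \mathfrak{g}^{(2)} \supset \cdots \supset \mathfrak{g}^{(s)} \supset \mathfrak{g}^{(s+1)} = 0$, which is $\varphi$-invariant. Look at the induced action on each quotient $\mathfrak{g}^{(i)}/\mathfrak{g}^{(i+1)}$; because the bracket $\mathfrak{g}^{(1)}/\mathfrak{g}^{(2)} \otimes \mathfrak{g}^{(i)}/\mathfrak{g}^{(i+1)} \to \mathfrak{g}^{(i+1)}/\mathfrak{g}^{(i+2)}$ is surjective and $\varphi$-equivariant, any eigenvalue on $\mathfrak{g}^{(i+1)}/\mathfrak{g}^{(i+2)}$ is a product of an eigenvalue on $\mathfrak{g}^{(1)}/\mathfrak{g}^{(2)}$ and one on $\mathfrak{g}^{(i)}/\mathfrak{g}^{(i+1)}$. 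If some eigenvalue $\lambda$ of $\varphi$ on $\mathfrak{g}^{(1)}/\mathfrak{g}^{(2)}$ had $|\lambda| \ne 1$ this would force eigenvalues of arbitrarily large or small modulus further down the series, contradicting compactness; so all eigenvalues on $\mathfrak{g}^{(1)}/\mathfrak{g}^{(2)}$ have modulus $1$. Actually the cleaner route: since $\varphi$ is semisimple and unipotent-free considerations don't even enter, I would instead directly invoke that $\mathrm{Aut}(\mathfrak{g}_0)$ for $\mathfrak{g}_0$ nilpotent has the property that its maximal compact subgroups act trivially on $\mathfrak{g}_0/[\mathfrak{g}_0,\mathfrak{g}_0]$ only in special cases — so I will stick with the lower-central-series eigenvalue argument, which shows every $\mathrm{Ad}(k)$ is unipotent; combined with semisimplicity this gives $\mathrm{Ad}(k) = \mathrm{Id}$ for all $k \in K$.

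From $\mathrm{Ad}(k) = \mathrm{Id}$ on $\mathfrak{g}_0$ for every $k \in K$, I conclude $c_k$ acts trivially on a neighborhood of $e$ in $G_0$ (via $\exp$, using that $\mathfrak{g}_0 \to G_0$ is a local diffeomorphism and $c_k \circ \exp = \exp \circ \mathrm{Ad}(k)$), hence $c_k = \mathrm{id}_{G_0}$ since $G_0$ is connected and generated by any neighborhood of the identity. Therefore $kgk^{-1} = g$ for all $k \in K$, $g \in G_0$, i.e. $[K, G_0]$ is trivial. The main obstacle is the Lie-algebraic core — verifying that a finite-order (or compact-group) semisimple automorphism of a nilpotent Lie algebra is trivial; the eigenvalue propagation down the lower central series is the crucial mechanism, and one must be careful that it genuinely forces modulus-$1$ eigenvalues to be roots of unity is \emph{not} needed — what is needed is only that they cannot escape the unit circle, which together with semisimplicity and the nilpotency-induced product structure pins $\varphi$ to the identity on the abelianization and then, inductively, everywhere.
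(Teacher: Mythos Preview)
Your ``key step'' --- that a semisimple automorphism of a nilpotent Lie algebra with all eigenvalues on the unit circle must be the identity --- is false, and the eigenvalue-propagation argument you sketch does not establish it. Take $\mathfrak{g}_0 = \mathbb{R}^2$ abelian: any rotation is a semisimple Lie algebra automorphism with unit-modulus eigenvalues. Or take the Heisenberg algebra with basis $X,Y,Z$, $[X,Y]=Z$: the map $X\mapsto X$, $Y\mapsto -Y$, $Z\mapsto -Z$ is a semisimple automorphism with eigenvalues $\{1,-1,-1\}$. Your argument only uses that $\mathfrak{g}_0$ is nilpotent, and that is not enough; $\mathrm{Aut}(\mathfrak{g}_0)$ can have large compact subgroups.

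What you are not using, and must use, is that $k$ lies in the nilpotent \emph{group} $G$, so $\mathrm{Ad}(k)$ is not an arbitrary automorphism of $\mathfrak{g}_0$. Let $G=\Gamma_1\supset\Gamma_2\supset\cdots\supset\Gamma_{c+1}=\{e\}$ be the lower central series of $G$ and let $\mathfrak{g}_i$ be the Lie algebra of the identity component of $\Gamma_i$. For any $k\in G$ and $g\in\Gamma_i$ one has $c_k(g)g^{-1}=[k,g]\in\Gamma_{i+1}$; differentiating along $g=\exp(tX)$ with $X\in\mathfrak{g}_i$ gives $(\mathrm{Ad}(k)-I)X\in\mathfrak{g}_{i+1}$. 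Hence $(\mathrm{Ad}(k)-I)^c=0$ and $\mathrm{Ad}(k)$ is unipotent for every $k\in G$. Now your compactness step is correct: $\mathrm{Ad}(K)$ sits in a compact subgroup of $\mathrm{GL}(\mathfrak{g}_0)$, so each $\mathrm{Ad}(k)$ is also semisimple, and unipotent $+$ semisimple forces $\mathrm{Ad}(k)=\mathrm{Id}$. Your final paragraph (passing from $\mathrm{Ad}(k)=\mathrm{Id}$ to $c_k=\mathrm{id}_{G_0}$ via $\exp$ and connectedness) is fine. The paper states this lemma without proof, so there is nothing further to compare against.
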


\begin{lemma}[Covering lemma, \cite{KapovitchWilking2011}]\label{covering}
There exists $C(N)$ such that the following holds. Let $(X,d,\mathfrak{m},p)$ be a RCD$(-(N-1),N)$ space and $f:X \to \mathbb{R}$ is a non-negative function. Let $\pi : \widetilde{X} \to X$ be the universal cover of $X$ and $\tilde{p} \in \widetilde{X}$ is a lift of $p$. Let $\tilde{f}=f \circ \pi$, then
 $$C^{-1}(N)\fint_{B_{\frac{1}{3}}(p)}f  \le \fint_{B_1(\tilde{p})} \tilde{f} \le C(N) \fint_{B_1(p)}f.$$
\end{lemma}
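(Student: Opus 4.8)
\medskip
\noindent\textbf{Proof strategy.} The plan is to lift everything to the universal cover $\pi\colon\widetilde X\to X$ and exploit that $\tilde f=f\circ\pi$ is invariant under the deck group $G=\pi_1(X)$, which acts freely, properly discontinuously and by measure-preserving isometries with $\widetilde X/G\cong X$. Since $\pi$ is a local isometry it is locally measure-preserving, so for every $\tilde z\in\widetilde X$ and every $\rho>0$ one has the coarea-type identity
\[
\int_{B_\rho(\tilde z)}\tilde f\,d\mathfrak m=\int_X f(x)\,m_\rho(\tilde z,x)\,d\mathfrak m(x),\qquad m_\rho(\tilde z,x):=\#\big(\pi^{-1}(x)\cap B_\rho(\tilde z)\big),
\]
and in particular $\mathfrak m(B_\rho(\tilde z))=\int_X m_\rho(\tilde z,x)\,d\mathfrak m(x)$. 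Taking $\tilde z=\tilde p$, $\rho=1$ and noting that $m_1(\tilde p,\cdot)$ is supported in $\bar B_1(p)$, the lemma reduces to the two pointwise multiplicity bounds
\[
m_1(\tilde p,x)\le C(N)\,\mathfrak m(B_1(\tilde p))/\mathfrak m(B_1(p))\ \ (x\in B_1(p)),\qquad
m_1(\tilde p,x)\ge \tfrac{1}{C(N)}\,\mathfrak m(B_1(\tilde p))/\mathfrak m(B_{1/3}(p))\ \ (x\in B_{1/3}(p)).
\]
Indeed, as $f\ge0$, the first gives $\int_{B_1(\tilde p)}\tilde f=\int_{B_1(p)}f\,m_1(\tilde p,\cdot)\le(\sup m_1)\int_{B_1(p)}f\le C(N)\tfrac{\mathfrak m(B_1(\tilde p))}{\mathfrak m(B_1(p))}\int_{B_1(p)}f$, and the second gives $\int_{B_1(\tilde p)}\tilde f\ge\int_{B_{1/3}(p)}f\,m_1(\tilde p,\cdot)\ge(\inf_{B_{1/3}(p)}m_1)\int_{B_{1/3}(p)}f\ge\tfrac{1}{C(N)}\tfrac{\mathfrak m(B_1(\tilde p))}{\mathfrak m(B_{1/3}(p))}\int_{B_{1/3}(p)}f$; dividing by $\mathfrak m(B_1(\tilde p))$ yields the two stated inequalities.

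The key object is the local orbit count $\gamma(\tilde z,\rho):=\#\{g\in G:d(\tilde z,g\tilde z)\le\rho\}$, which equals $m_\rho(\tilde z,\pi(\tilde z))$ because the fibre over $\pi(\tilde z)$ is the free orbit $G\tilde z$. First I would prove, by lifting a minimizing geodesic from $q=\pi(\tilde z)$ to a nearby point and checking that the induced maps between fibres are injective (this is where freeness of the action is used), the two-sided estimate
\[
\mathfrak m(B_{\rho/2}(q))\,\gamma(\tilde z,\tfrac{\rho}{2})\ \le\ \mathfrak m(B_\rho(\tilde z))\ \le\ \mathfrak m(B_\rho(q))\,\gamma(\tilde z,2\rho)\qquad\qquad(\ast)
\]
for all $\rho>0$, where $q=\pi(\tilde z)$: the left inequality holds because for $x\in B_{\rho/2}(q)$ each $g$ with $d(\tilde z,g\tilde z)\le\rho/2$ produces, via the lifted geodesic, a distinct point of $\pi^{-1}(x)\cap B_\rho(\tilde z)$, so $m_\rho(\tilde z,x)\ge\gamma(\tilde z,\rho/2)$ there; the right one because $m_\rho(\tilde z,\cdot)$ vanishes off $\bar B_\rho(q)$ and the symmetric count (lifting the geodesic from $x$ back to $q$) gives $m_\rho(\tilde z,x)\le\gamma(\tilde z,2\rho)$.

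Granting $(\ast)$, the two multiplicity bounds follow by combining it with Bishop--Gromov volume comparison, valid on any RCD$(-(N-1),N)$ space and hence on both $X$ and $\widetilde X$. For the upper bound: given $x\in B_1(p)$ let $\tilde x$ be the lift of $x$ closest to $\tilde p$, so $d(\tilde p,\tilde x)=d(p,x)<1$; then $B_1(\tilde p)\subset B_2(\tilde x)$ gives $m_1(\tilde p,x)\le\gamma(\tilde x,2)$, the lower half of $(\ast)$ at $\tilde z=\tilde x$, $\rho=4$ gives $\gamma(\tilde x,2)\le\mathfrak m(B_4(\tilde x))/\mathfrak m(B_2(x))$, while $B_4(\tilde x)\subset B_5(\tilde p)$ with Bishop--Gromov on $\widetilde X$ bounds $\mathfrak m(B_4(\tilde x))\le C(N)\,\mathfrak m(B_1(\tilde p))$ and $B_1(p)\subset B_2(x)$ gives $\mathfrak m(B_2(x))\ge\mathfrak m(B_1(p))$. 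For the lower bound: given $x\in B_{1/3}(p)$ and $\tilde x$ as above ($d(\tilde p,\tilde x)<1/3$), the inclusion $B_{2/3}(\tilde x)\subset B_1(\tilde p)$ gives $m_1(\tilde p,x)\ge\gamma(\tilde x,2/3)$; the upper half of $(\ast)$ at $\tilde z=\tilde x$, $\rho=1/3$ gives $\gamma(\tilde x,2/3)\ge\mathfrak m(B_{1/3}(\tilde x))/\mathfrak m(B_{1/3}(x))$; Bishop--Gromov on $\widetilde X$ together with $B_1(\tilde p)\subset B_{4/3}(\tilde x)$ gives $\mathfrak m(B_{1/3}(\tilde x))\ge C(N)^{-1}\,\mathfrak m(B_{4/3}(\tilde x))\ge C(N)^{-1}\,\mathfrak m(B_1(\tilde p))$; and $B_{1/3}(x)\subset B_{2/3}(p)$ with Bishop--Gromov on $X$ gives $\mathfrak m(B_{1/3}(x))\le C(N)\,\mathfrak m(B_{1/3}(p))$. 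Multiplying out the two chains yields the claimed bounds. (Throughout one should first restrict to the generic radii on which the relevant distance spheres are $\mathfrak m$-null, and remove the restriction at the end via monotonicity of $\rho\mapsto\mathfrak m(B_\rho(\cdot))$ and volume comparison.)

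The step I expect to be the real obstacle is establishing $(\ast)$, which is where the covering map has to be handled carefully: that $\pi$ is locally measure-preserving, so the coarea identity holds with the fibre cardinality as ``Jacobian''; that minimizing geodesics lift and the resulting maps between fibres are injective precisely because $G$ acts freely; and the careful tracking of the factor-of-two shifts in radii. Everything after that is volume-comparison bookkeeping on the two RCD spaces $X$ and $\widetilde X$; the RCD hypothesis enters only through Bishop--Gromov, which is exactly what prevents the multiplicity $m_1(\tilde p,\cdot)$ from oscillating uncontrollably between the scales $1/3$ and $1$.
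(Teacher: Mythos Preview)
The paper does not supply its own proof of this lemma: it is stated as a preliminary result and attributed to \cite{KapovitchWilking2011}. Your argument is the standard one and is correct. The reduction to pointwise multiplicity bounds via the fibre-counting identity, the two-sided orbit/volume comparison $(\ast)$, and the Bishop--Gromov bookkeeping on both $X$ and $\widetilde X$ are exactly how the Kapovitch--Wilking covering lemma is proved; the factor-of-two radius shifts and the choice of the closest lift $\tilde x$ are handled correctly.
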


\begin{lemma}[Gap lemma, \cite{KapovitchWilking2011}]\label{gap}
Assume that $(X_i,p_i,G_i)$ is a sequence of length metric space and 
$$(X_i,p_i,G_i) \overset{eGH}\longrightarrow (X,p,G).$$
Assume that there exists $b>a >0$ such that $\langle G(p,r) \rangle$ is the same group for any $r \in (a,b)$. Then there exists $\epsilon_i \to 0$, so that for any sufficiently large $i$, $\langle G_i(p_i,r) \rangle$ is the same group for any $r \in (a+\epsilon_i, b- \epsilon_i)$. 
\end{lemma}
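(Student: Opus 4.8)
The plan is to argue by contradiction using the pseudo-group/relator structure captured in Theorem 1.20 (the local groupfication result). The key point is that the group $\langle G(p,r)\rangle$ depends only on the finitely many generators $G(p,b)$ together with the finitely many relators coming from pairs $g_1,g_2\in G(p,b/2)$ with $g_1g_2\in G(p,b)$; once $r$ varies in the open interval $(a,b)$ with the generated subgroup staying constant, this finite combinatorial data stabilizes. I want to transport exactly this finite data along the eGH convergence to $(X_i,p_i,G_i)$ for $i$ large.

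First I would fix a value $r_0\in(a,b)$ and enumerate a finite generating set: since the spaces are length spaces and the action is by isometries, $G(p,r_0)$ need not be finite a priori, but $\langle G(p,r)\rangle$ is the same subgroup $H$ for all $r\in(a,b)$, so $H$ is generated by $G(p,a^+):=\bigcup_{r>a}G(p,r)$. The hypothesis "same group on all of $(a,b)$" means every element of $G(p,r)$ for $r$ close to $b$ is already a word in elements of $G(p,r')$ for any $r'>a$; by a compactness/finiteness argument (elements $g\in G(p,b)$ form a set that, modulo the relation of being expressible via shorter displacements, is effectively finite in the relevant range — this is where I would invoke that the relevant pseudo-group is generated by a compact neighborhood, cf. Theorem 1.20), I can choose finitely many $h_1,\dots,h_m\in H$ with $d(h_j p,p)<b$ and finitely many relator words $w_1,\dots,w_\ell$ in the $h_j$, such that: (i) every $g$ with $d(gp,p)\le r$ for $r<b$ lies in the subgroup generated by those $h_j$ with $d(h_jp,p)<r+\delta_0$ for a small gap $\delta_0$, and (ii) no nontrivial product of the $h_j$ realizing a relator has displacement straddling the missing interval near $a$.

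Next, using the $\epsilon_i$-eGHA $(f_i,\phi_i,\psi_i)$ with $\epsilon_i\to 0$: pull back the chosen $h_j$ to $h_{j,i}:=\psi_i(h_j)\in G_i(p_i,\tfrac1{\epsilon_i})$, so that $d(h_{j,i}p_i,p_i)\to d(h_jp,p)<b$, hence eventually $d(h_{j,i}p_i,p_i)<b-\epsilon_i'$ for suitable $\epsilon_i'\to0$; conversely any $g_i\in G_i(p_i,r)$ maps via $\phi_i$ to an element within $\epsilon_i$ of $H$'s generating displacements, forcing $g_i$ into $\langle h_{j,i}\rangle$ once $r<b-\epsilon_i'$. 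The relator words $w_k(h_{j,i})$ have displacement $\to 0$, so they act trivially on $B_{1/\epsilon_i}(p_i)$; combined with Theorem 1.20 (which says a ball of radius $\approx$ the diameter/generator scale together with $G_i$ on that scale determines the group and its relations) this shows the relations among the $h_{j,i}$ are exactly the $w_k$ for large $i$, hence $\langle G_i(p_i,r)\rangle$ is the fixed group $\langle h_{j,i}\mid w_k\rangle$, independent of $r$, for all $r\in(a+\epsilon_i,b-\epsilon_i)$ after renaming $\epsilon_i$.

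The main obstacle I anticipate is the lower bound near $a$: I must rule out that for some $r$ slightly above $a$ a "new" short element appears in $G_i(p_i,r)$ that has no counterpart in the limit (eGH convergence controls elements with bounded displacement from above but an element of tiny displacement in $X_i$ could in principle be invisible in the limit only if it acts almost trivially — but then it lies in the subgroup generated by the others, so it does not change $\langle G_i(p_i,r)\rangle$). Making this precise requires a clean statement that displacements cannot "jump downward" across the gap, which follows from eGH-convergence of the displacement spectrum: the set $\{d(gp,p):g\in G\}$ has no value in $(a,b)$ that is a "boundary" of the generated-subgroup, and this is a closed condition stable under eGH limits after shrinking the interval by $\epsilon_i$. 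I would isolate this as the technical heart of the argument and handle it by the same groupfication bookkeeping as above, noting that only finitely many subgroup-membership relations need to be preserved, each of which is an open condition verified for $i$ large.
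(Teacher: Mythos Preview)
The paper does not prove this lemma; it is quoted from \cite{KapovitchWilking2011} as a preliminary, so there is no in-paper proof to compare against. That said, your proposal takes a substantially more complicated route than the standard argument and in doing so introduces genuine gaps.

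The main problem is your reliance on the groupfication theorem (Theorem~\ref{local}) and on a finite presentation $\langle h_j \mid w_k\rangle$. Theorem~\ref{local} concerns the universal cover of a \emph{compact} space of bounded diameter and simply does not apply in the generality of the Gap Lemma. There is no reason $G(p,r_0)$ should be finite, nor that $H=\langle G(p,r)\rangle$ is finitely presented by elements of bounded displacement; your ``compactness/finiteness argument'' is never supplied. More seriously, your assertion that the relator words $w_k(h_{j,i})$ ``have displacement $\to 0$, so they act trivially on $B_{1/\epsilon_i}(p_i)$'' is false in general: small displacement at $p_i$ does not force an element to be the identity, and nothing in the hypotheses rules out small subgroups in $G_i$. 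Your conclusion that ``the relations among the $h_{j,i}$ are exactly the $w_k$'' therefore has no justification.

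The standard proof is a short contradiction argument that bypasses all of this machinery. Suppose for some fixed $\delta>0$ and infinitely many $i$ there is $g_i\in G_i(p_i,b-\delta)$ with $g_i\notin\langle G_i(p_i,a+\delta)\rangle$. Passing to a subsequence, $g_i\to g\in G$ with $d(gp,p)\le b-\delta$. By hypothesis $g\in\langle G(p,a+\delta/2)\rangle$, so write $g=h_1\cdots h_m$ with each $d(h_jp,p)\le a+\delta/2$, and set $h_{j,i}:=\psi_i(h_j)$; for large $i$ each $h_{j,i}\in G_i(p_i,a+\delta)$. Now the error $k_i:=(h_{1,i}\cdots h_{m,i})^{-1}g_i$ satisfies $d(k_ip_i,p_i)\to 0$, hence $k_i\in G_i(p_i,a+\delta)$ for large $i$, and therefore $g_i=h_{1,i}\cdots h_{m,i}\cdot k_i\in\langle G_i(p_i,a+\delta)\rangle$, a contradiction.

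Ironically, your final paragraph contains exactly this observation (``an element of tiny displacement \dots\ lies in the subgroup generated by the others''), but you treat it as a peripheral obstacle rather than recognizing it as the entire mechanism. The point is not that such elements are trivial, only that they already lie in $G_i(p_i,a+\delta)$. Drop the groupfication apparatus; the error term $k_i$ is the whole proof.
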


\subsection{Approximate groups and almost homogeneous spaces}

The references of this subsection are \cite{BGT,Zamora2020}. 

\begin{defn}
A (symmetric) local group $G$ is a topological space with the identity element $e \in G$, together with a global inverse map $()^{-1}:G \to G$ and a partially defined product map $\cdot : \Omega \to G$, satisfying the following axioms: \\
(1) $\Omega$ is an open neighborhood of $(G \times \{e\} \cup (\{e\} \times G)$ in $G \times G$. \\
(2) The map $()^{-1}: x \to x^{-1}$ and $\cdot : (x,y) \to xy$ are continuous. \\
(3) If $g,h,k \in G$ s.t. that $(gh)k$ and $g(hk)$ are well-defined, then $(gh)k=g(hk)$. \\
(4) For any $g \in G$, $eg=ge=g$. \\
(5) For any $g \in G$, $gg^{-1}$ and $g^{-1}g$ are well-defined and equal to $e$.
\end{defn}

In particular, if $\Omega = G \times G$, we call $G$ a global group or a topological group.

\begin{defn}
Let $G$ be a local group and $g_1,g_2,...,g_m \in G$. We say that the product $g_1g_2...g_m$ is well-defined, if for each $1 \le i \le j \le m$ we can find $g_{[i,j]} \in G$ s.t. $g_{[k,k]}=g_k$ for any $1 \le k \le m$ and $g_{[i,j]}g_{[j+1,k]}$ is well-defined and equal to $g_{[i,k]}$ for any $1 \le i \le j < k \le m$.
\end{defn}

For sets $A_1,A_2,...,A_m \subset G$, we say the product $A_1A_2...A_m$ is well-defined if for any choices of $g_j \in A_j$, $g_1g_2...g_m$ is well-defined.  

\begin{defn}
$A \subset G$ is called a multiplicative set if it is symmetric $A=A^{-1}$, and $A^{200}$ is well-defined.
\end{defn}

\begin{defn}
Let $A$ be a finite symmetric subset of a multiplicative set and $C \in \mathbb{N}$, $A$ is called a $C$-approximate group if $A^2$ can be covered by $C$ left translate of $A$.
\end{defn}

\begin{defn}
Let $A$ be a $C$-approximate group. We call $A$ a strong $C$-approximate group if there is a symmetric set $S \subset A$ so that \\
(1) $(\{asa^{-1} | s \in S, a \in A \})^{10^3C^3} \subset A$; \\
(2) if $g,g^2,...,g^{1000} \in A^{100}$, then $g \in A$; \\
(3) if $g,g^2,...,g^{10^3C^3} \in A^{100}$, then $g \in S$. 
\end{defn}

Consider a multiplicative set $A \subset G$. For any $g \in G$, define the escape norm as $$||g||_{A}= :\inf \{\frac{1}{m+1}|e,g,g^2,...,g^m \in A \}.$$

\begin{theorem}[escape norm estimate, \cite{BGT}] \label{escape} For each $C>0$, there is $M > 0$ s.t. if $A$ is a strong $C$-approximate group and $g_1,g_2,...,g_k \in A^{10}$, then \\
(1) $||g_1g_2...g_k||_A \le M\sum_{j=1}^k ||g_j||_A$. \\
(2) $||g_2g_1g_2^{-1}||_A \le 10^3||g_1||_A$. \\
(3) $||[g_1,g_2]||_A \le M||g_1||_A ||g_2||_A$.
\end{theorem}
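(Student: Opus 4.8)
The plan is to work throughout with the \emph{escape length} $L(g):=\sup\{n\ge 0:g^{i}\in A\ \text{for all}\ 0\le i\le n\}\in\mathbb{N}\cup\{\infty\}$, so that $\|g\|_{A}=1/(L(g)+1)$ and $L(g^{-1})=L(g)$ since $A=A^{-1}$. The recurring device is a \emph{bootstrapping} remark: axiom~(2) of a strong $C$-approximate group says a single $h$ with $h,h^{2},\dots,h^{1000}\in A^{100}$ lies in $A$, so applying it with $g=h^{i}$ for each $i$ gives: if $h,h^{2},\dots,h^{1000n}\in A^{100}$ then $h^{i}\in A$ for all $i\le n$ — slow escape out of $A^{100}$ forces slow escape out of $A$ up to a factor $1000$; the same with axiom~(3) upgrades ``$\in A$'' to ``$\in S$''. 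Part~(2) is then short and self-contained: with $h=g_{2}g_{1}g_{2}^{-1}$ and $n=L(g_{1})$ one has $h^{\,j}=g_{2}g_{1}^{\,j}g_{2}^{-1}\in A^{10}\cdot A\cdot A^{10}=A^{21}\subset A^{100}$ for every $j\le n$ (as $g_{1}^{\,j}\in A$ and $g_{2}\in A^{10}$), so the bootstrapping remark yields $h^{i}\in A$ for $i\le\lfloor n/1000\rfloor$, hence $L(h)+1\ge\lfloor n/1000\rfloor+1\ge(n+1)/1000$, i.e. $\|h\|_{A}\le 10^{3}\|g_{1}\|_{A}$; the constant is sharp because it is the ``$1000$'' of axiom~(2).

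For part~(1) the real point is that $M$ must be independent of the number $k$ of factors, so one cannot simply iterate a two-factor inequality (its constant would compound). I would aim to show $w^{s}\in A^{100}$ — and then invoke bootstrapping — for $w=g_{1}\cdots g_{k}$ and all $s$ up to a fixed fraction of $1/\sum_{j}\|g_{j}\|_{A}$; for such $s$ each $g_{j}^{\,s}$ still lies in $A$, but $w^{s}$ is a priori only a word of length $ks$ in the $g_{j}$'s, hence lies only in a huge power of $A$, and the content is a compression back into a bounded power of $A$. The mechanism is a pigeonhole inside cosets of $A$: since $A$ is a $C$-approximate group, every $A^{m}$ is covered by $C^{m-1}$ left translates of $A$, so a controlled family of partial products forced into one coset yields, on taking quotients, new short elements; applying this repeatedly along $w^{s}$, and using axioms~(1) and~(3) through the bootstrapping step to keep the conjugates that arise inside $A$, one rewrites $w^{s}$ with a bounded number of $A$-factors. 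Organising this so that the number of surviving factors is bounded \emph{independently of $k$} — in particular avoiding the naive straightening of $w^{s}$ into $g_{1}^{\,s}\cdots g_{k}^{\,s}$, which introduces $\sim(ks)^{2}$ commutators — is where the work lies.

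For part~(3) the saving must be \emph{quadratic}: parts~(1) and~(2) alone only give $\|[g_{1},g_{2}]\|_{A}=\|(g_{2}g_{1}g_{2}^{-1})g_{1}^{-1}\|_{A}\lesssim\|g_{1}\|_{A}$, and symmetrically $\lesssim\|g_{2}\|_{A}$. I would extract the extra factor from the telescoping identity
$$[g_{1}^{\,i},g_{2}]\ =\ \prod_{j=0}^{i-1}g_{1}^{-j}[g_{1},g_{2}]g_{1}^{\,j}\qquad(0\le i\le L(g_{1})),$$
together with the observation that $[g_{1}^{\,i},g_{2}^{\,j}]=g_{1}^{-i}\bigl(g_{2}^{-j}g_{1}^{\,i}g_{2}^{\,j}\bigr)\in A^{4}$ for all $i\le L(g_{1})$, $j\le L(g_{2})$ — roughly $L(g_{1})L(g_{2})$ elements of $A^{4}$. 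Covering $A^{4}$ by $C^{3}$ translates of $A$ and pigeonholing over a window $i\asymp L(g_{1})/C^{O(1)}$ (so that the chosen $i_{0}$ is comparable to $L(g_{1})$, which is what makes the final bound quadratic rather than linear), I would isolate one exponent $i_{0}\asymp L(g_{1})/C^{O(1)}$ and a set $D$ with $|D|\gtrsim L(g_{2})/C^{O(1)}$ such that $[g_{2}^{\,d},g_{1}^{\,i_{0}}]\in A^{O(1)}$ for $d\in D$; expanding $[g_{2}^{\,d},g_{1}^{\,i_{0}}]$ as $[g_{1},g_{2}]^{\,i_{0}d}$ times higher-weight iterated commutators (each of which obeys at least the linear bound, used inductively) turns this into $[g_{1},g_{2}]^{\,i_{0}d}\in A^{O(1)}$ for $d\in D$. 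A final arithmetic step — differences inside $D$ give a long progression of good exponents, and the linear bound $[g_{1},g_{2}]^{\,r}\in A$ for $r<i_{0}$ together with one more bootstrapping fills in the rest — upgrades this to $[g_{1},g_{2}]^{\,t}\in A$ for all $t\lesssim L(g_{1})L(g_{2})/C^{O(1)}$, i.e. $\|[g_{1},g_{2}]\|_{A}\lesssim C^{O(1)}\|g_{1}\|_{A}\|g_{2}\|_{A}$.

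The hard parts I expect are, for~(1), precisely the uniformity in $k$ (keeping each compression step from multiplying the number of retained $A$-factors by more than an absolute constant), and, for~(3), both the control of the higher iterated commutators in the expansion of $[g_{2}^{\,d},g_{1}^{\,i_{0}}]$ and the concluding arithmetic that converts ``many good exponents'' into ``an interval of good exponents up to a $C^{O(1)}$ loss''. These are exactly the commutator-estimate and escape phenomena underlying the structure theory of approximate groups, so in a write-up one would most cleanly cite \cite{BGT,Zamora2020} for~(1) and~(3) and record only the direct argument above for~(2).
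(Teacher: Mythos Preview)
The paper does not prove this theorem at all: it is stated with a citation to \cite{BGT} and used as a black box. Your proposal therefore goes strictly beyond what the paper does; there is nothing to compare against.

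As for the content of your sketch: your argument for (2) is complete and correct, and is indeed the standard one. For (1) and (3) you correctly identify the genuine difficulties --- uniformity in $k$ for (1), and extracting the quadratic saving for (3) --- and your outlines (pigeonhole in cosets of $A$, telescoping of iterated commutators) are along the right lines of the Breuillard--Green--Tao argument, though several steps remain to be filled in (notably, in (3), the control of the higher-weight commutators in the expansion of $[g_2^{\,d},g_1^{\,i_0}]$ requires an induction that you only gesture at, and the ``arithmetic step'' converting many good exponents into an interval is not obvious). Your own closing remark --- that in a write-up one would cite \cite{BGT} for (1) and (3) and record only the direct proof of (2) --- is exactly what the paper does, except that the paper cites \cite{BGT} for all three parts without isolating (2).
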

\begin{remark}
Due to (1) and (2), the set $w=\{g \in A| ||g||_A=0\}$ is a normal subgroup of $A$. We call that $A$ contains no small subgroup if $w$ is trivial. For any general strong approximate group $A$, $A/w$ is a strong approximate group with no small subgroup.

Readers may compare (3) in Theorem \ref{escape} with the holonomy estimates in Gromov's argument for almost flat manifolds, see \cite{BuserKarcher} Chapter 2. 
\end{remark}

\begin{defn}(nilprogression) Let $G$ be a local group, $u_1,u_2,...,u_r \in G$ and $C_1,C_2,...,C_r \in \mathbb{R}^+$. The set $P(u_1,...,u_r;C_1,...,C_r)$ is defined as the  the set of words in the $u_i$'s and their inverses such that the number of appearances of $u_i$ and $u_i^{-1}$ is not more than $C_i$. We call $P(u_1,...,u_r; C_1,...,C_r)$ a nilprogession of rank $r$ if every word in it is well defined in $G$. We say it a nilprogession in $C$-normal form  for some $C >0$ if it satisfies the following properties:\\
(1) For all $1 \le i \le j \le r$, we have
$$[u_i^{\pm 1},u_j^{\pm 1} ] \in P(u_{j+1},...,u_r; \frac{CC_{j+1}}{C_iC_j},..., \frac{CC_{r}}{C_iC_j}).$$
(2) The expression $u_1^{n_1}...u_r^{n_r}$ represent different elements in $G$ for $|n_1| \le \frac{C_1}{C},...,|n_r| \le \frac{C_r}{C}$.\\
(3) $\frac{1}{C}(2\lfloor C_1 \rfloor + 1)...(2\lfloor C_r \rfloor + 1) \le |P| \le C(2\lfloor C_1 \rfloor + 1)...(2\lfloor C_r \rfloor + 1)$.
\end{defn}

For a nilprogression $P(u_1,...,u_r;C_1,...,C_r)$ in $C$-normal form and $\epsilon \in (0,1)$, define $\epsilon P = P(u_1,...,u_r;\epsilon C_1,...,\epsilon C_r)$. Define the thickness of $P$ as the minimum of $C_1,...,C_r$ and we denote it by thick$(P)$. The set $\{u_1^{n_1}...u_r^{n_r}| |n_1| \le C_1/C,...,|n_r| \le C_r/C\}$ is called the grid part of $P$ and is denoted by $G(P)$.

\begin{defn}
Let $P(u_1,...,u_r;C_1,...,C_r)$ be a  nilprogression in $C$-normal form with thick$(P) > C$. Set $\Gamma_P$ to be the abstract group generated by $\gamma_1,...,\gamma_r$ with the relation $[\gamma_j,\gamma_k]=\gamma_{k+1}^{\beta_{j,k}^{k+1}}...\gamma_{r}^{\beta_{j,k}^{r}}$ whenever $j<k$, where $[u_j,u_k]=u_{k+1}^{\beta_{j,k}^{k+1}}...u_{r}^{\beta_{j,k}^{r}}$ and $|\beta_{j,k}^l| \le \frac{CN_l}{N_jN_k}$. We say that $P$ is good if each element of $\Gamma_P$ has a unique expression of the form $\gamma_1^{n_1}...\gamma_r^{n_r}$ with $n_1,...,n_r \in \mathbb{Z}$.
\end{defn}

\begin{theorem} [\cite{BGT,Zamora2020}] \label{Malcev}
For each $r \in \mathbb{N}, C>0$, there is $\epsilon >0$ so that the following holds. Let $P(u_1,...,u_r;C_1,...,C_r)$ be a  nilprogression in $C$-normal form. if thick$(P)$ is large enough depending on $r$ and $C$, then $P$ is good and the map $u_j \to \gamma_j$ extends to a product preserving embedding from $G(\epsilon P)$ to $\Gamma_P$. And $\Gamma_P$ is isomorphic to the lattice in a $r$-dim simply connected nilpotent Lie group $\mathcal{N}$.
\end{theorem}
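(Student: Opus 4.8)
The plan is to prove Theorem \ref{Malcev} by combining the structure-of-approximate-groups machinery with the classical Malcev embedding theorem for nilpotent groups. The statement has three parts: (a) $P$ is good, i.e. $\Gamma_P$ has the expected normal form with each element uniquely $\gamma_1^{n_1}\cdots\gamma_r^{n_r}$; (b) the map $u_j\mapsto\gamma_j$ extends to a product-preserving embedding from $G(\epsilon P)$ to $\Gamma_P$; (c) $\Gamma_P$ is a lattice in a simply connected nilpotent Lie group $\mathcal{N}$ of dimension $r$.

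\textbf{Step 1 (Goodness and the normal form).} First I would work purely with the abstract presentation of $\Gamma_P$ on generators $\gamma_1,\dots,\gamma_r$ with the bracket relations $[\gamma_j,\gamma_k]=\gamma_{k+1}^{\beta_{j,k}^{k+1}}\cdots\gamma_r^{\beta_{j,k}^r}$. The key point is that because brackets $[\gamma_j,\gamma_k]$ are expressed only in the generators $\gamma_l$ with $l>k$, the subgroup generated by $\gamma_{k+1},\dots,\gamma_r$ is normal in $\langle\gamma_k,\dots,\gamma_r\rangle$, and iterating this gives a central series; hence $\Gamma_P$ is nilpotent of class $\le r$ and every element can be written in the form $\gamma_1^{n_1}\cdots\gamma_r^{n_r}$ by repeatedly pushing generators to the right and collecting commutators (which only introduces generators further to the right, so the process terminates). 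The nontrivial claim is uniqueness of this expression. Here I would invoke the corresponding result from the approximate-groups literature (\cite{BGT,Zamora2020}): since $P$ is a nilprogression in $C$-normal form with $\mathrm{thick}(P)$ large, property (2) of $C$-normal form already says the words $u_1^{n_1}\cdots u_r^{n_r}$ are distinct in $G$ for $|n_i|\le C_i/C$, and a counting/consistency argument using property (3) (the cardinality bound on $|P|$) forces the abstract collection process to have no hidden collapse — this is precisely the content of ``goodness'' as established in those references, so I would cite it rather than reprove it.

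\textbf{Step 2 (The local embedding $G(\epsilon P)\hookrightarrow\Gamma_P$).} Once $\Gamma_P$ is good, the map $\gamma_1^{n_1}\cdots\gamma_r^{n_r}\mapsto u_1^{n_1}\cdots u_r^{n_r}$ is well-defined on all of $\Gamma_P$ as a map to the local group $G$ (using goodness to see it is well-defined on the abstract side) and is injective on the grid part $G(\epsilon P)$ for $\epsilon$ small by property (2) of normal form. To see it is product-preserving on $G(\epsilon P)$ one checks that multiplying two elements $u_1^{n_1}\cdots u_r^{n_r}\cdot u_1^{m_1}\cdots u_r^{m_r}$ and collecting to normal form uses exactly the same commutator identities (1) in $C$-normal form that define $\Gamma_P$, and provided $\epsilon=\epsilon(r,C)$ is chosen small enough all the intermediate exponents stay within the ranges where the $C$-normal-form relations are valid and all words involved remain well-defined in the local group $G$. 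This is a bookkeeping argument on exponent bounds.

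\textbf{Step 3 (Malcev: $\Gamma_P$ is a lattice in a nilpotent Lie group).} Finally, $\Gamma_P$ is a finitely generated, torsion-free (by goodness/uniqueness of normal form, no nontrivial element has finite order) nilpotent group, so by the Malcev embedding theorem it embeds as a lattice in a unique simply connected nilpotent Lie group $\mathcal{N}$; and the dimension of $\mathcal{N}$ equals the number of generators in a Malcev basis, which here is $r$ because the normal form $\gamma_1^{n_1}\cdots\gamma_r^{n_r}$ with $n_i\in\mathbb{Z}$ is exactly a Malcev basis. I expect the main obstacle to be Step 1: verifying that the abstract collecting process in $\Gamma_P$ does not collapse, i.e. that goodness genuinely holds — this is where the ``thick$(P)$ large'' hypothesis and the delicate cardinality estimate (3) in the definition of $C$-normal form are essential, and it is the part that is least formal; I would lean on \cite{BGT,Zamora2020} for the precise quantitative statement and spend most of the write-up making sure the hypotheses match.
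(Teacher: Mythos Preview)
The paper does not prove this theorem: it is stated as a cited result from \cite{BGT,Zamora2020} and no proof is given in the paper itself, so there is no ``paper's own proof'' to compare your proposal against. Your three-step sketch (goodness of $\Gamma_P$, the product-preserving embedding of $G(\epsilon P)$, and the Malcev completion) is a reasonable outline of how the result is obtained in those references, and you yourself correctly note that Step~1 is the substantive part and lean on the cited sources for it; but since the present paper treats the statement as a black box, your write-up is already more than what the paper provides.
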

\begin{remark}
During the proof of Theorem A and B, we shall use nilprogressions $P$ satisfying Theorem \ref{Malcev}. We may simply identify the grid part $G(\epsilon P)$ and $P$. Then we simply call $\Gamma_P$ in Theorem \ref{Malcev} the groupfication of $P$.
\end{remark}

In \cite{Zamora2020}, Zamora used the structure of approximates groups to study the limit of almost homogeneous spaces. A sequence of geodesic metric spaces $Z_i$ is called almost homogeneous if there are discrete isometric group actions $G_i$ on $Z_i$ with diam$(Z_i/G_i) \to 0$.
Now we assume that $(Z_i,p_i,G_i) \overset{eGH}\longrightarrow (Z,p,G)$. If we further assume that $Z$ is semi-locally simply connected,
Zamora proved that $G$ is a Lie group. 
\begin{center}
\begin{align}\label{e0}		
		\begin{CD}
			(Z_i,p_i,G_i) @>eGH>> (Z,p,G)\\
			@VV\pi V @VV\pi V\\
			M_i' @>GH>> \mathrm{pt}
		\end{CD}
\end{align}
	\end{center}
\begin{remark}
If we further assume that $Z_i$ is a RCD$(-(N-1),N)$ space, then the limit $Z$ is also a RCD$(-(N-1),N)$ space with a limit measure.  Therefore $G$ is a Lie group and  $Z$ must be semi-locally simply connected.  We may always assume that $Z_i$ and $Z$ in \ref{e0} are RCD spaces.
\end{remark}

Assume dim$(G)=r$. A small neighborhood of the identity $e \in G$ is a strong $C$ approximate group for some $C>0$. Thus for any small $\delta$, 
$$G_i(\delta)=\{g \in G_i| d(gx,x) \le \delta, \forall x \in \bar{B}_{\frac{1}{\delta}}(p_i)\}$$ is a strong $C$ approximate group.

We say that $G_i$ has no small subgroup if there is no non-trivial subgroup of $G_i$ converging to the identity $e \in G$ as $i \to \infty$. The next Theorem states that if $G_i$ has no small subgroup and equivariantly converges to a Lie group, then $G_i$ contains a large nilprogression which includes a neighborhood of the identity element $e \in G_i$.  

\begin{theorem}[\cite{BGT,Zamora2020}] \label{nil}
Assume that $G_i$ in \ref{e0} contains no small small subgroup, then for any $\epsilon>0$ sufficiently small, there exists $\delta>0$ independent of the choice of $i$, such that $G_i(\epsilon)$ contains a nilprogession $P_i(u_1,...,u_r;C_1,...,C_r)$ in $C$-normal form for some constant $C>0$ and $G_i(\delta) \subset G(P_i)$. 
\end{theorem}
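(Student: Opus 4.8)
The plan is to combine the structure theory of strong approximate groups from \cite{BGT} with the no-small-subgroup hypothesis, exactly as in \cite{Zamora2020}. First I would fix $\epsilon > 0$ small enough that $G_i(\epsilon)$ is a strong $C$-approximate group for a uniform constant $C = C(\dim G)$; this is possible because a small neighborhood of the identity in the limit Lie group $G$ is a strong $C$-approximate group (the Gleason–Yamabe structure of $G$), and $\epsilon$-equivariant closeness transfers this property to $G_i(\epsilon)$ for $i$ large. The no-small-subgroup hypothesis on $G_i$ guarantees that the normal subgroup $w = \{g \in G_i(\epsilon) : \|g\|_{G_i(\epsilon)} = 0\}$ of Theorem \ref{escape} is trivial — any nontrivial such subgroup would, by compactness and the escape-norm estimates, produce a nontrivial subgroup converging to $e \in G$, contradicting the hypothesis. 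So $G_i(\epsilon)$ is a strong $C$-approximate group with no small subgroup.

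Next I would invoke the main classification result of Breuillard–Green–Tao (\cite{BGT}, as packaged in \cite{Zamora2020}): a strong $C$-approximate group $A$ with no small subgroup contains a nilprogression $P(u_1,\dots,u_r;C_1,\dots,C_r)$ in $C'$-normal form, with $r \le$ (a bound in terms of $C$), such that $P$ and $A$ are commensurable, i.e. $A$ is covered by boundedly many translates of $G(P)$ and conversely $G(P) \subset A^{O_C(1)}$. Applying this to $A = G_i(\epsilon)$ yields the desired nilprogression $P_i$ in $C$-normal form. The rank $r$ must equal $\dim G$: since $\mathrm{diam}(M_i') \to 0$, the groups $G_i$ act with quotient diameter tending to zero, so $G_i(\epsilon)$ carries "the whole group" up to commensurability, and the dimension of the limit Lie group $G$ — which is the growth exponent of these approximate groups — forces $r = \dim G = r$. (Alternatively, this is precisely the identification carried out in \cite{Zamora2020}.)

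Finally, for the inclusion $G_i(\delta) \subset G(P_i)$ I would argue by contradiction with a diagonal/compactness argument: if no uniform $\delta > 0$ worked, there would be elements $g_i \in G_i(\delta_i) \setminus G(P_i)$ with $\delta_i \to 0$, hence $g_i \to e \in G$; but the escape-norm estimates of Theorem \ref{escape} together with the commensurability $G(P_i) \subset G_i(\epsilon)^{O_C(1)}$ and the normal-form property (2) of $P_i$ force every element of $G_i(\epsilon)$ with small enough escape norm to lie in the grid part $G(P_i)$, giving the contradiction once $\delta$ is chosen small depending only on $C$ and $\epsilon$ (and hence not on $i$).

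\emph{Main obstacle.} The routine parts are the transfer of the strong-approximate-group property and the invocation of \cite{BGT}; the delicate point is pinning down that the rank $r$ of the nilprogression equals $\dim G$ and that a \emph{uniform} (in $i$) neighborhood $G_i(\delta)$ of the identity is captured inside the grid part $G(P_i)$ — this is where the quotient-diameter-to-zero hypothesis (diagram \ref{e0}) and the no-small-subgroup condition must be used together, rather than the mere existence of \emph{some} nilprogression inside $G_i(\epsilon)$.
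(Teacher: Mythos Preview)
Your proposal is correct in outline, and since Theorem~\ref{nil} is a cited result the paper does not give a full proof either; it only sketches the construction. The two sketches differ in emphasis, and the difference is worth noting.

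You invoke the Breuillard--Green--Tao classification as a black box: a strong $C$-approximate group with no small subgroup contains a nilprogression commensurable with it, and then you argue separately (by growth/dimension matching and a contradiction argument) that the rank is $\dim G$ and that a uniform $G_i(\delta)$ sits inside $G(P_i)$. The paper instead recalls the \emph{constructive} inductive procedure behind the BGT theorem: pick $u_1 \in G_i(\epsilon)$ of minimal escape norm; the commutator estimate $\|[u_1,g]\|_A \le M\|u_1\|_A\|g\|_A$ from Theorem~\ref{escape} forces $u_1$ to be central (since any nontrivial commutator would have strictly smaller escape norm); the cyclic group $\langle u_1\rangle$ converges to a one-parameter subgroup in the center of $G$; then quotient and repeat. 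This gives the rank $r=\dim G$ directly, and because each $u_j$ is chosen with minimal escape norm at its stage, the inclusion $G_i(\delta)\subset G(P_i)$ falls out of the construction rather than requiring a separate compactness argument.

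So your identification of the ``main obstacle'' is accurate, but the paper's route sidesteps it: the inductive escape-norm construction produces the nilprogression with the inclusion built in, whereas your black-box route must recover the inclusion after the fact. Both are valid; the constructive version is what the paper actually uses downstream (e.g.\ in the affine-structure-group argument at the end of Section~6, where the ordering of the $u_j$ by escape norm is essential).
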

\begin{remark}
The original statement of Theorem \ref{nil} in \cite{BGT,Zamora2020} is purely algebraic using ultralimits. It is convenient in this paper to state it geometrically using equivariant convergence. 
\end{remark}

We first clarify the notations of exponential maps. If $G$ is a Lie group with Lie algebra $\mathfrak{g}$ and  a left-invariant metric, we denote by
$$\mathrm{exp}_G : \mathfrak{g} \to G$$
as the Lie group exponential map. For the identity element $e$, define the Riemannian exponential map at $e$:
$$\mathrm{exp}_e : T_eG \to G,$$
where $T_e G$ is the tangent space to $G$ at the identity. 

We briefly recall how to construct the nilprogression in Theorem \ref{nil}. Choose $\epsilon$ small enough so that the set
$$G(10\epsilon)=\{g \in G| d(gx,x) \le 10\epsilon, \forall x \in \bar{B}_{\frac{1}{10\epsilon}}(p)\} $$ 
is connected and $\exp^{-1}_G$ from $G(10\epsilon)$ to the Lie algebra of $G$ is diffeomorphic. 

Since $G_i(\epsilon)$ is a strong approximate group with no small subgroup, in particular, the escape norm is always non-zero. we find the element $u_1$ with the smallest escape norm. We may assume the diameter of $Z_i$ is small enough so that there are generators of $G_i$ with norm $< 1/M$, where $M$ is a constant obtained from Theorem \ref{escape}. Then by (3) in Theorem \ref{escape}, the commutator $[u_1,g]$ is trivial for any $g$ in the chosen generators; otherwise we get a non-trivial element whose escape norm is strictly less than $u_1$'s, a contradiction. In particular, $u_1$ must lie in the center of $G_i(\delta)$. 
The group generated by $u_1$, $\langle u_1 \rangle$, converges to an one-parameter subgroup in the center of $G$. By taking quotient groups and applying an induction argument, we can construct the nilprogression $P_i$.

An important observation from the above construction is that the nilpotent structure of $P_i$ is determined by the escape norm of $G_i(\epsilon)$. We shall use this observation to prove that structure group is affine in Theorem B.


Next recall the structure theory for  nilpotent Lie groups and their Lie algebras.
\begin{defn}
Let $\mathfrak{g}$ be a nilpotent Lie algebra. We say that an ordered basis $\{v_1,...,v_r\}$ of $\mathfrak{g}$ is a strong Malcev basis if for any $1 \le k \le r$, the vector subspace $J_k$ generated by $\{v_{k+1},...,v_r\}$ is an ideal, and $v_k + J_k$ is in the center of $\mathfrak{g}/J_k$. 
\end{defn}
\begin{theorem}\label{str}
Let $\mathcal{N}$ be a $r$-dim simply connected nilpotent Lie group and $\mathfrak{g}$ be its Lie algebra with a strong Malcev basis $\{v_1,...,v_r\}$. Then: \\
(1) $\mathrm{exp}_{\mathcal{N}}: \mathfrak{g} \to \mathcal{N}$ is a diffeomorphism; \\
(2) $\phi: \mathbb{R}^r \to \mathcal{N}$ given by $\phi(x_1,...,x_r)=\mathrm{exp}_{\mathcal{N}}(x_1v_1)...\mathrm{exp}_{\mathcal{N}}(x_rv_r)$ is a diffeomorphism; \\
(3) if we identify $\mathfrak{g}$ with $\mathbb{R}^r$ by the given basis, then $\mathrm{exp}_{\mathcal{N}}^{-1} \circ \phi : \mathfrak{g} \to \mathfrak{g}$ and $\phi^{-1} \circ \mathrm{exp}_{\mathcal{N}_i}: \mathfrak{g} \to \mathfrak{g}$  are polynomials of degree bounded by a number depending only on $r$.
\end{theorem}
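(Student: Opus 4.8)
The statement to prove is Theorem \ref{str}, the structure theory for simply connected nilpotent Lie groups equipped with a strong Malcev basis. Let me plan a proof.

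\textbf{Approach.} The three claims are standard consequences of the nilpotent structure, and the natural route is induction on the dimension $r$, peeling off the central ideal $J_1$ (or rather the one-parameter subgroup corresponding to $v_1$ modulo $J_1$ sitting in the center). The engine for (1) is that $\exp$ is a bijection for simply connected nilpotent Lie groups — this is classical (Malcev), but if we want a self-contained argument we build it up from the central-extension structure. The engine for (2) and (3) is the Baker--Campbell--Hausdorff formula, which for a nilpotent Lie algebra is a \emph{polynomial} map (finitely many terms), and the observation that in exponential coordinates all the group operations become polynomial of degree controlled by the nilpotency length, hence by $r$.

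\textbf{Step 1 (claim (1)).} Induct on $r$. For $r=1$ the group is $\mathbb{R}$ and $\exp$ is the identity. For the inductive step, let $Z = \exp_{\mathcal N}(\mathbb{R} w)$ where $w$ spans the center-direction we split off; more conveniently, use $J := J_{r-1} = \mathbb{R} v_r$, which by the strong Malcev condition is a central ideal, so $\exp_{\mathcal N}(J) \cong \mathbb{R}$ is a central normal subgroup, and $\mathcal N / \exp_{\mathcal N}(J)$ is a simply connected nilpotent Lie group of dimension $r-1$ with Lie algebra $\mathfrak g / J$ and induced strong Malcev basis $\{\bar v_1, \dots, \bar v_{r-1}\}$. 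By induction $\exp$ is a diffeomorphism for the quotient. Then one checks that $\mathcal N$ is a central extension of $\mathbb{R}^{r-1}$ (as a manifold) by $\mathbb{R}$; since $H^2(\mathbb{R}^{r-1};\mathbb{R})$ controls such extensions and the bundle is trivial as a smooth bundle, $\mathcal N \cong \mathbb{R}^{r-1} \times \mathbb{R}$ as a manifold, and a diagram chase with the naturality square
\[
\begin{CD}
J @>{\exp}>> \exp_{\mathcal N}(J) \\
@VVV @VVV \\
\mathfrak g @>{\exp_{\mathcal N}}>> \mathcal N \\
@VVV @VVV \\
\mathfrak g/J @>{\exp}>> \mathcal N/\exp_{\mathcal N}(J)
\end{CD}
\]
shows $\exp_{\mathcal N}$ is a bijection; smoothness of the inverse follows since $\exp_{\mathcal N}$ is a local diffeomorphism at every point (its differential is invertible — again an induction, or cite that for nilpotent groups $d\exp$ has no kernel because the adjoint action is nilpotent).

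\textbf{Step 2 (claim (2)).} Define $\psi = \exp_{\mathcal N}^{-1}\circ \phi : \mathbb{R}^r \to \mathfrak g$. Using BCH, $\exp(x_1 v_1)\cdots \exp(x_r v_r) = \exp\big(P(x_1,\dots,x_r)\big)$ where $P$ is obtained by iterating the (polynomial, finite) BCH series; because $\{v_k\}$ is a strong Malcev basis, the bracket of anything with $v_j$ lands in $J_j = \mathrm{span}(v_{j+1},\dots,v_r)$, so $P(x) = \sum_k Q_k(x_1,\dots,x_{k-1}, x_k)\, v_k$ where each $Q_k$ is a polynomial that is affine — in fact of the form $x_k + (\text{polynomial in } x_1,\dots,x_{k-1})$ — in the last variable $x_k$. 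Hence $\psi$ has \emph{triangular} form with diagonal entries equal to $x_k$ plus lower-order stuff, so its Jacobian is unipotent and $\psi$ is a polynomial diffeomorphism of $\mathbb{R}^r$ with polynomial inverse (invert the triangular system variable by variable). Composing with the diffeomorphism $\exp_{\mathcal N}$ from (1) gives that $\phi$ is a diffeomorphism.

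\textbf{Step 3 (claim (3)).} This is essentially already established: $\exp_{\mathcal N}^{-1}\circ\phi = \psi$ is polynomial of degree bounded by the number of BCH terms one can have in an $r$-step-nilpotent algebra, which is bounded in terms of $r$ alone (the nilpotency length is $\le r$, and each BCH iteration produces brackets of bounded length). For the inverse $\phi^{-1}\circ\exp_{\mathcal N} = \psi^{-1}$, solve the triangular system: $x_1$ is linear in the first coordinate of $\exp^{-1}$, and $x_k$ is $k$-th coordinate minus a polynomial in the already-solved $x_1,\dots,x_{k-1}$; substituting recursively gives a polynomial whose degree is again bounded in terms of $r$. (In the application in Theorems A and B, one uses this uniform degree bound along a sequence $\mathcal N_i$ whose Lie algebras converge, so ``depending only on $r$'' is exactly what is needed.)

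\textbf{Main obstacle.} The genuinely substantive point is claim (1), the global bijectivity of $\exp_{\mathcal N}$ — the surjectivity in particular is not formal and is where simple-connectedness enters. I would either invoke Malcev's theorem directly or, for a self-contained treatment, carry out the central-extension induction above carefully, making sure the smooth triviality of the $\mathbb{R}$-bundle $\mathcal N \to \mathcal N/\exp_{\mathcal N}(J)$ is justified (it has a global smooth section because the base is $\mathbb{R}^{r-1}$). Everything else — claims (2) and (3) — is a bookkeeping exercise with BCH once (1) is in hand, the only care being to track that the strong Malcev condition makes all the relevant maps triangular, which is what yields both invertibility and the degree bound.
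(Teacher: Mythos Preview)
The paper does not actually prove Theorem \ref{str}; it is stated without proof in the preliminaries as a standard structural fact about simply connected nilpotent Lie groups (see, e.g., the classical Malcev theory or the references \cite{BGT,Zamora2020} the paper cites nearby). Your outline is a correct and standard route to this classical result: the induction on $r$ via a central one-dimensional ideal for (1), and the triangular BCH computation for (2) and (3), are exactly how one proves it.

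One small comment on your Step 2: you should be careful about which variable the ``lower-order stuff'' depends on. With the convention in the paper (where $J_k = \mathrm{span}(v_{k+1},\dots,v_r)$ and $v_k + J_k$ is central in $\mathfrak{g}/J_k$), brackets involving $v_j$ land in $J_j$, so the $v_k$-coefficient of $\exp^{-1}\big(\exp(x_1 v_1)\cdots\exp(x_r v_r)\big)$ is $x_k$ plus a polynomial in $x_1,\dots,x_{k-1}$ --- this is what you wrote, and it is correct for this ordering convention. Just make sure the triangularity goes the right way when you invert. Otherwise the argument is fine and nothing more is needed.
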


In the diagram \ref{e0}, for sufficiently large $i$, by Theorem \ref{Malcev} and \ref{nil}, the (grid part of) nilprogression $P_i$ can be identified as a generating set of a lattice in simply connected nilpotent group $\mathcal{N}_i$. Let $g_{i,j}= u_j^{\lfloor C_j/C \rfloor}$ and $v_{i,j}$ in the Lie algebra $\mathfrak{g}_i$ of $\mathcal{N}_i$ such that
$$\mathrm{exp}_{\mathcal{N}_i}(v_{i,j})=g_{i,j},  \ 1 \le j \le r.$$ 
Then $\{ v_{i,j} , 1 \le j \le r\}$ is a strong Malcev basis of $\mathfrak{g}_i$. For any $j$, passing to a subseqeunce if necessary, assume $g_{i,j} \to g_j \in G$ and choose $v_{j}$ in the Lie algebra $\mathfrak{g}$ of $G$ such that $\mathrm{exp}_{G}(v_{j})=g_{j}$.

For any fixed $i$, since $\{ v_{i,j} , 1 \le j \le r\}$ is a strong Malcev basis of $\mathfrak{g}_i$, then $\phi_i: \mathbb{R}^r \to \mathcal{N}_i$, as in Theorem \ref{str}, is a diffeomorphism. Now we identity $(x_1,x_2,...,x_r) \in \mathbb{R}^r$ as $\sum_{j=1}^r x_jv_{i,j} \in \mathfrak{g}_i$, and define 
$$Q_i: \mathbb{R}^r \times \mathbb{R}^r \to \mathbb{R}^r, \ Q_i(x,y)=\phi_i^{-1}(\phi_i(x)\phi_i(y)).$$ 
Similarly define $Q$ for $G$ and $\{ v_{j} , 1 \le j \le r\}$. Roughly speaking, $Q_i$ and $Q$ decide Lie algebra structure of $\mathcal{N}_i$ and $G$ respectively.

\begin{theorem}[Lie algebra structure convergence, \cite{Zamora2020}] \label{Q}
For sufficiently large $i$, $Q_i$ and $Q$ are all polynomials of degree $\le d(r)$ and coefficients of $Q_i$ converge to corresponding ones of $Q$.
\end{theorem}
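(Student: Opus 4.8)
The plan is to prove three things in turn: (i) $Q_i$ and $Q$ are polynomial of a degree $d(r)$ depending only on $r$; (ii) for such polynomials, coefficient-wise convergence is equivalent to pointwise convergence on a fixed finite integer grid; and (iii) that pointwise convergence holds, via a direct group-theoretic computation using the equivariant convergence $G_i\to G$ and the structure of the nilprogressions $P_i$.

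For (i): in exponential coordinates of the first kind the multiplication of a simply connected nilpotent Lie group of dimension $r$ is given by the Baker--Campbell--Hausdorff formula, a polynomial of degree at most the nilpotency class $\le r$; by Theorem \ref{str}(3) the transition maps $\exp_{\mathcal N_i}^{-1}\circ\phi_i$ and $\phi_i^{-1}\circ\exp_{\mathcal N_i}$ between first- and second-kind coordinates are polynomial of degree bounded in terms of $r$; composing these three polynomial expressions shows $Q_i\colon\mathbb R^r\times\mathbb R^r\to\mathbb R^r$ is polynomial of degree $\le d(r)$, with $d(r)$ independent of $i$. The same applies to $Q$, using that (as in the setup preceding the theorem) $G$ is a simply connected nilpotent Lie group of dimension $r$ with strong Malcev basis $\{v_j\}$. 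For (ii): a polynomial map $\mathbb R^r\times\mathbb R^r\to\mathbb R^r$ of degree $\le d(r)$ is determined by its values on the finite grid $\Lambda=\{0,1,\dots,d(r)\}^r\times\{0,1,\dots,d(r)\}^r$, and its coefficients are fixed ($i$-independent) linear combinations of those values; so it suffices to show $Q_i(x,y)\to Q(x,y)$ for each $(x,y)\in\Lambda$.

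For (iii), I would fix $(x,y)\in\Lambda$ and write $m_{i,j}=\lfloor C_j/C\rfloor$, so that $v_{i,j}=m_{i,j}\log_{\mathcal N_i}u_j$ and $\exp_{\mathcal N_i}(t\,v_{i,j})=u_j^{\,t m_{i,j}}$ for $t\in\mathbb R$. For integer $x$ the element $\phi_i(x)=\prod_j g_{i,j}^{x_j}=\prod_j u_j^{x_j m_{i,j}}$ is a genuine element of $\langle u_1,\dots,u_r\rangle\le G_i$; since $g_{i,j}\to g_j$, the displacements of $\phi_i(x)$ and $\phi_i(y)$ are bounded uniformly in $i$ and $\phi_i(x)\phi_i(y)\to\phi(x)\phi(y)=\phi(Q(x,y))$ in $G$. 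By the unique-expression property of the lattice $\Gamma_{P_i}$ (Theorem \ref{Malcev}) we may write $\phi_i(x)\phi_i(y)=\prod_j u_j^{N^{(i)}_j}$ with $N^{(i)}_j\in\mathbb Z$; since $\phi_i$ is injective and $\phi_i(z)=\prod_j u_j^{z_j m_{i,j}}$, this forces $Q_i(x,y)_j=N^{(i)}_j/m_{i,j}$. These numbers are bounded uniformly in $i$ (as $\phi_i(x)\phi_i(y)$ stays in a fixed ball and, by the equivariant convergence $G_i\to G$, an element $\prod_j u_j^{N_j}$ of $G_i$ lying in a fixed ball has $|N_j|/m_{i,j}$ bounded), so along any subsequence $Q_i(x,y)\to a^{*}$ for some $a^{*}\in\mathbb R^r$. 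By the construction of $P_i$ recalled after Theorem \ref{nil}, the rescaled orbit maps $s\mapsto u_j^{\lfloor s m_{i,j}\rfloor}$ converge, uniformly on bounded intervals, to one-parameter subgroups $\sigma_j(s)=\exp_G(s v_j)$ of $G$; hence, since $Q_i(x,y)_j m_{i,j}=N^{(i)}_j\in\mathbb Z$,
\[
\phi_i(x)\phi_i(y)=\prod_j u_j^{\lfloor Q_i(x,y)_j m_{i,j}\rfloor}\longrightarrow \prod_j\sigma_j(a^{*}_j)=\phi(a^{*}).
\]
Comparing with $\phi_i(x)\phi_i(y)\to\phi(Q(x,y))$ and using that $\phi$ is injective (Theorem \ref{str} applied to $G$), we get $a^{*}=Q(x,y)$. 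Thus every subsequential limit of $Q_i(x,y)$ equals $Q(x,y)$, so $Q_i(x,y)\to Q(x,y)$; by (ii) the coefficients of $Q_i$ converge to those of $Q$.

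I expect the main obstacle to be the convergence of the rescaled orbit maps $s\mapsto u_j^{\lfloor s m_{i,j}\rfloor}$ to one-parameter subgroups of $G$ — equivalently, that the rescaled generators $g_{i,j}=u_j^{m_{i,j}}$ remain at unit scale and their directions converge to $v_j$, uniformly enough to be passed through the finite products above. This is precisely where the fine structure of the nilprogression enters: the inductive construction (take the element of smallest escape norm, which is central modulo the already-treated lower levels, then pass to a quotient) forces $\langle u_j\rangle$ to converge to a one-parameter subgroup, while the $C$-normal-form bounds $|\beta^l_{jk}|\le CC_l/(C_jC_k)$ together with the choice $m_{i,j}=\lfloor C_j/C\rfloor$ keep the relevant directions from collapsing to the identity or escaping to infinity, so that all the limits above exist and are compatible with the Lie group structure of $G$. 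The uniform bound on $Q_i(x,y)$ in step (iii) and the uniform degree bound $d(r)$ in step (i) rest on these same features.
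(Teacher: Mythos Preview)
The paper does not prove Theorem~\ref{Q}; it is quoted from \cite{Zamora2020} as a black box and no argument is given in this paper. So there is no ``paper's own proof'' to compare against.

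That said, your outline is a reasonable reconstruction of how such a statement is proved. Part (i) is correct and standard: BCH in first-kind coordinates is polynomial of degree at most the nilpotency step, and Theorem~\ref{str}(3) gives a uniform degree bound for the change between first- and second-kind coordinates, so the composite degree $d(r)$ depends only on $r$. Part (ii), the reduction of coefficient convergence to pointwise convergence on a fixed integer grid, is a clean observation and is exactly the right move once (i) is in hand.

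Part (iii) is where the content lies, and you have correctly located the crux: the statement that the rescaled orbit maps $s\mapsto u_j^{\lfloor s\,m_{i,j}\rfloor}$ converge, uniformly on bounded intervals, to one-parameter subgroups $s\mapsto\exp_G(sv_j)$ of $G$. This is not a formal consequence of the equivariant convergence $G_i\to G$ alone; it requires the inductive construction of the nilprogression (smallest escape norm first, central modulo lower levels, pass to quotient) together with the $C$-normal-form bounds, and in Zamora's paper it is established carefully level by level. Your last paragraph is honest about this, but as written it is a description of why one expects the claim to hold rather than a proof. Two further points to watch: you use that $\{v_j\}$ is a strong Malcev basis of $\mathfrak g$ so that $\phi$ is a diffeomorphism and Theorem~\ref{str} applies to $G$; in the present paper this is immediate since $G=\mathbb R^{N}$ (or $\mathbb R^{N-k}$) in every application, but in the general setting one must argue it from the inductive construction. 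Also, the uniform bound on $|N^{(i)}_j|/m_{i,j}$ from ``$\prod_j u_j^{N_j}$ lies in a fixed ball'' is again not automatic from equivariant convergence and needs the same one-parameter-subgroup convergence you are trying to establish; be careful not to make the argument circular there.
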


\section{Constructing a GHA map which is locally almost splitting}

In this section, we want to generalize  main results in \cite{Huang2020} from the smooth case to a weaker version in the RCD case. 

\begin{theorem}[smooth fibration, \cite{Huang2020}] \label{fibration}
Given $n,v>0$, there exists $\epsilon(n,v) > 0$ and $C(n)>0$ so that consider a compact $n$-manifold $M$ and a $k$-manifold $K$ satisfying:  \\ $\text{Ric}_M \ge -(n-1)$, $(1,v)$-bound covering geometry holds on $M$, $|\text{sec}_K| \le 1$, $\text{inj}_K \ge 1$, $d_{GH}(M,K)<\epsilon < \epsilon(n,v)$. \\
Then there is a smooth fiber bundle map $f:M \to K$ which is a $\Phi(\epsilon|n,v)$-GHA, where $\Phi(\epsilon|n,v) \to 0$ as $\epsilon \to 0$.
\end{theorem}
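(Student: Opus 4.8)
The plan is to follow the Cheeger--Fukaya--Gromov construction of a fibration from local splitting data, using the bounded covering geometry at the single step where the construction could genuinely fail, namely the non-degeneracy of the differential. I would argue by contradiction: suppose the statement is false, so that there are $\epsilon_i\to 0$, compact $n$-manifolds $M_i$ with $\mathrm{Ric}_{M_i}\ge-(n-1)$ and $(1,v)$-bounded covering geometry, and $k$-manifolds $K_i$ with $|\sec_{K_i}|\le 1$, $\mathrm{inj}_{K_i}\ge 1$, $d_{GH}(M_i,K_i)<\epsilon_i$, admitting no smooth fiber bundle map $M_i\to K_i$ that is a $\Phi(\epsilon_i)$-GHA; after a subsequence $K_i\to K$ smoothly. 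Fix a GHA $\bar f_i\colon M_i\to K_i$ from the hypothesis, a finite atlas $\{\psi_\alpha\colon U_\alpha\to B_2(0^k)\}$ of $K_i$ with uniformly bounded geometry (available since $|\sec_{K_i}|\le1$, $\mathrm{inj}_{K_i}\ge1$), and a subordinate partition of unity $\{\rho_\alpha\}$ with uniformly bounded derivatives.

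First I would produce local almost splitting maps. For $p\in M_i$ with $\bar f_i(p)\in U_\alpha$, the ball $B_3(p)$ is $\Phi(\epsilon_i)$-GH close to $B_3(0^k)\subset\mathbb{R}^k$ (compose $\bar f_i$ with $\psi_\alpha$ and use that $K_i$ is locally almost Euclidean). By the almost splitting theory of Cheeger--Colding \cite{CheegerColding1997,CheegerColding2000a} (solve the Dirichlet problem for the approximate coordinate functions, or run the heat flow and invoke the segment and Abresch--Gromoll inequalities) there is a harmonic $(k,\delta_i)$-splitting map $u_{p,\alpha}\colon B_2(p)\to\mathbb{R}^k$ with $\delta_i\to0$, which is $\Phi(\epsilon_i)$-close to $\psi_\alpha\circ\bar f_i$ on $B_2(p)$ after composing with an affine motion of $\mathbb{R}^k$; on overlaps, the $u_{p,\alpha}$ differ by $C^1$-small perturbations of the (bounded-geometry) transition maps of $K_i$.

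Next I would glue these to a global map. Using the partition of unity on $K_i$, define $f_i\colon M_i\to K_i$ by the center-of-mass averaging of \cite{CFG}, which makes sense because $K_i$ has bounded geometry and each $u_{p,\alpha}$ is smooth; standard estimates then show $f_i$ is a $\Phi(\epsilon_i)$-GHA and that, read in any chart on any unit ball, $f_i$ is a $(k,\delta_i')$-splitting map with $\delta_i'\to0$ and with $\fint_{B_s}|\nabla\Delta f_i|$ small (an average of harmonic $(k,\delta)$-splitting maps with controlled pairwise differences is almost splitting and almost harmonic). In particular the hypotheses of the transformation and canonical Reifenberg theorems (Theorems \ref{transformation} and \ref{Rei}) are met for $f_i$ on each unit ball.

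The main obstacle, and the only place the covering geometry is used, is to show that $d(f_i)_x\colon T_xM_i\to T_{f_i(x)}K_i$ is surjective for every $x$ and all large $i$, with a lower bound depending only on $n,v$. Fix $x$, work in a chart so that $f_i\colon B_1(x)\to\mathbb{R}^k$ is a $(k,\delta_i')$-splitting map, and pass to the local relative cover $\pi\colon\widetilde B(x,1,3)\to B_1(x)$ with deck group $G_{i,x}$, which is non-collapsed by the $(1,v)$-bounded covering geometry, $\mathrm{Vol}(B_{1/3}(\tilde x))\ge v$. By the covering lemma (Lemma \ref{covering}, in the local form implicit in Theorem \ref{precompact}) the integral gradient and Hessian bounds pull back, so $\tilde f_i:=f_i\circ\pi$ is an almost $k$-splitting map with $\fint|\nabla\Delta\tilde f_i|$ small on $B_1(\tilde x)$ in the non-collapsed \emph{smooth} space $\widetilde B(x,1,3)$. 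Now the smooth non-degeneracy statement of Theorem \ref{Rei} (after rescaling to unit scale, the non-collapsed almost splitting forces the $T_s$-normalized differential to converge to a genuine rank-$k$ linear map) gives that $d\tilde f_i$ is non-degenerate at $\tilde x$ with a bound depending only on $n,v$; since $\pi$ is a local isometry, so is $d(f_i)_x$. Hence $f_i$ is a smooth submersion, and being proper (as $M_i$ is compact) it is a smooth fiber bundle by Ehresmann's theorem, with fibers of diameter $\to 0$ by the GHA property — contradicting the choice of $M_i$. I expect the delicate point to be precisely this quantitative non-degeneracy on the local cover: it fails under a mere Ricci lower bound (Anderson's examples \cite{Anderson1992}), and the hypothesis $\mathrm{Vol}(B_{1/3}(\tilde p))\ge v$ enters essentially because only a non-collapsed splitting prevents the differential from degenerating in a collapsed direction.
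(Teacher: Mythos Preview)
Your approach is essentially the one the paper sketches (which is itself a recall of Huang's argument in \cite{Huang2020}): build local harmonic almost $k$-splitting maps, glue by center of mass to a smooth GHA that is locally almost $k$-splitting, then use the non-collapsed local cover to show the differential has full rank and conclude by Ehresmann. The paper phrases the middle step as ``$df$ agrees with the differential of a linear average of local almost $k$-splitting maps'' (see the discussion before Theorem~\ref{local split}), which is the same content as your claim that the glued map is itself almost $k$-splitting with small $\fint|\nabla\Delta f_i|$.

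One imprecision: you invoke Theorem~\ref{Rei} for the non-degeneracy of $d\tilde f_i$ on the cover, but as stated that theorem applies to an $(N,\epsilon)$-splitting map into $\mathbb{R}^N$, not to a $(k,\delta)$-splitting map into $\mathbb{R}^k$. The missing step is that on the non-collapsed cover $\widetilde B(x,1,3)$ the ball $B_1(\tilde x)$ is $\Phi(\epsilon_i|n,v)$-GH close to $B_1(0^N)$, so one can complete $\tilde f_i$ by adding $N-k$ further harmonic functions $\tilde h$ so that $(\tilde f_i,\tilde h)$ is an $(N,\delta_i'')$-splitting map; then Theorem~\ref{Rei} (or the transformation Theorem~\ref{transformation} with its lower-triangular $T_s$) gives that $d(\tilde f_i,\tilde h)$ is non-degenerate, hence the first $k$ rows $d\tilde f_i$ have rank $k$. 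This completion step is exactly what the paper carries out later in Lemma~\ref{trans}, and it is what makes your parenthetical ``rank-$k$ linear map'' remark rigorous.
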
 

Assume a group $G$ isometrically acts two metric spaces $X_1$ and $X_2$ separately, we call a map $h:X_1 \to X_2$ $\epsilon$-almost $G$-equivariant if $d(h(gx),gh(x)) < \epsilon$ for any $ x \in X_1, g \in G$. 

\begin{theorem}[stability for compact group actions, \cite{Huang2020}] \label{sta}
There exists $\epsilon(n)>0$ so that the following holds for any $\epsilon < \epsilon(n)$. Assume that $M$ and $K$ are compact $n$-manifolds so that $|\mathrm{sec}_K| \le 1$, $\mathrm{inj}_K \ge 1$, $\mathrm{Ric}_M \ge -(n-1)$. The group $G$ acts isometrically on $M$ and $K$ separately and there is $\epsilon$-GHA $h:M \to K$ which is $\epsilon$-almost $G$-equivariant. Then there exists a $G$-equivariant diffeomorphism $f:M \to K$, that is $f(gx)=gf(x)$ for any $x \in M$ and $g \in G$, which is a $\Phi(\epsilon|n)$-GHA. 
\end{theorem}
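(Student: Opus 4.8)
The plan is to first construct a $G$-equivariant smooth map $f : M \to K$ by averaging the given $\epsilon$-almost equivariant GHA $h$ over the compact group $G$, and then to verify that $f$ is a diffeomorphism and a $\Phi(\epsilon\,|\,n)$-GHA. For the averaging step I would equip $K$ with its natural smooth structure and exploit that $|\mathrm{sec}_K|\le 1$ and $\mathrm{inj}_K\ge 1$, so that geodesic convexity and the center-of-mass (Karcher mean) construction are available on balls of definite radius: define $f(x)$ to be the Riemannian center of mass of the pushforward of the Haar probability measure on $G$ under $g\mapsto g^{-1}h(gx)$. Since $h$ is $\epsilon$-almost $G$-equivariant, all the points $g^{-1}h(gx)$ for $g\in G$ lie within $O(\epsilon)$ of $h(x)$, hence inside a ball on which the center of mass is well-defined, smooth in $x$, and depends smoothly on the data; a direct check of the defining equation $\int_G \exp_{f(x)}^{-1}\big(g^{-1}h(gx)\big)\,dg = 0$ shows $f(g'x) = g' f(x)$ for all $g'\in G$, i.e. $f$ is genuinely $G$-equivariant. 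By construction $d(f(x),h(x)) = O(\epsilon)$, so $f$ is a $\Phi(\epsilon\,|\,n)$-GHA.

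Next I would argue that $f$ is a diffeomorphism. Since $f$ is a $\Phi(\epsilon\,|\,n)$-GHA between two closed $n$-manifolds, it is surjective up to $\Phi(\epsilon)$-density and "almost injective." To upgrade this to an actual diffeomorphism I would use that $f$ is $\Phi(\epsilon\,|\,n)$-close (in a $C^0$, and after the smoothing estimates a $C^1$) sense to a map that, locally, looks like an isometry: on a ball $B_1(x)\subset M$, the bounded covering geometry of $M$ together with $|\mathrm{sec}_K|\le1$, $\mathrm{inj}_K\ge1$ forces $df$ to be $\Phi(\epsilon)$-close to an orthogonal map, hence nondegenerate, so $f$ is a local diffeomorphism; being a proper local diffeomorphism between closed manifolds it is a covering map, and being a $\Phi(\epsilon)$-GHA it has degree one, hence is a diffeomorphism. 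The nondegeneracy of $df$ is exactly the point where one invokes the Abresch–Gromov type gradient/Hessian estimates for $h$ (obtained from harmonic/almost-splitting approximations in the $(1,v)$-covering setting) that control $f$ up to first order; this parallels the corresponding step in the smooth fibration theorem of \cite{Huang2020}.

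The main obstacle is precisely the $C^1$-control needed for the nondegeneracy of $df$: the GHA hypothesis on $h$ is only metric ($C^0$), so one must first replace $h$ (or the averaged $f$) by a map with genuine derivative estimates. Here the role of the local bounded covering geometry is essential — lifting to the local universal cover $\widetilde{B_1}(p)$, which is non-collapsed, one obtains almost-splitting harmonic coordinates with uniform Hessian bounds (Theorem \ref{transformation}), descends them back to $M$, and uses them to show that after a harmonic smoothing $f$ has $df$ within $\Phi(\epsilon\,|\,n)$ of an isometry at every point. Once this derivative estimate is in hand, the covering-map/degree argument is routine and the equivariance is preserved because the smoothing, the averaging, and the center-of-mass construction are all canonical and hence $G$-natural.
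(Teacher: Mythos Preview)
Your overall strategy---average $h$ over $G$ via the Riemannian center of mass to produce a genuinely $G$-equivariant map, then show the differential is nondegenerate---matches the paper's sketch (and the proof of the RCD analogue, Theorem~\ref{inv}). Two points deserve correction.

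First, a concrete error: you invoke ``local bounded covering geometry'' and propose lifting to the universal cover $\widetilde{B_1}(p)$ to get non-collapsed almost-splitting coordinates. But Theorem~\ref{sta} does \emph{not} assume any bounded covering hypothesis, and none is needed: here $M$ and $K$ are both $n$-manifolds and $d_{GH}(M,K)<\epsilon$ with $\mathrm{inj}_K\ge 1$, so $M$ is automatically non-collapsed (unit balls in $M$ have volume close to those in $K$). You can construct harmonic $(n,\Phi(\epsilon|n))$-splitting maps directly on balls in $M$, with no lifting. You have confused this with Theorem~\ref{fibration}, where $\dim M>\dim K$ and the bounded covering geometry is genuinely needed to pass to a non-collapsed cover.

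Second, a difference of order that explains the $C^0/C^1$ obstacle you raise. The paper (see the proof of Theorem~\ref{inv}) first replaces the raw GHA $h$ by a map $f$ built from glued local harmonic almost-$n$-splitting maps via center of mass with cutoffs, so that $f$ already has the required Hessian control; \emph{then} it averages $g\mapsto gf(g^{-1}x)$ over $G$ and checks, by comparing to the linear average in $T_{y_0}K$, that the result $f_G$ remains almost-$n$-splitting on every ball. With this order the $C^1$ control is present before the $G$-averaging, and the canonical Reifenberg method (Theorem~\ref{Rei}) then gives nondegeneracy of $df_G$ directly. Your order---average the non-smooth $h$ first, then try to smooth---creates exactly the difficulty you flag, and your proposed fix routes through an unavailable hypothesis. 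Reversing the order, as the paper does, resolves it cleanly.
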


We briefly recall the proof of Theorem \ref{fibration} and \ref{sta}. In Theorem \ref{fibration}, the injective radius of $K$ is at least $1$. Then locally we can identify a small ball in $K$ as an open subset in the tangent space of $K$ at some point. By our assumption that $M$ is GH close to a manifold $K$, locally we can construct almost $k$-splitting maps from a small open neighborhood in $M$ to the tangent space of $K$ at some point. 

To construct a globally-defined map $f:M \to K$, we can glue and average these local almost splitting maps using some cut-off functions and the center of mass technique. Then we have a smooth GHA $f:M \to K$. Then Huang showed that, at any point $p \in M$, $df$  is the same as the differential of a local almost $k$-splitting function, which is constructed by the linear average. Then Huang proved that the differential of any almost $k$-splitting map is non-degenerate under  $(1,v)$-bound covering condition. Thus $df$ is non-degenerate and $f$ is a fibration map by the implicit function theorem. 

The proof of Theorem \ref{sta} follows a similar approach. Huang construct a $G$-equivariant map using the center of mass and applies the canonical Reifenberg method to show that the map is a diffeomorphism.
  
In the non-smooth RCD case, we have no implicit function theorem. However, we can prove the following two theorems using ideas from \cite{Huang2020}. For any metric space $(X,d)$ and $p \in X$, $r>0$, we use $rX$ or $(X,rd)$ for the rescaled metric on $X$. Then $rB_{\frac{1}{r}}(p)$ is actually a unit ball in $rX$.
  
\begin{theorem}[Almost $k$-splitting] \label{local split}
Assume that a sequence of compact RCD$(-(N-1),N)$ spaces $(X_i,d_i,\mathfrak{m})$ converges to a smooth compact $k$-manifold $(K,g)$  in the Gromov-Hausdorff sense. 
Then for sufficiently large $i$, there is a continuous GHA $f_i:X_i \to K$ which is local almost $k$-splitting, i.e., for any $\delta > 0$, $p_i \in X_i$ close to $p \in K$, and $i$ large enough,
$$\frac{1}{\delta}B_{\sqrt{\delta}}(p_i) \overset{f_i}\longrightarrow \frac{1}{\delta}B_{\sqrt{\delta}}(p) \overset{exp_p^{-1}}\longrightarrow T_{p}K= \mathbb{R}^k$$ is a $(k,\delta)$-splitting $\delta$-GHA. 
\end{theorem}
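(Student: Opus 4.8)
The plan is to adapt, to the RCD setting, the construction underlying Huang's Theorems \ref{fibration} and \ref{sta} in \cite{Huang2020}, retaining only the part of it that does not require the implicit function theorem. Fix an $\epsilon_i$-GHA $h_i\colon X_i\to K$ with $\epsilon_i\to0$, and use that the smooth compact manifold $K$ has injectivity radius $\ge i_0>0$ and $|\mathrm{sec}_K|\le\Lambda$, so that for $0<r_0\ll\min(i_0,\Lambda^{-1/2})$ each $\exp_p$, $p\in K$, is a diffeomorphism of $B_{2r_0}(0)\subset T_pK$ onto $B_{2r_0}(p)$ distorting distances by at most $O(\Lambda r_0^3)$, and the weighted Riemannian centre of mass in $K$ of any family of points in a ball of radius $\ll\min(i_0,\Lambda^{-1/2})$ is well defined and smooth in those points. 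First I choose a maximal $\tfrac{r_0}{10}$-net $\{p_\alpha\}$ in $K$ of multiplicity $\le C(k)$, together with points $p_{i,\alpha}\in X_i$ with $d(h_i(p_{i,\alpha}),p_\alpha)$ small. Since $B_{2r_0}(p_{i,\alpha})$ is, with error controlled by $\epsilon_i$, GH close to $B_{2r_0}(p_\alpha)$, and $B_{2r_0}(p_\alpha)$ is $O(\Lambda r_0^3)$-GH close to $B_{2r_0}(0^k)$, for $r_0$ small and $i$ large the ball $B_{2r_0}(p_{i,\alpha})$ is, after rescaling to unit size, arbitrarily close to a Euclidean $k$-ball; so by the standard equivalence between such closeness and the existence of almost splitting maps (\cite{BNS,CheegerColding1997,CheegerColding2000a,CheegerNaber2015}) there are harmonic $(k,\tau_i)$-splitting maps $u_{i,\alpha}\colon B_{2r_0}(p_{i,\alpha})\to\mathbb{R}^k$ with $\tau_i\to\tau(r_0|N)$ and $\tau(r_0|N)\to0$ as $r_0\to0$; post-composing each with an orthogonal map and a translation, I normalise so that $u_{i,\alpha}(p_{i,\alpha})=0$ and $u_{i,\alpha}$ is uniformly close to $\exp_{p_\alpha}^{-1}\circ h_i$ on $B_{2r_0}(p_{i,\alpha})$.

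Next, using the good cut-off functions available on any RCD$(K,N)$ space, I build a partition of unity $\{\rho_\alpha\}$ on $X_i$ subordinate to $\{B_{r_0}(p_{i,\alpha})\}$ with $r_0|\nabla\rho_\alpha|+r_0^2|\Delta\rho_\alpha|\le C(N)$, and I define $f_i(x)$ to be the unique minimiser over $y\in K$ of $\sum_\alpha\rho_\alpha(x)\,d_K\bigl(y,\exp_{p_\alpha}(u_{i,\alpha}(x))\bigr)^2$, i.e.\ the weighted centre of mass in $K$ of the points $\exp_{p_\alpha}(u_{i,\alpha}(x))$. This is well defined because the points with $\rho_\alpha(x)>0$ all lie in a ball of radius $O(r_0)$, and $f_i$ is Lipschitz, hence continuous, as a composition of Lipschitz harmonic maps, smooth exponential maps, Lipschitz cut-offs, and the smooth centre-of-mass map. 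Each $\exp_{p_\alpha}(u_{i,\alpha}(x))$ lies within $\Psi(\tau_i|N)+O(\Lambda r_0^3)$ of $h_i(x)$ --- since $u_{i,\alpha}\approx\exp_{p_\alpha}^{-1}\circ h_i$ and a $(k,\tau_i)$-splitting map is a $\Psi(\tau_i|N)$-GHA onto its image --- so the same holds for their centre of mass, giving $d_K(f_i(x),h_i(x))$ uniformly small; combined with $h_i$ being a GHA this shows $f_i$ is a GHA, and letting $r_0=r_0(i)\to0$ slowly enough forces its error to $0$.

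It remains to verify the local almost-splitting property. Fix $\delta>0$, $p\in K$, and $p_i\to p$ in $X_i$; take $i$ large. On the small ball $B_{\sqrt\delta}(p_i)$ only boundedly many $\rho_\alpha$ are nonzero, and since $\sqrt\delta\ll r_0$ each varies by $O(\sqrt\delta/r_0)$, so after rescaling by $1/\delta$ the weights may be treated as constants up to vanishing error; on this ball the maps $\exp_p^{-1}\circ\exp_{p_\alpha}$ and the transition maps between the $u_{i,\alpha}$ --- all of which approximate $\exp_{p_\alpha}^{-1}\circ h_i$ --- are $O(\Lambda\sqrt\delta)+\Phi(\epsilon_i)$-close to rigid motions; and applying the transformation theorem (Theorem \ref{transformation}, with $k$ in place of $N$, the collapsed version being proved identically) at the scale $\sqrt\delta$ replaces each $u_{i,\alpha}$ by a lower-triangular modification $T_{i,\alpha}u_{i,\alpha}$ that is $(k,\delta_i')$-splitting on $B_{\sqrt\delta}(p_{i,\alpha})$ with $\delta_i'\to0$ and with gradients that are exactly $L^2$-orthonormal at that scale. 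Then $\exp_p^{-1}\circ f_i$ differs, on $\tfrac1\delta B_{\sqrt\delta}(p_i)$ after rescaling, by $\Phi(\epsilon_i,\delta_i',\sqrt\delta,r_0|N,K)$ from a constant-coefficient affine combination of the $T_{i,\alpha}u_{i,\alpha}$ with near-orthogonal coefficient matrices, whose gradient and scale-$\sqrt\delta$ Hessian estimates follow from those of the $u_{i,\alpha}$ and the slow variation of $\{\rho_\alpha\}$ (the cross terms $\nabla\rho_\alpha\otimes\nabla u_{i,\alpha}$ and $\mathrm{Hess}\,\rho_\alpha$ contributing only powers of $\sqrt\delta/r_0$). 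Hence $\exp_p^{-1}\circ f_i$ is a $(k,\delta)$-splitting $\delta$-GHA on $\tfrac1\delta B_{\sqrt\delta}(p_i)$ for $i$ large.

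The hard part is precisely this last verification: that the Riemannian centre-of-mass average of the local harmonic splitting maps is again an almost splitting map with the correct normalisation. It breaks into (i) controlling the deviation of the centre of mass in $K$ from the Euclidean average read in a fixed normal chart of $K$ --- where the bounded geometry of $K$ is used essentially --- and (ii) showing that a slowly varying weighted combination of $(k,\delta)$-splitting maps whose pairwise transitions are close to rigid motions is still $(k,\Phi(\delta))$-splitting, the delicate point being the Hessian integral bound, which needs both the slow variation of $\{\rho_\alpha\}$ at the scale $\sqrt\delta$ and the precise normalisation supplied by the transformation theorem. This is exactly the computation carried out in \cite{Huang2020} in the smooth case; it uses only tools available on RCD spaces (good cut-off functions, the transformation theorem, and the canonical correspondence between splitting maps and Gromov--Hausdorff approximations), so it transfers with no essential change, and --- unlike in the smooth setting --- one needs here no statement about $df_i$ being non-degenerate, that being postponed to the proof of Theorem B where the bounded covering geometry enters.
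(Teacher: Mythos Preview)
Your overall strategy --- build $f_i$ as a Riemannian centre of mass of local harmonic almost-splitting maps and then verify the splitting estimate by comparing with a linear average --- is the same as the paper's. Two choices you make, however, diverge from the paper and introduce complications.

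First, you place the partition of unity $\{\rho_\alpha\}$ on the RCD space $X_i$, whereas the paper places smooth cut-offs $\Lambda_j$ on the target manifold $K$ and uses $\Lambda_j(h_j(x))$ as weights. Since $K$ is smooth, the $\Lambda_j$ have arbitrarily many bounded derivatives, and the weights inherit controlled Laplacian and gradient of Laplacian from those of the $h_j$. Your cut-offs on $X_i$ only come with $|\nabla\rho_\alpha|$ and $|\Delta\rho_\alpha|$ bounds; the term $\mathrm{Hess}\,\rho_\alpha$ you invoke in the cross-terms is \emph{not} a priori controlled for the standard RCD good cut-offs, so that step needs justification or a workaround.

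Second, you work at a fixed scale $r_0$ and then appeal to ``Theorem \ref{transformation}, with $k$ in place of $N$, the collapsed version being proved identically'' in order to descend to scale $\sqrt\delta$. The paper's Theorem \ref{transformation} is stated only for non-collapsed $(N,\epsilon)$-splitting maps; a $k<N$ analogue does exist in the literature but requires that every intermediate ball be $(k,\epsilon)$-symmetric, a hypothesis you should state and verify rather than assert as ``proved identically''. The paper bypasses this entirely by rescaling at the outset so that $\mathrm{inj}_K\ge 10/\epsilon$ and $K$ is $\epsilon^2$-$C^4$-close to flat on $\frac{1}{\epsilon}$-balls; then the construction and the verification take place at a single scale, the $C^3$-closeness of the centre of mass to the Euclidean linear average follows directly from the $C^4$-closeness of $K$ to flat, and no transformation theorem is needed.

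In short: your argument is repairable, but the paper's two devices --- cut-offs on $K$ pulled back by the local GHAs, and an initial rescaling so that one never has to change scale --- make both of your delicate steps unnecessary.
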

\begin{proof}
Take a small number $\epsilon<1$. We may assume that the injective radius of $K$ is at least $\frac{10}{\epsilon}$ and $X_i$ is RCD$(-(N-1)\epsilon^2,N)$. We also assume that for any $p \in K$, $B_{\frac{10}{\epsilon}}(p)$ is $\epsilon^2$-$C^4$-close, by $\mathrm{exp}_p^{-1}$, to its pre-image in $T_pK$ with the flat metric. Otherwise we can consider $(K,rg)$ for sufficiently large $r$, then $(X_i,rd_i)$ will still converges to $(K,rg)$. 

Let $\Phi(\epsilon|k,N)$ denote a function which converges to $0$ as $\epsilon \to 0$, for fixed $k,N$. The value of $\Phi(\epsilon|k,N)$ may vary depending on the specific case.
Take any large $i$ such that $d_{GH}(X_i,K) < \epsilon$. Our goal is to construct a $\Phi(\epsilon|k,N)$-GHA $f_i: X_i \to K$ which is $(k,\Phi(\epsilon|k,N))$-splitting on any $\frac{1}{\epsilon}$ ball. Once this is established, we just take $\epsilon$ small enough so that $\Phi(\epsilon|k,N)< \delta$, thereby completing the proof.

Let $\{p^j\}_{j=1,2,...J}$ be a $\frac{1}{\epsilon}$-net in $K$ and find $p_i^j \in X_i$ $\epsilon$-close to $p^j \in K$ for each $j$. $\Lambda_{j}$ is a cut-off function on $K$ such that $\Lambda_j(B_{\frac{1}{\epsilon}} (p^j)) =1$ and $\mathrm{supp}(\Lambda_j) \subset B_{\frac{2}{\epsilon}}(p^j)$. We may assume $|\nabla^l \Lambda_j| \le \Phi(\epsilon|k,N)$, $l=1,2,3$. Let $B_{ij}=B_{\frac{2}{\epsilon}}(p_i^j)$, $B_j=B_{\frac{2}{\epsilon}}(p^j)$, $B_j^{-1}$ be the  pre-image of $B_j$ in $T_{p^j}K$ with the flat metric.  

$B_{ij}$ is $\epsilon$-GH close to $B_j$, $B_j$ is $\epsilon^2$-$C^4$-close to the $B^{-1}_j$ with the flat metric. Take a smaller radius if necessary, we can construct a $\Phi(\epsilon|k,N)$-GHA 
$$h_j : B_{ij} \longrightarrow B_j,$$
such that $\mathrm{exp}_{p^j}^{-1} \circ h_j: B_{ij} \to B_j^{-1} \subset \mathbb{R}^k$ is a harmonic $(k,\Phi(\epsilon|k,N))$-splitting map.

Take the energy function $E : X_i \times K \to \mathbb{R}$ as follows, 
$$E(x,y) = \frac{\sum_{j=1}^J \Lambda_j(h_j(x)) d(h_j(x),y)^2}{\sum_{j=1}^J \Lambda_j(h_j(x))}.$$
Since $\mathrm{supp}(\Lambda_j)$ is contained in the image of $h_j$, $\Lambda_j(h_j(x))$ is well-defined for any $x \in X_i$ by a $0$ extension outside of the support. Since $h_j$ is a GHA, the image of all $\{h_j(x)\}_{j=1,2,...,J}$ (if defined) is contained in a $\Phi(\epsilon|k,N)$-ball for any fixed $x \in X_i$. Let $y' \in K$ be a point close to $x$, then $E(x,\cdot)$ is strictly convex in $B_{\frac{1}{\epsilon}}(y')$   and achieve a global minimum at some $y \in B_{\frac{1}{\epsilon}}(y')$, which is the center of mass with respect to $E$. Define $f_i(x)= y$, then $f_i$ is a $\Phi(\epsilon|k,N)$-GHA.

We next show that $f_i$ is $(k,\Phi(\epsilon|k,N))$-splitting map on any $\frac{1}{\epsilon}$-ball. For any $x_0 \in X_i$, take $y_0=f(x_0)$. There exists at most $C(N)$ many points $p^j$ in the net contained in $B_{\frac{4}{\epsilon}}(y_0)$, saying $j_1,j_2,j_3,...,j_C$. Then the value of $f_i$ on $B_{\frac{1}{\epsilon}}(x_0)$ only depends on $h_j$ and $\Lambda_j$ for $j=j_1,j_2,...,j_C$.

Consider the energy function on the product space, $\tilde{E}: \prod_{l=1}^C B_{j_l} \times K \to \mathbb{R}$,
$$  \tilde{E}(\prod_{l=1}^C y_l,y) = \frac{\sum_{l=1}^C \Lambda_{j_l}(y_l) d(y_l,y)^2}{\sum_{l=1}^C \Lambda_{j_l}(y_l)}, \forall y \in K, y_l \in B_{j_l}, l=1,2,...,C.$$ 
For any $\prod_{l=1}^C y_l \in \prod_{l=1}^C B_{j_l}$, define $\tilde{h}(\prod_{l=1}^C y_l)$ to be the center of mass with respect to $\tilde{E}$. Then by the definition,
$$f_i(x)=\tilde{h}(h_{j_1}(x),h_{j_2}(x),...,h_{j_C}(x)), \ \forall x \in B_{\frac{1}{\epsilon}}(x_0).$$

Now consider the center of mass on the Euclidean space, which is a linear average. Define 
$$\bar{h}: \prod_{l=1}^C B_{j_l} \to T_{y_0}K, \ \bar{h}(\prod_{l=1}^C y_l) = \frac{\sum_{l=1}^C \Lambda_{j_l}(y_l) \mathrm{exp}_{y_0}^{-1}(y_l) }{\sum_{l=1}^C \Lambda_{j_l}(y_l)}.$$ 
Then $\mathrm{exp}_{y_0}^{-1} \circ \tilde{h}$ is $\Phi(\epsilon|k,N)$-$C^3$ close to $\bar{h}$, since the metric on $B_{\frac{10}{\epsilon}}(y_0)$ is $\epsilon^2$-$C^4$ close the a $\frac{10}{\epsilon}$-ball in $T_{y_0}K$ with the flat metric; the center of mass in flat $\mathbb{R}^k$ is the linear average. 

For any $1 \le l \le C$, $\mathrm{exp}_{p^{j_l}}^{-1} \circ h_{j_l}$ is a harmonic $\Phi(\epsilon|k,N)$-GHA and $\mathrm{exp}_{y_0}^{-1} \circ \mathrm{exp}_{p^{j_l}}$ is $\Phi(\epsilon|k,N)$-$C^3$-close to the a linear isometric action on $B_{\frac{3}{\epsilon}}(0^k)$ by our assumption, thus
$$\mathrm{exp}_{y_0}^{-1} \circ h_{j_l} =\mathrm{exp}_{y_0}^{-1} \circ \mathrm{exp}_{p^{j_l}} \circ \mathrm{exp}_{p^{j_l}}^{-1} \circ h_{j_l} $$
is a $\Phi(\epsilon|k,N)$-GHA with 
$$| \Delta (\mathrm{exp}_{y_0}^{-1} \circ h_{j_l})| \le \Phi(\epsilon|k,N), | \nabla \Delta (\mathrm{exp}_{y_0}^{-1} \circ h_{j_l})| \le \Phi(\epsilon|k,N)$$ 
on $B_{\frac{1}{\epsilon}}(y_0)$.

Since $|\nabla^l \Lambda_j|\le \Phi(\epsilon|k,N)$, $l=1,2,3$, the linear average 
$$\bar{h} \circ (h_{j_1},h_{j_2},...,h_{j_C}) = \frac{\sum_{l=1}^C \Lambda_{j_l}(h_{j_l}(x)) \mathrm{exp}_{y_0}^{-1}(h_{j_l}(x)) }{\sum_{l=1}^C \Lambda_{j_l}(h_{j_l}(x))}: B_{\frac{1}{\epsilon}}(x_0) \to T_{y_0}K$$ is a $\Phi(\epsilon|k,N)$-GHA with 
$$| \Delta (\bar{h} (h_{j_1},h_{j_2},...,h_{j_C}))| \le \Phi(\epsilon|k,N), | \nabla \Delta (\bar{h} (h_{j_1},h_{j_2},...,h_{j_C})| \le \Phi(\epsilon|k,N)$$ 
on $B_{\frac{1}{\epsilon}}(y_0)$. In particular, $\bar{h} (h_{j_1},h_{j_2},...,h_{j_C})$ is a $(k,\Phi(\epsilon|k,N))$-splitting map on $B_{\frac{1}{\epsilon}}(x_0)$.
 
Since $\mathrm{exp}_{y_0}^{-1} \circ \tilde{h}$ is $\Phi(\epsilon|k,N)$-$C^3$-close to $\bar{h}$, $\mathrm{exp}_{y_0}^{-1} \circ f_i = \mathrm{exp}_{y_0}^{-1} \circ \tilde{h}(h_{j_1},h_{j_2},...,h_{j_C})$ is also a $(k,\Phi(\epsilon|k,N))$-splitting map on $B_{\frac{1}{\epsilon}}(x_0)$.
\end{proof}

\begin{remark}
We can also use the embedding argument in \cite{HondaPeng2024} to prove Theorem \ref{fibration}. 
By the Nash embedding theorem, we can isometrically embed $K$ into some $\mathbb{R}^n$ where $n$ only depends on $k$. The embedding map is $\Phi=(\phi_1,\phi_2,...,\phi_n):K \hookrightarrow \mathbb{R}^n$. Let $\pi$ be the projection map from a neighborhood of $K \subset \mathbb{R}^n$ to $K$. By \cite{HondaPeng2024}, if $i$ is large enough, we can construct $\Psi^i=(\psi_1^i,\psi_2^i,...,\psi_n^i): X_i \to \mathbb{R}^n$ so that $f=  \Phi^{-1} \circ \pi \circ \Psi^i$ is a GHA and $\Delta \psi^i_j$ is $H^{1,2}$ close to $\Delta \phi_i$ for any $1 \le j \le n$.
Then $f$ is locally almost $k$-splitting if we sufficiently blow up the metric.
\end{remark}

We next state a $G$-stability result in the RCD setting. 

\begin{theorem}[$G$-equivariant] \label{inv}
There exists $\epsilon(N)$ such that for any $0<\epsilon \le \epsilon(N)$, the following holds. For any compact RCD$(-(N-1)\epsilon^2,N)$ space $(X,d,\mathfrak{m})$ and $k$-manifold $K$ satisfying that $\mathrm{inj}_K \ge \frac{10}{\epsilon}$ and for any $p \in K$, $B_{\frac{10}{\epsilon}}(p)$ is $\epsilon^2$-$C^4$-close, by $\mathrm{exp}_p^{-1}$, to its preimage in the tangent space $T_pK$ with the flat metric. Assume that a map $h:X \to K$ is an $\epsilon$-GHA map on any $\frac{10}{\epsilon}$-ball in $X$, that is for any $x \in X$, $h:B_{\frac{10}{\epsilon}}(x) \to K$ is an $\epsilon$-GHA to its image. A finite group $G$ acts isometrically on $X$ and $K$ separately and $h$ is $\epsilon$-almost $G$-equivariant. 
Then there is a $G$-equivariant map $f_G:X \to K$, which is also a $\Phi(\epsilon|k,N)$-GHA and locally $(k,\Phi(\epsilon|k,N))$-splitting on any $\frac{1}{\epsilon}$-ball.
\end{theorem}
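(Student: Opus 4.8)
The plan is to adapt the center-of-mass averaging argument of Theorem \ref{local split} so that the averaging respects the $G$-action, using that $G$ is finite. First I would fix a $G$-invariant $\frac{1}{\epsilon}$-net in $K$: start with any $\frac{1}{\epsilon}$-net $\{p^j\}$ and replace it by its $G$-orbit $\{gp^j : g\in G, j\}$, which is still an $\epsilon^{-1}$-net (and a $C'$-net for some $C'=C'(N,\epsilon)$); choose lifts $p_i^j\in X$ that are $G$-compatible in the sense that we pick one representative per orbit and define the others by the $G$-action, using that $h$ is $\epsilon$-almost $G$-equivariant to guarantee $gp_i^j$ is $\Phi(\epsilon|k,N)$-close to $gp^j$. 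For each net point $p^j$ one builds, exactly as in Theorem \ref{local split}, a local harmonic $(k,\Phi)$-splitting GHA $h_j:B_{2/\epsilon}(p_i^j)\to B_j\subset K$ with $\mathrm{exp}_{p^j}^{-1}\circ h_j$ splitting; for the translated net points one sets $h_{gj} := g\circ h_j\circ g^{-1}$ so the whole family is tautologically $G$-equivariant. Choose the cut-off functions $\Lambda_j$ on $K$ to be $G$-invariant as a family, i.e. $\Lambda_{gj}=\Lambda_j\circ g^{-1}$, which is possible since $\mathrm{inj}_K\ge 10/\epsilon$ and the net is $G$-invariant.

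Next I would define the energy function
\[
E(x,y)=\frac{\sum_j \Lambda_j(h_j(x))\, d(h_j(x),y)^2}{\sum_j \Lambda_j(h_j(x))},\qquad x\in X,\ y\in K,
\]
and set $f_G(x)$ to be its center of mass in $y$, which is well-defined and unique by strict convexity because all the relevant points $h_j(x)$ lie in a $\Phi(\epsilon|k,N)$-ball (as in Theorem \ref{local split}) and $\mathrm{inj}_K\ge 10/\epsilon$. The key point is $G$-equivariance: for $g\in G$ one has $h_{gj}(gx)=g\,h_j(x)$ and $\Lambda_{gj}(h_{gj}(gx))=\Lambda_j(h_j(x))$, so reindexing $j\mapsto gj$ in the sum gives $E(gx,gy)=E(x,y)$; since $g$ is an isometry of $K$ it maps the unique minimizer of $E(x,\cdot)$ to the unique minimizer of $E(gx,\cdot)$, hence $f_G(gx)=g f_G(x)$. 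That $f_G$ is a $\Phi(\epsilon|k,N)$-GHA and is locally $(k,\Phi(\epsilon|k,N))$-splitting on every $\frac{1}{\epsilon}$-ball is then proved verbatim as in Theorem \ref{local split}: restrict attention to the $\le C(N)$ net points meeting $B_{4/\epsilon}(f_G(x_0))$, compare $\mathrm{exp}_{y_0}^{-1}\circ f_G$ with the linear (Euclidean) average $\bar h$ of the maps $\mathrm{exp}_{y_0}^{-1}\circ h_{j_l}$, using that $B_{10/\epsilon}(y_0)$ is $\epsilon^2$-$C^4$-close to a flat ball, that each $\mathrm{exp}_{y_0}^{-1}\circ h_{j_l}$ is a GHA with $|\Delta(\cdot)|,|\nabla\Delta(\cdot)|\le\Phi(\epsilon|k,N)$, and that $|\nabla^l\Lambda_j|\le\Phi$ for $l=1,2,3$; the center of mass in the flat model is exactly the linear average, so $\mathrm{exp}_{y_0}^{-1}\circ f_G$ is $\Phi$-$C^3$-close to a $(k,\Phi)$-splitting map and hence itself $(k,\Phi)$-splitting.

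I expect the main technical obstacle to be the bookkeeping that makes the local data genuinely $G$-invariant without degrading the estimates: one must ensure the enlarged orbit-net is still a controlled net (the multiplicity $|G|$ enters, so $C(N)$ in the ``$\le C(N)$ net points in $B_{4/\epsilon}(y_0)$'' step should really be $C(N,|G|)$, or one argues that nearby orbit points give essentially the same local splitting map and can be merged), and that the cut-offs and lifts $h_{gj}=g h_j g^{-1}$ are mutually consistent where supports overlap — i.e. that choosing one representative per $G$-orbit and propagating by $g$ is well-defined, which uses that the stabilizers act compatibly on the corresponding balls. Since $G$ is finite this is harmless for the qualitative statement (all error terms remain of the form $\Phi(\epsilon|k,N)$ with implied dependence on $|G|$, or one absorbs $|G|\le C(N)$ from the Margulis-type bounds in the applications), but it is the step requiring care. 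Everything else is a direct transcription of the proof of Theorem \ref{local split}, now with an equivariant input family, followed by the observation that the center of mass of a $G$-invariant energy is $G$-equivariant.
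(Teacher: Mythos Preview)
Your approach is conceptually natural but differs from the paper's and runs into a genuine difficulty with the order $|G|$. The paper's proof is a \emph{two-step} construction. First, it runs the argument of Theorem \ref{local split} verbatim (with only $\le C(N)$ local pieces, entirely ignoring $G$) to produce a map $f:X\to K$ that is a $\Phi(\epsilon|k,N)$-GHA and $(k,\Phi)$-splitting on every $\tfrac{1}{\epsilon}$-ball; since $f$ is $\Phi$-close to $h$, each conjugated map $x\mapsto gf(g^{-1}x)$ is itself a $\Phi$-GHA and $(k,\Phi)$-splitting with the \emph{same} constants. Second, it averages these $|G|$ maps with the \emph{constant-weight} energy
\[
E(x,y)=\frac{1}{|G|}\sum_{g\in G} d\bigl(gf(g^{-1}x),y\bigr)^2,
\]
and takes $f_G(x)$ to be the center of mass. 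Because the weights $1/|G|$ are constant, the comparison with the Euclidean linear average $\bar h(x)=\tfrac{1}{|G|}\sum_g \exp_{y_0}^{-1}(gf(g^{-1}x))$ introduces no $\nabla(\text{weight})$ terms; and a convex combination of $|G|$ splitting maps that are pairwise $\Phi$-close in $L^\infty$ (hence have $L^2$-close gradients by a Caccioppoli-type estimate) remains $(k,\Phi')$-splitting with $\Phi'$ independent of $|G|$.

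In your one-step scheme the averaging weights $\Lambda_j(h_j(x))$ are non-constant, and the number of active orbit-net points near a given $x$ scales with $|G|$ whenever $G$ has many elements of small displacement. The term $\sum_l(\nabla w_l)(v_l-\bar v)$ in $\nabla f_G$ is then of size roughly $|G|\cdot\Phi^2$, which spoils the splitting estimate unless $|G|$ is controlled. Your proposed fix ``$|G|\le C(N)$ from the Margulis-type bounds'' does not apply here: in the paper's principal application (the proof of Theorem A) one invokes Theorem \ref{inv} with $G=G_i'/G_i''$, where $G_i''$ is a normal subgroup chosen so that $G_i''\cap B_{1/\epsilon}(e)=\emptyset$; the index $|G_i'/G_i''|$ is finite but carries no a priori bound, and the Remark following Theorem \ref{inv} explicitly stresses that there is no control on $|G|$. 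Your alternative suggestion, merging nearby orbit points, might be salvageable but would require a careful argument that $\sum\lambda_l$ and the number of nonzero $\lambda_l$ are comparable; the paper's two-step construction sidesteps the issue entirely by separating the local gluing (bounded multiplicity) from the $G$-averaging (constant weights).
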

\begin{remark}
In Theorem \ref{inv}, there is no control of the diameter of $X$ or the order $|G|$. Instead we only need a global map $h$ and the gluing is local. If we further assume $k=N$ and $\epsilon$ small enough, then $f_G$ is biH\"{o}lder on any $\frac{1}{\epsilon}$-ball due to Theorem \ref{Rei}.
\end{remark}
\begin{proof}
We can use the same proof of Theorem \ref{local split} to construct a map $f:X \to K$ which is $(k,\Phi(\epsilon|k,N))$-splitting and a $\Phi(\epsilon|k,N)$-GHA on any $\frac{1}{\epsilon}$-ball. Notice that the condition that $h:X \to K$ is an $\epsilon$-GHA map on any $\frac{10}{\epsilon}$-ball is enough for the construction in Theorem \ref{local split} as the gluing and averaging procedure is local.

Since $f(x)$ is close to $h(x)$ for any $x \in X$, we have $d(f(gx),gf(x)) \le \Phi(\epsilon|k,N)$ for any $x \in X$ and $g \in G$. In particular, for any $g \in G$,
$$gf(g^{-1}x):X \to K$$
is a $\Phi(\epsilon|k,N)$-GHA.

Since $f$ is $(k,\Phi(\epsilon|k,N))$-splitting on any $\frac{1}{\epsilon}$-ball and $G$-actions are isometric on $X$ and $K$, thus for  any $g \in G$, the map $gf(g^{-1}x):X \to K$
is also $(k,\Phi(\epsilon|k,N))$-splitting on any $\frac{1}{\epsilon}$-ball. 

Now we average $G$ actions by the center of mass. Take the energy function 
$$E(x,y): X \times K \to \mathbb{R}, \ E(x,y)= \frac{\sum_{g \in G} d(gf(g^{-1}x),y)^2}{|G|}.$$ For any fixed $x \in X$, $E(x,\cdot)$ in strictly convex in $B_{\frac{1}{\epsilon}}(f(x))$ thus there is a global minimum point $y$. Define $f_G(x)=y$, then $f_G$ is $G$-equivariant due to the uniqueness of the minimal point. $f_G$ is also a $\Phi(\epsilon|k,N)$-GHA.

We next show that $f_G$ is $(k,\Phi(\epsilon|k,N))$-splitting on every $\frac{1}{\epsilon}$-ball by a similar argument in the proof of Theorem \ref{local split}. For any $x_0 \in X$ and $y_0=f_G(x_0)$. Define the energy function on the product space, $\tilde{E}: \prod_{l=1}^{|G|} B_{\frac{1}{\epsilon}}(y_0) \times K \to \mathbb{R}$ by
$$ \tilde{E}(\prod_{l=1}^{|G|} y_l,y) = \frac{\sum_{g \in G} d(y_l,y)^2}{|G|}, y \in K, y_l \in B_{\frac{1}{\epsilon}}(y_0), l=1,...,|G|. $$ 
Then define $\tilde{h}(\prod_{l=1}^{|G|} y_l)$ to be the center of mass with respect to $\tilde{E}$. Then $f_G(x)=\tilde{h}(\prod_{g \in G} gf(g^{-1}x))$ by the definition.

Now consider the center of mass on the Euclidean space, which is a linear average. Define $\bar{h}: \prod_{l=1}^{|G|} B_{\frac{1}{\epsilon}}(y_0) \to T_{y_0}K$ by $\bar{h}(\prod_{l=1}^{|G|}  y_l) = \frac{\sum_{l=1}^{|G|} \mathrm{exp}_{y_0}^{-1}(y_l)}{|G|}$. Then $\mathrm{exp}_{y_0}^{-1} \circ \tilde{h}$ is $\Phi(\epsilon|k,N)$ -$C^3$-close to $\bar{h}$ as the metric on $B_{\frac{10}{\epsilon}}(y_0)$ is $\epsilon^2$-$C^4$-close the a $B_{\frac{10}{\epsilon}}$ ball in $T_{y_0}K$ with the flat metric. 

The linear average $\bar{h}(\prod_{g \in G} gf(g^{-1}x)) : B_{\frac{1}{\epsilon}}(x_0) \to T_{y_0}K$ is $(k,\Phi(\epsilon|k,N))$-splitting by a similar argument as in Theorem \ref{local split}, therefore 
$$\mathrm{exp}_{y_0}^{-1} \circ f_G= \mathrm{exp}_{y_0}^{-1} \circ \tilde{h}(\prod_{g \in G} gf(g^{-1}x))$$ is also $(k,\Phi(\epsilon|k,N))$-splitting on $B_{\frac{1}{\epsilon}}(x_0)$.
\end{proof}

\section{Proof of Theorem A: construct an infranil-manifold} \label{proof A}

We prove Theorem A in this section.
\begin{thm1}
Given $N,v>0$, there exists $\epsilon(N,v) > 0$ and $C(N)>0$, so that if a RCD$(-(N-1),N)$ space $(X,d,\mathcal{H}^N)$ satisfies: 
$$ \mathrm{diam}(X) < \epsilon \le \epsilon(N,v), \  \mathcal{H}^N (B_1(\tilde{p})) \ge v, \ \forall \tilde{p} \in \widetilde{X},$$ 
then $X$ is biH\"{o}lder homeomorphic to an infranil-manifold $\mathcal{N}/\Gamma$ where $\mathcal{N}$ has a left-invariant metric, i.e., there exists $f:X \to \mathcal{N}/\Gamma$ with 
$$(1-\Phi(\epsilon|N,v))d(x,y)^{1+\Phi(\epsilon|N,v)} \le d(f(x),f(y)) \le (1+\Phi(\epsilon|N,v))d(x,y),$$
where $\Phi(\epsilon|N,v) \to 0$ as $\epsilon \to 0$.
Moreover, if $X$ is a smooth $N$-manifold with $\mathrm{Ric} \ge -(N-1)$, then $f$ is a diffeomorphism.
\end{thm1}

Assume that Theorem A does not hold. Then we can find a sequence of RCD$(-(N-1),N)$ space $(X_i,d_i,\mathcal{H}^N)$ with non-collapsing universal covers and $\mathrm{diam}(X_i) \to 0$, while these spaces are not biH\"{o}lder to any infranil-manifold. 

Blow up the sequence slowly and pass to a subsequence if necessary, we may assume that the universal covers $\widetilde{X}_i$ of $X_i$ converge to $\mathbb{R}^N$. By Theorem \ref{KW}, for all sufficiently large $i$, $G_i'=\pi_1(X_i,p_i)$ contains a normal nilpotent subgroup $G_i$ of index $\le C(N)$. Let $X_i'=\widetilde{X}_i/G_i$.
	\begin{center}
		$\begin{CD}
			(\widetilde{X}_i,\tilde{p}_i,G_i, G_i') @>eGH>> (\mathbb{R}^N,\tilde{p},G,G')\\
			@VV\pi V @VV\pi V\\
			(X_i',p_i',G_i'/G_i) @>GH>> \mathrm{pt}\\
			@VV\pi V @VV\pi V\\
			(X_i,p_i) @>GH>> \mathrm{pt}
		\end{CD}$
	\end{center}
$X_i'$ converges to a point as it is a finite cover of $X_i$ with order $\le C$, thus $\mathrm{diam}(X_i') \to 0$.

\begin{lemma}
$G_i$ contains no small subgroup, i.e., there exists $\delta > 0$ such that the set $$G_i(\delta)=\{ g \in G_i' | d(x,gx) < \delta, x \in B_{1/\delta} (\tilde{p}_i) \}$$ contains no non-trivial subgroup for all large $i$. 
\end{lemma}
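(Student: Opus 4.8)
The plan is to argue by contradiction: suppose no such $\delta$ works, so there is a sequence of nontrivial subgroups $H_i \le G_i$ with $H_i \subset G_i(\delta_i)$ for some $\delta_i \to 0$; that is, every element of $H_i$ moves every point of $B_{1/\delta_i}(\tilde p_i)$ by less than $\delta_i$. First I would show $H_i$ is in fact a closed subgroup of the isometry group acting on $\widetilde X_i$ that, after passing to a subsequence in the equivariant Gromov--Hausdorff topology (using Theorem \ref{eGH} and the precompactness available from non-collapsing), converges to a nontrivial closed subgroup $H \le \mathrm{Isom}(\mathbb R^N)$ which fixes the base point $\tilde p$ and in fact fixes \emph{every} point of $\mathbb R^N$, hence $H$ is trivial --- but one must be careful, since a sequence of nontrivial subgroups can limit to the trivial group. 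The right way to extract content is to rescale.

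The key step is the following rescaling trick, standard in the Cheeger--Fukaya--Gromov circle of ideas. For each large $i$, since $H_i$ is nontrivial and discrete (it acts freely and properly discontinuously on $\widetilde X_i$, being a subgroup of the deck group), pick $\gamma_i \in H_i \setminus \{e\}$ realizing the minimal displacement $d_i := \min\{ d(\tilde p_i, \gamma \tilde p_i) : \gamma \in H_i, \gamma \ne e\} \le \delta_i \to 0$ (a minimum exists by discreteness). Rescale the metric on $\widetilde X_i$ by $1/d_i$, so that $\gamma_i$ now moves $\tilde p_i$ exactly distance $1$. After this rescaling $\widetilde X_i$ still has $\mathrm{Ric} \ge -(N-1)d_i^2 \to 0$ and is non-collapsed (the non-collapsing hypothesis $\mathcal H^N(B_1(\tilde p_i)) \ge v$ gives, after rescaling, a Euclidean-volume lower bound at all scales up to $1/d_i \to \infty$), so by volume convergence the rescaled $\widetilde X_i$ subconverges to $\mathbb R^N$. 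Passing to a subsequence in equivariant GH convergence, the subgroups $\langle H_i \rangle$ converge to a closed subgroup $H_\infty \le \mathrm{Isom}(\mathbb R^N)$; by construction $H_\infty$ is \emph{nontrivial} because $\gamma_i$ limits to an isometry moving the base point distance $1$. On the other hand, each element of $H_i$ still moves every point of a ball of radius $\frac{1}{\delta_i d_i} \to \infty$ (in the rescaled metric) by at most $\frac{\delta_i}{d_i} \cdot d_i / \delta_i$ --- wait, one should track this carefully: elements of $H_i$ displace points of $B_{1/\delta_i}(\tilde p_i)$ in the \emph{original} metric by $< \delta_i$, and after dividing the metric by $d_i \le \delta_i$ they displace points of a ball of radius $1/(\delta_i d_i) \ge 1/\delta_i^2 \to \infty$ by $< \delta_i/d_i$, which is not obviously small. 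The correct normalization is instead to rescale so that the \emph{minimal} displacement at $\tilde p_i$ is $1$ and observe that then, on any fixed ball $B_R(\tilde p)$ in the limit, $H_\infty$ acts by isometries with no nontrivial element of small displacement (the minimal displacement is $1$ by construction), so $H_\infty$ is a nontrivial \emph{discrete} group of isometries of $\mathbb R^N$ normalized by the whole limit group $G'$ of deck transformations; but $G'$ acts cocompactly (the quotient $\mathbb R^N$ was a point before rescaling, and after the blow-up $G'$ still acts with quotient of finite diameter on each ball in a suitable sense), forcing a contradiction with $\widetilde X_i / G_i' \to \mathrm{pt}$ already at the original scale, since $H_i \triangleleft G_i$ would produce an intermediate cover $\widetilde X_i / H_i$ whose diameter stays comparable to $1$ in the rescaled metric while being squeezed to a point at the original scale.

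The cleanest route, and the one I would actually write, avoids the displacement bookkeeping: since $G_i$ is nilpotent, apply the generalized Margulis lemma / approximate group structure (Theorem \ref{nil} and the surrounding discussion, or directly the argument that $G_i(\epsilon)$ is a strong $C$-approximate group with a well-defined escape norm) to the rescaled picture. The set $w = \{ g : \|g\|_{G_i(\epsilon)} = 0\}$ is a normal subgroup (remark after Theorem \ref{escape}); a nontrivial small subgroup would be contained in $w$. So it suffices to show $w$ is trivial, equivalently that $G_i(\epsilon)$ has no small subgroup, and \emph{this} is exactly where non-collapsing of the universal cover enters: if $w$ were nontrivial, then $w$ (being a nontrivial subgroup all of whose elements have zero escape norm, hence arbitrarily small displacement on arbitrarily large balls) would, upon rescaling the metric by the minimal displacement of a shortest element of $w$, produce in the limit a nontrivial group of isometries of $\mathbb R^N$ \emph{with a limit one-parameter subgroup in its closure} --- i.e. a positive-dimensional isotropy or translation direction collapsing --- contradicting that $\widetilde X_i$ is non-collapsed (the limit would be $\mathbb R^{N-1}$ times a quotient, lowering the Hausdorff dimension below $N$, contradicting volume convergence). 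I expect \textbf{the main obstacle} to be making this last dimension-drop rigorous in the RCD setting: one needs that an isometric limit action containing a nontrivial limit of subgroups with vanishing escape norm forces the limit space to split off a line with a proper quotient, so that $\mathbb R^N$ cannot be the limit --- this should follow from the splitting theorem applied to the orbit of the limit one-parameter subgroup together with the non-collapsing volume bound, but the bookkeeping of scales and the extraction of the one-parameter subgroup from a sequence of discrete subgroups is the delicate part.
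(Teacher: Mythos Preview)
Your proposal repeatedly circles the right neighborhood but never lands on the direct argument, which needs no rescaling, no escape norms, and no dimension drop. You correctly note early on that if $H_i \subset G_i(\delta_i)$ with $\delta_i \to 0$ then $H_i$ converges to the trivial group in the equivariant sense; you then dismiss this as ``a sequence of nontrivial subgroups can limit to the trivial group'' and reach for a rescaling. That dismissal is the mistake: the limit being trivial is exactly what produces the contradiction, via volume convergence at the \emph{original} scale.

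Here is the paper's argument. Since $(\widetilde X_i,\tilde p_i,H_i)\overset{eGH}{\longrightarrow}(\mathbb R^N,0,\{e\})$, Theorem~\ref{eGH} gives $(\widetilde X_i/H_i,\bar p_i)\overset{GH}{\longrightarrow}(\mathbb R^N,0)$. The deck group $H_i$ acts freely and is finite (it is discrete and contained in $G_i(\tilde p_i,\delta_i)$), with $|H_i|\ge 2$. Since every $\gamma\in H_i$ moves $\tilde p_i$ by $<\delta_i$, the full preimage $\pi^{-1}(B_1(\bar p_i))$ sits inside $B_{1+\delta_i}(\tilde p_i)$, and because $\pi$ is a local isometry with fibers of cardinality $|H_i|$,
\[
\mathcal H^N\big(B_1(\bar p_i)\big)=\frac{1}{|H_i|}\,\mathcal H^N\big(\pi^{-1}(B_1(\bar p_i))\big)\le \frac{1}{2}\,\mathcal H^N\big(B_{1+\delta_i}(\tilde p_i)\big).
\]
But volume convergence on both $\widetilde X_i$ and $\widetilde X_i/H_i$ (each limiting to $\mathbb R^N$) forces both sides to approach $\mathcal H^N(B_1(0^N))$, which is absurd.

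Your alternative routes each have a concrete problem. The rescaling by minimal displacement, as you yourself discover mid-paragraph, does not control $\delta_i/d_i$, so you cannot conclude the limit group fixes large balls. The escape-norm route via Theorem~\ref{escape} and the nilprogression structure of Theorem~\ref{nil} is circular: in this paper those tools are invoked only \emph{after} the no-small-subgroup lemma is established, precisely because the nilprogression construction takes ``no small subgroup'' as a hypothesis. The final dimension-drop sketch is morally in the right direction (it is a volume obstruction), but it is packaged as a splitting argument with a one-parameter subgroup, which is both harder to justify and unnecessary.
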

\begin{proof}
Otherwise assume that $H_i$ is a non-trivial subgroup in $G_i(1/i)$. Then $H_i$ converges to the identity, therefore $\widetilde{X}_i/H_i$ converges to $\mathbb{R}^N$ as well. By volume convergence theorem, the volume of $1$-ball  at $\pi(\tilde{p})$ in $\widetilde{X}_i/H_i$ is close to the volume of $1$-ball at $\tilde{p}$ in $\widetilde{X}_i$, a contradiction since $H_i$ is a small non-trivial subgroup.
\end{proof}

\begin{lemma}\label{free}
$G$ is free. In particular, $G$ can be identified as $\mathbb{R}^N$. 
\end{lemma}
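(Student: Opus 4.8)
The plan is to show that the limit group $G$, viewed as a closed subgroup of $\mathrm{Isom}(\mathbb{R}^N)=\mathbb{R}^N\rtimes O(N)$, acts \emph{freely} on $\mathbb{R}^N$; together with the fact that the action is transitive this will let us identify $G$ with $\mathbb{R}^N$ through the orbit map. First I would record the structure of $G$. By the equivariant precompactness theorem (Theorem \ref{eGH}), $G$ is a closed subgroup of $\mathrm{Isom}(\mathbb{R}^N)$, hence a Lie group, and from the commutative diagram together with $\mathrm{diam}(X_i')\to 0$ we get $\mathbb{R}^N/G=\mathrm{pt}$, i.e. $G$ acts transitively on $\mathbb{R}^N$. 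Moreover $G$ is nilpotent: each $G_i$ is nilpotent with nilpotency class $\le N$ (its nilpotent basis has length at most $k=N$ by Theorem \ref{KW}, $X_i$ being non-collapsed of dimension $N$), and since any finitely many elements of $G$ arise as equivariant limits of elements of $G_i$ and the group operations of $\mathrm{Isom}(\mathbb{R}^N)$ are continuous on bounded regions, every iterated commutator of length $N+1$ in $G$ is trivial; alternatively this follows from the nilprogression description of $G_i(\delta)$ in Theorems \ref{nil}, \ref{Malcev} and \ref{Q}.

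Next I would pass to the identity component $G_0$ and a fixed basepoint $x\in\mathbb{R}^N$. Since $G_0\trianglelefteq G$ is normal, $\mathbb{R}^N=Gx$ is covered by the countably many $G_0$-orbits $G_0(g_jx)$, where $\{g_jG_0\}$ enumerates $G/G_0$; as all $G_0$-orbits are immersed submanifolds of the common dimension $d=\dim G_0-\dim (G_0)_x$, a countable union of $d$-dimensional submanifolds with $d<N$ has $\mathcal{H}^N$-measure zero, forcing $d=N$, so $G_0x=\mathbb{R}^N$ and already $G_0$ acts transitively. Now let $K=G_x$ be the isotropy group at $x$; since a point stabilizer in $\mathrm{Isom}(\mathbb{R}^N)$ is conjugate to $O(N)$, $K$ is a \emph{compact} subgroup of the nilpotent Lie group $G$, so Lemma \ref{G_0} gives $[K,G_0]=\{e\}$. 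Hence for $h\in K$ and arbitrary $y=gx$ with $g\in G_0$ we have $hy=hgx=ghx=gx=y$; thus $h$ fixes $\mathbb{R}^N$ pointwise and $h=e$. This gives $G_x=\{e\}$ for every $x$, i.e. the $G$-action on $\mathbb{R}^N$ is free.

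Finally, $G$ acts properly on $\mathbb{R}^N$ (properness being inherited from the proper $\mathrm{Isom}(\mathbb{R}^N)$-action), so a free, proper, transitive action makes the orbit map $G\to\mathbb{R}^N$, $g\mapsto gx$, a homeomorphism; in particular $G$ is connected, $G=G_0$, and $G$ is a simply connected nilpotent Lie group diffeomorphic to $\mathbb{R}^N$. Pulling back the flat metric through the orbit map endows $G$ with a left-invariant flat metric, so by Milnor's classification of such metrics together with nilpotency (a skew-symmetric nilpotent operator vanishes) $G$ must be abelian, and then the orbit map identifies $G$ with $\mathbb{R}^N$ acting on itself by translations — the asserted identification.

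The step I expect to be the main obstacle is the very last identification: passing from "free and transitive" to "$G$ is $\mathbb{R}^N$ acting by translations" is not formal and genuinely uses the flatness of the model (equivalently Milnor's structure theory for flat left-invariant metrics) together with the nilpotency carried over from the Margulis lemma; by contrast, freeness is short once Lemma \ref{G_0} and the transitivity of $G_0$ are in place. A secondary point requiring care is the limit argument for nilpotency: one must check that the relevant products and iterated commutators of group elements actually converge under equivariant Gromov--Hausdorff convergence restricted to bounded balls, so that the class-$\le N$ identity survives in $G$.
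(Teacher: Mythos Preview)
Your proof is correct and follows essentially the same route as the paper: show that any isotropy group $G_x$ is compact, invoke Lemma \ref{G_0} to get $[G_x,G_0]=\{e\}$, and then use transitivity of $G_0$ to conclude $G_x$ fixes everything, hence is trivial. The paper's proof is much terser—it simply asserts that transitivity of $G$ implies transitivity of $G_0$ and then stops after freeness—whereas you supply the dimension-counting argument for $G_0$-transitivity and the additional step (via Milnor's classification plus nilpotency) to promote ``free and transitive'' to ``$G$ is the abelian translation group $\mathbb{R}^N$''. The paper omits this last step entirely in the proof of Lemma \ref{free} but silently uses it later (e.g.\ in Lemma \ref{metric}), so your added care is well placed, not excessive.
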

\begin{proof}
Consider any isotropy group 
$$G_x=\{g \in G|gx=x \}$$
where $x \in \mathbb{R}^N$. Then $G_x$ is compact. Since $G$ is a nilpotent Lie group, thus $[G_x,G_0]$ is trivial, where $G_0$ is the identity component of $G$. Since $G$ is transitive, then $G_0$ is also transitive. Therefore $G_x$ fixes all points, thus is a trivial group.
\end{proof}

$G$ has no isotropy. Thus if any $g \in G_i$ which moves $\tilde{p}_i$ small, $g$ must be close to the identity action. 
\begin{corollary}\label{nofix}
For any $\delta > 0$, there exists $\epsilon > 0$ such that $G_i(\tilde{p}_i,\epsilon) \subset G_i(\delta)$ for sufficiently large $i$. 
\end{corollary}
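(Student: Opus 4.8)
The plan is to argue by contradiction, using the freeness of the limit action established in Lemma~\ref{free} together with the equivariant Gromov--Hausdorff convergence $(\widetilde{X}_i,\tilde{p}_i,G_i)\overset{eGH}\longrightarrow(\mathbb{R}^N,\tilde{p},G)$. Fix $\delta>0$ and suppose the conclusion fails. Negating the statement and diagonalizing over $\epsilon=1/n$, we obtain (after passing to a subsequence) elements $g_i\in G_i$ with $d(\tilde{p}_i,g_i\tilde{p}_i)\to 0$ but $g_i\notin G_i(\delta)$; the latter means there is a point $x_i\in B_{1/\delta}(\tilde{p}_i)$ with $d(x_i,g_ix_i)\ge\delta$. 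Thus both $g_i$ (through its action on the base point) and the $x_i$ are confined to a fixed bounded region, which is what allows us to pass to a limit.

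Next I would take the equivariant limit. Since $d(\tilde{p}_i,g_i\tilde{p}_i)\to 0$, for $i$ large $g_i$ lies in $G_i(\tilde{p}_i,1)$, so under the $\epsilon_i$-eGHA $(f_i,\phi_i,\psi_i)$ its image $\phi_i(g_i)$ lies in the set of isometries of $\mathbb{R}^N$ moving $\tilde{p}$ by at most $2$, a compact subset of $\mathrm{Isom}(\mathbb{R}^N)$. Passing to a further subsequence, $\phi_i(g_i)\to g\in G$. Combining the eGHA inequality $d(\phi_i(g_i)f_i(\tilde{p}_i),f_i(g_i\tilde{p}_i))<\epsilon_i$ with $d(f_i(g_i\tilde{p}_i),f_i(\tilde{p}_i))\le d(g_i\tilde{p}_i,\tilde{p}_i)+\epsilon_i\to 0$, and using $f_i(\tilde{p}_i)\to\tilde{p}$, we get $d(g\tilde{p},\tilde{p})=0$, so $g$ fixes $\tilde{p}$. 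By Lemma~\ref{free} the $G$-action is free, hence $g=e$. Finally, after one more subsequence $x_i\to x\in\bar{B}_{1/\delta}(\tilde{p})$; since $g_ix_i$ also stays in a fixed ball, the GHA estimate $|d(f_i(x_i),f_i(g_ix_i))-d(x_i,g_ix_i)|\le\epsilon_i$ together with $f_i(g_ix_i)\to gx=ex=x$ and $f_i(x_i)\to x$ gives $d(x_i,g_ix_i)\to d(x,x)=0$, contradicting $d(x_i,g_ix_i)\ge\delta$. This contradiction proves the corollary.

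The only point requiring care is the bookkeeping in the definition of equivariant convergence: one must verify that the \emph{a priori} control of $g_i$ only at the base point upgrades, in the limit $g=e$, to control of $g_i$ on the entire ball $B_{1/\delta}(\tilde{p}_i)$. Because the maps $f_i$ are defined on balls of radius $1/\epsilon_i\to\infty$ while $g_i$, $x_i$, and $g_ix_i$ all remain inside the fixed ball $B_{1/\delta}(\tilde{p}_i)$, this is automatic once $i$ is large, so there is no substantive obstacle --- the statement is essentially the standard fact that for a free action, small displacement of a single point forces uniformly small displacement on compact sets. (One could alternatively phrase the argument via the quotients $\widetilde{X}_i/\langle g_i\rangle$, which would have to keep converging to $\mathbb{R}^N$, but tracking the maps $(f_i,\phi_i,\psi_i)$ directly is cleaner.)
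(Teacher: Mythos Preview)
Your proof is correct and follows the same idea the paper intends: the paper states the corollary as an immediate consequence of the sentence preceding it (``$G$ has no isotropy. Thus if any $g \in G_i$ which moves $\tilde{p}_i$ small, $g$ must be close to the identity action''), and your contradiction argument is precisely the standard way to unpack that sentence using the equivariant convergence and Lemma~\ref{free}. There is no substantive difference in approach---you have simply supplied the details the paper omits.
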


\begin{lemma}\label{ni}
For sufficiently large $i$, $G_i$ is isomorphic to a lattice in a $N$-dim simply connected nilpotent Lie group $\mathcal{N}_i$.
\end{lemma}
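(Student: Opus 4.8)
The plan is to feed the diagram into the structure theory of approximate groups established in Section 2.4. By the previous lemma and Corollary \ref{nofix}, $G_i$ contains no small subgroup in the sense required by Theorem \ref{nil}: a definite neighborhood $G_i(\delta)$ of the identity contains no non-trivial subgroup. Applying Theorem \ref{nil} to the equivariant convergence $(\widetilde{X}_i,\tilde{p}_i,G_i)\overset{eGH}\longrightarrow(\mathbb{R}^N,\tilde{p},G)$ (noting $\dim G=N$ by Lemma \ref{free}), we obtain, for a fixed small $\epsilon>0$ and a constant $C>0$ independent of $i$, a nilprogression $P_i=P(u_1,\dots,u_N;C_1,\dots,C_N)$ in $C$-normal form with $G_i(\epsilon)\subset P_i$ and $G_i(\delta)\subset G(P_i)$ for some fixed $\delta>0$. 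Since $\mathrm{diam}(X_i')\to 0$ and $\widetilde{X}_i/G_i=X_i'$, every element of $G_i$ arises as a product of elements of $G_i(\tilde p_i, 3\,\mathrm{diam}(X_i'))\subset G_i(\epsilon)$ (for $i$ large, using Corollary \ref{nofix}); combined with Theorem \ref{local} this shows $G_i$ is the groupfication of the pseudo-group $G_i(\tilde p_i, 20\,\mathrm{diam}(X_i'))$, and the latter, for large $i$, sits inside the grid part of $\epsilon P_i$.

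Next I would invoke Theorem \ref{Malcev}: since the number of generators $N$ and the normal-form constant $C$ are fixed, and the thickness $\mathrm{thick}(P_i)=\min_j C_j\to\infty$ as $i\to\infty$ (because the escape norms of the generators tend to $0$ as $\mathrm{diam}(X_i')\to 0$, forcing each $C_j\to\infty$), for $i$ large $P_i$ is good and the groupfication $\Gamma_{P_i}$ is a lattice in an $N$-dimensional simply connected nilpotent Lie group $\mathcal{N}_i$. It then remains to identify $G_i$ with $\Gamma_{P_i}$. The map $u_j\mapsto\gamma_j$ extends to a product-preserving embedding of the grid part $G(\epsilon P_i)$ into $\Gamma_{P_i}$; conversely, the projection $\pi:\hat G\to G_i$ of Theorem \ref{local} identifies $G_i$ with the group generated by $G_i(\tilde p_i,20\,\mathrm{diam}(X_i'))$ subject only to the relations realized inside that set. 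Because $G(\epsilon P_i)$ contains all these generators and (by the $C$-normal form commutator relations, property (1), and the uniqueness property (2)) all the defining relations, the embedding extends to an isomorphism $G_i\cong\Gamma_{P_i}$, a lattice in $\mathcal{N}_i$.

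The main obstacle is the bookkeeping in the last step: one must check carefully that the relations of $G_i$ — which a priori are dictated by the geometry of $\widetilde{X}_i$ on a ball of radius comparable to $\mathrm{diam}(X_i')$ — are exactly the nilprogression relations, with no extra relations collapsing the group and none missing. This is where $\mathrm{diam}(X_i')\to 0$ is essential: it guarantees that all of $G_i$, generators and relations alike, is visible inside the arbitrarily small neighborhood $G_i(\tilde p_i,\epsilon)$, so that $G_i$ is "determined by a neighborhood of the identity" in the sense needed to match it with the groupfication of $P_i$. I would isolate this matching as the key claim, proving it by comparing the presentation of $\hat G$ from Theorem \ref{local} with the presentation of $\Gamma_{P_i}$ and using that both $G_i(\tilde p_i, 20\,\mathrm{diam}(X_i'))$ and its image lie in $G(\epsilon P_i)$ where the product structure is faithfully recorded.
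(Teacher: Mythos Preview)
Your proposal is correct and follows essentially the same route as the paper: use the no-small-subgroup lemma and Theorem~\ref{nil} to produce a nilprogression $P_i$ of rank $N$ containing a definite neighborhood of the identity, invoke Corollary~\ref{nofix} together with $\mathrm{diam}(X_i')\to 0$ to place $G_i(\tilde p_i,20\,\mathrm{diam}(X_i'))$ inside $P_i$, then combine Theorem~\ref{local} (so $G_i$ is the groupfication of this small pseudo-group) with Theorem~\ref{Malcev} (so the groupfication of $P_i$ is a lattice in $\mathcal{N}_i$). One minor slip: the containment coming from Theorem~\ref{nil} is $P_i\subset G_i(\epsilon)$, not $G_i(\epsilon)\subset P_i$; the inclusion you actually need, and correctly state, is $G_i(\delta)\subset G(P_i)$, and Corollary~\ref{nofix} then gives $G_i(\tilde p_i,20\,\mathrm{diam}(X_i'))\subset G_i(\delta)\subset G(P_i)$ for large $i$.
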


\begin{proof}
$G_i$ contains no small subgroup and converges to $\mathbb{R}^N$. By Theorem \ref{nil}, $G_i$ contains a nilprogession $P_i$ of dimension $N$, which contains $G_i(\delta)$ for some $\delta>0$. By Corollary \ref{nofix}, the nilprogession $P_i$ contains $G_i(\tilde{p}_i,\epsilon)$ for some $\epsilon>0$. We choose $i$ large enough such that diam$(X_i) < \frac{\epsilon}{20}$, then the groupfication of the $P_i$ is isomorphic to $G_i$ by Theorem \ref{local}. On the other hand, for $i$ large enough,  thick$(P_i)$ is large enough so that Theorem \ref{Malcev} holds, thus the groupfication of the $P_i$ is a lattice in a simply connected $N$-dim nilpotent group $\mathcal{N}_i$. Thus $G_i$ is isomorphic to a lattice in $\mathcal{N}_i$.
\end{proof}

We shall use Theorem \ref{Q} to find a left-invariant metric on $\mathcal{N}_i$ so that it is locally $C^4$-close to flat $\mathbb{R}^N$.
\begin{lemma}\label{metric}
For any $\epsilon \le 1$ and $i$ large enough, $\mathcal{N}_i$ admits a left-invariant metric $g_{\mathcal{N}_i}$ with  $\mathrm{inj}_{\mathcal{N}_i} \ge \frac{1}{\epsilon}$. Moreover, there exists $\epsilon_i \to 0$ so that $\forall g \in \mathcal{N}_i$, $B_{\frac{1}{\epsilon}}(g) \subset \mathcal{N}_i$ is $\epsilon_i$-$C^4$-close, by $\mathrm{exp}_g^{-1}$, to the $\frac{1}{\epsilon}$-ball in the tangent space $T_{g}\mathcal{N}_i$ with the flat metric.
\end{lemma}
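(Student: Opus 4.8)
The plan is to put on $\mathcal{N}_i$ the left-invariant Riemannian metric $g_{\mathcal{N}_i}$ for which the strong Malcev basis $\{v_{i,1},\dots,v_{i,N}\}$ of $\mathfrak{g}_i$ is orthonormal at the identity, and to show that, on balls of any fixed radius, $g_{\mathcal{N}_i}$ approaches a flat metric as $i\to\infty$ because the bracket on $\mathfrak{g}_i$ degenerates. Indeed, by Lemma \ref{free} the limit group $G$ acts on $\mathbb{R}^N$ freely and transitively, so $G$ is abstractly $\mathbb{R}^N$ and abelian; hence in Theorem \ref{Q} the limit polynomial is $Q(x,y)=x+y$, and the convergence $Q_i\to Q$ means exactly that all structure constants of $\mathfrak{g}_i$ in the basis $\{v_{i,j}\}$ tend to $0$.

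To make this quantitative I would work in the global chart $\phi_i:\mathbb{R}^N\to\mathcal{N}_i$ of Theorem \ref{str}(2), under which left multiplication in $\mathcal{N}_i$ becomes the polynomial map $Q_i$, of degree bounded by $d(N)$ with coefficients polynomial in the structure constants of $\mathfrak{g}_i$ (Theorem \ref{str}(3), Theorem \ref{Q}). Pushing the standard inner product on $\mathbb{R}^N$ forward along these translations, the components of $\phi_i^*g_{\mathcal{N}_i}$, and hence its Christoffel symbols, curvature tensor and all their derivatives up to order $4$, are on each fixed ball $B_R(0^N)$ polynomial expressions in the structure constants; when those vanish, $\phi_i$ is the identity and $\phi_i^*g_{\mathcal{N}_i}$ is the flat metric. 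Therefore $\phi_i^*g_{\mathcal{N}_i}\to g_{\mathrm{flat}}$ in $C^4(B_R(0^N))$ for every fixed $R$. Since $g_{\mathcal{N}_i}$ is left-invariant, the metric near an arbitrary $g\in\mathcal{N}_i$ looks the same as near $e$; so for each $\epsilon\le 1$ there is $\epsilon_i\to 0$ (depending on $\epsilon$) such that $\exp_g^{-1}$ identifies $B_{1/\epsilon}(g)$ with a subset of $T_g\mathcal{N}_i$ that is $\epsilon_i$-$C^4$-close to the flat $1/\epsilon$-ball, which is the second claim.

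Finally, for the injectivity radius: $\mathcal{N}_i$ is a complete, simply connected, homogeneous space with $|\sec_{\mathcal{N}_i}|\to 0$ on every fixed ball by the previous step, so its conjugate radius exceeds $1/\epsilon$ for $i$ large. A geodesic loop at $e$ of length $\ell<10/\epsilon$ is contained in $\bar B_{5/\epsilon}(e)$, a ball on which $g_{\mathcal{N}_i}$ is arbitrarily $C^2$-close to a flat metric; since a flat ball carries no geodesic loop and a sufficiently small $C^2$-perturbation carries none of length $<10/\epsilon$, no such loop exists for $i$ large, so every geodesic loop at $e$ has length $\ge 10/\epsilon$. By Klingenberg's lemma and left-invariance, $\mathrm{inj}_{\mathcal{N}_i}\ge 1/\epsilon$. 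I expect the main obstacle to be the first half of the second paragraph — making the dependence of $g_{\mathcal{N}_i}$, its connection and its curvature on the structure constants explicit enough that $C^4$-convergence follows from Theorem \ref{Q}; this is routine but notation-heavy, whereas the injectivity-radius bound is then a standard consequence of curvature decay, completeness and simple connectivity.
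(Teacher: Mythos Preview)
Your proposal is correct and follows essentially the same route as the paper: use Theorem \ref{Q} (with the abelian limit from Lemma \ref{free}) to see that the structure constants of $\mathfrak{g}_i$ in the Malcev basis tend to $0$, deduce via Koszul/explicit formulas that the connection, curvature, and its covariant derivatives go to $0$, and conclude the $C^4$-closeness and injectivity radius bound. One small difference: the paper declares $g_{\mathcal{N}_i}(v_{i,j_1},v_{i,j_2})=\langle v_{j_1},v_{j_2}\rangle$ using the \emph{limit} vectors $v_j\in\mathbb{R}^N$ rather than making $\{v_{i,j}\}$ orthonormal; this is immaterial for the present lemma, but it is the choice that makes the local eGHA of Lemma \ref{localeg} line up with the GHA $\widetilde{X}_i\to\mathbb{R}^N$ without an extra linear correction, so you may want to adopt it.
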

\begin{proof}
We always assume $\epsilon_i$ to be a sequence of numbers converging to $0$ while the value of $\epsilon_i$ depends on the specific setting.
Consider the Lie algebra structure on $\mathcal{N}_i$ using the same notations in the argument after Theorem \ref{nil}. Take  $\{ v_{i,j} , 1 \le j \le N\}$ as a strong Malcev basis of the Lie algebra of $\mathcal{N}_i$, then $\mathrm{exp}_{\mathcal{N}_i}(v_{i,j}) = g_{i,j} \in \mathcal{N}_i \cap G_i$. By our assumption, $g_{i,j}$ action on $\widetilde{X}_i$ pointed equivariantly converges to $g_j \in G = \mathbb{R}^N$ for any $1 \le j \le N$ and $i \to \infty$. $\{g_j\}_{j=1,2...,N}$ is a basis of $\mathbb{R}^N$. We can take the corresponding Lie algebra $v_j=g_j$ since $G = \mathbb{R}^N$ is abelian. Define the left-invariant metric $g_{\mathcal{N}_i}$ by 
$$g_{\mathcal{N}_i}(v_{i,j_1},v_{i,j_2}) = \langle v_{j_1}, v_{j_2} \rangle,$$ 
for any $1 \le j_1,j_2 \le N$ and the right-hand side is the inner product in $\mathbb{R}^N$.

Since $\{ v_{i,j} , 1 \le j \le n\}$ is a strong Malcev basis, for any $1 \le j_1 < j_2 \le N$, 
$$[v_{i,j_1},v_{i,j_2}]= \sum_{j=j_2+1}^n a_{i,j_1j_2}^j v_{i,j}.$$ 
$a_{i,j_1j_2}^j \to 0$ as $i \to 0$ by Theorem \ref{Q} and the fact that the limit group is abelian. Define $a_{i,j_1j_2}^j=0$ if $j \le j_1$ or $j \le j_2$.
Then by Koszul's formula, for any $ 1 \le j_1,j_2,j_3 \le N$,
$$g_{\mathcal{N}_i}(\nabla_{v_{i,j_1}} v_{i,j_2} , v_{i,j_3}) = \frac{1}{2} (a_{i,j_1j_2}^{j_3} - a_{i,j_2j_3}^{j_1} + a_{i,j_3j_1}^{j_2}).$$

Since all terms on the right-hand side are constant (depending on $i$) and converge to $0$ as $i \to \infty$. In particular, the covariant derivatives of the Riemannian curvature tensor $g_{\mathcal{N}_i}$ satisfy 
$$|(\nabla^{g_{\mathcal{N}_i}})^k R_{g_{\mathcal{N}_i}}| \le \epsilon_i, \ 0 \le k \le 3$$
where $\epsilon_i \to 0$. 

The sectional curvature of $\mathcal{N}_i$ is bounded by $\epsilon_i \to 0$. 
By Theorem \ref{str}, $\mathcal{N}_i$ is diffeomorphiic to $\mathbb{R}^N$. By our construction of the metric, $B_{\frac{4}{\epsilon}}(e) \subset \mathcal{N}_i$ must be biLipschitz on $B_{\frac{4}{\epsilon}}(0^N)$.
Therefore the injective radius of $\mathcal{N}_i$ is at least $\frac{1}{\epsilon}$ for sufficiently large $i$. The $C^4$-closeness follows from the fact that  $|(\nabla^{g_{\mathcal{N}_i}})^k R_{g_{\mathcal{N}_i}}| \le \epsilon_i$, $0 \le k \le 3$.
\end{proof}

From now on we always assume that $\mathcal{N}_i$ has the metric $g_{\mathcal{N}_i}$ constructed in Lemma \ref{metric}. 

\begin{lemma}[Local eGH closeness]\label{localeg}
For any $ \epsilon > 0$, let $B_{\frac{1}{\epsilon}}(\tilde{p}_i) \subset \widetilde{X}_i$ and $B_{\frac{1}{\epsilon}}(e) \subset \mathcal{N}_i$. Then there exists an $\epsilon_i$-GHA $h_i': B_{\frac{1}{\epsilon}}(\tilde{p}_i) \to B_{\frac{1}{\epsilon}}(e)$ which is $\epsilon_i$-almost $G_i(\tilde{p}_i,{\frac{1}{\epsilon}})$-equivariant if it is well-defined, where $\epsilon_i \to 0$ as $i \to 0$.
\end{lemma}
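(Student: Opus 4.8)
I will compare $\widetilde{X}_i$ with $\mathcal{N}_i$ through their common equivariant limit $\mathbb{R}^N$. Besides $(\widetilde{X}_i,\tilde{p}_i,G_i)\overset{eGH}{\longrightarrow}(\mathbb{R}^N,0,\mathbb{R}^N)$, which we already have, I claim that the nilmanifold $\mathcal{N}_i$, with the lattice $G_i\subset\mathcal{N}_i$ from Lemma~\ref{ni} acting by left translations, satisfies $(\mathcal{N}_i,e,G_i)\overset{eGH}{\longrightarrow}(\mathbb{R}^N,0,\mathbb{R}^N)$ with the \emph{same} identification of $G_i$. Granting this, fix $\epsilon>0$, choose $\epsilon_i\to 0$ together with underlying maps $a_i:B_{1/\epsilon}(\tilde{p}_i)\to B_{1/\epsilon}(0)\subset\mathbb{R}^N$ and $b_i:B_{1/\epsilon}(e)\to B_{1/\epsilon}(0)\subset\mathbb{R}^N$ of $\epsilon_i$-eGHAs for the two convergences, and set $h_i':=\bar{b}_i\circ a_i$, where $\bar{b}_i$ is an $\epsilon_i$-approximate inverse of $b_i$, on the open set where the composition is defined. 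Then $h_i'$ is an $\epsilon_i$-GHA onto a $2\epsilon_i$-dense subset of $B_{1/\epsilon}(e)$, and for $g\in G_i(\tilde{p}_i,1/\epsilon)$ with $\tilde{x},g\tilde{x}\in B_{1/\epsilon}(\tilde{p}_i)$ one chases
\[
h_i'(g\tilde{x})=\bar{b}_i(a_i(g\tilde{x}))\approx \bar{b}_i(\bar{g}\, a_i(\tilde{x}))\approx \bar{b}_i\big(b_i(g\,\bar{b}_i(a_i(\tilde{x})))\big)\approx g\, h_i'(\tilde{x}),
\]
where $\bar{g}$ is the translation of $\mathbb{R}^N$ corresponding to $g$ and each $\approx$ costs $O(\epsilon_i)$ by the eGHA axioms and the compatibility of the two identifications; after renaming $\epsilon_i$ this is the asserted almost $G_i(\tilde{p}_i,1/\epsilon)$-equivariance.

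\textbf{Proof of the claim.} By Lemma~\ref{metric}, $(\mathcal{N}_i,e)$ converges in the pointed ($C^4$) sense to $(\mathbb{R}^N,0)$ and $\mathcal{N}_i$ is a manifold with $|\mathrm{sec}_{\mathcal{N}_i}|\to 0$, so Theorem~\ref{eGH} applies to the left translation action of $G_i$. The quotient $\mathcal{N}_i/G_i$ is the compact nilmanifold with $\pi_1=G_i$; since $G_i$ is generated by $\gamma_1,\dots,\gamma_N$ with $\gamma_j^{\lfloor C_j/C\rfloor}=g_{i,j}=\exp_{\mathcal{N}_i}(v_{i,j})$ and $|v_{i,j}|_{g_{\mathcal{N}_i}}=|g_j|$ bounded, each $\gamma_j=\exp_{\mathcal{N}_i}(v_{i,j}/\lfloor C_j/C\rfloor)$ moves $e$ by a distance comparable to $|g_j|/\lfloor C_j/C\rfloor\to 0$, so a fundamental domain, hence $\mathrm{diam}(\mathcal{N}_i/G_i)$, tends to $0$. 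Thus, up to a subsequence, $(\mathcal{N}_i,e,G_i)\overset{eGH}{\longrightarrow}(\mathbb{R}^N,0,H)$ with $H$ transitive. On both sides the distinguished generator $g_{i,j}$ converges to translation by $g_j$: on $\widetilde{X}_i$ by the choice of $g_j$, and on $\mathcal{N}_i$ because $v_{i,j}\to v_j=g_j$ and, as $\mathcal{N}_i$ flattens, left translation by $\exp_{\mathcal{N}_i}(v_{i,j})$ converges to translation by $g_j$. More is true: by Lemma~\ref{metric} the structure constants $a^{j}_{i,j_1j_2}$ tend to $0$, so (Theorems~\ref{str} and~\ref{Q}) multiplication in $G_i$, expressed in the canonical coordinates $\gamma_1^{n_1}\cdots\gamma_N^{n_N}$, converges on either side to addition of the corresponding vectors of $\mathbb{R}^N$; consequently the two orbit--distance functions $g\mapsto d_{\widetilde{X}_i}(\tilde{p}_i,g\tilde{p}_i)$ and $g\mapsto d_{\mathcal{N}_i}(e,ge)$ on $G_i$ differ by $o(1)$ uniformly on $\{g: d_{\widetilde{X}_i}(\tilde{p}_i,g\tilde{p}_i)\le 1/\epsilon\}$ (and symmetrically), and both converge to $|\bar{g}(0)|$ for the same limiting translation $\bar{g}$. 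In particular $H=\mathbb{R}^N$ and its identification with $G_i$ agrees with that of $\widetilde{X}_i$, proving the claim.

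\textbf{Main obstacle.} The delicate point is exactly this last one: that the abstract isomorphism of Lemma~\ref{ni} between $G_i$ and a lattice in $\mathcal{N}_i$ is \emph{geometrically} compatible with the deck action on $\widetilde{X}_i$, i.e.\ that the two orbit--distance functions on $G_i$ agree up to $o(1)$ on the relevant escape ball. This is where the specific left-invariant metric constructed in Lemma~\ref{metric} is essential — it is arranged so that the distinguished generators $g_{i,j}$ match on both sides — and where one must propagate the matching from the generators to all bounded words, using that the Lie-algebra data of $G_i$ inside $\mathcal{N}_i$ converges to the abelian data of $\mathbb{R}^N$ (Theorem~\ref{Q}). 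Once the two orbit--distance functions are known to agree in the limit, the rest of the argument is a routine composition of Gromov--Hausdorff approximations.
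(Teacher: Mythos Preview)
Your argument is correct and follows the same route as the paper: both factor the desired GHA through $\mathbb{R}^N$ and verify that each $g\in G_i(\tilde p_i,1/\epsilon)$ acts, on $\widetilde X_i$ and on $\mathcal{N}_i$, close to the \emph{same} translation of $\mathbb{R}^N$, using the left-invariant metric of Lemma~\ref{metric} and the Lie-algebra convergence of Theorem~\ref{Q}. The only difference is packaging. The paper builds the map $\mathbb{R}^N\to\mathcal{N}_i$ explicitly as the Malcev-coordinate map $\phi_i\circ\psi_i^{-1}$ and then checks equivariance element by element: first for the distinguished $g_{i,j}$, then for powers $u_{i,j}^k$ (using that $\langle u_{i,j}\rangle$ converges to the same one-parameter subgroup on both sides), and finally for arbitrary words $u_{i,1}^{k_1}\cdots u_{i,N}^{k_N}$, where the nilprogression gives the crucial uniform bound $k_j\le C_0$. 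You instead phrase the target as a single eGH-convergence statement $(\mathcal{N}_i,e,G_i)\overset{eGH}{\to}(\mathbb{R}^N,0,\mathbb{R}^N)$ with matching identification; this is slightly slicker, but your passage from generators to all bounded words is less explicit than the paper's, and in a full write-up you would still need the same $k_j\le C_0$ bound to make the uniform $o(1)$ in your orbit-distance claim precise.
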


\begin{proof}
Take a linear map 
$$\psi_i: \mathbb{R}^N \to \mathbb{R}^N, \ \phi(x_1,x_2,...,x_N) = \sum_{j=1}^N x_jv_{i,j}.$$
Take $\phi_i : \mathbb{R}^N \to \mathcal{N}_i$ as in Theorem \ref{str} (2). By Theorem \ref{Q} and the definition of the metric in Lemma \ref{metric},
$$\phi_i \circ \psi_i^{-1}: \mathbb{R}^N \to \mathcal{N}_i$$ is a $\epsilon$-GHA on $B_{\frac{1}{\epsilon}}(0^N)$ for sufficiently large $i$.

$(\widetilde{X}_i,\tilde{p}_i)$ is pGH-close to $\mathbb{R}^N$ by our assumption, and $\phi_i \circ \psi_i^{-1}:\mathbb{R}^N \to \mathcal{N}_i$ is a GHA on the $\frac{1}{\epsilon}$-ball for all large $i$. Combine two GHAs, we get $h_i' : B_{\frac{1}{\epsilon}}(\tilde{p}_i) \to B_{\frac{1}{\epsilon}}(e)$ which is a $\epsilon_i$-GHA. We need to check $h_i'$ is almost $G_i(\tilde{p}_i,{\frac{1}{\epsilon}})$-equivariant.

By our construction of $g_{i,j}$ (see the argument after Theorem \ref{nil}), $\mathrm{exp}(v_{i,j})=g_{i,j}$ action on $\tilde{X}_i$ is $\epsilon_i$-close to  $g_j \in G=\mathbb{R}^N$, $1 \le j \le N$. On the other hand, under the map $\phi_i \circ \psi_i^{-1}: \mathbb{R}^N \to \mathcal{N}_i$, $g_{i,j}$ action on $\mathcal{N}_i$ is $\epsilon_i$-close to the $g_j \in G=\mathbb{R}^N$ action. Thus $g_{i,j}$ actions on $B_{\frac{1}{\epsilon}}(\tilde{p}_i)$ and $B_{\frac{1}{\epsilon}}(e)$ are $\epsilon_i$-close to each other, that is,  $h_i'(g_{i,j}x)$ is $\epsilon_i$-close to $g_{i,j} h_i'(x)$ for all $x \in B_{\frac{1}{\epsilon}}(\tilde{p}_i)$ such that $g_{i,j}x \in B_{\frac{1}{\epsilon}}(\tilde{p}_i)$, $1 \le j \le N$.

Take $C_0$ such that $|\psi_i^{-1}(B_{\frac{1}{\epsilon}}(0^N))| \le \frac{C_0}{2}$.
Fix $j$, recall $g_{i,j}=u_{i,j}^{l_j}$ where $u_{i,j}$ is one of the generators of the nilprogression and $l_j=\lfloor C_j/C \rfloor$. Since the  group actions $\langle  u_{i,j} \rangle $ on both spaces $\widetilde{X}_i$ or $\mathcal{N}_i$ converge  to a line in $G=\mathbb{R}^N$. Then for any $k \le C_0$, $u_{i,j}^k$ action on $B_{\frac{1}{\epsilon}}(\tilde{p}_i)$ and $B_{\frac{1}{\epsilon}}(e)$ are both $\epsilon_i$-close to $g_j^{k/l_j} \in \mathbb{R}^N$ action. 

For a general $g=u_{i,1}^{k_1}...u_{i,N}^{k_N} \in G_i(\tilde{p}_i, \frac{1}{\epsilon})$, we have $k_j \le C_0$, $1 \le j \le N$. Then any component of $g$ actions, saying $u_{i,1}^{k_1}$, on $B_{\frac{1}{\epsilon}}(\tilde{p}_i)$ and $B_{\frac{1}{\epsilon}}(e)$ are $\epsilon_i$-close to each other. Therefore any $g \in G_i(\tilde{p}_i, \frac{1}{\epsilon})$ actions on $B_\frac{1}{\epsilon}(\tilde{p}_i)$ and $B_\frac{1}{\epsilon}(e)$ are $\epsilon_i$-close to each other.
\end{proof}

By Lemma \ref{localeg}, we have a local GHA $h_i'$ defined from $B_\frac{1}{\epsilon}(\tilde{p}_i)$ to $B_\frac{1}{\epsilon}(e) \subset \mathcal{N}_i$, which is almost $G_i(\tilde{p_i}, \frac{1}{\epsilon})$-equivariant. We can extend $h_i'$ to a global map by the follows. For any $x \in \widetilde{X}_i$, choose $g \in G_i$ so that $gx \in B_1(\tilde{p}_i)$. This choice is valid since $\widetilde{X}/G_i$ has a small diameter. Define $h_i$ by $h_i(x)=g^{-1}h'(gx) \in \mathcal{N}_i$. Note that different choices of $g$ yield only minor differences in $h_i(x)$, so we can select one for our construction without loss of generality. 

\begin{lemma}[Extend a local approximation to the global map, \cite{Wang2023}]\label{extension}
The map $h_i:\widetilde{X}_i \to \mathcal{N}_i$ is a global map which is an $\epsilon_i$-GHA on any $\frac{1}{\epsilon}$-ball and $\epsilon_i$-almost $G_i$-equivariant, for some $\epsilon_i \to 0$.
\end{lemma}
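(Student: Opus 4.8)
The plan is to deduce both properties of the extension $h_i$ from the corresponding properties of the local model $h_i'$ by a change-of-base-point argument. Since Lemma~\ref{localeg} is available for every value of its parameter, I would first run it so that $h_i'$ is defined and is an $\epsilon_i$-GHA to its image on a ball $B_R(\tilde p_i)$ with $R\gg 1/\epsilon$ fixed, and is $\epsilon_i$-almost $G_i(\tilde p_i,R)$-equivariant (that is, $d(h_i'(gx),g h_i'(x))\le\epsilon_i$ whenever $x$ and $gx$ both lie in $B_R(\tilde p_i)$). Fixing once and for all, for each $x\in\widetilde X_i$, an element $g_x\in G_i$ with $g_x x\in B_1(\tilde p_i)$ --- such $g_x$ exists because $\mathrm{diam}(\widetilde X_i/G_i)$ is small, and $g_x x$ then lies in the domain of $h_i'$ --- the formula $h_i(x)=g_x^{-1}h_i'(g_x x)$ defines a genuine map $\widetilde X_i\to\mathcal N_i$.

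The heart of the argument is a change-of-chart estimate: for any $x_0\in\widetilde X_i$, setting $g_0:=g_{x_0}$, one has $d\big(h_i(x),\,g_0^{-1}h_i'(g_0 x)\big)\le\epsilon_i$ for all $x\in B_{1/\epsilon}(x_0)$, with $\epsilon_i$ independent of $x_0$ and $i$. Indeed, the selected element $g_x$ differs from $g_0$ by $k_x:=g_xg_0^{-1}\in G_i$, so $g_x x=k_x(g_0 x)$ and $h_i(x)=g_0^{-1}k_x^{-1}h_i'(k_x g_0 x)$; since $g_0 x\in B_{1+1/\epsilon}(\tilde p_i)$ while $k_x(g_0 x)=g_x x\in B_1(\tilde p_i)$, the element $k_x$ has displacement at $\tilde p_i$ at most $2+1/\epsilon<R$, and $g_0 x,\,k_x g_0 x\in B_R(\tilde p_i)$, so the $\epsilon_i$-almost equivariance of $h_i'$ gives $d\big(h_i'(k_x g_0 x),\,k_x h_i'(g_0 x)\big)\le\epsilon_i$; applying the isometry $g_0^{-1}k_x^{-1}$ of $\mathcal N_i$ yields the estimate.

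From here both assertions follow quickly. The map $x\mapsto g_0^{-1}h_i'(g_0 x)$ is a composition of an isometry of $\widetilde X_i$, the $\epsilon_i$-GHA $h_i'$ (its domain contains $g_0 B_{1/\epsilon}(x_0)=B_{1/\epsilon}(g_0 x_0)$), and an isometry of $\mathcal N_i$, hence an $\epsilon_i$-GHA to its image on $B_{1/\epsilon}(x_0)$; combined with the change-of-chart estimate and the triangle inequality, $h_i|_{B_{1/\epsilon}(x_0)}$ is a $3\epsilon_i$-GHA to its image, whose image lies in $B_{1/\epsilon+\epsilon_i}(h_i(x_0))$. For equivariance, given $g\in G_i$ and $x$, the element $g_x g^{-1}\in G_i$ moves $gx$ into $B_1(\tilde p_i)$, so it is an admissible selection for $gx$; the element $h:=g_{gx}(g_xg^{-1})^{-1}\in G_i$ then sends $g_x x\in B_1(\tilde p_i)$ to $g_{gx}(gx)\in B_1(\tilde p_i)$, hence has displacement $\le 2$ at $\tilde p_i$, and the same almost-equivariance of $h_i'$ together with the isometry $g_{gx}^{-1}$ gives $d\big(h_i(gx),\,(g_xg^{-1})^{-1}h_i'(g_x x)\big)=d\big(h_i(gx),\,g\,h_i(x)\big)\le\epsilon_i$. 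Relabelling the constant as $\epsilon_i$ finishes the proof.

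The step I expect to be the main obstacle is the change-of-chart estimate, because $h_i$ is assembled from an \emph{arbitrary} selection of the elements $g_x$, so its local behaviour is not directly accessible. Making this estimate work forces two things to be arranged in advance: $h_i'$ must be almost equivariant for \emph{all} group elements of bounded displacement, not merely those near the identity --- which is why Lemma~\ref{localeg} is applied on a ball of radius $R\gg 1/\epsilon$ rather than exactly $1/\epsilon$ --- and the transition elements $k_x$ (and their analogues in the equivariance step) must have displacement bounded by a constant depending only on $\epsilon$, not on $x_0$ or $i$, so that the error stays $o(1)$ uniformly over all $1/\epsilon$-balls. Everything else reduces to bookkeeping with isometries and the triangle inequality.
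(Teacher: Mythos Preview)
Your proof is correct and follows essentially the same approach as the paper, which only sketches the construction of $h_i$ in the paragraph preceding the lemma and cites \cite{Wang2023} for the details. Your change-of-chart estimate is exactly the content of the paper's remark that ``different choices of $g$ yield only minor differences in $h_i(x)$,'' and you have correctly identified the one point the sketch glosses over: Lemma~\ref{localeg} must be invoked at a radius $R>2+1/\epsilon$ rather than exactly $1/\epsilon$, so that the transition elements $k_x=g_xg_0^{-1}$ and $h=g_{gx}gg_x^{-1}$ --- whose displacements at $\tilde p_i$ you bound by $2+1/\epsilon$ and $2$ respectively --- fall within the range where the almost-equivariance of $h_i'$ applies.
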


It is well known that a subgroup of finite index contains a normal subgroup of finite index. We provide a proof for readers' convenience.
\begin{lemma}\label{normal}
Assume that group $H$ contains a subgroup $H_0$ of index $|H:H_0| \le C$, then there is normal subgroup $H'$ of $H$ such that $|H/H'| \le C!$ and $H' \subset H_0$. 
\end{lemma}
\begin{proof}
Define $H/H_0$ be the set of left cosets of $H_0$ in $H$, the index is $\le C$. Define a homomorphism 
$\phi: H \to  \mathrm{sym}(H/H_0)$
by $\phi(g,g' H_0) =(gg')H_0$. Then define $H' = \mathrm{Ker}(\phi)$, which is a normal subgroup of $H$ with index less than or equal to the order of the symmetric group $\mathrm{sym}(H/H_0)$, thus $|H/H'| \le C!$.

For any $g \in H'$, since $\phi(g,H_0)=H_0$, it follows that $g \in H_0$. Then $H' \subset H_0$.
\end{proof}

Now we can complete the proof of Theorem A.
\begin{proof}[Proof of Theorem A] Consider a contradiction sequence,
\begin{center}
		$\begin{CD}
			(\widetilde{X}_i,\tilde{p}_i,G_i, G_i') @>eGH>> (\mathbb{R}^N,\tilde{p},G,G')\\
			@VV\pi V @VV\pi V\\
			(X_i',p_i',G_i'/G_i) @>GH>> \mathrm{pt}\\
			@VV\pi V @VV\pi V\\
			(X_i,p_i) @>GH>> \mathrm{pt}.
		\end{CD}$
	\end{center}
We assumed that none of $X_i$ is biH\"{o}lder to an infranil-manifold.

We have established that $G_i$ acts as lattice in a simply connected nilpotent Lie group $\mathcal{N}_i$. Since $G_i$ is a normal subgroup of $G_i'$ with bounded index, by the rigidity result from \cite{LeeRay}, $G_i'$ can be identified as a discrete subset of $\mathcal{N}_i \rtimes \text{Aut}(\mathcal{N}_i)$. Since the lattice $G_i$ is $\epsilon_i$-dense in $\mathcal{N}_i$, $h_i$ in Lemma \ref{extension} is also $\epsilon_i$-almost $G_i'$-equivariant by the rigidity.

Since $g_{\mathcal{N}_i}$ is left-invariant metric on $\mathcal{N}_i$, $G_i$ actions on $\mathcal{N}_i$ are isometric. For any $g'=(g,\phi) = G_i' \subset \mathcal{N}_i \rtimes \text{Aut}(\mathcal{N}_i)$ where $g \in \mathcal{N}_i$ and $\phi \in \text{Aut}(\mathcal{N}_i)$, $g'$ action on either $(\widetilde{X}_i,\tilde{p}_i)$ or $(\mathcal{N}_i,e)$ is eGH close to an isometric action on $\mathbb{R}^N$. Since $(\mathcal{N}_i,e)$ pointed $C^5$-close to flat $\mathbb{R}^N$, $\phi^*(g_{\mathcal{N}_i})$ must be $C^4$ close to $g_{\mathcal{N}_i}$. We have $|G_i'/G_i| \le C$, thus the number of the choices of $\phi$ is at most $C$. Averaging all such $\phi^*(g_{\mathcal{N}_i})$ if necessary, we may simply assume that $\phi^*(g_{\mathcal{N}_i})=g_{\mathcal{N}_i}$, then $G_i'$ actions on $\mathcal{N}_i$ are isometric.     

Take any small $\epsilon>0$. The finitely generated nilpotent group $G_i$ is residually finite, i.e., for any $g \in G_i$, there exists a finite index normal subgroup of $G_i$ which does not contain $g$.
Therefore there is a normal subgroup $G_i''$ of $G_i$ with finite index so that $G_i'' \cap B_{\frac{1}{\epsilon}}(e) = \emptyset$. We may also assume that $G_i''$ is normal in $G_i'$ as well due to Lemma \ref{normal}.

Since $G_i'' \cap B_{\frac{1}{\epsilon}}(e) = \emptyset$, we can apply Lemma \ref{metric} to conclude that the injective radius of  $\mathcal{N}_i/G_i''$ is at least ${\frac{1}{\epsilon}}$. For any $y \in \mathcal{N}_i/G_i''$,  $B_{\frac{1}{\epsilon}}(y) \subset \mathcal{N}_i/G_i''$ is $\epsilon_i$-$C^4$-close to the $\frac{1}{\epsilon}$-ball in the tangent space $T_{y}(\mathcal{N}_i/G_i'')$ with the flat metric. 

Since $h_i$ is $\epsilon_i$-almost $G_i'$-equivariant, we can reduce $h_i$ to a map 
$$\bar{h}_i: \widetilde{X}_i/G_i''\to \mathcal{N}_i/G_i'',$$
which is an $\epsilon_i$-GHA on any ${\frac{1}{\epsilon}}$-ball and $\epsilon_i$-almost $G_i'/G_i''$-equivariant. For any $x \in \widetilde{X}_i/G_i''$, define $\bar{h}_i(x)= \pi (h_i(\tilde{x})) \in \mathcal{N}_i/G_i''$, where $\tilde{x}$ is a pre-image of $x$ in $\widetilde{X}_i$. Different choices of $\tilde{x}$ lead to only minor differences in $\bar{h}_i$ since $h_i$ is almost $G_i'$-equivariant.

Since $G_i'/G_i''$ is finite, we can apply Theorem \ref{inv} to 
$$\bar{h}_i: (\widetilde{X}_i/G_i'',G_i'/G_i'')\longrightarrow (\mathcal{N}_i/G_i'',G_i'/G_i'').$$ We can find a $(G_i'/G_i'')$-equivariant map $f_{G_i'/G_i''}$ from $\widetilde{X}_i/G_i''$ to $\mathcal{N}_i/G_i''$ which is $(N,\Phi(\epsilon|N))$-splitting on any $\frac{1}{10\epsilon}$-ball. Then we have biH\"{o}lder estimates from \cite{HondaPeng2024},
$$(1-\Phi(\epsilon|N))d_i(x,y)^{1+\Phi(\epsilon|N)} \le d(f_{G_i'/G_i''}(x),f_{G_i'/G_i''}(y)) \le (1+\Phi(\epsilon|N))d_i(x,y),$$
for any $x,y \in \widetilde{X}_i/G_i''$ with $d_i(x,y) \le \frac{1}{20\epsilon}$.

Since $f_{G_i'/G_i''}$ is $(G_i'/G_i'')$-equivariant, it can be naturally reduced to a biH\"{o}lder map on the quotient space $f: X_i= \widetilde{X}_i/G_i' \to \mathcal{N}_i/G_i'$. Thus $X_i$ is biH\"{o}lder to an infranil-manifold, a contradiction to the assumption. Moreover, if $X_i$ is a smooth manifold with $\mathrm{Ric} \ge -(N-1)$, then $f$ is smooth and $df$ is non-degenerate, thus $X_i$ is biH\"{o}lder diffeomorphic to $\mathcal{N}_i/G_i'$.
\end{proof}

\begin{remark}
At the beginning of the proof of Theorem A, we slowly blow up the metric to get a limit space $\mathbb{R}^N$. Therefore, for a contradiction sequence $(X_i,d_i,\mathcal{H}^N)$, we actually proved the biH\"{o}lder estimate for $(X_i,r_id_i,\mathcal{H}^N)$, where $r_i \to \infty$ slowly,
$$(1-\Phi(\epsilon|N))(r_id_i(x,y))^{1+\Phi(\epsilon|N)} \le r_id(f(x),f(y)) \le (1+\Phi(\epsilon|N))r_id_i(x,y)$$
where $d$ is the distance function on $\mathcal{N}_i$. Fix a large $i$, we can take $d'= \frac{d}{r_i}$ on $\mathcal{N}_i$.
Since 
$$(1-\Phi(\epsilon|N))(r_id_i(x,y))^{1+\Phi(\epsilon|N)} \ge r_i(1-\Phi(\epsilon|N))(d_i(x,y))^{1+\Phi(\epsilon|N)},$$
we have 
$$(1-\Phi(\epsilon|N))(d_i(x,y))^{1+\Phi(\epsilon|N)} \le d'(f(x),f(y)) \le (1+\Phi(\epsilon|N))d_i(x,y).$$
Then $f:(X_i,d_i) \to (\mathcal{N}_i,d')$ is also biH\"{o}lder. Thus the biH\"{o}lder estimate on the blowing up metric implies the biH\"{o}lder estimate on the original metric. For this reason, in the next sections we shall omit some blowing up arguments and directly apply Theorem \ref{inv} and canonical Reifenberg method; see the proofs of Lemma \ref{product} and Theorem B. 
\end{remark}

\section{Mixed curvature and almost flat manifolds theorem}
In this section we prove Theorem \ref{mixed} using a similar construction in the proof of Theorem A. The main difference is that we need to glue strainer maps instead of almost splitting maps.  

Take small $\epsilon>0$. Assume that $(X,d,\mathfrak{m},p)$ is an $\mathrm{RCD}(-\epsilon,N)$ space such that $(X,d)$ is $\mathrm{CBA}(\epsilon)$, $\partial X =\emptyset$. Suppose that $(K,q)$ is a smooth Riemannian $n$-manifold with $\mathrm{inj}_K \ge \frac{10}{\epsilon}$. $B_{\frac{10}{\epsilon}}(p)$ is $\epsilon^2$-$C^4$-close, by $\mathrm{exp}_p^{-1}$, to its preimage in the tangent space $T_pK$ with the flat metric. Suppose that $h:B_2(p) \to B_2(q)$ is an $\epsilon$-GHA. 

By \cite{Kapovitch2021}, we can use a strainer to construct a differentiable $\Phi(\epsilon|n)$-GHA $u: B_1(p) \to B_2(q)$ with $1- \Phi(\epsilon|n) \le |\nabla u| \le 1 + \Phi(\epsilon|n)$. In particular, $u$ is diffeomorphic onto its image.
Moreover, if we use another strainer to construct another differentiable $\Phi(\epsilon|n)$-GHA, saying $u':B_1(p) \to B_2(q)$. Then $u'$ is $\Phi(\epsilon|n)$-$C^1$-close to $u$. 

By the same construction in Theorem \ref{inv}, we can glue local strainer maps and obtain the following theorem; see also the proof of the fibration theorem for mixed curvature spaces in \cite{Kapovitch2021}.

\begin{theorem}\label{mix glue}
Given $N$ and $n \le N$, there exists $\epsilon(N,n)$ such that for any $\epsilon \le \epsilon(N,n)$, the following holds. Assume that a smooth $n$-manifold $K$ satisfies $\mathrm{inj}_K \ge \frac{10}{\epsilon}$ and for any $p \in K$, $B_{\frac{10}{\epsilon}}(p)$ is $\epsilon^2$-$C^4$-close, by $\mathrm{exp}_p^{-1}$, to its preimage in the tangent space $T_pK$ with the flat metric. Suppose that $(X,d,\mathfrak{m})$ is an $\mathrm{RCD}(-\epsilon,N)$ space such that $(X,d)$ is $\mathrm{CBA}(\epsilon)$, $\partial X =\emptyset$, $\mathrm{dim}(X)=n$.
Assume that there is a global map $h:X \to K$ which is an $\epsilon$-GHA map on any $\frac{10}{\epsilon}$-ball in $X$; a finite group $G$ acts isometrically on $X$ and $K$ separately and $h$ is $\epsilon$-almost $G$-equivariant. 

Then there is a $G$-equivariant map $f_G:X \to K$, which is also a $\Phi(\epsilon|n,N)$-GHA and biLipschitz diffeomorphic on any $\frac{1}{\epsilon}$-ball, that is, for any $x,y \in X$ with $d(x,y) < \frac{1}{\epsilon}$, 
$$(1-\Phi(\epsilon|n,N))d(x,y) \le d(f_G(x),f_G(y)) \le (1+\Phi(\epsilon|n,N))d(x,y),$$
where $\Phi(\epsilon|n,N) \to 0$ as $\epsilon \to 0$.
\end{theorem}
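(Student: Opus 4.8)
The plan is to mimic the construction in the proof of Theorem~\ref{inv} almost verbatim, replacing the harmonic almost $k$-splitting maps $h_j$ by the differentiable $\Phi(\epsilon|n)$-GHA strainer maps $u_j$ supplied by \cite{Kapovitch2021}, which are diffeomorphisms onto their images with $1-\Phi(\epsilon|n)\le|\nabla u_j|\le 1+\Phi(\epsilon|n)$. First I would fix a $\frac{1}{\epsilon}$-net $\{p^j\}$ in $K$, lift to $p_i^j$ close to $p^j$ in $X$, and on each ball $B_{\frac{2}{\epsilon}}(p^j)$ construct via a strainer a differentiable GHA $u_j:B_{\frac{2}{\epsilon}}(p_i^j)\to B_{\frac{2}{\epsilon}}(p^j)$ with $\mathrm{exp}_{p^j}^{-1}\circ u_j$ close in $C^1$ to a linear isometric identification with an open set in $T_{p^j}K=\mathbb{R}^n$. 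Since $\dim X=n$ and $\partial X=\emptyset$, the $\mathrm{RCD}+\mathrm{CBA}$ structure theory from \cite{KKK} gives that $X$ is a smooth $C^1$-manifold with a $C^0\cap\mathrm{BV}$ metric inducing $d$, so these strainer maps are honest local charts.

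Next I would glue and average these local maps exactly as in Theorem~\ref{local split} and Theorem~\ref{inv}, using the cut-off functions $\Lambda_j$ on $K$ with $|\nabla^l\Lambda_j|\le\Phi(\epsilon|n,N)$ and the center-of-mass/energy function $E(x,y)=\frac{\sum_j\Lambda_j(u_j(x))\,d(u_j(x),y)^2}{\sum_j\Lambda_j(u_j(x))}$, obtaining a global $\Phi(\epsilon|n,N)$-GHA $f:X\to K$; then averaging over the finite group $G$ via $E_G(x,y)=\frac{1}{|G|}\sum_{g\in G}d(gf(g^{-1}x),y)^2$ produces a $G$-equivariant $\Phi(\epsilon|n,N)$-GHA $f_G:X\to K$, with the equivariance coming from the uniqueness of the center of mass and the fact that $f(x)$ is close to $h(x)$, which is $\epsilon$-almost $G$-equivariant. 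The gluing is local, so, as remarked after Theorem~\ref{inv}, no control on $\mathrm{diam}(X)$ or $|G|$ is needed; one only needs $h$ to be an $\epsilon$-GHA on every $\frac{10}{\epsilon}$-ball.

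The step that replaces the canonical Reifenberg/biH\"older estimate of Theorem~\ref{Rei} is the biLipschitz claim, and this is where the strainer-map comparison from \cite{Kapovitch2021} does the work in place of the Reifenberg argument: on a fixed $\frac{1}{\epsilon}$-ball $B_{\frac{1}{\epsilon}}(x_0)$ with $y_0=f_G(x_0)$, the map $\mathrm{exp}_{y_0}^{-1}\circ f_G$ is, by the same product-space argument as in Theorem~\ref{local split}, $\Phi(\epsilon|n,N)$-$C^1$-close to the linear average $\bar h(\prod_{g}gf(g^{-1}x))=\frac{1}{|G|}\sum_g\mathrm{exp}_{y_0}^{-1}(gf(g^{-1}x))$. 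Each $\mathrm{exp}_{y_0}^{-1}\circ g\circ u_{j_l}\circ g^{-1}$ is $\Phi(\epsilon|n,N)$-$C^1$-close to a linear isometry of $\mathbb{R}^n$ (here we use that the metric on $B_{\frac{10}{\epsilon}}(y_0)\subset K$ is $\epsilon^2$-$C^4$-close to flat and that the $u_{j_l}$ have gradient $1\pm\Phi$ and are $C^1$-close to charts), so the linear average is a linear map $\Phi(\epsilon|n,N)$-close to an isometry; hence $d f_G$ is nondegenerate with singular values in $(1-\Phi,1+\Phi)$, and $f_G$ restricted to any $\frac{1}{\epsilon}$-ball is a diffeomorphism onto its image satisfying the stated two-sided Lipschitz bounds.

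The main obstacle, and the point requiring the most care, is the passage from the $C^1$-closeness of $\mathrm{exp}_{y_0}^{-1}\circ f_G$ to $\bar h$ to a genuine two-sided \emph{metric} estimate $(1-\Phi)d(x,y)\le d(f_G(x),f_G(y))\le(1+\Phi)d(x,y)$ on the ball: this uses that $X$ carries a $C^0\cap\mathrm{BV}$ Riemannian metric (so lengths of curves and the distance $d$ are recovered by integrating that metric), that the charts $u_{j_l}$ are $C^1$ and close to Euclidean isometries on overlaps, and that the average $f_G$ is $C^1$ with differential uniformly close to an orthogonal map in these charts; one then integrates along minimizing geodesics in $X$ and their $f_G$-images in $K$. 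A secondary technical point is checking that the different local strainers glue consistently (two strainer-constructed GHAs on the same ball are $\Phi$-$C^1$-close, by \cite{Kapovitch2021}), so that the averaged differential is well-defined up to $\Phi$-error; but this is exactly the analogue of the almost-splitting transformation compatibility used in Theorem~\ref{inv}.
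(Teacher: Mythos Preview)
Your proposal is correct and follows essentially the same approach as the paper: the paper's proof is just the one-line remark that the construction in Theorem~\ref{inv} goes through verbatim once the harmonic almost-splitting maps are replaced by the strainer maps from \cite{Kapovitch2021} (which are differentiable GHAs with $1-\Phi\le|\nabla u|\le 1+\Phi$ and are $\Phi$-$C^1$-close to one another on overlaps), and you have spelled this out in detail. The biLipschitz estimate via $C^1$-closeness to a linear isometry, in place of the canonical Reifenberg argument, is exactly the intended substitution.
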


We now proceed to prove Theorem \ref{mixed} using Theorem \ref{mix glue} and the construction in the proof of Theorem A.
\begin{proof}[Proof of Theorem \ref{mixed}]
We apply a contradiction argument. Assume that there exists $\epsilon_i \to 0$ so that there exists $(X_i,d_i,\mathfrak{m}_i)$ which is $\mathrm{RCD}(-\epsilon_i,N)$ and $\mathrm{CBA}(\epsilon_i)$; $\partial X_i =\emptyset$ and $\mathrm{diam}(X_i) \le \epsilon_i$. And we assume that none of $X_i$ is biLipschitz diffeomorphic to an infranil-manifold of dimension $\le N$. We need to prove that $X_i$ is biLipschitz diffeomorphic to an infranil-manifold of dimension $\le N$ for some large $i$.

Passing to a subsequence if necessary, we may assume that all $X_i$ are manifolds of dimension $n$ where $n \le N$. We recall the argument from \cite{Kapovitch2021} that the universal cover $\widetilde{X}_i$ of $X_i$ pGH converges to $\mathbb{R}^n$. Specifically, take $p_i \in X_i$ and fix a large $r >0$. We consider $\hat{X}_i=B_r(0^n) \subset T_{p_i}X_i$, the  $r$-ball centered at the origin in the tangent space $T_{p_i}X_i$ with the pull back metric.
Since the curvature of $X_i$ is bounded above by $\epsilon_i \to 0$, the exponential map $\mathrm{exp}_{p_i}: \hat{X}_i \to X_i$ is non-degenerate for sufficiently large $i$. Thus $\hat{X}_i$ a pseudo-cover of $X_i$ with pseudo-actions. By Lemma 3.7 in \cite{Kapovitch2021} $\hat{X}_i$ pGH converges to a $r$-ball in $\mathbb{R}^n$. 

Then by Lemma 2.5 in \cite{Kapovitch2021}, the groupfication of pseudo-actions is exactly $\pi_1(X_i)$. Therefore the gluing space using the pseudo-cover and pseudo-actions (see section 2.3) is exactly the universal cover $\widetilde{X}_i$ which also pGH converges to $\mathbb{R}^n$.

Let $\tilde{p}_i$ be a lift of $p_i$ in $\widetilde{X}_i$. By the generalized Margulis lemma, for sufficiently large $i$, $G_i'=\pi_1(X_i,p_i)$ contains a nilpotent subgroup $G_i$ with index $\le C$. Properly rescale the measure on $\widetilde{X}_i$ and let $X_i'=\widetilde{X}_i/G_i$. By the splitting theorem, we have the following diagram.
	\begin{center}
		$\begin{CD}
			(\widetilde{X}_i,\tilde{p}_i,G_i, G_i',\mathfrak{m}_i) @>epmGH>> (\mathbb{R}^n,\tilde{p},G,G',\mathcal{H}^n)\\
			@VV\pi V @VV\pi V\\
			(X_i',p_i',G_i'/G_i) @>GH>> \mathrm{pt}\\
			@VV\pi V @VV\pi V\\
			(X_i,p_i) @>GH>> \mathrm{pt}
		\end{CD}$
	\end{center}
By Lemma \ref{free}, $G$ is free, thus can be identified as $\mathbb{R}^n$. 

$G_i$ has no small subgroup due to the measure convergence.
Then by Lemma \ref{ni}, for sufficiently large $i$, $G_i$ is isomorphic to a lattice in a $n$-dim simply connected nilpotent Lie group $\mathcal{N}_i$. For any $\epsilon>0$ and sufficiently large $i$, by Lemma \ref{metric}, we can endow $\mathcal{N}_i$ a left-invariant metric so that $\mathcal{N}_i$ is $\epsilon$-$C^4$-close to $\mathbb{R}^n$. Next we apply Lemma \ref{localeg} and \ref{extension}, we can find a map $h_i:\widetilde{X}_i \to \mathcal{N}_i$ so that , $h_i$ is an $\epsilon_i$-GHA on any $\frac{1}{\epsilon}$-ball in $\widetilde{X}_i$ and $\epsilon_i$-almost $G_i$-equivariant. By the rigidity, $G_i'$ can be identified as a discrete subset of $\mathcal{N}_i \rtimes \text{Aut}(\mathcal{N}_i)$. Then $h_i$ is also $\epsilon_i$-almost $G_i'$-equivariant.

We can find a normal subgroup $G_i''$ of $G_i'$ with finite index so that $G_i'' \subset G_i$ and $G_i'' \cap B_{\frac{1}{\epsilon}}(e) = \emptyset$ where $e \in \mathcal{N}_i$. In particular, $\mathcal{N}_i/G_i''$ is compact and $\epsilon$-$C^4$-close to $\mathbb{R}^n$ on any $\frac{1}{\epsilon}$-ball. Then we can find a map $\bar{h}_i: \widetilde{X}_i/G_i'' \to \mathcal{N}_i/G_i''$,
which is $\epsilon_i$-GHA on any $\frac{1}{\epsilon}$-ball and $\epsilon_i$-almost $G_i'/G_i''$-equivariant.
Then applying Theorem \ref{mix glue} to $\bar{h}_i$, we can obtain an $G_i'/G_i''$-equivariant GHA map 
$f_{G_i'/G_i''} : \widetilde{X}_i/G_i'' \to \mathcal{N}_i/G_i''$,
which is biLipschitz diffeomorphic on any $\frac{1}{\epsilon}$-ball.

Since $f_{G_i'/G_i''}$ is $G_i'/G_i''$-equivariant and the diameter of $X_i$ converges to $0$, $f_{G_i'/G_i''}$ induces a biLipschitz diffeomorphic map on the quotient space
$$f_i : X_i = \widetilde{X}_i/G_i' \to \mathcal{N}_i/G_i'.$$
Therefore $X_i$ is biLipschitz diffeomorphic to an infranil-manifold $\mathcal{N}_i/G_i'$.
\end{proof} 

\section{Proof of Theorem B: construct fibers along the collapsing direction}
We restate Theorem B.
\begin{thm2} 
Given $N,v>0$, assume that a sequence of compact RCD$(-(N-1),N)$ spaces $(X_i,d_i,\mathcal{H}^N)$ with $(1,v)$-bound covering geometry converges to a $k$-manifold $K$ in the Gromov-Hausdorff sense. Then for all large $i$, there is a fiber bundle map $f_i:X_i \to K$ which is a GHA. Moreover, the fiber is homeomorphic to an infranil-manifold and the structure group is affine.
\end{thm2}

\subsection{Basic constructions}

To prove Theorem B, we first employ the construction from Theorem \ref{local split} to obtain an $\epsilon_i$-GHA $f_i: X_i \to K$ which is locally almost $k$-splitting. We want to prove that $f_i$ is actually a fibration map for sufficiently large $i$. Assume that for some $p_i \in X_i$ converging to $p \in K$, $f_i$ is not a fibration map on any small neighborhood of $p_i$. We will show the existence of fiberation which leads to a contradiction.

By the generalized Margulis lemma, we can find $\epsilon_0>0$ such that $G_i'$, the image of $\pi_1(B_{\epsilon_0}(p_i)) \to \pi_1(B_1(p_i))$, contains a nilpotent subgroup $G_i$ of finite index $\le C$. Let $\widetilde{B}(p_i,\epsilon_0,1)$ be a connected component of the pre-image of $B_{\epsilon_0}(p_i)$ in the universal cover of $B_1(p_i)$. Passing to a subsequence if necessary, 
$$(\widetilde{B}(p_i,\epsilon_0,1),\tilde{p}_i) \overset{pGH}\longrightarrow (Y,\tilde{p})$$ by the precomactness Theorem \ref{precompact}. Moreover, by \cite{Huang2020}, the tangent cone at  $\tilde{x} \in Y$ must be $\mathbb{R}^N$ since any tangent cone at $K$ is $\mathbb{R}^k$.

We use the notation $r_iB_{\epsilon_0} (p_i)$ for the set $B_{\epsilon_0} (p_i) \subset X_i$ with the rescaled metric $r_id_i$. Take $r_i \to \infty$ slowly to blow up the metric and pass to a subsequence if necessary,
\begin{center}
\begin{align}\label{e1}		
		\begin{CD}
			(r_i \widetilde{B}(p_i,\epsilon_0,1)  ,\tilde{p}_i,G_i, G_i') @>eGH>> (\mathbb{R}^N,0^N,G,G')\\
			@VV\pi V @VV\pi V\\
			(r_i \widetilde{B}(p_i,\epsilon_0,1) /G_i ,p_i',G_i'/G_i) @>GH>> (Y', p' , \bar{G}) \\
			@VV\pi V @VV\pi V\\
			(r_iB_{\epsilon_0} (p_i),p_i) @>F_i>> (\mathbb{R}^k,0^k)
		\end{CD}
\end{align}
	\end{center}
where $F_i = \exp_p^{-1} \circ f_i$ is almost $k$-splitting by Theorem \ref{local split}.
Since $G_i$ is a normal subgroup of $G_i'$ with index $\le C$, $ \widetilde{B}(p_i,\epsilon_0,1) /G_i$ is a finite cover of $B_{\epsilon_0} (p_i)$. 
	
By the covering lemma \ref{covering},
$$\tilde{F}_i: r_i \widetilde{B}(p_i,\epsilon_0,1) \overset{\pi}\longrightarrow r_i B_{\epsilon_i}(p_i) \overset{F_i}\longrightarrow (\mathbb{R}^k,0^k),$$
 $$\tilde{F}_i': r_i \widetilde{B}(p_i,\epsilon_0,1) /G_i \overset{\pi}\longrightarrow r_i B_{\epsilon_i}(p_i) \overset{F_i}\longrightarrow (\mathbb{R}^k,0^k)$$
are also almost $k$-splitting. In particular, $Y'=\mathbb{R}^k \times Y''$ and $\mathbb{R}^N=\mathbb{R}^k \times \mathbb{R}^{N-k}$.

Thus $\bar{G},G,G'$ actions are trivial on the first $\mathbb{R}^k$ component of $\mathbb{Y}'$ and $\mathbb{R}^N$ respectively. In particular, since the order of $|G_i'/G_i| \le C$, $Y''$ is a finite set. $Y''$ is connected, thus $Y''$ is a single point and $Y'=\mathbb{R}^k$. Then $\bar{G}$ is trivial; $G$  and $G'$ act transitively on the $\mathbb{R}^{N-k}$ component of $\mathbb{R}^N=\mathbb{R}^k \times \mathbb{R}^{N-k}$. Thus by the same proof of Lemma \ref{free}, we conclude that:
\begin{lemma}\label{D}
$G$ is free. In particular, $G$ can be identified as $\mathbb{R}^{N-k}$.
\end{lemma}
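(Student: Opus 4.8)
The plan is to run the argument of Lemma~\ref{free} on the collapsing factor. By the construction preceding the statement, $\mathbb{R}^N=\mathbb{R}^k\times\mathbb{R}^{N-k}$ with $G$ acting trivially on the $\mathbb{R}^k$-factor and transitively on the $\mathbb{R}^{N-k}$-factor, and $G$ is a nilpotent Lie group acting faithfully on $\mathbb{R}^N$ as a closed subgroup of $\mathrm{Isom}(\mathbb{R}^N)$, exactly as in the situation of Lemma~\ref{free}. Since the action is trivial on the first factor, $G$ may be regarded as a faithfully and transitively acting closed subgroup of $\mathrm{Isom}(\mathbb{R}^{N-k})$, the latter carrying the flat Euclidean metric inherited from $\mathbb{R}^N$.

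First I would bound the isotropy groups. For $x\in\mathbb{R}^{N-k}$ the stabilizer $G_x=\{g\in G:gx=x\}$ is a closed subgroup of the point-stabilizer in $\mathrm{Isom}(\mathbb{R}^{N-k})$, which is conjugate to $O(N-k)$, so $G_x$ is compact. By Lemma~\ref{G_0}, $[G_x,G_0]$ is trivial, where $G_0$ is the identity component of $G$. Since $G_0$ is open in $G$ and $G$ acts transitively on the connected manifold $\mathbb{R}^{N-k}$, the $G_0$-orbits are open, hence also closed, so $G_0$ is already transitive. Then for $g\in G_x$ and any $y=hx$ with $h\in G_0$ we get $gy=(gh)x=(hg)x=hx=y$, so $g$ fixes all of $\mathbb{R}^{N-k}$; by faithfulness $g=e$, so $G$ acts freely and transitively on $\mathbb{R}^{N-k}$. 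Hence the orbit map $g\mapsto g\cdot o$ is a diffeomorphism $G\to\mathbb{R}^{N-k}$ intertwining left translation on $G$ with the isometric $G$-action, so the pulled-back metric on $G$ is left-invariant and flat; since a nilpotent Lie group with a flat left-invariant metric is abelian, we conclude $G\cong\mathbb{R}^{N-k}$.

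I do not expect a genuine obstacle here: this is a direct transcription of the proof of Lemma~\ref{free} with $\mathbb{R}^N$ replaced by the collapsing factor $\mathbb{R}^{N-k}$. The only point that needs a little care is the reduction of the $G$-action to a faithful isometric action on $\mathbb{R}^{N-k}$, which is where the facts just established (triviality on $\mathbb{R}^k$, transitivity on $\mathbb{R}^{N-k}$, faithfulness on $\mathbb{R}^N$) must be combined; once that is set up, compactness of the stabilizers, Lemma~\ref{G_0}, and transitivity of $G_0$ close the argument as before.
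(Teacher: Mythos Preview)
Your proposal is correct and follows essentially the same route as the paper, which simply refers back to the proof of Lemma~\ref{free} applied to the collapsing factor. You have added welcome detail (why $G_0$ is transitive, why faithfulness forces $G_x=\{e\}$, and the final identification of $G$ with $\mathbb{R}^{N-k}$ via the flat left-invariant metric), but the core argument is identical.
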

By Lemma \ref{nofix}, $G_i$ contains no small subgroup. Now we can simplify diagram \ref{e1} as: 
\begin{center}
\begin{align}\label{e2}		
\begin{CD}
			(r_i \widetilde{B}(p_i,\epsilon_0,1)  ,\tilde{p}_i,G_i, G_i') @>eGH>> (\mathbb{R}^N,0^N,G=\mathbb{R}^{N-k},G')\\
			@VV\pi V @VV\pi V\\
			(r_i \widetilde{B}(p_i,\epsilon_0,1) /G_i ,p_i',G_i'/G_i) @>eGH>> (\mathbb{R}^k,0^k , \mathrm{id}) \\
			@VV\pi V @VV\pi V\\
			(r_iB_{\epsilon_0} (p_i),p_i) @>F_i>> (\mathbb{R}^k,0^k)
\end{CD}
\end{align}
	\end{center}

The strategy for proving Theorem B follows a similar approach to the proof of Theorem A. Specifically, we use a nilpotent subgroup of the (local relative) fundamental group to construct a simply connected nilpotent Lie group. Then we show that the (local relative) covering space locally admits a fibration structure. However, in the case of Theorem B, there are two additional challenges compared to the proof of Theorem A.

The first challenge is that the generators of $G_i$ may be large in the rescaled metric spaces $r_i \widetilde{B}(p_i,\epsilon_0,1)$. The second challenge is that we cannot directly apply Theorem \ref{local} to local relative fundamental groups. 

From now on we always consider rescaled metrics $r_id_i$.
We solve the first issue using the gap lemma and choosing a subgroup of $G_i$.

For any $r>0$, define 
$$G_i(\tilde{p}_i,r)= \{ g \in G_i | r_id_i (g\tilde{p}_i,\hat{p}_i) \le r\}.$$
By the gap lemma \ref{gap} and Lemma \ref{D}, there exists $\epsilon_i \to 0$ with $\epsilon_i r_i \to \infty$ and for all $r \in [\epsilon_i,\frac{1}{\epsilon_i}]$, $ \langle G_i(\hat{p}_i,r) \rangle$ is the same group, saying $H_i$. Then the equivariant limit of $H_i$ contains a neighborhood of $G=\mathbb{R}^{N-k}$ thus must equal $G$ itself. 

We next prove that 
$$G_i'(\tilde{p}_i,r)= \{ g' \in G_i' | r_id_i (g'\tilde{p}_i,\hat{p}_i) \le r\}$$
 generates the same group in $G_i'$ for all $r \in [2\epsilon_i, \frac{1}{\epsilon_i} - \epsilon_i]$ as well, saying $H_i'$. For any $g' \in G_i'(\tilde{p},\frac{1}{\epsilon_i}- \epsilon_i)$, since $G_i/G_i'$ converges to the trivial group, we can find $g \in G_i(\tilde{p}_i,\frac{1}{\epsilon_i})$ so that $r_id_i(g'g^{-1}\tilde{p},\tilde{p}) \le 2\epsilon_i$. Thus $g'g^{-1} \in G_i'(\tilde{p},2\epsilon_i)$. On the other hand $g \in H_i \subset \langle G_i'(\tilde{p}_i,2\epsilon_i) \rangle$ thus $g'=g'g^{-1} g \in \langle G_i'(\tilde{p}_i,2\epsilon_i) \rangle$.

\begin{lemma}
For sufficiently large $i$, $H_i$ is a normal subgroup of $H_i'$ with index $\le C$, where $C$ is the constant in the generalized Margulis lemma. 
\end{lemma}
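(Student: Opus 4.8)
The plan is to deduce both assertions from three facts already at our disposal: $H_i\subseteq H_i'$ (established just above); the stability statements $H_i=\langle G_i(\tilde p_i,r)\rangle$ for every $r\in[\epsilon_i,\tfrac1{\epsilon_i}]$ and $H_i'=\langle G_i'(\tilde p_i,r)\rangle$ for every $r\in[2\epsilon_i,\tfrac1{\epsilon_i}-\epsilon_i]$; and the fact that $G_i\trianglelefteq G_i'$ with $[G_i':G_i]\le C$, together with the $\delta_i$-density of $G_i$ in $G_i'$ near $\tilde p_i$ ($\delta_i\to0$) coming from the lower row of diagram \ref{e2}.

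For normality, I would take a generator $g'\in G_i'(\tilde p_i,2\epsilon_i)$ of $H_i'$ and a generator $h\in G_i(\tilde p_i,\epsilon_i)$ of $H_i$, and apply the isometry $(g')^{-1}$:
$$r_id_i\big(g'h(g')^{-1}\tilde p_i,\tilde p_i\big)=r_id_i\big(h(g')^{-1}\tilde p_i,(g')^{-1}\tilde p_i\big)\le r_id_i(h\tilde p_i,\tilde p_i)+2\,r_id_i(g'\tilde p_i,\tilde p_i)\le 5\epsilon_i.$$
Since $G_i\trianglelefteq G_i'$, $g'h(g')^{-1}\in G_i$, so $g'h(g')^{-1}\in G_i(\tilde p_i,5\epsilon_i)\subseteq\langle G_i(\tilde p_i,5\epsilon_i)\rangle=H_i$ once $i$ is large enough that $5\epsilon_i\le\tfrac1{\epsilon_i}$; the same holds for $(g')^{-1}hg'$. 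Hence each generator of $H_i'$ normalizes the generating set of $H_i$, so $H_i\trianglelefteq H_i'$. The identical computation shows more generally that any element of $G_i'(\tilde p_i,R)$ with $R\le\tfrac12(\tfrac1{\epsilon_i}-\epsilon_i)$ normalizes $H_i$.

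For the index bound, the composite $H_i'\hookrightarrow G_i'\twoheadrightarrow G_i'/G_i$ has kernel $H_i'\cap G_i\supseteq H_i$ and image of order $\le[G_i':G_i]\le C$, so $[H_i':H_i'\cap G_i]\le C$ and it remains to prove $H_i'\cap G_i=H_i$. Given $\gamma\in H_i'\cap G_i$, write $\gamma=g_1'\cdots g_m'$ with $g_j'\in G_i'(\tilde p_i,2\epsilon_i)$; using $\delta_i$-density choose $h_j\in G_i$ with $r_id_i(h_j\tilde p_i,g_j'\tilde p_i)\le\delta_i$, so $h_j\in G_i(\tilde p_i,3\epsilon_i)\subseteq H_i$ and $s_j:=h_j^{-1}g_j'\in G_i'(\tilde p_i,\delta_i)$. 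Telescoping, $\gamma=(\tilde h_1\cdots\tilde h_m)\,S$ with $S:=s_1\cdots s_m$ and $\tilde h_j:=(s_1\cdots s_{j-1})h_j(s_1\cdots s_{j-1})^{-1}$, and since $s_1\cdots s_{j-1}\in H_i'$ normalizes $H_i$ each $\tilde h_j\in H_i$; thus $S=(\tilde h_1\cdots\tilde h_m)^{-1}\gamma\in G_i$ and $S$ displaces $\tilde p_i$ by at most $m\delta_i$. Once we know $m\delta_i\le\tfrac1{\epsilon_i}$, $S\in G_i(\tilde p_i,\tfrac1{\epsilon_i})\subseteq H_i$ and therefore $\gamma\in H_i$, giving $[H_i':H_i]=[H_i':H_i'\cap G_i]\le C$.

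The last step is where the real work lies. The quantity $m\delta_i$ must be kept below $1/\epsilon_i$, which intertwines the gap scale $\epsilon_i$, the density scale $\delta_i$, and the word length $m=m(\gamma)$, all depending on $i$. I would handle this by contradiction: if $H_i'\cap G_i\ne H_i$ along a subsequence, pick $\gamma_i\in(H_i'\cap G_i)\setminus H_i$ of minimal word length $m_i$ in $G_i'(\tilde p_i,2\epsilon_i)$. A bounded subsequence of $m_i$ puts $\gamma_i$ in $G_i(\tilde p_i,2m_i\epsilon_i)\subseteq G_i(\tilde p_i,\tfrac1{\epsilon_i})\subseteq H_i$, a contradiction; and $m_i\to\infty$ is to be ruled out using the bounded-degree polynomial growth of $H_i'$ together with the fact that $H_i$ and $G_i$ both have Hirsch length $N-k$ (the fibres of $X_i\to K$ being $(N-k)$-dimensional), which forces $(H_i'\cap G_i)/H_i$ to be finite with no element of infinite order, hence, after the $\delta_i$-rewriting above, trivial. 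This Hirsch-length/growth input, rather than the two soft steps, is the crux of the lemma.
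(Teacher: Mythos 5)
Your normality argument is correct and is essentially the paper's own: conjugate a short generator of $H_i$ by a short generator of $H_i'$, note the conjugate lies in $G_i$ (normality of $G_i$ in $G_i'$) and displaces $\tilde p_i$ by at most $5\epsilon_i$, hence lies in $G_i(\tilde p_i,5\epsilon_i)\subset H_i$ by the gap-lemma stability of $H_i$.

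The index half, however, has a genuine gap, and it is exactly where you say the "real work lies." Your route reduces the bound $[H_i':H_i]\le C$ to the stronger identity $H_i'\cap G_i=H_i$, and the proof of that identity is not carried out: for $\gamma\in H_i'\cap G_i$ the word length $m(\gamma)$ in the generators $G_i'(\tilde p_i,2\epsilon_i)$ is unbounded, so after your telescoping the leftover element $S\in G_i$ may displace $\tilde p_i$ by $m\delta_i\gg \tfrac1{\epsilon_i}$, and then the gap lemma gives no membership in $H_i$ (elements of $G_i\setminus H_i$ are precisely those with displacement beyond the gap scale, so nothing is gained). The proposed rescue is not available: the claim that $H_i$ and $G_i$ have Hirsch length $N-k$ and that the fibres are $(N-k)$-dimensional is established only later (via the nilprogression structure and Lemma 6.8/Theorem B itself), so invoking it here is circular; and even granting finiteness of $(H_i'\cap G_i)/H_i$, the inference "finite with no element of infinite order, hence trivial" is a non sequitur -- a nontrivial finite quotient is exactly what has to be excluded, and your argument does not exclude it. Note also that $H_i'\cap G_i=H_i$ is more than the lemma needs.

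The paper avoids all of this with a short pigeonhole argument in which only elements of \emph{bounded} displacement ever appear: if $|H_i'/H_i|>C$, then powers of the image $T$ of $H_i'(\tilde p_i,2\epsilon_i)$ in $H_i'/H_i$ grow until one finds $C+1$ elements of $H_i'(\tilde p_i,1)$ lying in distinct $H_i$-cosets; since $|G_i'/G_i|\le C$, two of them, $g_1,g_2$, agree modulo $G_i$, so $g_1g_2^{-1}\in G_i(\tilde p_i,3)\subset H_i$ (gap lemma again, at a bounded scale), contradicting that they represent distinct $H_i$-cosets. In other words, one never needs to control long words in $H_i'\cap G_i$; the comparison with $G_i'/G_i$ is made only among short elements, where membership of $G_i$-elements in $H_i$ is automatic. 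If you want to keep your framework, you would have to replace the unproved step $H_i'\cap G_i=H_i$ by this kind of bounded-displacement pigeonhole.
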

\begin{proof}
$H_i'= \langle G_i'(\tilde{p}_i,2\epsilon_i) \rangle$ and $H_i=\langle G_i(\tilde{p}_i,\epsilon_i) \rangle$. To show that $H_i$ is normal in $H_i'$, we only need to show that $h^{-1}gh \in H_i$ for any $h \in G_i'(\tilde{p}_i,2\epsilon_i)$ and $g \in  G_i(\tilde{p}_i,\epsilon_i)$. Since $G_i$ is normal in $G_i'$, $h^{-1}gh \in G_i$. Then 
\begin{align*}
r_id_i(\tilde{p}_i, h^{-1}gh \tilde{p}_i) & = r_id_i (h\tilde{p}_i, gh \tilde{p}_i) \\
& \le r_id_i(h\tilde{p}_i,\tilde{p}_i) + r_id_i(\tilde{p}_i,g\tilde{p}_i) + r_id_i(g\tilde{p}_i,gh\tilde{p}_i) \\ & \le 5\epsilon_i.
\end{align*}
Since $G_i(\tilde{p}_i,5\epsilon_i) \subset H_i$, $h^{-1}gh \in H_i$. Thus $H_i$ is a normal subgroup of $H_i'$.

Next we prove that $|H_i'/H_i| \le C$. Assume $|H_i'/H_i| > C \ge |G_i'/G_i|$. Let $T$ denote the image of $H_i'(\tilde{p}_i, 2\epsilon_i)$ in $H_i'/H_i$. Define $T^2=\{g_1g_2|g_1,g_2 \in T\}$ and similarly $T^l$ for $l>0$. If $T^{l+1}=T^l$ for some $l > 0$, then $T^l = H_i'/H_i$. Therefore either $|T^{l+1}| \ge |T^l|+1$ or $|T^l| > C$. In either case, we always have the order $|T^{C+1}| \ge C+1$. However, since $\epsilon_i \to 0$, $(H_i'(\tilde{p}_i, 2\epsilon_i))^{C+1} \subset H_i'(\tilde{p}_i,1)$ for sufficiently large $i$. Then we can find $g_1,g_2 \in H_i'(\tilde{p}_i,1)$ so that they have different images in $H_i'/H_i$ but the same image in $ G_i'/G_i$. Then $g_1g_2^{-1} \in G_i(\tilde{p}_i,3) \subset H_i$, a contradiction.
\end{proof}

Since any $g_i \in G_i$ but $g_i  \notin H_i$ satisfies $r_id_i(\tilde{p},g\tilde{p}) > \frac{1}{\epsilon_i}$, it diverges in the pointed Gromov-Hausdorff sense. Thus a large ball in $r_i \widetilde{B}(p_i,\epsilon_0,1) /G_i$ is isometric to a large ball in $r_i \widetilde{B}(p_i,\epsilon_0,1) /H_i$. To simplify the notation, we can identify:
$$r_i \widetilde{B}(p_i,\epsilon_0,1) /G_i= r_i \widetilde{B}(p_i,\epsilon_0,1) /H_i.$$
Similar we can identify:
$$r_iB_{\epsilon_0}(p_i) = r_i \widetilde{B}(p_i,\epsilon_0,1) /G_i' =r_i \widetilde{B}(p_i,\epsilon_0,1)/H_i'.$$ 
From \ref{e2}, we have the following commutative diagram:
\begin{center}
\begin{align}\label{e3}		
		\begin{CD}
			(r_i \widetilde{B}(p_i,\epsilon_0,1)  ,\tilde{p}_i,H_i, H_i') @>eGH>> (\mathbb{R}^N,0^N,H=\mathbb{R}^{N-k},H')\\
			@VV\pi V @VV\pi V\\
			(r_i \widetilde{B}(p_i,\epsilon_0,1) /H_i ,p_i',H_i'/H_i) @>eGH>> (\mathbb{R}^k, 0^k , \mathrm{id}) \\
			@VV\pi V @VV\pi V\\
			(r_iB_{\epsilon_0}(p_i),p_i) @>F_i>> (\mathbb{R}^k,0^k)
		\end{CD}
\end{align}
	\end{center}

\subsection{The existence of local almost product structure}
In this subsection, we always consider the rescaled metric $r_id_i$ on $(X_i,p_i)$. Let $B^{r_i}_{r}(p_i)$ denote the $r$-ball of $p_i$ with respect to the rescaled metric $r_id_i$. We will always use the rescaled metric $r_id_i$ on $B^{r_i}_{r}(p_i)$.

The goal of this subsection is to prove the following result.
\begin{lemma}[Existence of local product structure]\label{product}
In the context of \ref{e3}, there exists $r>0$ so that for sufficiently large $i$, $B_r^{r_i}(p_i')$ is, removing some points near the boundary if necessary, biH\"{o}lder homeomorphic to $B_r(0^k) \times \mathcal{N}_i/\hat{H}_i$, where $B_r(0^k) \subset \mathbb{R}^k$ and $\mathcal{N}_i$ is a simply connected nilpotent Lie group with lattice $\hat{H}_i$.
\end{lemma}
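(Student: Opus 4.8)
The plan is to follow the strategy of the proof of Theorem A, working now with the (rescaled) local relative cover $r_i\widetilde{B}(p_i,\epsilon_0,1)$ and its nilpotent deck subgroup $H_i$, and to bypass the failure of Theorem \ref{local} for relative fundamental groups --- the ``second challenge'' --- by only ever quotienting by the \emph{full} groupfication of the relevant nilprogression. Throughout I use the rescaled metric $r_id_i$ and diagram \ref{e3}. Since $H_i\le G_i$ and $G_i$ has no small subgroup, neither does $H_i$, and $H_i$ converges equivariantly to $H=\mathbb{R}^{N-k}$. First I would apply Theorem \ref{nil}: $H_i(\epsilon)$ contains a nilprogression $P_i(u_1,\dots,u_{N-k};C_1,\dots,C_{N-k})$ in $C$-normal form with $H_i(\delta)\subseteq G(P_i)$ for a fixed $\delta>0$; since $\mathrm{thick}(P_i)\to\infty$, Theorem \ref{Malcev} applies, making $P_i$ good and producing a lattice $\hat{H}_i:=\Gamma_{P_i}$ in a simply connected nilpotent Lie group $\mathcal{N}_i$ of dimension $N-k$. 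The $C$-normal form commutator relations show that $\gamma_j\mapsto u_j$ extends to a homomorphism $\hat{H}_i\to H_i$, which is surjective because $H_i=\langle G_i(\tilde{p}_i,\epsilon_i)\rangle\subseteq\langle G(P_i)\rangle=\langle u_1,\dots,u_{N-k}\rangle$ --- crucially, I do \emph{not} need it to be injective. Exactly as in Lemma \ref{metric}, using the Lie-algebra-structure convergence of Theorem \ref{Q} and the abelianness of the limit, I would equip $\mathcal{N}_i$ with a left-invariant metric that is $\epsilon_i$-$C^4$-close on every $\frac{1}{\epsilon}$-ball to flat $\mathbb{R}^{N-k}$.

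Next I would build a glued model. Fix $R=R(r)$ large, and form $\widehat{B}_i:=\hat{H}_i\times_{G(P_i)}\bar{B}_R(\tilde{p}_i)$ with its induced length metric and the left $\hat{H}_i$-action, as in the construction preceding Theorem \ref{local}. Being locally isometric to the rescaled relative cover, $\widehat{B}_i$ and its discrete quotients are $\mathrm{RCD}(-(N-1)r_i^{-2},N)$. Two observations are needed. First, $\widehat{B}_i/\hat{H}_i$ coincides with $r_i\widetilde{B}(p_i,\epsilon_0,1)/H_i$ on $B_r(p_i')$ once $r\ll R$: quotienting by all of $\hat{H}_i$ collapses the $\hat{H}_i$-factor and leaves $\bar{B}_R(\tilde{p}_i)$ modulo the local $H_i$-action, so the (possibly non-trivial) kernel of $\hat{H}_i\to H_i$ is irrelevant --- this is precisely the point where the proof of Theorem A used that $\widetilde{X}_i$ is simply connected. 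Second, $\widehat{B}_i$ converges, non-collapsed, to a ball in $\mathbb{R}^N$ (its tangent cones are $\mathbb{R}^N$), the $\hat{H}_i$-gluing merely un-rolling the collapsing $H_i$-orbit directions. Since the model $B_R(0^k)\times\mathcal{N}_i$ is $\epsilon_i$-$C^4$-close to the same ball in $\mathbb{R}^N$, combining the almost-$k$-splitting map $\tilde{F}_i=F_i\circ\pi$ of \ref{e3} (almost $k$-splitting by the covering Lemma \ref{covering}) with the Malcev-coordinate GHA $\phi_i\circ\psi_i^{-1}\colon\mathbb{R}^{N-k}\to\mathcal{N}_i$ of Lemma \ref{localeg} and globalizing as in Lemma \ref{extension}, I would obtain an $\epsilon_i$-GHA $\widehat{F}_i\colon\widehat{B}_i\to B_R(0^k)\times\mathcal{N}_i$ on bounded balls which is $\epsilon_i$-almost $\hat{H}_i$-equivariant (only the bounded radius of $B_r(p_i')$ is used here, no diameter bound).

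Finally I would run the $G$-equivariant gluing plus Reifenberg argument of Theorem A. Since $\hat{H}_i$ is finitely generated nilpotent, hence residually finite, I pick a finite-index normal subgroup $\hat{H}_i''$ of $\hat{H}_i$ with $\hat{H}_i''\cap B_{1/\epsilon}(e)=\emptyset$ in $\mathcal{N}_i$; then $(B_R(0^k)\times\mathcal{N}_i)/\hat{H}_i''=B_R(0^k)\times(\mathcal{N}_i/\hat{H}_i'')$ has injectivity radius $\ge\frac{1}{\epsilon}$ and is $\epsilon_i$-$C^4$-close to flat $\mathbb{R}^N$ on $\frac{1}{\epsilon}$-balls (Lemma \ref{metric}), while $\widehat{B}_i/\hat{H}_i''$ carries an isometric action of the finite group $\hat{H}_i/\hat{H}_i''$ whose quotient is $\widehat{B}_i/\hat{H}_i$, hence $r_i\widetilde{B}/H_i$ locally. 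The descended map $\bar{F}_i\colon\widehat{B}_i/\hat{H}_i''\to B_R(0^k)\times\mathcal{N}_i/\hat{H}_i''$ is an $\epsilon_i$-GHA on $\frac{10}{\epsilon}$-balls and $\epsilon_i$-almost $(\hat{H}_i/\hat{H}_i'')$-equivariant, so Theorem \ref{inv} (applied with the $N$-dimensional target $B_R(0^k)\times\mathcal{N}_i/\hat{H}_i''$) produces a genuinely $(\hat{H}_i/\hat{H}_i'')$-equivariant, locally $(N,\Phi(\epsilon|N))$-splitting map, which by the canonical Reifenberg estimate (Theorem \ref{Rei}, \cite{HondaPeng2024}, and the remark after Theorem \ref{inv}) is biH\"older on $\frac{1}{\epsilon}$-balls. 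Being equivariant, it descends to a biH\"older homeomorphism of $B_r(p_i')$ --- after deleting a thin set near $\partial B_r(p_i')$, as forced by the finite radius of $\bar{B}_R(\tilde{p}_i)$ and the GHA errors --- onto $B_r(0^k)\times\mathcal{N}_i/\hat{H}_i$, which is the assertion of Lemma \ref{product}. I expect the main obstacle to be making the second paragraph rigorous: checking that $\widehat{B}_i=\hat{H}_i\times_{G(P_i)}\bar{B}_R(\tilde{p}_i)$ is a covering of the local relative cover, that $\widehat{B}_i/\hat{H}_i$ genuinely recovers $r_i\widetilde{B}/H_i$ on a small ball, and that $\widehat{B}_i$ converges non-collapsed to a ball of $\mathbb{R}^N$; once this model space is in hand, the rest is bookkeeping built on Lemmas \ref{metric}, \ref{localeg}, \ref{extension} and Theorems \ref{inv}, \ref{Rei}.
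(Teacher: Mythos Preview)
Your proposal is essentially the same strategy as the paper's. The paper also forms the glued space $Y_i=\hat{H}_i\times_{H_i(\tilde{p}_i,4r)}\bar{B}^{r_i}_{2r}(\tilde{p}_i)$ with the groupfication $\hat{H}_i$ of the nilprogression, passes to a finite-index normal subgroup, applies Theorem \ref{inv} and canonical Reifenberg, and descends; the only substantive point you flag as ``the main obstacle'' --- that $\widehat{B}_i$ converges non-collapsed to $\bar{B}_r(0^k)\times\mathbb{R}^{N-k}$ --- is exactly what the paper fills in just before the proof of Lemma \ref{product}: it shows the limit $S$ of $\mathrm{Ker}(s_i)$ is trivial (discrete by the $r$-gap from \cite{PanWang2021,Zamora2020}, isotropy-free by volume convergence on tangent cones, hence trivial since $\bar{B}_r(0^k)\times\mathbb{R}^{N-k}$ is simply connected), which yields $Y=\bar{B}_r(0^k)\times\mathbb{R}^{N-k}$ with $\hat{H}=\mathbb{R}^{N-k}$ acting freely by translations.

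One minor divergence: the paper constructs $\mathcal{N}_i$ from a \emph{second} nilprogression $P_i'\subset\hat{H}_i(\hat{p}_i,1)$ found on $Y_i$ itself (and then checks via Lemma \ref{same} that its groupfication is again $\hat{H}_i$), whereas you build $\mathcal{N}_i$ directly from the original $P_i$ on the relative cover. Both are fine --- Lemma \ref{same} makes them interchangeable --- but the paper's choice makes Lemmas \ref{localeg} and \ref{extension} apply verbatim to $(Y_i,\hat{H}_i)$ rather than requiring you to transport the Malcev coordinates through the gluing.
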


\begin{remark}
Lemma \ref{product} claims the existence of a nil-manifold fibration structure on a neighborhood of $p_i'$ (not $p_i$). The obstruction to constructing an infranil-manifold fibration structure near $p_i$ is that $H_i$ is not necessarily isomorphic to a lattice. This issue will be addressed in Lemma \ref{no ker}. 
\end{remark}

The proof of Lemma \ref{product} follows by applying strategy used in the proof of Theorem A to the collapsing direction. In Theorem A, we first use Theorem \ref{nil} to find the nilprogression structure of a neighborhood of the identity in the fundamental group; then we apply Theorem \ref{local} to show that the fundamental group is isomorphic to a lattice in a nilpotent Lie group. Unfortunately, we cannot directly apply Theorem \ref{local} to $H_i$ and corresponding relative covers, thus we need to consider the groupfication of a neighborhood of the identity in $H_i$.

By Theorem \ref{nil}, there exists $r>0$ such that for all large $i$, $H_i(\tilde{p}_i,10)$ contains a nilprogression $P_i$ which contains $H_i(\tilde{p}_i,4r)$. 
\begin{lemma}\label{same}
For any fixed $0 < \epsilon < 4r$ and sufficiently large $i$, the groupfication of $H_i(\tilde{p}_i,4r)$ is naturally isomorphic to the groupfication of $H_i(\tilde{p}_i,\epsilon)$.
\end{lemma}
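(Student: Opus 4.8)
\textbf{Proof proposal for Lemma \ref{same}.}
The plan is to show that both groupfications are naturally isomorphic to the groupfication of the nilprogression $P_i$, which sits between the two escape-norm balls. First I would recall the inclusions: by the choice of $r$ coming from Theorem \ref{nil}, we have $H_i(\tilde{p}_i,4r) \subset G(P_i) \subset P_i \subset H_i(\tilde{p}_i,10)$ for all large $i$, and shrinking $\epsilon$ only shrinks the ball, so $H_i(\tilde{p}_i,\epsilon) \subset H_i(\tilde{p}_i,4r)$. Thus it suffices to produce, for each pair $A \subset B$ among these nested symmetric generating sets (with $A$ still generating the same group $H_i$), a natural isomorphism between the groupfication $\widehat{A}$ of $A$ and the groupfication $\widehat{B}$ of $B$, compatible with the obvious maps to $H_i$.

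The key mechanism is the escape-norm estimate, Theorem \ref{escape}: since $H_i(\tilde{p}_i,4r)$ is (a subset of) a strong $C$-approximate group with no small subgroup — this is exactly the content of Lemma \ref{nofix}/the ``no small subgroup'' statement, which forces every nontrivial element to have positive escape norm — the escape norm of any fixed nontrivial element of $H_i$ is bounded below by a constant independent of $i$, while the elements of $H_i(\tilde{p}_i,4r)$ can be written as bounded-length words in the small generators $H_i(\tilde{p}_i,\epsilon)$ by part (1) of Theorem \ref{escape}, once $i$ is large enough that $4r$-displacement elements decompose into $\epsilon$-displacement pieces. Concretely: for $i$ large, every $g \in H_i(\tilde{p}_i,4r)$ is a product $g = g_1\cdots g_m$ with $g_j \in H_i(\tilde{p}_i,\epsilon)$ and $m \le m(\epsilon, r)$ bounded independent of $i$ (follow the segment from $\tilde p_i$ to $g\tilde p_i$ in $r_i\widetilde{B}$ in steps of length $<\epsilon$, using that the quotient has small diameter and the almost-homogeneity; this is the same displacement-subdivision used elsewhere in the paper). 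This gives a well-defined candidate homomorphism $\widehat{H_i(\tilde{p}_i,4r)} \to \widehat{H_i(\tilde{p}_i,\epsilon)}$, because any defining relation $e_{g_1}e_{g_2}e_{g_1g_2}^{-1}$ of the larger groupfication maps to a relation that holds in $\widehat{H_i(\tilde{p}_i,\epsilon)}$: the equality $g_1g_2 = (\text{word in } \epsilon\text{-elements})$ is witnessed by products of the basic relations, all of whose intermediate terms have displacement bounded by a constant $< $ (say) $8r$, hence lie in $H_i(\tilde{p}_i,8r)$, which is still contained in $P_i$ once the thickness is large — so the relation is already a consequence of relations among elements of a common nilprogression. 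Composing with the inclusion $\widehat{H_i(\tilde{p}_i,\epsilon)} \to \widehat{H_i(\tilde{p}_i,4r)}$ and using injectivity of the maps $i$ into the groups (as in the discussion before Theorem \ref{local}, $\pi\circ i = \mathrm{id}$) shows the two maps are mutually inverse.

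The step I expect to be the main obstacle is controlling the \emph{intermediate} terms in the defining relations: it is easy to say ``$g_1g_2$ decomposes into short generators'' but one must verify that the \emph{sequence of partial products} realizing that decomposition, together with the chain of basic relations $e_ae_b e_{ab}^{-1}$ rewriting it, stays inside a region where all group elements involved have uniformly bounded displacement — otherwise the relation need not descend to the small groupfication. This is handled by the escape-norm estimate (Theorem \ref{escape}, parts (1) and (2)): products and conjugates of a bounded number of elements of small escape norm again have small escape norm, hence small displacement of $\tilde p_i$, and in particular for $i$ large they all lie in $H_i(\tilde p_i, 8r) \subset G(P_i)$; then Theorem \ref{Malcev} (and the ``good'' property of $P_i$, with $\mathrm{thick}(P_i)$ large for large $i$) guarantees that every relation among elements of $G(P_i)$ that holds in $H_i$ is already a consequence of the nilprogression relations, i.e.\ holds in $\Gamma_{P_i}$. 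Since $\Gamma_{P_i}$ is the common groupfication target, both $\widehat{H_i(\tilde{p}_i,4r)} \to \Gamma_{P_i}$ and $\widehat{H_i(\tilde{p}_i,\epsilon)} \to \Gamma_{P_i}$ are isomorphisms, and the composite isomorphism $\widehat{H_i(\tilde{p}_i,4r)} \cong \widehat{H_i(\tilde{p}_i,\epsilon)}$ is the natural one, intertwining the projections to $H_i$. This completes the proof.
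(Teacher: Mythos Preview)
Your proposal is correct and lands on the same target as the paper: both groupfications are naturally identified with $\Gamma_{P_i}$. The paper's route is shorter, however. Rather than decomposing arbitrary elements of $H_i(\tilde p_i,4r)$ into bounded-length words in $H_i(\tilde p_i,\epsilon)$ and then tracking the partial products, the paper observes directly that the \emph{presentation data} of $P_i$ already lies in the smallest set: the generators $u_j$ were selected to have minimal escape norm, so $u_j \in H_i(\tilde p_i,\epsilon_i)$ with $\epsilon_i \to 0$, and the commutator relations defining $P_i$ involve (by Theorem \ref{escape}(3)) only elements of arbitrarily small escape norm. Hence for large $i$ both the generators and the relations of $P_i$ sit inside $H_i(\tilde p_i,\epsilon)$, and the chain $H_i(\tilde p_i,\epsilon)\subset H_i(\tilde p_i,4r)\subset P_i$ groupifies to the same object at every stage. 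Your displacement-subdivision argument is valid and conceptually equivalent, but the paper's shortcut sidesteps precisely the intermediate-term bookkeeping you flag as the main obstacle.
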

\begin{proof}
We have $H_i(\tilde{p}_i,\epsilon) \subset H_i(\tilde{p}_i,4r) \subset P_i$. $H_i$ can be generated by $H_i(\tilde{p}_i,\epsilon_i)$ where $\epsilon_i \to 0$. Using the escape norm, the generators of $P_i$ are $u_j \in H_i(\tilde{p}_i,\epsilon_i)$ and the relations in $P_i$ are contained in $H_i(\tilde{p}_i,\epsilon)$. Thus the groupfication of $H_i(\tilde{p}_i,4r)$ or $H_i(\tilde{p}_i,\epsilon)$ is naturally isomorphic to the groupfication of $P_i$ which is a lattice in a simply connected nilpotent Lie group.
\end{proof}

Let $\hat{H}_i$ be the groupfication of $H_i(\tilde{p}_i,4r)$. For sufficiently large $i$, we know that $\langle H_i(\tilde{p}_i,r) \rangle= H_i$, thus there is a natural surjective homomorphism $s_i: \hat{H}_i \to H_i$. We shall use the argument in the proof of Theorem A for $\hat{H}_i$.

We construct a gluing space $\hat{H}_i \times_{H_i(\tilde{p}_i,4r)} \bar{B}^{r_i}_{2r}(\tilde{p}_i)$ which is a covering space of $\bar{B}^{r_i}_{2r}(p_i)$. Let $Y_i$ be the pre-image of $\bar{B}^{r_i}_{r}(p_i)$ in $\hat{H}_i \times_{H_i(\tilde{p}_i,4r)} \bar{B}^{r_i}_{2r}(\tilde{p}_i)$. Then $Y_i$ is connected since the generators of $\hat{H}_i$ is contained in the image of natural injective pseudo-group homomorphism 
$$H_i(\tilde{p}_i,\epsilon_i) \hookrightarrow \hat{H}_i$$
and $\epsilon_i << r$ when $i$ is large enough. 

Thus $H_i=\hat{H}_i/\mathrm{Ker}(s_i)$ and define $Z_i=Y_i/\mathrm{Ker}(s_i)$. $Z_i$ is the union of all $H_i$-orbits of $\bar{B}^{r_i}_{r}(p_i')$, and we have
$$Z_i= \pi^{-1}(\bar{B}^{r_i}_{r}(p_i)) \subset r_i\widetilde{B}(p_i,\epsilon_0,1).$$ Using \ref{e3} (and forget $H_i'$ for now) and the pre-compactness theorem, we obtain the following: 

\begin{center}
\begin{align}	
		\begin{CD}
			(Y_i,\hat{p}_i,\mathrm{Ker}(s_i),\hat{H}_i) @>eGH>> (Y,\hat{p},S,\hat{H})\\
			@VV\pi V @VV\pi V\\
			(Z_i,\tilde{p}_i,H_i=\hat{H}_i/ \mathrm{Ker}(s_i)) @>eGH>> (Y/S =\bar{B}_{r}(0^k) \times \mathbb{R}^{N-k}, 0^N ,H=\mathbb{R}^{N-k}) \\
			@VV\pi V @VV\pi V\\
			(\bar{B}^{r_i}_{r}(p_i') ,p_i') @>F_i>> (\bar{B}_{r}(0^k) ,0^k)
		\end{CD}
\end{align}	
	\end{center}
	where $S$ is the limit group of $\mathrm{Ker}(s_i)$, and $H=\hat{H}/S$.
\begin{lemma}
$S$ is a trivial group. Thus $Y=\bar{B}^{r_i}_{r}(0^k) \times \mathbb{R}^{N-k}$ and $\hat{H}= \mathbb{R}^{N-k}$ are free translation actions.
\end{lemma}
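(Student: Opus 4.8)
The plan is to prove $S=\{e\}$ by showing that the limit space $Y$ is non-collapsed and that the relevant groups are nilpotent, so that $S$ ends up being a discrete central subgroup of a connected simply connected nilpotent Lie group, hence trivial.

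First I would upgrade the bottom rows of the diagram with a uniform volume bound. The covering map $\pi\colon Y_i\to Z_i$ is a local isometry which is onto every metric ball, so $\mathcal H^N(B_1(\hat p_i))\ge\mathcal H^N(B_1(\tilde p_i))$; and in the rescaled metric $r_id_i$, Bishop--Gromov together with the $(1,v)$-bound covering hypothesis yields $\mathcal H^N(B_1(\tilde p_i))\ge v'>0$ with $v'$ independent of $i$ (this is precisely what the slow blow-up $r_i\to\infty$ buys us). Hence the $Y_i$ are uniformly non-collapsed $\mathrm{RCD}(-(N-1)r_i^{-2},N)$ spaces and $Y$ is a non-collapsed $\mathrm{RCD}(0,N)$ space of rectifiable dimension $N$. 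Moreover the almost $k$-splitting map $\tilde F_i$ is $\hat H_i$-invariant, since it factors through $\bar B^{r_i}_r(p_i)$, so it pulls back to an almost $k$-splitting map on $Y_i$; passing to the limit and invoking the splitting theorem, $Y$ splits isometrically as $\bar B_r(0^k)\times Y''$, where $Y''$ is a connected non-collapsed $\mathrm{RCD}(0,N-k)$ space of dimension $N-k$, and both $\hat H$ and $S$ act trivially on the $\bar B_r(0^k)$-factor. Reading off the quotients in the diagram, $\hat H$ then acts transitively on $Y''$ (since $Y/\hat H=\bar B_r(0^k)$), while $Y''/S=\mathbb R^{N-k}$.

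Next I would show that $\hat H$ acts freely and transitively on $Y''$. Since $\hat H_i\cong\Gamma_{P_i}$ is a lattice in an $(N-k)$-dimensional simply connected nilpotent Lie group, the $\hat H_i$ are nilpotent of uniformly bounded step; hence $\hat H\subset\mathrm{Isom}(Y)$ is a nilpotent Lie group and $S\triangleleft\hat H$, being a limit of the normal subgroups $\mathrm{Ker}(s_i)$. For $y\in Y''$ the isotropy group $\hat H_y$ is compact, the identity component $\hat H_0$ is open and, by connectedness of $Y''$, still transitive, and Lemma \ref{G_0} gives $[\hat H_y,\hat H_0]=\{e\}$; since $\hat H_0$ is transitive and the action is effective, it follows that $\hat H_y=\{e\}$. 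Thus $Y''\cong\hat H$ as homogeneous spaces, so $\hat H$ is a connected nilpotent Lie group with $\dim\hat H=\dim Y''=N-k$. Finally $\hat H/S=H\cong\mathbb R^{N-k}$, and comparing dimensions forces $\dim S=0$, so $S$ is a discrete central subgroup of the connected group $\hat H$; then $\hat H\to\hat H/S=\mathbb R^{N-k}$ is a covering homomorphism onto a simply connected group, hence an isomorphism, and $S=\{e\}$. Consequently $\hat H=\mathbb R^{N-k}$ acts on $Y''=\mathbb R^{N-k}$ by free translations and $Y=\bar B_r(0^k)\times\mathbb R^{N-k}$, as claimed.

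I expect the main obstacle to be the non-collapsing step: verifying that the slow blow-up really produces a uniform lower volume bound for $B_1(\tilde p_i)$ in the rescaled metric (so that $Y$ and $Y''$ have rectifiable dimensions $N$ and $N-k$), and checking that the almost $k$-splitting maps lift compatibly with the $\hat H_i$-action so that the splitting of $Y$ is $\hat H$- and $S$-invariant. Once $Y=\bar B_r(0^k)\times Y''$ is established with $Y''$ non-collapsed of dimension $N-k$, the remaining group theory is routine.
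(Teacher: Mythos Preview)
Your argument is correct but follows a different route from the paper. The paper proceeds more directly: it first observes, citing \cite{PanWang2021,Zamora2020}, that every nontrivial $g\in\mathrm{Ker}(s_i)$ satisfies $r_id_i(g\hat p_i,\hat p_i)\ge r$ (this is a general displacement lower bound for the kernel of the groupification map), so the limit group $S$ is automatically discrete. It then shows $S$ has no isotropy by a tangent-cone plus volume-convergence argument: if $h\in S$ fixed some interior point $x\in Y$, the tangent cone at $x$ would equal the tangent cone at its image $\bar x\in Y/S$, namely $\mathbb R^N$, contradicting the fact that $\langle h\rangle$ acts nontrivially on that tangent cone. With $S$ discrete and free, $Y\to Y/S=\bar B_r(0^k)\times\mathbb R^{N-k}$ is a covering of a simply connected space, so $S=\{e\}$.

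Your approach instead establishes the splitting $Y=\bar B_r(0^k)\times Y''$ upstairs, identifies $Y''$ with $\hat H$ via a free transitive action (using the nilpotency of $\hat H$ and Lemma~\ref{G_0}, exactly as in Lemma~\ref{free}), and then kills $S$ by a dimension count together with the fact that $\hat H/S\cong\mathbb R^{N-k}$ is simply connected. This avoids the external displacement bound and the tangent-cone argument, at the cost of first proving the splitting of $Y$ and verifying $\dim Y''=N-k$ via non-collapsing. Both arguments ultimately hinge on the simple connectedness of $\mathbb R^{N-k}$; the paper's version is shorter because the displacement bound gives discreteness of $S$ for free, while yours is more self-contained and structurally parallel to the proof of Lemma~\ref{free}.
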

\begin{proof}
By \cite{PanWang2021,Zamora2020}, for any $g \in \mathrm{Ker}(s_i)$, $r_id_i(g\hat{p}_i,\hat{p}_i) \ge r$. Thus the limit group $S$ is a discrete group. 

We now claim that $S$ admits no isotropy subgroup. Otherwise assume that $\mathrm{id} \neq h \in S$ and $hx=x$ for some $x \in Y$. We may assume that $x$ is not a boundary point, otherwise we take $\frac{3r}{2}$-ball instead of $r$-ball when we define $Y_i$, then $x$ is not on the boundary. Let $\bar{x}$ be the image of $x$ in $Y/S$. The tangent cone of $\bar{x}$ is $\mathbb{R}^N$, thus tangent cone at $x$ is also $\mathbb{R}^N$. However, $\langle g \rangle$ has non-trivial limit group actions on the tangent cone of $x$ due to volume convergence, thus $\bar{x}$ and $x$ cannot have the same tangent cone, a contradiction.

Thus $S$ are free and discrete isometric group actions. Since $\bar{B}_{r}(0^k) \times \mathbb{R}^{N-k}$ is simply connected, $S$ is trivial.  
\end{proof}

Now we have $\hat{H}=H= \mathbb{R}^{N-k}$ and the following diagram:
\begin{center}
\begin{align}\label{e4}		
		\begin{CD}
			(Y_i,\hat{p}_i,\mathrm{Ker}(s_i),\hat{H}_i) @>eGH>> (Y=\bar{B}_{r}(0^k) \times \mathbb{R}^{N-k}, 0^N, \mathrm{id},\hat{H}= \mathbb{R}^{N-k})\\
			@VV\pi V @VV\pi V\\
			(Z_i,\tilde{p}_i,H_i=\hat{H}_i/ \mathrm{Ker}(s_i)) @>eGH>> (\bar{B}_{r}(0^k) \times \mathbb{R}^{N-k}, 0^N ,H=\mathbb{R}^{N-k}) \\
			@VV\pi V @VV\pi V\\
			(\bar{B}^{r_i}_{r}(p_i'),p_i') @>F_i>> (\bar{B}_{r}(0^k) ,0^k)
		\end{CD}
\end{align}
	\end{center}

\begin{proof}[Proof of Lemma \ref{product}]
By Theorem \ref{nil} and the diagram \ref{e4}, there exists $\epsilon>0$, for sufficiently large $i$, we can find a nilporgression $P_i' \subset \hat{H}_i(\hat{p}_i,1)$ that contains $\hat{H}_i(\hat{p}_i,\epsilon)= \{g \in \hat{H}_i | r_id_i (g\hat{p},\hat{p}) \le \epsilon \}$. We may assume $\epsilon < 4r$, then 
 the map $$s_i :H_i(\tilde{p}_i,\epsilon)  \to \hat{H}_i(\hat{p}_i,\epsilon)$$
is injective \cite{PanWang2021,Zamora2020}, thus a pseudo-group isomorphism.
Since the groupfication of $H_i(\tilde{p}_i,\epsilon)$ is isomorphic to $\hat{H}_i$ by Lemma \ref{same}, thus $\hat{H}_i$ is isomorphic to the groupfication of $\hat{H}_i(\hat{p}_i,\epsilon)$, which is the groupfication of $P_i'$.

We can construct a $(N-k)$-dim nilpotent group $\mathcal{N}_i$ using the nilprogession $P_i'$, and endow $N_i$ a left-invariant metric as in Lemma \ref{metric}. Then $\hat{H}_i$ is isomorphic to a lattice in $\mathcal{N}_i$. 

Take $(0^k,e) \in \bar{B}_{r}(0^k) \times \mathcal{N}_i$. By Lemma \ref{localeg}, we can construct a map 
$$h_i': B^{r_i}_{r} (\hat{p}_i) \to B_{r}(0^k,e)$$
which is an $\epsilon_i$-GHA and $\epsilon_i$-almost $\hat{H}_i(\hat{p}_i,r)$-equivariant with some $\epsilon_i \to 0$.

Then by Lemma \ref{extension}, we can construct a global map, possibly dropping points near the boundary if necessary,
$$h_i: (Y_i,\hat{p}_i) \to \bar{B}_{r}(0^k) \times \mathcal{N}_i$$ 
which is an $\epsilon_i$-GHA on any $r$-ball and  $h$ is $\epsilon_i$-almost $\hat{H}_i$-equivariant. 

Now we can find a normal subgroup $\hat{H}_i'$ in $\hat{H}_i$ of finite index, with $\hat{H}_i' \cap B_r(e) = \emptyset$.  Then by Theorem \ref{inv}, after sufficiently blowing up the metric and dropping points near the boundary if necessary, there exists a $\hat{H}_i/\hat{H}_i'$-equivariant $\epsilon_i$-GHA 
$$f_{\hat{H}_i/\hat{H}_i'} : Y_i/\hat{H}_i' \longrightarrow \bar{B}_{r}(0^k) \times \mathcal{N}_i/\hat{H}_i',$$ which is locally almost $N$-splitting. In particular, $f_{\hat{H}_i/\hat{H}_i'}$ is biH\"{o}lder. Thus $Y_i/H_i = \bar{B}^{r_i}_{r}(p_i')$ is biH\"{o}lder homeomorphic to $\bar{B}_{r}(0^k) \times \mathcal{N}_i/ \hat{H}_i$.
\end{proof}

\subsection{Local relative fundamental groups}
Lemma \ref{product} is not a proof of Theorem B since $H_i$ may not be isomorphic to $\hat{H}_i$; equivalently, $\mathrm{Ker}(s_i)$ in \ref{e4} may not be trivial or the nilpotency rank of $H_i$ may be strictly less than $N-k$. Then the structure of $H_i'$ is unknown for now.  We shall solve this issue by considering another contradiction sequence. 

We aim to prove the following in this subsection.
\begin{lemma}\label{no ker}
Take another contradiction sequence if necessary, we may assume that $\hat{H}_i=H_i$ and $H_i$ is a normal subgroup of $H_i'$ with index $\le C(N)$.
\end{lemma}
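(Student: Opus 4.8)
The plan is to run a second contradiction argument at the level of the local relative covers, extracting from the possibly-collapsed situation a new sequence in which the groupification map $s_i:\hat H_i\to H_i$ becomes an isomorphism, i.e.\ $\mathrm{Ker}(s_i)$ is trivial. The key observation is that $\mathrm{Ker}(s_i)$ measures exactly the failure of the nilpotency rank of $H_i$ to reach $N-k$: from the diagram \ref{e4} we know $\hat H_i$ is a lattice in the $(N-k)$-dimensional nilpotent Lie group $\mathcal N_i$, while the equivariant limit of $H_i$ in the middle row is $H=\mathbb R^{N-k}$ acting by translations. So if $\mathrm{Ker}(s_i)$ is nontrivial, then $H_i$ has strictly smaller nilpotency rank, say $m<N-k$, and by the gap lemma and the structure of $H_i(\tilde p_i,r)$ (which, up to a subgroup of bounded index, is a nilprogression whose groupification equals $H_i$) the generators escaping to infinity in $Z_i$ are exactly those generating $\mathrm{Ker}(s_i)$.

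\textbf{The first step} is to make precise what "take another contradiction sequence" means. Recall we started from a sequence $(X_i,d_i,\mathcal H^N)$ with $(1,v)$-bound covering geometry, converging to $K$, for which the locally-almost-splitting GHA $f_i$ fails to be a fibration near some $p_i$. After the blow-ups in \ref{e3}, we have the rescaled spaces $r_iB_{\epsilon_0}(p_i)$ and their local relative covers. The idea is: rather than blowing up by $r_i$, blow up by the \emph{slowest} rate at which the collapsing is $(N-k)$-dimensional, i.e.\ choose the blow-up scale so that $H_i$ has already "unfolded" all of its infinite directions. Concretely, I would argue that if $\mathrm{Ker}(s_i)$ is nontrivial for the original scale $r_i$, then the quotient $Z_i/\overline{\langle \text{escaping generators}\rangle}$ — equivalently a further blow-up at a larger scale $r_i'\gg r_i$ — still has local relative covers of rectifiable dimension $N$ (because $(1,v)$-bound covering geometry is a scale-invariant, or at least scale-robust, hypothesis: rescaling a RCD$(-(N-1),N)$ space keeps it RCD with better curvature, and the $(\rho,v)$-bound covering condition on $X_i$ yields, for any fixed scale, a definite lower volume bound on balls in the local relative universal cover, via the covering/volume-convergence arguments already used to prove Lemma \ref{D} and Lemma \ref{ni}). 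Iterating this at most $N-k$ times — each iteration strictly increasing the nilpotency rank of the relevant relative fundamental group while preserving all standing hypotheses and the failure-of-fibration assumption — must terminate, and the terminal sequence has $\mathrm{Ker}(s_i)$ trivial, hence $\hat H_i=H_i$.

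\textbf{The second step,} once $\hat H_i=H_i$, is the normality and index bound. Since $\hat H_i = H_i$, the subgroup $H_i'$ now plays the role of $G_i'$ in the proof of Theorem A: we have $H_i\vartriangleleft H_i'$ (already shown in the lemma preceding \ref{e3}, where $|H_i'/H_i|\le C$ with $C$ the Margulis constant), $H_i$ is a lattice in $\mathcal N_i$, and by the rigidity result \cite{LeeRay} $H_i'$ embeds as a discrete subgroup of $\mathcal N_i\rtimes\mathrm{Aut}(\mathcal N_i)$ — exactly the setup needed to finish Theorem B in the same way Theorem A was finished. So this step is essentially bookkeeping: re-examine the lemma proving $|H_i'/H_i|\le C$ and note that nothing there used $\mathrm{Ker}(s_i)=\{e\}$, so the bound persists on the new sequence, and invoke \cite{LeeRay} for the normal nilpotent subgroup structure.

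\textbf{The main obstacle} I expect is the termination/iteration argument in the first step: one must verify that passing to a larger blow-up scale (or to the quotient $Z_i/\overline{\langle\mathrm{Ker}(s_i)\rangle}$) genuinely produces a \emph{new contradiction sequence} — i.e.\ that the non-fibration hypothesis is inherited, and that one has not accidentally changed the dimension $k$ of the base or lost the $(1,v)$-bound covering geometry. The cleanest route is probably: show that $\overline{\langle\mathrm{Ker}(s_i)\rangle}$, being generated by elements that escape to infinity but whose groupification is trivial in $H_i$, is precisely the kernel of $H_i$ acting on a "sub-collapse", so that $Z_i/\overline{\langle\mathrm{Ker}(s_i)\rangle}$ is itself (a blow-up of) the local relative cover at a coarser scale, with relative fundamental group $H_i/\mathrm{Ker}(s_i)$ of strictly larger rank; then the rectifiable dimension of its limit is still $N$ by the volume bound inherited from $(1,v)$-bound covering geometry on $X_i$, and the argument of \S5.1–5.2 applies verbatim to this new sequence. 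Since rank is a nonnegative integer bounded by $N-k$, the process stabilizes, and at the stable stage $\mathrm{Ker}(s_i)$ is trivial.
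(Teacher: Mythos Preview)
Your proposal has a genuine gap: the iteration mechanism you sketch does not actually force the rank to increase, and the quotients you write down are not well-formed. You write ``$Z_i/\overline{\langle\mathrm{Ker}(s_i)\rangle}$'' and ``$H_i/\mathrm{Ker}(s_i)$'', but $\mathrm{Ker}(s_i)$ is a subgroup of $\hat H_i$, not of $H_i$; since $Z_i = Y_i/\mathrm{Ker}(s_i)$ and $H_i = \hat H_i/\mathrm{Ker}(s_i)$ already, these further quotients are trivial. More seriously, passing to a \emph{larger} blow-up scale $r_i'r_i$ replaces $H_i$ by the group generated by even shorter elements, which is a subgroup of the old $H_i$ --- so there is no obvious reason its rank should go up rather than stay the same (or go down). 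You assert ``each iteration strictly increasing the nilpotency rank'' but give no mechanism for it.

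The paper's approach is not an iteration at all; it is a single step, and the idea you are missing is to \emph{use the local product structure just established in Lemma~\ref{product}}. Since $B_r^{r_i}(p_i')$ is biH\"older to $B_r(0^k)\times\mathcal N_i/\hat H_i$, loops in the nil-fiber represent elements of $\hat H_i$ and can be pushed down to $B_r^{r_i}(p_i)$; this defines an injective homomorphism $i:\hat H_i\hookrightarrow\widetilde H_i'$, where $\widetilde H_i'$ is the image of $\pi_1(B^{r_i}_{r/20C^2}(p_i))\to\pi_1(B^{r_i}_{r/10C}(p_i))$. A short covering-space argument shows $[\widetilde H_i':i(\hat H_i)]\le C$. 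Since $\hat H_i$ has rank $N-k$ (it is a lattice in $\mathcal N_i$), this realizes a rank-$(N-k)$ group inside an \emph{actual} local relative fundamental group, and by Lemma~\ref{normal} one extracts a normal nilpotent $\widetilde H_i\lhd\widetilde H_i'$ of index $\le C!$ and rank $N-k$. Now one further blow-up using $\widetilde H_i,\widetilde H_i'$ in place of $G_i,G_i'$ gives the new contradiction sequence; its groupification $\hat{\bar H}_i$ and $\bar H_i=\widetilde H_i$ both have rank $N-k$, so $s_i$ is an isomorphism. The index bound $|H_i'/H_i|\le C(N)$ is then inherited from $[\widetilde H_i':\widetilde H_i]\le C!$.
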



Now we have the following from \ref{e4},
\begin{center}
\begin{align}\label{e5}
\begin{CD}
			(Y_i,\hat{p}_i,\mathrm{Ker}(s_i),\hat{H}_i) @>eGH>> (Y=\bar{B}_{r}(0^k) \times \mathbb{R}^{N-k}, 0^N, \mathrm{id},\hat{H}= \mathbb{R}^{N-k})\\
			@VV\pi V @VV\pi V\\
			(Z_i,\tilde{p}_i,H_i=\hat{H}_i/ \mathrm{Ker}(s_i)) @>GH>> (\bar{B}_{r}(0^k) \times \mathbb{R}^{N-k}, 0^N ,H=\mathbb{R}^{N-k}) \\
			@VV\pi V @VV\pi V\\
			(\bar{B}^{r_i}_{r}(p_i'),p_i') @>eGH>> (\bar{B}_{r}(0^k) ,0^k),
		\end{CD}
\end{align}
	\end{center}
and $B^{r_i}_{r}(p_i')$ is  biH\"{o}lder homeomorphic to $B_r(0^k) \times \mathcal{N}_i/\hat{H}_i$.

Let $C$ be constant in the generalized Margulis lemma. Consider the quotient space $r_i \widetilde{B}(p_i,\epsilon_0,1) /G_i$ which is a finite cover of $r_i B_{\epsilon_0}(p_i)$ of index $\le C$. For any $g \in G_i$ with $g \notin H_i$, $r_i d_i(g\tilde{p}_i,\tilde{p}_i) \ge \frac{1}{\epsilon_i}$. Thus $r_i \widetilde{B}(p_i,\epsilon_0,1) /G_i$ is isometric to $r_i \widetilde{B}(p_i,\epsilon_0,1) /H_i$ on a large ball. Therefore, for any fixed $s>0$ and $i$ large enough, a connected component of the pre-image of $B_s^{r_i}(p_i)$ in $r_i \widetilde{B}(p_i,\epsilon_0,1) /H_i$ is a cover of $B_s^{r_i}(p_i)$ with index $\le C$; in particular, this cover is contained in a $Cs$-ball. 
	
Let $\widetilde{H}_i'$ be the image of natural map $ \pi_1(B^{r_i}_{\frac{r}{20C^2}}(p_i),p_i) \to \pi_1 (B^{r_i}_{\frac{r}{10C}}(p_i),p_i)$. Let $\widetilde{Z}_i$ be a connected component of the pre-image of  $B^{r_i}_{\frac{r}{20C^2}}(p_i)$ in $r_i \widetilde{B}(p_i,\epsilon_0,1) /H_i$. Then $\widetilde{Z}_i$ is a finite cover of $B^{r_i}_{\frac{r}{20C^2}}(p_i)$ with index at most $C$. In particular, $\widetilde{Z}_i \subset B^{r_i}_{\frac{r}{20C}}(p_i') \subset Z_i$. 
\begin{lemma}
There exists a natural injective homomorphism $i:\hat{H}_i \to \widetilde{H}_i'$.
\end{lemma}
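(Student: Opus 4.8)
First I would set up the comparison between the two covering spaces. Recall that $\hat H_i$ is the groupfication of the pseudo-group $H_i(\tilde p_i,4r)$, and by Lemma~\ref{same} it is already the groupfication of $H_i(\tilde p_i,\epsilon)$ for any fixed $0<\epsilon<4r$; in particular $H_i(\tilde p_i,\epsilon)$ embeds into $\hat H_i$ as a generating pseudo-group. On the other side, $\widetilde H_i'$ is the image of $\pi_1(B^{r_i}_{r/(20C^2)}(p_i))\to\pi_1(B^{r_i}_{r/(10C)}(p_i))$, which (via the deck-transformation description, using $\widetilde Z_i\subset B^{r_i}_{r/(20C)}(p_i')\subset Z_i$) acts on $\widetilde Z_i$ and hence can be compared with $H_i$ acting on $Z_i$. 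The plan is to produce the homomorphism in two stages: define it first on the pseudo-group $H_i(\tilde p_i,\epsilon_i)$ of short generators of $H_i$, then extend by the universal property of the groupfication $\hat H_i$.

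The key step is the construction on generators. For $i$ large, $\epsilon_i r_i\to\infty$, so every element $g\in H_i(\tilde p_i,\epsilon_i)$ is represented by a loop in $X_i$ of $r_i d_i$-length $\le 2\epsilon_i$ based at $p_i$; since $\epsilon_i$ is much smaller than $r/(20C^2)$, such a loop lies in $B^{r_i}_{r/(20C^2)}(p_i)$ and its class in $\pi_1(B^{r_i}_{r/(10C)}(p_i))$ — i.e.\ an element of $\widetilde H_i'$ — is well-defined (independent of the representative loop inside the small ball, because two such short loops are homotopic within $B^{r_i}_{r/(10C)}(p_i)$ by the semi-local simple connectedness of $X_i$ applied at the relevant scale, cf.\ the argument behind Theorem~\ref{precompact} and \cite{Xu2023}). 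This assignment respects the partial product: if $g_1,g_2\in H_i(\tilde p_i,\epsilon_i)$ and $g_1g_2\in H_i(\tilde p_i,4r)$, the concatenated loop still lies in the ball where the relation is seen, so $\widetilde H_i'$ receives the relation $e_{g_1}e_{g_2}=e_{g_1g_2}$. Hence the map $e_g\mapsto[\,\text{loop}_g\,]$ descends from the free group to $\hat H_i$, giving a homomorphism $i:\hat H_i\to\widetilde H_i'$; surjectivity onto a generating set of $\widetilde H_i'$ follows because $\widetilde H_i'$ is generated by the images of short loops (this is exactly the content of ``$\langle G_i(\tilde p_i,r)\rangle=H_i$'' transported to the $p_i$-level), though for this lemma we only need injectivity.

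The main obstacle — and the part I would be most careful about — is injectivity, i.e.\ showing the kernel of $i$ is trivial. Equivalently, one must show that a word in the short generators that is trivial in $\pi_1(B^{r_i}_{r/(10C)}(p_i))$ is already trivial in $\hat H_i$. Here I would argue geometrically using the gluing-space description of Section~\ref{2.4}: a nullhomotopy in $B^{r_i}_{r/(10C)}(p_i)$ lifts to a path-closing relation in the cover $r_i\widetilde B(p_i,\epsilon_0,1)/H_i=Z_i$, and then pulls back to $Y_i=\hat H_i\times_{H_i(\tilde p_i,4r)}\bar B^{r_i}_{2r}(\tilde p_i)$, where by construction (Theorem~\ref{local}-type argument) relations are detected exactly by $\hat H_i$. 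The subtlety is keeping track of scales: the relations witnessed inside the small $\tfrac{r}{10C}$-ball at $p_i$ must be witnessed, after lifting through a cover of index $\le C$, inside the $2r$-ball at $\tilde p_i$ used to define $Y_i$ — this is why the radii $\tfrac{r}{20C^2},\tfrac{r}{10C}$ were chosen with the factors of $C$. Provided these radius bookkeeping inequalities hold (which they do by the choices made, using that the cover has index $\le C$ and hence multiplies distances by at most $C$), the kernel of $i$ is trivial, completing the proof. I expect this scale-tracking, rather than any deep new idea, to be the genuinely delicate point.
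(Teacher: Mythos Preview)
Your approach is different from the paper's and, while the overall strategy can be made to work, there are two genuine soft spots.

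\textbf{How the paper proceeds.} The paper does not work with the universal property of the groupfication at all. Instead it invokes Lemma~\ref{product}: the intermediate ball $B^{r_i}_r(p_i')$ is biH\"older homeomorphic to $B_r(0^k)\times\mathcal N_i/\hat H_i$, so the image of $\pi_1(B^{r_i}_{2\epsilon_i}(p_i'))\to\pi_1(B^{r_i}_{r/10}(p_i'))$ is canonically identified with $\hat H_i$. The map $i$ is then defined by taking a loop at $p_i'$ representing $g\in\hat H_i$ and projecting it to $p_i$; injectivity is one line of homotopy lifting: a nullhomotopy in $B^{r_i}_{r/(10C)}(p_i)$ lifts to the finite-index cover, landing inside $B^{r_i}_{r/10}(p_i')$ by the index bound, and the product structure there forces $g=e$. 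All the ``scale bookkeeping'' you anticipate is absorbed by the product structure of Lemma~\ref{product}.

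\textbf{Where your argument is shaky.} First, when you verify that the assignment is a homomorphism you assert that the triangular relation $e_{g_1}e_{g_2}=e_{g_1g_2}$ is visible in $\widetilde H_i'$ ``by semi-local simple connectedness of $X_i$''. This is not the right tool: you need the short loop $\gamma_{g_1}\gamma_{g_2}\gamma_{g_1g_2}^{-1}$ to be nullhomotopic inside $B^{r_i}_{r/(10C)}(p_i)$, not just in all of $X_i$. What makes this true is that the lift of this loop to the \emph{full} non-collapsed cover is a closed loop in a small ball near $\tilde p_i$, and that cover is GH-close to $\mathbb R^N$; you need to say this. Second, in your injectivity step you write ``lifts to a path-closing relation in the cover $r_i\widetilde B(p_i,\epsilon_0,1)/H_i=Z_i$'', but in the paper's notation $Z_i\subset r_i\widetilde B(p_i,\epsilon_0,1)$ sits in the \emph{full} cover, not in the $H_i$-quotient; your chain of lifts is therefore garbled. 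The lift you actually need is: nullhomotopy at $p_i$ $\Rightarrow$ nullhomotopy in the index-$\le C$ cover near $p_i'$ $\Rightarrow$ nullhomotopy in $Z_i$ $\Rightarrow$ lift to $Y_i$ closes up, hence $[w]\cdot\hat p_i=\hat p_i$ and $[w]=e$. This is salvageable, but as written the spaces are misidentified and the argument is a sketch rather than a proof.

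\textbf{Comparison.} Your route avoids Lemma~\ref{product} and works directly with the gluing space $Y_i$, which is conceptually natural given how $\hat H_i$ was introduced. The paper's route exploits that Lemma~\ref{product} has already done the hard work of giving $B_r(p_i')$ a concrete topological model, after which both the definition of $i$ and its injectivity become immediate covering-space statements with no delicate scale-tracking left over.
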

\begin{proof}
By Lemma \ref{product}, $B^{r_i}_{r}(p_i')$ is biH\"{o}lder homeomorphic to $B^{r_i}_{r}(0^k) \times \mathcal{N}_i/ \hat{H}_i$ and the diameter of $\mathcal{N}_i/ \hat{H}_i < \epsilon_i \to 0$. In particular, the image of 
$$\pi_1(B^{r_i}_{2\epsilon_i}(p_i'), p_i') \to  \pi_1(B^{r_i}_{\frac{r}{10}}(p_i'), p_i')$$ is isomorphic to $\hat{H}_i$. 

Now we define the homomorphism $i:\hat{H}_i \to \widetilde{H}_i'$
as follows: for any $g \in \hat{H}_i$, we can take a loop $\gamma_g \subset B^{r_i}_{2\epsilon_i}(p_i')$ at $p_i'$ that represents $g$ in the image of 
$$\pi_1(B^{r_i}_{2\epsilon_i}(p_i'), p_i') \to  \pi_1(B^{r_i}_{\frac{r}{10}}(p_i'), p_i').$$
Let
$\pi : B^{r_i}_{2\epsilon_i}(p_i') \to B^{r_i}_{2\epsilon_i}(p_i)$ be the projection map
and define $i(g)$ to be the element in $\widetilde{H}_i'$ represented by the loop $\pi(\gamma_g)$.

The map $i$ is well-defined because that if two loops $\gamma_1,\gamma_2 \subset B^{r_i}_{2\epsilon_i}(p_i')$ are homotpic to each other in $B^{r_i}_{\frac{r}{10}}(p_i')$, then they must be homotopic to each other in $B^{r_i}_{\frac{r}{10C}}(p_i')$ by the local product structure in Lemma \ref{product}. Thus $\pi(\gamma_1)$ is homotopic to $\pi(\gamma_2)$ in $B^{r_i}_{\frac{r}{10C}}(p_i)$.  Then they represent the same element in $\widetilde{H}_i$. Thus $i$ is well-defined. $i$ is a homomorphism by the same argument.

Then we prove that $i$ is injective. Assume that there exists an element $g \in \hat{H}_i$ represented by a loop $\gamma_g \subset B_{2\epsilon_i}(p_i')$ while $\pi(\gamma_g)$ is contractible in $B^{r_i}_{\frac{r}{10C}}(p_i)$. A connected component of the pre-image $B^{r_i}_{\frac{r}{10C}}(p_i)$ is contained in $B^{r_i}_{\frac{r}{10}}(p_i')$. By the homotopy lifting property, $\gamma_g$ is contractible in $B^{r_i}_{\frac{r}{10}}(p_i')$, thus $g$ is the identity element in the image of 
$\pi_1(B^{r_i}_{2\epsilon_i}(p_i'), p_i') \to  \pi_1(B^{r_i}_{\frac{r}{10}}(p_i'), p_i')$.  Then $i$ is injective. 
\end{proof}

\begin{lemma}\label{index}
The index $|\widetilde{H}_i' : i(\hat{H}_i)| \le C$ where $C$ is the constant in the generalized Margulis lemma.
\end{lemma}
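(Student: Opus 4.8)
The plan is to bound the index $[\widetilde{H}_i':i(\hat{H}_i)]$ by the degree of the covering $\widetilde{Z}_i\to B^{r_i}_{\frac{r}{20C^2}}(p_i)$, which is at most $C$ by construction. Let $q:\pi_1(B^{r_i}_{\frac{r}{20C^2}}(p_i),p_i)\twoheadrightarrow\widetilde{H}_i'$ be the natural surjection onto the image in $\pi_1(B^{r_i}_{\frac{r}{10C}}(p_i),p_i)$, and let $K\le\pi_1(B^{r_i}_{\frac{r}{20C^2}}(p_i),p_i)$ be the subgroup corresponding, with basepoint $p_i'$, to the connected covering $\widetilde{Z}_i\to B^{r_i}_{\frac{r}{20C^2}}(p_i)$, so that $[\pi_1(B^{r_i}_{\frac{r}{20C^2}}(p_i),p_i):K]\le C$. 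Since $q$ is surjective, $[\widetilde{H}_i':q(K)]\le[\pi_1(B^{r_i}_{\frac{r}{20C^2}}(p_i),p_i):K]\le C$, so it suffices to prove $q(K)\subseteq i(\hat{H}_i)$; the reverse inclusion in fact also holds (a small loop at $p_i'$ representing a class in $\hat{H}_i$ already lies inside $\widetilde{Z}_i$, so its projection lies in $K$), but we will not need it.

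To prove $q(K)\subseteq i(\hat{H}_i)$, take $[\gamma]\in K$ represented by a loop $\gamma$ at $p_i$ in $B^{r_i}_{\frac{r}{20C^2}}(p_i)$; it lifts to a loop $\tilde{\gamma}$ at $p_i'$ in $\widetilde{Z}_i\subset\overline{B}^{r_i}_{\frac{r}{20C}}(p_i')\subset B^{r_i}_{r}(p_i')$. On $B^{r_i}_{r}(p_i')$ we use the biH\"older homeomorphism onto $B_r(0^k)\times\mathcal{N}_i/\hat{H}_i$ from Lemma \ref{product} (we work away from the boundary of $B^{r_i}_{r}(p_i')$, so the boundary-point caveat in Lemma \ref{product} is irrelevant). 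Since $B_r(0^k)$ is contractible and $\mathrm{diam}(\mathcal{N}_i/\hat{H}_i)=\epsilon_i\to0$, the loop $\tilde{\gamma}$ is homotopic, through loops contained in $\overline{B}^{r_i}_{\frac{r}{20C}+\Phi(\epsilon|N)}(p_i')$, to a loop $\gamma_g\subset B^{r_i}_{2\epsilon_i}(p_i')$, and $\gamma_g$ represents an element $g$ of the fundamental group of $\mathcal{N}_i/\hat{H}_i$, i.e.\ an element $g\in\hat{H}_i$.

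Finally, apply the projection $\pi:B^{r_i}_{r}(p_i')\to B^{r_i}_{r}(p_i)$ (which sends $p_i'$ to $p_i$ and does not increase distances) to the above homotopy: it shows that $\pi(\tilde{\gamma})=\gamma$ and $\pi(\gamma_g)$ are homotopic inside $B^{r_i}_{\frac{r}{20C}+\Phi(\epsilon|N)}(p_i)\subset B^{r_i}_{\frac{r}{10C}}(p_i)$, the last inclusion holding for all large $i$ since $\frac{1}{20C}<\frac{1}{10C}$ and $\Phi(\epsilon|N)\to0$. Hence $q([\gamma])=q([\pi(\gamma_g)])$, which by the construction of $i$ equals $i(g)\in i(\hat{H}_i)$. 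This proves $q(K)\subseteq i(\hat{H}_i)$ and hence the lemma. The only genuinely delicate point is that $\widetilde{Z}_i$ is a priori merely \emph{some} connected cover of $B^{r_i}_{\frac{r}{20C^2}}(p_i)$ of degree $\le C$, not necessarily the universal relative cover; the product structure of Lemma \ref{product} is precisely what rigidifies the homotopy type of small loops near $p_i'$ and forces the classes killed by $\widetilde{Z}_i$ to project into $i(\hat{H}_i)$ — all the rest is bookkeeping of radii, which works because the radii $\frac{r}{20C^2}$, $\frac{r}{20C}$, $\frac{r}{10C}$ were fixed in advance with room to absorb the errors $\epsilon_i,\Phi(\epsilon|N)\to0$.
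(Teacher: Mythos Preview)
Your argument is correct and is essentially the same as the paper's, just phrased directly rather than by contradiction. The paper argues by pigeonhole: if the index exceeded $C$, then two of more than $C$ coset representatives would lift to paths in $\widetilde{Z}_i$ with the same endpoint, so their concatenation lifts to a loop which, via the product structure of Lemma~\ref{product}, represents a class in $i(\hat{H}_i)$, a contradiction. You instead identify the covering subgroup $K$ and show $q(K)\subseteq i(\hat{H}_i)$ directly; the core step---using the product structure $B^{r_i}_r(p_i')\cong B_r(0^k)\times\mathcal{N}_i/\hat{H}_i$ to homotope a lifted loop into the small fiber and then project---is identical in both arguments.
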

\begin{proof}
Recall that $\widetilde{Z}_i$ is a finite cover of $B^{r_i}_{\frac{r}{20C^2}}(p_i)$ with index at most $C$. Assume that $|\widetilde{H}_i' : i(\hat{H}_i)| > C$. Then we can find two loops $\gamma_1, \gamma_2$ in $B^{r_i}_{\frac{r}{20C^2}}(p_i)$ at $p_i$, so that $\gamma_1 \gamma_2^{-1}$ does not represent an element in $i(\hat{H}_i)$ while we lift $\gamma_1$ and $\gamma_2$ in $\widetilde{Z}_i \subset B_{\frac{r}{20C}}(p_i')$ at $p_i'$, saying the $\gamma_1'$ and $\gamma_2'$, then $\gamma_1'$ and $\gamma_2'$ have the same endpoint in $\widetilde{Z}_i$. 

Since $B^{r_i}_{r}(p_i')$ is  biH\"{o}lder homeomorphic to $B_r(0^k) \times \mathcal{N}_i/ \hat{H}_i$, $\gamma_1' (\gamma_2')^{-1} $ is homotopic to a loop $\gamma_g$ corresponding to some $g \in \hat{H}_i$ in $B_r(0^k) \times \mathcal{N}_i/ \hat{H}_i$. Moreover, the homotopy image is contained in $B^{r_i}_{\frac{r}{10C}}(p_i')$ and $\gamma_g \subset B_{2\epsilon_i}(p_i')$ as the diameter of $\mathcal{N}_i/ \hat{H}_i$ converges to $0$. We can project this homotopy map to $B^{r_i}_{\frac{r}{10C}}(p_i)$, then $\gamma_1 \gamma_2^{-1}$ represents $i(g) \in i(\hat{H}_i)$, a contradiction.
\end{proof}

Since $\widetilde{H}_i'$ contains a nilpotent subgroup $i(\hat{H}_i)$ of index $\le C$, then by Lemma \ref{normal},
$\widetilde{H}_i'$ contains a normal nilpotent subgroup $\widetilde{H}_i$ of index $\le C(N)=C!$.
Since $\hat{H}_i$ is torsion free nilpotent group with rank $N-k$ and $\widetilde{H}_i$ is a normal subgroup of $i(\hat{H}_i)$ with finite index, thus rank$(\widetilde{H}_i)=N-k$.

\begin{proof}[Proof of Lemma \ref{no ker}]
Now we return to the setup of \ref{e1} in the beginning of this section. Recall that $\widetilde{H}_i'$ is the image of $ \pi_1(B^{r_i}_{\frac{r}{20C^2}}(p_i),p_i) \to \pi_1 (B^{r_i}_{\frac{r}{10C}}(p_i),p_i)$. We may take $r_i' \to \infty$ slowly, then we have another contradiction sequence,
\begin{center}
\begin{align}\label{e6}		
		\begin{CD}
			(r_i' r_i \widetilde{B}(p_i,\epsilon_0,1)  ,\tilde{p}_i,\widetilde{H}_i, \widetilde{H}_i') @>eGH>> (\mathbb{R}^N,0^N,G,G')\\
			@VV\pi V @VV\pi V\\
			(r_i' r_i \widetilde{B}(p_i,\epsilon_0,1) /\widetilde{H}_i ,p_i',\widetilde{H}_i'/\widetilde{H}_i) @>GH>> (Y', p' , \bar{G}) \\
			@VV\pi V @VV\pi V\\
			(r_i' r_iB_{\epsilon_0} (p_i),p_i) @>F_i>> (\mathbb{R}^k,0^k)
		\end{CD}
\end{align}
	\end{center} 
The reason that we can replace $G_i'$ by $\widetilde{H}_i'$ is that the element not in $\widetilde{H}_i'$ will disappear in the limit.

Since the index of $\widetilde{H}_i$ in $\hat{H}_i$ is $\le C(N)$,  $\widetilde{H}_i$ is generated by short elements 
$$ \widetilde{H}_i(\hat{p}_i,\epsilon_i) = \{ g \in \widetilde{H}_i | r_id_i (g \hat{p}_i, \hat{p}_i) \le \epsilon_i\}.$$
Then we can use the same method, for constructing $H_i,H_i'$ from the diagram \ref{e2}, to define $\bar{H}_i$ and $\bar{H}_i'$ from the diagram \ref{e6} using the gap lemma. We may assume $r_i'\epsilon_i \to 0$,  then $\bar{H}_i = \widetilde{H}_i$. Since $\bar{H}_i'$ is a subgroup of $\widetilde{H}_i'$, $\bar{H}_i$ is a normal subgroup of $\bar{H}_i'$ of index $\le C(N)$.

Then by the same construction of $\hat{H}_i$ for $H_i$, we can construct the groupfication $\hat{\bar{H}}_i$ using a pseudo-group in $\bar{H}_i=\widetilde{H}_i$. There is a natural surjective homomorphism $s_i : \hat{\bar{H}}_i \to \widetilde{H}_i$. The main improvement is that $\mathrm{Ker}(s_i)$ must be trivial, since the nilpotency rank of both $\hat{\bar{H}}_i$ and $\hat{H}_i$ are $N-k$. Therefore  $\widetilde{H}_i$ is isomorphic to the groupfication  $\hat{\bar{H}}_i$.

Working on the diagram \ref{e6} with $\widetilde{H}_i=\bar{H}_i$ and $\bar{H}_i'$ if necessary, we may assume that $H_i$ is normal subgroup of $H_i'$ of index $\le C(N)$ and $\hat{H}_i=H_i$.
\end{proof}

\subsection{Proof of Theorem B}
Now we can prove that $f_i: X_i \to K$ is a fibration map. We summarize the differences in this subsection compared with the proof of Lemma \ref{product}. The first difference is that, since $H_i$ is isomorphic to the lattice by Lemma \ref{no ker}, $H_i'$ can be identified as a discrete subset of $\mathcal{N}_i \rtimes \text{Aut}(\mathcal{N}_i)$. Then we can construct an infranil-manifold fiber on $X_i$. The second difference is that we apply the gluing argument from Theorem \ref{inv} carefully so that $F_i= f_i \circ \mathrm{exp}_p^{-1}$ is the a fibration map near $p$, thus $f_i$ is a fibration map.   

\begin{proof}[Proof of Theorem B, the existence of the fibration with an infranil-manifold fiber]
Consider the diagram \ref{e3} 
\begin{center}
		$\begin{CD}
			(r_i \widetilde{B}(p_i,\epsilon_0,1)  ,\tilde{p}_i,H_i, H_i') @>eGH>> (\mathbb{R}^N,0^N,H=\mathbb{R}^{N-k},H')\\
			@VV\pi V @VV\pi V\\
			(r_i \widetilde{B}(p_i,\epsilon_0,1) /H_i' ,p_i',H_i/H_i') @>GH>> (\mathbb{R}^k, 0^k , \mathrm{id}) \\
			@VV\pi V @VV\pi V\\
			(r_iB_{\epsilon_0}(p_i),p_i) @>F_i>> (\mathbb{R}^k,0^k).
		\end{CD}$
	\end{center}
We assumed that $F_i=\mathrm{exp}_p^{-1} \circ f_i$ is not a fibration around $p$. The goal is to show that $F_i$ is a fibration when $i$ is large enough, thus a contradiction.

Now we apply Lemma \ref{no ker} to the diagram \ref{e5},
\begin{center}
\begin{align}\label{e7}
\begin{CD}
			(Z_i,\tilde{p}_i,H_i=\hat{H}_i,H_i') @>eGH>> (\bar{B}_{r}(0^k) \times \mathbb{R}^{N-k}, 0^N ,H=\mathbb{R}^{N-k},H') \\
			@VV\pi V @VV\pi V\\
			(\bar{B}^{r_i}_{r}(p_i'),p_i',H_i/H_i') @>eGH>> (\bar{B}_{r}(0^k) ,0^k, \mathrm{id})\\
			@VV\pi V @VV\pi V\\
			(\bar{B}^{r_i}_{r}(p_i),p_i) @>F_i>> (\bar{B}_{r}(0^k) ,0^k)
		\end{CD}
\end{align}
	\end{center}
By Lemma \ref{no ker}, $H_i=\hat{H}_i$ is a lattice in $\mathcal{N}_i$ and $H_i'$ can be identified as a discrete subset of $\mathcal{N}_i \rtimes \text{Aut}(\mathcal{N}_i)$. Use the argument in Lemma \ref{localeg} and \ref{extension}, we can construct a global map
$$\tilde{h}_i:Z_i \to \bar{B}_{r}(0^k) \times \mathcal{N}_i$$ 
is an $\epsilon_i$-GHA on any $r$-ball and $\epsilon_i$-almost $H_i'$-equivariant. 

Now we can find a normal subgroup $H_i''$ in $H_i'$ with finite index and $B_r(e) \cap H_i'' = \emptyset$ where $e \in \mathcal{N}_i$. Then $(Z_i/H_i'',H_i'/H_i'') $ is eGH close to $(\bar{B}_{r}(0^k) \times \mathcal{N}_i/H_i'',H_i'/H_i'')$ on any $r$-ball. We use Theorem \ref{inv} to construct a $H_i'/H_i''$-equivariant homeomorphism from $Z_i/H_i''$ to $\bar{B}_{r}(0^k) \times \mathcal{N}_i/H_i''$, dropping some points near the boundary if necessary. Thus $Z_i/H_i'=\bar{B}^{r_i}_r(p_i)$, dropping some points near the boundary if necessary, is biH\"{o}lder homeomorphic to $\bar{B}_{r}(0^k) \times N_i/H_i'$.

Recall that
$$\tilde{F}_i: r_i \widetilde{B}(p_i,\epsilon_0,1) \overset{\pi}\longrightarrow r_i B_{\epsilon_i}(p_i) \overset{F_i}\longrightarrow (\mathbb{R}^k,0^k),$$
is almost $k$-splitting.
We need to use Theorem \ref{inv} more carefully to prove that $F_i$ is exactly the fibration map. The construction in Theorem \ref{inv} is to glue local almost $N$-splitting maps and the group action orbits. 
Every time we choose local almost $N$-splitting map on $Z_i/H_i''$, we take first $\mathbb{R}^k$-component exactly to be $\tilde{F}_i$. Then the gluing by the center of mass keeps the value on the first $\mathbb{R}^k$ component, since different local almost $N$-splitting or $H_i'$ orbits have the same $\tilde{F}_i$ value at a point. Therefore the gluing only changes the value on the $\mathcal{N}_i$ component.

In particular, $F_i: B_r(p_i) \to \mathbb{R}^k$ is the exactly $\mathbb{R}^k$ component of the biH\"{o}lder homeomorphic from $B_r(p_i)$ to $B_{r}(0^k) \times \mathcal{N}_i/H_i'$ constructed above,
$$F_i: B_r(p_i) \overset{\mathrm{homeo}}\longrightarrow B_{r}(0^k) \times \mathcal{N}_i/H_i' \overset{\pi}\longrightarrow B_{r}(0^k).$$ Therefore $F_i$ is a fibration map with the infranil-manifold fiber.
\end{proof}



We next show that nilpotent structure of $\mathcal{N}_i$, as described in Theorem B, does not depend on the choice of base point on $X_i$, which implies that the structure group in Theorem B can be affine;
we refer to the gluing arguments in \cite{CFG,Rong2022} for further details.

Recall that nilpotent structure of $\mathcal{N}_i$ in Theorem B is determined by the escape norm on $H_i(\tilde{p}_i,r)$. Let $A=H_i(\tilde{p},r)$, then $\mathcal{N}_i$ is constructed from a nilprogression $P(u_1,...,u_{N-k}; C_1,...,C_{N-k})$ with the escape norm $||u_1||_A \le ||u_2||_A \le ... \le ||u_{N-k}||_A$. And $H_i=\hat{H}_i$ is a lattice of $\mathcal{N}_i$. 

Passing to a subsequence if necessary, we may assume that $\frac{||u_{j+1}||_A}{||u_j||_A}$ converges to a real number or $\infty$ as $i \to \infty$. Thus we can find $K$ and $1 = l_1 < l_2 < ... < l_K \le N-k$ such that the following two conditions holds for sufficiently large $i$: 
$$\frac{||u_j||_A}{||u_l||_A} \le C, \  \mathrm{if \ for \ some \ } s, l_s \le l< j < l_{s+1},$$
and 
$$\frac{||u_j||_A}{||u_l||_A} \to \infty,  \ \mathrm{if \ for \ some \  s, } \  l < l_s \le j,$$
as $i \to \infty$.

Fix a large $i$, define $G_s=\langle u_1,...,u_{l_s-1} \rangle \subset H_i$, $1 \le s \le K$, then $G_s/G_{s-1}$ is in the center of $H_i/G_{s-1}$ due to Theorem \ref{escape} and the construction of the  nilprogression. Let $\mathcal{N}_{i,s}$ be the simply connected subgroup of $\mathcal{N}_i$ with lattice $G_s$. Thus $(\mathcal{N}_i/\mathcal{N}_{i,s-1})/(G_s/G_{s-1})$ is a torus bundle over $\mathcal{N}_i / \mathcal{N}_{i,s}$. 

For sufficiently large $i$, we shall show that the constructions of $\mathcal{N}_i$ and $G_s$ above are independent of the choice of base point $p_i \in X_i$. This will allow us to use a gluing argument in \cite{CFG,Rong2022} to modify the fibration map and reduce the structure group in Theorem B to be affine.

Consider \ref{e3}, assume $\tilde{x}_i,\tilde{y}_i \in B^{r_i}_1 (\tilde{p}_i)$, for any $0< r <1/100$, define  
$$A_1=\{ g \in H_i | r_id_i(g,g\tilde{x}_i) \le r\},$$
$$A_2=\{ g \in H_i | r_id_i(g,g\tilde{y}_i) \le r \}.$$ 
We need to show that for any $g \in A_1 \cap A_2$ and sufficiently large $i$, $\frac{1}{2} \le \frac{||g||_{A_1}}{||g||_{A_2}} \le 2$, which implies that the escape norms by $A_1$ and $A_2$ give the same nilpotent structure of $H_i$. 

By the definition of escape norm, $g^{1/||g||_{A_1}} \notin A_1$. Then $d(g^{1/||g||_{A_1}}\tilde{x}_i,\tilde{x}_i) \ge r$.   Since the limit of $H_i$ is a free translation group $\mathbb{R}^{N-k}$ by Lemma \ref{D}, $d(g^{1/||g||_{A_1}}\tilde{y}_i,\tilde{y}) \ge \frac{2r}{3}$. Then $d(g^{2/||g||_{A_1}}\tilde{y}_i,) \ge r$.  In particular, $||g||_{A_2} \ge \frac{||g||_{A_1}}{2}$. Similarly $||g||_{A_1} \ge \frac{||g||_{A_2}}{2}$. Thus we have proved that the nilpotent structure of the fiber does not depend on the choice of base point in a small neighborhood of $p_i$. Then by a connectedness argument, we conclude that the nilpotent structure of the fiber is independent of the choice of base point on $X_i$ for sufficiently large $i$, thus the structure group in Theorem B is affine. 






\section{Limit of RCD$(N-1,N)$ spaces with bounded covering geometry}

In this section we consider a sequence of pointed RCD$(-(N-1),N)$ spaces $(X_i,d_i,\mathcal{H}^N,p_i)$  with $(1,v)$-bound covering geometry and  assume the following convergence:
\begin{equation}\label{e8}
(X_i,d_i,\mathcal{H}^N,p_i) \overset{\mathrm{pmGH}}\longrightarrow (X,d,\mathfrak{m},p).
\end{equation}
Assume that the rectifiable dimension of $X$ is $k$. We want to show that any $k$-regular point in $X$ is a manifold point.

\begin{theorem}\label{mani}
In the context of \ref{e8}, assume that $x \in X$ is a $k$-regular point. Then there exists a neighborhood of $x$ which is biH\"{o}lder homeomorphic to an open ball in $\mathbb{R}^k$.
\end{theorem}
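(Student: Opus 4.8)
The plan is to reduce Theorem \ref{mani} to the local fibration picture already set up in Section 6 and the biH\"older statement in Lemma \ref{product} (really its boundary-free local version), applied at the regular point $x$. First I would fix $x \in \mathcal{R}^k(X)$ and lift to a sequence $p_i \to x$ with $p_i \in X_i$; after rescaling slowly we may assume we are in the setup of diagram \ref{e3}, i.e. the local relative cover $\widetilde{B}(p_i,\epsilon_0,1)$ with its nilpotent subgroup $H_i \vartriangleleft H_i'$ of bounded index, the tangent cone at $x$ being $\mathbb{R}^k$ because $x$ is $k$-regular. Since $x$ is a $k$-regular point, for all sufficiently small scales $B_s(x)$ is $\epsilon s$-close to $B_s(0^k)$; this is exactly the hypothesis (after rescaling) that the base $K$-factor is a smooth $k$-manifold piece near $x$, so Theorem \ref{local split} gives a locally almost $k$-splitting GHA near $p_i$ onto a ball in $\mathbb{R}^k$, and the whole machinery of Section 6 applies verbatim.

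The key steps, in order: (1) Run the argument of Section 6.1--6.2 at the base point $p_i \to x$: construct the nilprogression $P_i$ inside $H_i(\tilde p_i, 4r)$, its groupfication $\hat H_i$, the simply connected nilpotent Lie group $\mathcal{N}_i$ of dimension $N-k$ with left-invariant metric $C^4$-close to flat $\mathbb{R}^{N-k}$ (Lemma \ref{metric}), and the global almost-equivariant GHA $\tilde h_i \colon Y_i \to \bar B_r(0^k)\times \mathcal{N}_i$ (Lemmas \ref{localeg}, \ref{extension}). (2) Apply Lemma \ref{no ker} so that $\hat H_i = H_i$ and $H_i$ is a lattice in $\mathcal{N}_i$, with $H_i'$ identified inside $\mathcal{N}_i \rtimes \mathrm{Aut}(\mathcal{N}_i)$. (3) Pass to a finite-index normal subgroup $H_i''$ of $H_i'$ with $B_r(e)\cap H_i'' = \emptyset$ and apply Theorem \ref{inv} to obtain an $H_i'/H_i''$-equivariant biH\"older homeomorphism $Z_i/H_i'' \to \bar B_r(0^k)\times \mathcal{N}_i/H_i''$; quotienting gives that $\bar B_r^{r_i}(p_i)$ (minus boundary points) is biH\"older homeomorphic to $\bar B_r(0^k)\times \mathcal{N}_i/H_i'$. (4) Take the GH limit as $i\to\infty$: the infranil fiber $\mathcal{N}_i/H_i'$ has diameter $\to 0$ (since $K$-collapsing is to dimension exactly $k$ near $x$, i.e. the fiber collapses), so the limit of $\bar B_r(0^k)\times \mathcal{N}_i/H_i'$ is just $\bar B_r(0^k) \subset \mathbb{R}^k$, and the biH\"older homeomorphisms, being uniformly biH\"older with exponent $\to 1$, pass to a biH\"older homeomorphism from a neighborhood of $x$ in $X$ onto an open ball in $\mathbb{R}^k$. (This last passage to the limit is where one uses that the biH\"older constants in Theorem \ref{inv}/\ref{Rei} are $\Phi(\epsilon|N,v) \to 0$, uniformly in $i$, together with an Arzel\`a--Ascoli / diagonal argument on the inverse maps.)

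The main obstacle I expect is step (4): justifying that the biH\"older equivalences at the cover/finite-quotient level actually descend to a genuine biH\"older homeomorphism of a neighborhood of $x$ \emph{in the limit space} $X$, not just in each $X_i$. One must check that the diameters of the fibers $\mathcal{N}_i/H_i'$ genuinely go to $0$ (this uses that $x$ is $k$-regular, so there is no residual collapsing left after projecting to $\mathbb{R}^k$; concretely, the $\mathbb{R}^{N-k}$-factor in diagram \ref{e3} is killed at the quotient level because $H_i'$ acts transitively on it, matching the fact that $\widetilde{B}(p_i,\epsilon_0,1)/H_i'=B_{\epsilon_0}(p_i)$ collapses to a point while its cover opens up to $\mathbb{R}^k$). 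One also needs the biH\"older maps and their inverses to have a convergent subsequence; since the forward maps are $\Phi(\epsilon_i)$-GHAs and $(1+\Phi)$-Lipschitz, and the inverses are $(1-\Phi)^{-1}$-H\"older with exponent $(1+\Phi)^{-1}\to 1$, equicontinuity holds and Arzel\`a--Ascoli yields the limit map; that the limit is a biH\"older homeomorphism onto an open ball follows because GH-limits of uniform GHAs with these two-sided bounds are biH\"older homeomorphisms. A secondary technical point is the "removing points near the boundary'' caveat inherited from Lemma \ref{product} and Theorem \ref{inv}: one handles it by working on a slightly larger radius $\tfrac{3r}{2}$ so that the genuine neighborhood of $x$ of radius $r$ is in the interior, exactly as in the proof of Lemma \ref{product}.
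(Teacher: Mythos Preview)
Your approach has a genuine gap in step (4). After the slow blow-up by $r_i \to \infty$, the ball $\bar B_r^{r_i}(p_i)$ on which you obtain a biH\"older homeomorphism has radius $r/r_i$ in the \emph{original} metric $d_i$, and this radius tends to $0$. Hence, in the Gromov--Hausdorff limit, these balls do not converge to a neighborhood of $x$ in $(X,d)$: in the rescaled metrics they converge to a ball in the tangent cone $T_xX=\mathbb{R}^k$, and in the original metrics they shrink to the point $x$. An Arzel\`a--Ascoli argument on these maps therefore cannot produce a biH\"older homeomorphism defined on any ball of positive radius in $X$. The Section~6 machinery gives a biH\"older fibration at \emph{one} scale (after one blow-up), not a family of compatible estimates across all scales down to zero, which is what is needed to control a fixed neighborhood in the limit.

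The paper avoids this by working directly on the limit space $X$ rather than on the approximating $X_i$. It builds a harmonic $(k,\epsilon)$-splitting map $f:B_{r_0}(x)\to\mathbb{R}^k$ on a fixed ball in $X$, and then proves a transformation theorem for $f$ (Lemma~\ref{trans}): lift $f$ to the non-collapsed local relative cover via Lemma~\ref{lift}, complete it to an $(N,\Phi(\epsilon))$-splitting map there, apply the full $N$-dimensional transformation theorem (Theorem~\ref{transformation}), and read off the upper-left $k\times k$ block $T_s$. This yields, for every $s\le r_0/10$, a matrix $T_s$ with $|T_s|\le s^{-\delta}$ such that $T_sf$ is $(k,\delta)$-splitting on $B_s(y)$; the canonical Reifenberg argument then gives the biH\"older estimate for $f$ itself on $B_{r_0}(x)$. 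The point is that the transformation theorem furnishes control at \emph{all} scales simultaneously, which is precisely what your single blow-up cannot provide.
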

Let $x_i \in X_i$ converge to $x \in X$. Let $G_i$ be the image of natural homomoephism 
$$\pi_1(B_{\frac{1}{2}}(x_i),x_i) \to \pi_1(B_1(x_i),x_i).$$ 
Then let $\widetilde{B}(x_i,\frac{1}{2},1)$ be a connected component of the pre-image of $B_{\frac{1}{2}}(x_i)$ in the universal cover of $B_1(x_i)$. Take $\tilde{x}_i \in \widetilde{B}(x_i,\frac{1}{2},1)$ as a lift of $x_i$. By the pre-compactness theorem \ref{precompact}, passing to a subsequence if necessary,
\begin{center}
\begin{align}\label{e9}
\begin{CD}
			(\widetilde{B}(x_i,\frac{1}{2},1),\tilde{p}_i,G_i) @>eGH>> (Y,\tilde{x},G) \\
			@VV\pi V @VV\pi V\\
			(B_{\frac{1}{2}}(x_i),x_i) @>GH>> (\bar{B}_{\frac{1}{2}}(x) ,x)
		\end{CD}
\end{align}
	\end{center}
Since $x$ is a $k$-regular point, $\tilde{x}$ must be a $N$-regular point by \cite{Huang2020}.

We next show that an almost $k$-splitting map near $x$ can be lifted to an almost $k$-splitting map near $\tilde{x}$.
\begin{lemma}\label{lift}
In the context of \ref{e8}, there exists $C(N)>0$ so that the following holds. Assume that there exist small $\epsilon,r>0$, and 
$$f:B_{r}(x) \to \mathbb{R}^k$$
is harmonic and $(k,\epsilon)$-splitting. Define the map
$$\tilde{f}:B_{\frac{r}{2}}(\tilde{x}) \to \mathbb{R}^k, \ \tilde{f} = f \circ \pi.$$
Then $\tilde{f}$ is harmonic and $(k,C\epsilon)$-splitting.
\end{lemma}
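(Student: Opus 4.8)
The strategy is to run the argument on the approximating sequence $X_i$, where the covering projection is a genuine covering map, and only then pass to the limit. First, since $(X_i,d_i,\mathcal{H}^N,p_i)\to(X,d,\mathfrak{m},p)$ in the pointed measured Gromov--Hausdorff sense and $f:B_r(x)\to\mathbb{R}^k$ is harmonic and $(k,\epsilon)$-splitting, the stability of harmonic almost-splitting maps under measured Gromov--Hausdorff convergence produces harmonic maps $f_i:B_r(x_i)\to\mathbb{R}^k$ (shrinking $r$ slightly if necessary, which is harmless) that are $(k,\epsilon+\epsilon_i)$-splitting for some $\epsilon_i\to 0$ and converge to $f$ in the natural sense: uniform convergence of the maps, $L^2$-convergence of $\nabla f_i^a\cdot\nabla f_i^b$, lower semicontinuity of the $L^2$-norms of $\mathrm{Hess}\,f_i^a$, and $\Delta f_i\to\Delta f=0$ (see \cite{BNS,CheegerNaber2015}).

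Next, for each fixed $i$ consider the relative covering map $\pi_i:\widetilde{B}(x_i,\frac{1}{2},1)\to B_{1/2}(x_i)$ from \ref{e9}; for $r$ small the preimage ball $B_{r/2}(\tilde{x}_i)$ is contained in $\widetilde{B}(x_i,\frac{1}{2},1)$ and $\pi_i(B_{r/2}(\tilde{x}_i))\subset B_r(x_i)$. Since $\pi_i$ is a covering map of RCD spaces, hence a local isometry preserving the local metric measure structure \cite{MondinoWei2019,Xu2023}, the map $\tilde{f}_i:=f_i\circ\pi_i$ on $B_{r/2}(\tilde{x}_i)$ is harmonic, with $|\nabla\tilde{f}_i|=|\nabla f_i|\circ\pi_i\le C(N)$ pointwise, $|\mathrm{Hess}\,\tilde{f}_i^a|^2=|\mathrm{Hess}\,f_i^a|^2\circ\pi_i$, and $\langle\nabla\tilde{f}_i^a,\nabla\tilde{f}_i^b\rangle=\langle\nabla f_i^a,\nabla f_i^b\rangle\circ\pi_i$. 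Applying the covering lemma \ref{covering} — in its localized and rescaled form for the relative cover $\widetilde{B}(x_i,\frac{1}{2},1)\to B_{1/2}(x_i)$, cf. \cite{Xu2023,Huang2020} — to the non-negative functions $|\mathrm{Hess}\,f_i^a|^2$ and $|\langle\nabla f_i^a,\nabla f_i^b\rangle-\delta_{ab}|$, and using Bishop--Gromov to compare averages over concentric balls, we obtain
$$\sum_{a,b}\fint_{B_{r/4}(\tilde{x}_i)}|\langle\nabla\tilde{f}_i^a,\nabla\tilde{f}_i^b\rangle-\delta_{ab}|+\sum_{a}\fint_{B_{r/4}(\tilde{x}_i)}(r/4)^2|\mathrm{Hess}\,\tilde{f}_i^a|^2\le C(N)(\epsilon+\epsilon_i)^2.$$
Hence each $\tilde{f}_i$ is harmonic and $(k,\sqrt{C(N)}(\epsilon+\epsilon_i))$-splitting on $B_{r/2}(\tilde{x}_i)$.

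Finally, along \ref{e9} one has $\pi_i\to\pi$, so $\tilde{f}_i\to\tilde{f}=f\circ\pi$ on $B_{r/2}(\tilde{x})$; harmonicity and the pointwise gradient bound pass to the limit, and the integral bounds above pass to the limit by the measured Gromov--Hausdorff stability of the $L^2$-norms of gradients and Hessians of Sobolev functions \cite{BNS}. This yields that $\tilde{f}$ is harmonic and $(k,C\epsilon)$-splitting with $C=C(N)$. The one delicate point — and the step I expect to be the main obstacle to write carefully — is the covering-lemma estimate for the \emph{relative} cover at a small scale: in the limit the base $\bar{B}_{1/2}(x)$ collapses from dimension $N$ down to dimension $k$, so $\pi:Y\to\bar{B}_{1/2}(x)$ is no longer a covering map and the estimate cannot be applied directly to it; running everything at the level of $X_i$, where $\pi_i$ is an honest covering map, and only then passing to the limit is precisely what circumvents this. (Alternatively, since every tangent cone of $Y$ at $\tilde{x}$ is $\mathbb{R}^N$ by \cite{Huang2020}, one could argue purely locally, but the sequence argument is cleaner.)
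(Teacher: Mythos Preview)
Your proposal is correct and follows essentially the same approach as the paper's own proof: approximate $f$ by harmonic $(k,2\epsilon)$-splitting maps $f_i$ on $X_i$ (the paper cites Corollary~4.12 in \cite{AH18}), lift to $\tilde f_i=f_i\circ\pi_i$ and use the covering lemma to get the $(k,C\epsilon)$-splitting estimate, then pass to the limit (the paper cites Theorem~4.4 in \cite{AH18}) and identify the $H^{1,2}$ limit with $\tilde f=f\circ\pi$ via the equivariant convergence. Your explicit remark that the covering-lemma step must be run on the genuine covers $\pi_i$ rather than on the collapsed limit $\pi$ is exactly the point of the argument.
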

\begin{proof}
By Corollary 4.12 in \cite{AH18}, we can find harmonic maps
$$f_i : B_{\frac{2r}{3}}(x_i) \to \mathbb{R}^k$$
converging to $f_{|B_{\frac{2r}{3}}(x_i)}$ in the $H^{1,2}$ sense. In particular, $f_i$ is $(k,2\epsilon)$-splitting for large $i$.

Define 
$$\tilde{f}_i:B_{\frac{2r}{3}}(\tilde{x}_i) \to \mathbb{R}^k, \ \tilde{f}_i = f_i \circ \pi.$$
Then $\tilde{f}_i$ is a harmonic and $(k,C\epsilon)$-splitting by the covering lemma \ref{covering}. Passing to a subsequence if necessary, by Theorem 4.4 in \cite{AH18}, $\tilde{f}_i$ has a $H^{1,2}$ limit $\tilde{g}$ which is harmonic on $B_{\frac{r}{2}}(\tilde{x})$. Then $\tilde{g}$ is also $(k,C\epsilon)$-splitting. By the equivariant convergence, $f= \tilde{g} \circ \pi$ on $B_{\frac{r}{2}}(x)$, thus $\tilde{f}=\tilde{g}$ is harmonic and $(k,C\epsilon)$-splitting.
\end{proof}

We prove that the geometric transformation theorem holds at regular points in $X$.
\begin{lemma}\label{trans}
In the context of \ref{e8}, for any $\delta>0$, there exists $\epsilon>0$ so that the following holds. Assume that there exists $r_0 \le 1$ and a $(k,\epsilon)$-splitting map 
$$f:B_{r_0}(x) \to \mathbb{R}^k.$$
Then for any $s \le \frac{r_0}{10}$, there exists an $k \times k$ lower triangular matrix $T_s$ such that
$$T_s(f): B_s(x) \to \mathbb{R}^k$$
is a $(k,\delta)$-splitting and $|T_s| \le s^{-\delta}$.
\end{lemma}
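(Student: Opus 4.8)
The plan is to deduce the geometric transformation theorem at the regular point $x \in X$ from the transformation theorem for non-collapsed RCD spaces with bounded covering geometry (Theorem \ref{transformation}) applied in the local relative covers. The key mechanism is the lifting of almost splitting maps (Lemma \ref{lift}): an almost $k$-splitting map $f$ on a ball $B_{r_0}(x) \subset X$ pulls back, under the covering projection $\pi \colon \widetilde B(x,\tfrac12,1) \to B_{1}(x)$, to an almost $k$-splitting map $\tilde f = f \circ \pi$ on the lift $B_{r_0/2}(\tilde x) \subset \widetilde B(x,\tfrac12,1)$, with a controlled loss $\epsilon \mapsto C(N)\epsilon$ in the splitting constant. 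Since $\tilde x$ is an $N$-regular point of the limit $Y$ of the relative covers (by \cite{Huang2020}), and the relative covers $\widetilde B(x_i,\tfrac12,1)$ are non-collapsed RCD$(-(N-1),N)$ spaces with a uniform lower volume bound near $\tilde x_i$ coming from the $(1,v)$-bounded covering hypothesis, we are precisely in the setting where Theorem \ref{transformation} applies upstairs.

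First I would pass to the approximating sequence: using Corollary 4.12 in \cite{AH18} replace $f$ by harmonic $(k,2\epsilon)$-splitting maps $f_i \colon B_{2r_0/3}(x_i) \to \mathbb{R}^k$ converging in $H^{1,2}$, and lift to harmonic $(k,C\epsilon)$-splitting maps $\tilde f_i = f_i \circ \pi$ on $B_{2r_0/3}(\tilde x_i)$ via Lemma \ref{covering}; these converge in $H^{1,2}$ to a harmonic $(k,C\epsilon)$-splitting map $\tilde f$ on $B_{r_0/2}(\tilde x)$ as in Lemma \ref{lift}. Because $\tilde x_i$ is becoming $N$-regular (the tangent cones at $\tilde x_i$ are forced to be $\mathbb{R}^N$, since those at $x_i$ are becoming $\mathbb{R}^k$ and the covering geometry bound rules out any measure drop along the fiber directions), for all sufficiently large $i$ and all $r_0 \le s \le \tfrac{2r_0}{3}$ the ball $B_s(\tilde x_i)$ is $(N,\epsilon^2)$-symmetric. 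Here one enlarges $\tilde f_i$ to an $(N,\epsilon')$-splitting map on $B_{2r_0/3}(\tilde x_i)$ by appending $N-k$ further harmonic coordinates coming from the $\mathbb{R}^{N-k}$-splitting of $Y$ near $\tilde x$ (available again by Cheeger--Colding almost splitting and $H^{1,2}$-stability), so that the hypotheses of Theorem \ref{transformation} are met in the relative cover.

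Next I would apply Theorem \ref{transformation} at $\tilde x_i$: for each scale $s$ there is an $N \times N$ lower triangular matrix $\widetilde T_s$ with $\widetilde T_s (\tilde f_i, \text{aux})$ an $(N,\delta)$-splitting map on $B_s(\tilde x_i)$, normalized gradient Gram matrix equal to the identity, and $|\widetilde T_s \circ \widetilde T_{2s}^{-1} - \mathrm{Id}| \le \delta$; iterating the last estimate from scale $r_0$ down to scale $s$ gives $|\widetilde T_s| \le s^{-\Phi(\delta)}$, hence $\le s^{-\delta}$ after shrinking $\delta$. The point now is that the block structure is compatible with the splitting $\mathbb{R}^N = \mathbb{R}^k \times \mathbb{R}^{N-k}$ where $\mathbb{R}^k$ carries $\tilde f_i$: the upper-left $k \times k$ block $T_s$ of $\widetilde T_s$ is lower triangular, and $T_s \tilde f_i$ is $(k,\delta)$-splitting on $B_s(\tilde x_i)$ with the correct normalization. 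Since $\tilde f_i = f_i \circ \pi$ and $T_s$ acts only on the $\mathbb{R}^k$-factor, $T_s (f_i \circ \pi) = (T_s f_i) \circ \pi$; passing to the limit $i \to \infty$ (the $T_s$ vary in a precompact family) and using that $\pi$ is a covering with $T_s f_i \to T_s f$ in $H^{1,2}$ on $B_s(x_i)$, we conclude $T_s f$ is $(k,\delta)$-splitting on $B_s(x)$ with $|T_s| \le s^{-\delta}$.

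The main obstacle I anticipate is the compatibility of the block decomposition with the splitting directions — i.e. ensuring that the auxiliary $\mathbb{R}^{N-k}$-coordinates genuinely split off near $\tilde x$ (so that $B_s(\tilde x_i)$ is $(N,\epsilon^2)$-symmetric down to scale $r_0$) and that Theorem \ref{transformation}'s lower triangular matrix respects the $\mathbb{R}^k$-block, so that projecting the upstairs transformation back down is legitimate. One must also verify that the symmetry of $B_s(\tilde x_i)$ holds \emph{uniformly} in $s \in [r_0, \tfrac{2r_0}{3}]$, which follows from $\tilde x$ being $N$-regular together with the volume convergence theorem applied in the relative covers; this is where the $(1,v)$-bounded covering geometry is essential, since without it the fibers could collapse and $\tilde x$ need not be a manifold point.
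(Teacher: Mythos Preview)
Your approach is essentially the paper's: lift $f$ to the cover via Lemma~\ref{lift}, complete it to an $(N,\epsilon')$-splitting map by appending $N-k$ auxiliary coordinates, apply Theorem~\ref{transformation} upstairs, extract the upper-left $k\times k$ block of the resulting lower-triangular matrix, and descend via the covering lemma. The one unnecessary detour is that you apply Theorem~\ref{transformation} along the sequence at $\tilde x_i$ and then pass to the limit, whereas the paper applies it directly on the limit space $Y$ at $\tilde x$---legitimate because $Y$ is itself a non-collapsed RCD$(-(N-1),N)$ space by volume convergence and $\tilde x$ is $N$-regular there, so the $(N,\epsilon^2)$-symmetry holds at \emph{all} scales $s\le r_0/10$ simultaneously; this sidesteps the scale-uniformity issue you flag in your final paragraph (and incidentally, your stated range ``$r_0 \le s \le \tfrac{2r_0}{3}$'' is inverted).
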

\begin{proof}
Take a small $\epsilon>0$ to be decided later. Since $x$ is regular, there exists $r_0 > 0$ and a $(k,\epsilon)$-splitting map 
$$f:B_{r_0}(x) \to \mathbb{R}^k.$$
Then $\tilde{f} = f \circ \pi$ is $(k,C\epsilon)$-splitting by Lemma \ref{lift}.

The rectifiable dimension of $X$ is $k$. By \cite{Kitabeppu2019}, any tangent cone of $X$ can not split an $\mathbb{R}^{k+1}$ factor.  In particular,  $B_{r_0}(x)$ is $r_0\Phi(\epsilon|k,N)$-close to a $r_0$-ball in $\mathbb{R}^k$. By \cite{Huang2020}, $B_{r_0}(\tilde{x})$ is $r_0\Phi(\epsilon|k,N,v)$-close to $r_0$-ball in $\mathbb{R}^N$. 

Now we can find 
$$\tilde{h}: B_{\frac{r_0}{2}}(\tilde{x}) \to \mathbb{R}^{N-k}$$ such that the pair  
$(\tilde{f},\tilde{h}): B_{\frac{r_0}{2}}(\tilde{x}) \to \mathbb{R}^{N}$ forms a $(N,\Phi(\epsilon|k,N,v))$-splitting map. Apply the transformation theorem \ref{transformation} to $(\tilde{f},\tilde{h})$. When $\epsilon$ is small enough, for any $s \le \frac{r_0}{10}$, we can find a $N \times N$ lower triangular matrix $\tilde{T}_s$ such that
$$\tilde{T}_s(\tilde{f},\tilde{h}): B_{3s}(\tilde{x}) \to \mathbb{R}^N$$\\
a $(N,\delta)$-splitting and $|\tilde{T}_s | \le s^{-\delta}$. 

Take $T_s$ to be the  upper left   $k \times k$ submatrix of $\tilde{T}_s$. Then $T_s$ is a $k \times k$ lower triangular matrix and $T_s\tilde{f}$ is a    
$(k,\delta)$-splitting on $B_{3s}(\tilde{x})$. By a similar argument in Lemma \ref{lift}, we conclude that $T_sf$ is $(N,C\delta)$-splitting on $B_s(x)$. Moreover, $|T_s| \le |\tilde{T}_s| \le s^{-\delta}$.
\end{proof}

\begin{proof}[Proof of Theorem \ref{mani}]
The proof of Theorem \ref{mani} follows from the proof of canonical Reifenberg theorem in \cite{CheegerNaberJiang2021} and \cite{HondaPeng2024}. Take any small $\delta > 0$, by Lemma \ref{trans}, there exists $r_0 > 0$ and a $(k,\epsilon)$-splitting map
$$f: B_{4r_0}(x) \to \mathbb{R}^k$$ 
so that for each $y \in B_{2r_0}(x)$ and $s \le r_0$, there exists a $k \times k$ lower triangular matrix $T_{s,y}$ so that 
$$T_{s,y}f : B_s(y) \to \mathbb{R}^k$$
is a $(k,\delta)$-splitting map with $|T_{s,y}| \le s^{-\delta}$.

We shall show that $f$ is biH\"{o}lder from $B_{r_0}(x)$ to its image. For any $y_1,y_2 \in B_{r_0}(x)$, take $s=d(y_1,y_2)$. Since $T_{s,y_1}f:B_s(y_1) \to \mathbb{R}^k$ is $(k,\delta)$-splitting and the dimension of $X$ is $k$, it must be a $\Phi(\delta |k)s$-GHA. Thus
$$d(T_{s,y_1}f(y_1),T_{s,y_1}f(y_2)) \ge (1-\Phi(\delta|k))d(y_1,y_2).$$
Since $|T_{s,y_1}| \le s^{-\delta}$ and $s=d(y_1,y_2)$, 
$$d(f(y_1),f(y_2)) \ge s^{\delta}d(T_{s,y_1}f(y_1),T_{s,y_1}f(y_2)) \ge (1-\Phi(\delta|k,N))d(y_1,y_2)^{1+\delta}.$$

On the other hand, since $f$ is harmonic, $|\nabla f^a| \le 1+\Phi(\delta|k)$, for any $a=1,2,...k$, thus 
$$d(f(y_1),f(y_2)) \le (1+\Phi(\delta|k))d(y_1,y_2).$$ The biH\"{o}lder estimate holds, completing the proof.
\end{proof}

\begin{proof}[Proof of Theorem \ref{T3}]
The part (a) is exactly Theorem \ref{mani} and the part (b) follows from the construction in Theorem B.
\end{proof}

\bibliographystyle{plain} 
\bibliography{bib}

\begin{thebibliography}{10}

\bibitem{AH18}
L.~Ambrosio and S.~Honda.
\newblock Local spectral convergence in {RCD$*$(K,N)} spaces.
\newblock {\em Nonlinear Anal. 177 (2018), 1–23.}, 2018.

\bibitem{Anderson1992}
Michael~T. Anderson.
\newblock {Hausdorff perturbations of Ricci-flat manifolds and the splitting
  theorem}.
\newblock {\em Duke Mathematical Journal}, 68(1):67 -- 82, 1992.

\bibitem{BGT}
E~Breuillard, B~Green, and T~Tao.
\newblock The structure of approximate groups.
\newblock {\em Publ. Math. Inst. Hautes Etudes Sci.}, 116:115--221, 2012.

\bibitem{BNS}
E.~Brue, A.~Naber, and D.~Semola.
\newblock Boundary regularity and stability for spaces with {R}icci bounded
  below.
\newblock {\em Invent. Math. 228 (2022) no. 2, 777–891}, 2022.

\bibitem{BS2023}
E.~Brue and D.~Semola.
\newblock Constancy of the dimension in codimension one and locality of the
  unitnormal on {RCD(K,N)} spaces.
\newblock {\em Ann. Sc. Norm. Super. Pisa Cl. Sci. (5) 24 (2023), no.3,
  1765-1816.}, 2023.

\bibitem{BuserKarcher}
Peter Buser and Hermann Karcher.
\newblock {\em Gromovs almost ﬂat manifolds}.
\newblock Soci\'{e}t\'{e} math\'{e}matique de France, 1981.

\bibitem{CheegerColding1997}
Jeff Cheeger and Tobias~H. Colding.
\newblock On the structure of spaces with {R}icci curvature bounded below. i.
\newblock {\em J. Differential Geom.}, 46(3):406--480, 1997.

\bibitem{CheegerColding2000a}
Jeff Cheeger and Tobias~H. Colding.
\newblock On the structure of spaces with {R}icci curvature bounded below. ii.
\newblock {\em J. Differential Geom.}, 54(1):13--35, 2000.

\bibitem{CFG}
Jeff Cheeger, Kenji Fukaya, and Mikhael Gromov.
\newblock Nilpotent structures and invariant metrics on collapsed manifolds.
\newblock {\em Journal of the American Mathematical Society}, 5(2):327--372,
  1992.

\bibitem{CG1}
Jeff Cheeger and Mikhael Gromov.
\newblock {Collapsing Riemannian manifolds while keeping their curvature
  bounded. I}.
\newblock {\em Journal of Differential Geometry}, 23(3):309--346, 1986.

\bibitem{CG2}
Jeff Cheeger and Mikhael Gromov.
\newblock {Collapsing Riemannian manifolds while keeping their curvature
  bounded. II}.
\newblock {\em Journal of Differential Geometry}, 32(1):269-- 298, 1990.

\bibitem{CheegerNaberJiang2021}
Jeff Cheeger, Wenshuai Jiang, and Aaron Naber.
\newblock {Rectifiability of singular sets of noncollapsed limit spaces with
  Ricci curvature bounded below}.
\newblock {\em Annals of Mathematics}, 193(2):407--538, 2021.

\bibitem{CheegerNaber2015}
Jeff Cheeger and Aaron Naber.
\newblock Regularity of {E}instein manifolds and the codimension 4 conjecture.
\newblock {\em Ann. of Math.}, 182(3):1093--1165, 2015.

\bibitem{DSZZ2023}
Q.~Deng, J.~Santos-Rodr\'iguez, S.~Zamora, and X.~Zhao.
\newblock Margulis {L}emma on {RCD$(K,N)$} spaces.
\newblock {\em arXiv preprint}, 2023.

\bibitem{Fukaya1986}
Kenji Fukaya.
\newblock Theory of convergence for riemannian orbifolds.
\newblock {\em Japan. J. Math. (N.S.)}, 12(1):121--160, 1986.

\bibitem{Fukaya1987}
Kenji Fukaya.
\newblock {Collapsing Riemannian manifolds to ones of lower dimensions}.
\newblock {\em Journal of Differential Geometry}, 25(1):139--156, 1987.

\bibitem{Fukaya1988}
Kenji Fukaya.
\newblock {A boundary of the set of the Riemannian manifolds with bounded
  curvatures and diameters}.
\newblock {\em Journal of Differential Geometry}, 28(1):1--21, 1988.

\bibitem{Fukaya1989}
Kenji Fukaya.
\newblock {Collapsing Riemannian manifolds to ones with lower dimension II}.
\newblock {\em Journal of the Mathematical Society of Japan}, 41(2):333--356,
  1989.

\bibitem{FukayaYamaguchi1992}
Kenji Fukaya and Takao Yamaguchi.
\newblock The fundamental groups of almost nonnegatively curved manifolds.
\newblock {\em Annals of Mathematics}, 136(2):253--333, 1992.

\bibitem{Gromov1978}
Mikhael Gromov.
\newblock Almost flat manifolds.
\newblock {\em J. Diff. Geom.}, 13(2):231--241, 1978.

\bibitem{GuSan2019}
Luis Guijarro and Jaime Santos-Rodríguez.
\newblock On the isometry group of {RCD$*$(K,N)-spaces}.
\newblock {\em manuscripta mathematica}, 158:441--461, 2019.

\bibitem{HondaPeng2024}
S.~Honda and Y.~Peng.
\newblock A note on the topological stability theorem from spaces to
  {R}iemannian manifolds.
\newblock {\em Manuscripta Math. 172 (2023), no.3-4, 971-1007.}, 2023.

\bibitem{Huang2020}
Hongzhi Huang.
\newblock Fibrations, and stability for compact group actions on manifolds with
  local bounded {R}icci covering geometry.
\newblock {\em Front. Math. China}, 15(1):69--89, 2020.

\bibitem{HKRX2020}
Hongzhi Huang, Lingling Kong, Xiaochun Rong, and Shicheng Xu.
\newblock Collapsed manifolds with {R}icci bounded covering geometry.
\newblock {\em Transactions of the American Mathematical Society},
  373(11):8039--8057, 2022.

\bibitem{HNW2023}
Erik Hupp, Aaron Naber, and Kai-Hsiang Wang.
\newblock Lower {R}icci curvature and nonexistence of manifold structure.
\newblock {\em arXiv:2308.03909}, 2023.

\bibitem{Kapovitch2021}
Vitali Kapovitch.
\newblock Mixed curvature almost flat manifolds.
\newblock {\em Geom. Topol. 25 (2021) 2017-2059}, 2021.

\bibitem{KKK}
Vitali Kapovitch, Martin Kell, and Christian Ketterer.
\newblock {On the structure of RCD spaces with upper curvature bounds}.
\newblock {\em Mathematische Zeitschrift, 2022, No 4, p. 3469-3502}, 2022.

\bibitem{KapovitchWilking2011}
Vitali {Kapovitch} and Burkhard {Wilking}.
\newblock {Structure of fundamental groups of manifolds with Ricci curvature
  bounded below}.
\newblock {\em arXiv:1105.5955}, 2011.

\bibitem{Kitabeppu2019}
Yu~Kitabeppu.
\newblock A sufficient condition to a regular set of positive measure on {RCD}
  spaces.
\newblock {\em Potential Anal. 51 (2019), no. 2, 179–196.}, 2019.

\bibitem{LeeRay}
K.B. Lee and F.~Raymond.
\newblock {R}igidity of almost crystallographic groups.
\newblock {\em Combinatorial methods in topology and algebraic geometry
  (Rochester, N.Y., 1982), Contemp. Math., vol. 44, Amer. Math. Soc.,
  Providence, RI, 1985, pp. 73-78. ISBN:0-8218-5039-3.}, 1985.

\bibitem{MondinoWei2019}
Andrea Mondino and Guofang Wei.
\newblock On the universal cover and the fundamental group of an {RCD$*(K,
  N)$-space}.
\newblock {\em J. Reine Angew. Math.}, 2019(753):211--237, 2019.

\bibitem{PanWang2021}
Jiayin Pan and Jikang Wang.
\newblock Some topological results of {R}icci limit spaces.
\newblock {\em Transactions of the American Mathematical Society},
  375(12):8445--8464, 2022.

\bibitem{DeGigli2019}
G.~De Philippis and N.~Gigli.
\newblock Non-collapsed spaces with {R}icci curvature bounded from below.
\newblock {\em J.Ec. polytech. Math. 5 (2018), 613-650.}, 2018.

\bibitem{Rong2019}
Xiaochun Rong.
\newblock A new proof of the {G}romov's theorem on almost flat manifolds.
\newblock {\em arXiv:1906.03377v2}, 2019.

\bibitem{Rong2022}
Xiaochun Rong.
\newblock Collapsed manifolds with local ricci bounded covering geometry.
\newblock {\em arXiv:2211.09998}, 2022.

\bibitem{Ruh1982}
Ernst~A. Ruh.
\newblock Almost flat manifolds.
\newblock {\em J. Differential Geom.}, 17(1):1--14, 1982.

\bibitem{SantosZamora}
Jaime Santos-Rodriguez and Sergio Zamora.
\newblock On fundamental groups of {RCD} spaces.
\newblock {\em Journal für die reine und angewandte Mathematik (Crelles
  Journal), vol. 2023, no. 799, 2023, pp. 249-286.}, 2023.

\bibitem{SormaniWei2004}
Christina Sormani and Guofang Wei.
\newblock Universal covers for {H}ausdorff limits of noncompact spaces.
\newblock {\em Transactions of the American Mathematical Society},
  356(3):1233--1270, 2004.

\bibitem{Sosa2018}
Gerardo Sosa.
\newblock The isometry group of an {RCD$*$} space is {L}ie.
\newblock {\em Potential Analysis}, 49:267--286, 2018.

\bibitem{Wang2023}
Jikang Wang.
\newblock On the limit of simply connected manifolds with discrete isometric
  cocompact group actions.
\newblock {\em arXiv:2307.07658}, 2023.

\bibitem{Wang2022}
Jikang Wang.
\newblock {RCD$*$(K,N) spaces are semi-locally simply connected}.
\newblock {\em Journal für die reine und angewandte Mathematik (Crelles
  Journal), vol. 2024, no. 806, 2024, pp. 1-7.}, 2024.

\bibitem{Wang2021}
Jikang Wang.
\newblock Ricci limit spaces are semi-locally simply connected.
\newblock {\em Journal of Differential Geometry, J. Differential Geom. 128(3),
  1301-1314, (November 2024)}, 2024.

\bibitem{Xu2023}
Shicheng Xu.
\newblock Precompactness of domains with lower {R}icci curvature bound under
  {G}romov-{H}ausdorff topology.
\newblock {\em arXiv:2311.05140}, 2023.

\bibitem{Zamora2020}
Sergio Zamora.
\newblock Limits of almost homogeneous spaces and their fundamental groups.
\newblock {\em Groups Geom. Dyn. 18 (2024), no. 3, pp. 761–798}, 2024.

\bibitem{ZamoraZhu2024}
Sergio Zamora and Xingyu Zhu.
\newblock Topological rigidity of small {RCD(K,N)} spaces with maximal rank.
\newblock {\em arXiv:2406.10189}, 2024.

\bibitem{Zhou2024}
Shengxuan Zhou.
\newblock Examples of {R}icci limit spaces with infinite holes.
\newblock {\em arXiv:2404.00619}, 2024.

\end{thebibliography}
\end{document}